\def\COMMENT#1{}
\let\COMMENT=\footnote
\def\TASK#1{}
\def\noproof{{\unskip\nobreak\hfill\penalty50\hskip2em\hbox{}\nobreak\hfill%
        $\square$\parfillskip=0pt\finalhyphendemerits=0\par}\goodbreak}
\def\endproof{\noproof\bigskip}
\newdimen\margin   
\def\textno#1&#2\par{%
    \margin=\hsize
    \advance\margin by -4\parindent
           \setbox1=\hbox{\sl#1}%
    \ifdim\wd1 < \margin
       $$\box1\eqno#2$$%
    \else
       \bigbreak
       \hbox to \hsize{\indent$\vcenter{\advance\hsize by -3\parindent
       \sl\noindent#1}\hfil#2$}%
       \bigbreak
    \fi}
\newtheorem{thm}{Theorem}[section]
\newtheorem{examp}[thm]{Example}
\newtheorem{lemma}[thm]{Lemma}
\newtheorem{problem}[thm]{Problem}
\newtheorem{claim}[thm]{Claim}
\newtheorem{col}[thm]{Corollary}
\newtheorem{conj}[thm]{Conjecture}
\newtheorem{prop}[thm]{Proposition}
\newtheorem{remark}[thm]{Remark}
\newtheorem*{thm*}{Theorem}
\newtheorem*{define*}{Definition}
\newtheorem*{examp*}{Example}
\newtheorem*{lem*}{Lemma}
\newtheorem*{claim*}{Claim}
\newtheorem*{fact*}{Fact}
\newtheorem*{col*}{Corollary}
\newtheorem*{conj*}{Conjecture}
\newtheorem{exex}{Extremal Example}
\def\eps{\varepsilon}
\theoremstyle{definition}
\newtheorem{define}[thm]{Definition}
\begin{document}

\title{Dirac-type results for tilings and coverings in ordered graphs}

\author{Andrea Freschi}
\author{Andrew Treglown}
\thanks{AF: School of Mathematics, University of Birmingham, United Kingdom, {\tt axf079@bham.ac.uk}. \\
\indent AT: School of Mathematics, University of Birmingham, United Kingdom, {\tt a.c.treglown@bham.ac.uk}.\\
}

\date{\today}
\begin{abstract}
A recent paper of Balogh, Li and Treglown~\cite{blt} initiated the study of Dirac-type problems for ordered graphs. In this paper we prove a number of results in this area.
In particular, we determine asymptotically the minimum degree threshold for forcing
\begin{itemize}
    \item[(i)] a
    perfect $H$-tiling in an ordered graph, for any fixed ordered graph $H$ of interval chromatic number at least $3$;
    \item[(ii)]  an $H$-tiling in an ordered graph $G$ covering a fixed proportion of the vertices of $G$ (for any fixed ordered graph $H$);
  
     \item[(iii)] an $H$-cover in an ordered graph (for any fixed ordered graph $H$).
\end{itemize}
The first two of these results resolve questions of  Balogh, Li and Treglown whilst (iii) resolves a question of Falgas-Ravry. Note that (i) combined with a result from \cite{blt} completely determines the asymptotic minimum degree threshold for forcing a perfect $H$-tiling. Additionally, we prove a result that combined with a theorem of Balogh, Li and Treglown,  asymptotically  determines the minimum degree threshold for forcing an almost perfect $H$-tiling in an ordered graph (for any fixed ordered graph $H$). Our work therefore provides ordered graph analogues of the seminal
tiling theorems of K\"uhn and Osthus~[Combinatorica 2009] and of Koml\'os~[Combinatorica 2000].
Each of our results exhibits some curious, and perhaps unexpected, behaviour. Our solution to (i) makes use of a novel absorbing argument.
\end{abstract}
\maketitle

\section{Introduction} \label{Introduction}
In recent years there has been a significant effort to develop both Tur\'an and Ramsey theories in the setting of  \emph{vertex ordered graphs}. 
A \emph{(vertex) ordered graph} or \emph{labelled graph} $H$ on $h$ vertices is a graph whose vertices have been labelled with $[h]:=\{1,\dots,h\}$.
An ordered graph $G$ with vertex set $[n]$ \emph{contains} an ordered graph $H$ on $[h]$ if (i) there is an injection $\phi : [h] \rightarrow [n]$ such that
$\phi (i) < \phi (j)$ for all $1\leq i < j \leq h$ and  (ii) $\phi(i)\phi(j)$ is an edge in $G$ whenever $ij$ is an edge in $H$.

Tur\'an-type problems concern edge density conditions that force a fixed graph $H$ as a subgraph in a host graph $G$. Whilst the  Erd\H{o}s--Stone--Simonovits theorem~\cite{es1, es2} determines, up to a quadratic error term, the number of edges in the densest $H$-free $n$-vertex graph, there is still active interest in the Tur\'an problem for bipartite $H$. Indeed, for bipartite $H$ the error term in the Erd\H{o}s--Stone--Simonovits theorem is in fact the dominant term, so more refined results are sought. 
Similarly, a result of Pach and Tardos~\cite{pt} determines asymptotically the number of edges an ordered graph requires to force a copy of a fixed ordered graph $H$ with so-called interval chromatic number $\chi_<(H)$ at least $3$. Therefore, again there is significant interest in the `bipartite' case of this problem (i.e., when $\chi_<(H)=2$); see Tardos~\cite{tardosbcc} for a recent survey on such results.

The study of Ramsey theory for ordered graphs has also gained significant traction. For example, results of Conlon, Fox, Lee and Sudakov~\cite{con}, and of
 Balko,  Cibulka,  Kr\'al and Kyn\v{c}l~\cite{balko}
 demonstrate that there are ordered graphs $H$ for which the behaviour of the Ramsey number is vastly different to their underlying unordered graph.

Other than Tur\'an and Ramsey problems, another central branch of extremal graph theory concerns Dirac-type results; that is, minimum degree conditions that force fixed (spanning) structures in graphs.
In a recent paper, Balogh, Li and the second author~\cite{blt} 
 initiated the study of Dirac-type results for ordered graphs. 
 Their main focus was on 
 \emph{perfect $H$-tilings}, though they also raised other Dirac-type problems (see \cite[Section 8]{blt}).
In both the ordered and unordered settings, 
an \emph{$H$-tiling} in a graph $G$ 
is a collection of vertex-disjoint copies of $H$ contained in $G$.
 An
$H$-tiling is \emph{perfect} if it covers all the vertices of $G$.
Perfect $H$-tilings are also often referred to as \emph{$H$-factors}, \emph{perfect $H$-packings} or \emph{perfect $H$-matchings}. 
$H$-tilings can be viewed as generalisations of both the notion of a matching (which corresponds to the case when $H$ is a single edge) and the Tur\'an problem (i.e.,~a copy of $H$ in $G$ is simply an $H$-tiling of size one).

Balogh, Li and Treglown~\cite{blt} raised the following question.
\begin{problem}\cite{blt}\label{mainprob}
Given any ordered graph $H$ and any $n \in \mathbb N$ divisible by $|H|$, determine
the smallest integer $\delta_{<}(H, n)$ such that every $n$-vertex ordered graph $G$  with $\delta(G)\geq \delta_{<}(H, n)$ contains a perfect $H$-tiling. 
\end{problem}
The analogous problem in the (unordered) graph setting had been studied since the 1960s (see, e.g.,~\cite{alonyuster, cor, hs, kssAY, kuhn, kuhn2}) and forty-five years later 
 a complete solution, up to an additive constant term, was obtained via a theorem of K\"uhn and Osthus~\cite{kuhn2}. We will discuss this result further when comparing this problem with Problem~\ref{mainprob}.

In~\cite[Theorem 1.9]{blt}, Balogh, Li and Treglown asymptotically resolved Problem~\ref{mainprob} for
$H$ with $\chi_<(H)=2$. Further, they developed  approaches to the absorbing and regularity methods for ordered graphs, including providing  general absorbing and almost perfect tiling lemmas. In this paper we build on their  results to asymptotically resolve Problem~\ref{mainprob} in all remaining cases (i.e., for all $H$ with interval chromatic number at least $3$). Our main result shows that Problem~\ref{mainprob} does exhibit a somewhat different behaviour when $\chi _<(H)\geq 3$ compared to when $\chi_<(H) = 2$; we discuss this further in Section~\ref{compare}.

In addition to this result, we also resolve the Dirac-type problems for \emph{$H$-covers} in ordered graphs (see Section~\ref{sec:cover}) and $H$-tilings covering a fixed proportion $x$ of the vertices of an ordered graph for $x \in (0,1)$ (see Section~\ref{sec:x}).

\subsection{A Dirac-type theorem for perfect $H$-tilings}
In this subsection we state Theorem~\ref{mainthm}, which asymptotically resolves Problem~\ref{mainprob}  when $\chi _<(H)\geq 3$. This result will depend on several definitions and parameters which we now introduce.

\begin{define}[Interval chromatic number]
Given  $r \in \mathbb N$, an \emph{interval $r$-colouring} of an ordered graph $H$ is 
a partition of the vertex set $[h]$ of $H$ into $r$ (possibly empty) intervals so that  no two vertices belonging
to the same interval are adjacent in $H$.
The \textit{interval chromatic number} $\chi_{<}(H)$ of an ordered graph $H$ is the smallest $r\in \mathbb N$ such that there exits an interval $r$-colouring of $H$.
\end{define}
One can think of the interval chromatic number as the natural ordered analogue of the chromatic number of a graph. Moreover, whilst the Erd\H{o}s--Stone--Simonovits theorem~\cite{es1, es2} asserts that $1-1/(\chi (H)-1)+o(1)$ is the edge density threshold for ensuring a copy of a graph $H$ in $G$, the 
\emph{Erd\H{o}s--Stone--Simonovits theorem for ordered graphs} due to Pach and Tardos~\cite{pt} asserts the corresponding threshold in the ordered graph setting is 
$1-1/(\chi_< (H)-1)+o(1)$.

The next definition is the relevant parameter for studying \emph{almost} perfect $H$-tilings in graphs.
\begin{define}[Critical chromatic number]
The \textit{critical chromatic number} $\chi_{cr}(F)$ of an unordered graph $F$ is defined as 
\[
\chi_{cr}(F):=(\chi(F) - 1)\frac{|F|}{|F| - \sigma(F)},
\]
where $\sigma(F)$ denotes the size of the smallest possible colour class in any $\chi(F)$-colouring of $F$.
\end{define}
Note that $\chi(H)-1<\chi_{cr}(H)\leq \chi(H)$ for all graphs $H$, and $\chi_{cr}(H)=\chi(H)$ precisely when every $\chi(H)$-colouring of $H$ has colour classes of equal size.

We informally refer to an $H$-tiling in an  $n$-vertex (ordered) graph $G$ as an
\emph{almost perfect $H$-tiling} if it covers all but at most $o(n)$ vertices of $G$.
Koml\'os~\cite{komlos} proved that $(1-1/\chi_{cr}(H))n$ is the minimum degree threshold for forcing an almost perfect $H$-tiling in an $n$-vertex graph $G$. In fact, it was later shown~\cite{szhao} that such graphs $G$ contain $H$-tilings covering all but a constant number of the vertices in $G$.
In the setting of ordered graphs,  a related parameter $\chi^*_{cr}(H)$  turns out to be the relevant parameter for forcing an almost perfect $H$-tiling. To introduce this parameter we need the following definitions.

For two subsets $X, Y$ of $[n]$, we write $X<Y$ if $x<y$ for all $x\in X$ and $y\in Y$. 
Let $B$ be a complete $k$-partite unordered graph with parts $U_1,\ldots, U_k$,
and $\sigma$ be a permutation of the set $[k]$. 
An \textit{interval labelling} of $B$ with respect to $\sigma$ is a bijection $\phi: V(B)\rightarrow [|B|]$ such that $\phi(U_i)<\phi(U_j)$ if $\sigma(i)<\sigma(j)$; that is, $$\phi(U_{\sigma^{-1}(1)})<\cdots<\phi(U_{\sigma^{-1}(k)}).$$
For brevity, we will usually drop $\phi$ and just write $U_{\sigma^{-1}(1)}<\cdots<U_{\sigma^{-1}(k)}.$
Given $t \in \mathbb N$, write $B(t)$ for the \emph{blow-up  of $B$} with vertex set $\bigcup_{x\in V(B)}V_x$, where the $V_x$'s are sets of $t$ independent vertices; so there are all possible edges between $V_x$ and $V_y$ in $B(t)$ if $xy \in E(B)$.  Given an interval labelling $\phi$ of $B$,
let $(B(t), \phi)$ be the ordered graph obtained from $B(t)$ by equipping $V(B(t))$ with a vertex ordering, satisfying $V_x < V_y$ for every $x, y\in V(B)$ with $\phi(x)<\phi(y)$.
We refer to $(B(t), \phi)$ as an \textit{ordered blow-up}  of $B$.

\begin{define}[Bottlegraph]\label{bottlegraphdef}
For an ordered graph $H$, we say that a complete $k$-partite unordered graph $B$ is a \textit{bottlegraph of $H$}, if for every permutation $\sigma$ of $[k]$ and every interval labelling $\phi$ of $B$ with respect to $\sigma$, there exists a constant $t=t(B, H, \phi)$ such that the ordered blow-up $(B(t), \phi)$ contains a perfect $H$-tiling. We say that $B$ is a \textit{simple bottlegraph of $H$} if for any choice of $\sigma$ and $\phi$ we can take $t=1$.
\end{define}
Note that in Definition~\ref{bottlegraphdef} we did not impose any restriction on the size of the parts of a bottlegraph. However, as we will see in Proposition~\ref{propbottle},
it suffices to consider bottlegraphs $B'$ where  all parts are of the same size except for perhaps one smaller part. More precisely, given any bottlegraph $B$ of $H$, there is another bottlegraph $B'$ with this structure such that $\chi _{cr}(B')=\chi_{cr}(B)$. This bottle-like structure 
is where the name {bottlegraph} is derived, and was first used by Koml\'os~\cite{komlos}
 in the setting of unordered graphs.

\begin{define}[Ordered critical chromatic number]
The \textit{ordered critical chromatic number} $\chi^*_{cr}(F)$ of an ordered graph $F$ is defined as 
$$\chi_{cr}^*(F):=\inf\{\chi_{cr}(B):B\text{ is a bottlegraph of }F\}.$$
We say a bottlegraph $B$ of $F$ is \emph{optimal} if $\chi_{cr}(B)=\chi_{cr}^*(F)$.
\end{define}

Notice that $\chi _<(H)-1 \leq \chi_{cr}^*(H)$ for all ordered graphs $H$ as each bottlegraph $B$ of $H$ must have chromatic number at least $\chi_<(H)$ and so 
$\chi _<(H)-1 < \chi _{cr}(B)$. In fact, Proposition~\ref{stronglowerb} in Section~\ref{sec:extremalconstructions} yields a stronger lower bound on $\chi_{cr}^*(H)$.
On the other hand, (in contrast to $\chi_{cr}(F)$ for unordered graphs $F$) we will also see examples of ordered graphs where $\chi_{cr}^*(H)$ is much larger than $\chi_<(H)$.
Note though that $\chi_{cr}^*(H)\leq h$ for any ordered graph $H$ on $[h]$ as $K_{h}$ is a bottlegraph of $H$. In fact, this upper bound is attained when $H$ is such that $1$ and $2$ are adjacent or $h-1$ and $h$ are adjacent; this is an immediate consequence of Proposition~\ref{lowerbound}. 
To aid the reader's intuition, in Section~\ref{examp3} we give examples of ordered graphs $H$ where we compute $\chi^*_{cr}(H)$. Various  bounds on $\chi^*_{cr}(H)$ are given in Section~\ref{sec:properties}.

\smallskip

The next result, a simple corollary of~\cite[Theorem~4.3]{blt}, shows that 
$\chi^*_{cr}(H)$ is a relevant parameter for forcing an almost perfect $H$-tiling in an ordered graph.\footnote{The reader may not immediately see why Theorem~\ref{BalLiTre1} is a corollary of Theorem 4.3 from \cite{blt} as there is an error term in the minimum degree condition in the latter theorem. However, a standard trick (see, e.g., the proof of Corollary~6.6 in~\cite{forum}) 
always allows one to omit an error term in a minimum degree condition that forces an almost perfect $H$-tiling.}

\begin{thm}[Balogh, Li and Treglown \cite{blt}]\label{BalLiTre1}
Let $H$ be an ordered graph.  Then for every $\eta>0$, there exists an integer $n_0=n_0(H,\eta)$ so that every ordered graph $G$ on $n\geq n_0$ vertices with
$$\delta(G)\geq\left(1-\frac{1}{\chi^*_{cr}(H)}\right)n$$
contains an $H$-tiling covering all but at most $\eta n$ vertices.
\end{thm}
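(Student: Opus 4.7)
The plan is to deduce Theorem~\ref{BalLiTre1} from Theorem~4.3 of~\cite{blt} via a padding argument that trades the error term in the minimum degree hypothesis for a negligibly larger error in the size of the uncovered set. Presumably Theorem~4.3 of~\cite{blt} asserts (in its natural almost-perfect-tiling form): for each $\varepsilon>0$ there is an $n_0'$ such that every ordered graph $G'$ on at least $n_0'$ vertices with $\delta(G')\geq(1-1/\chi^*_{cr}(H)+\varepsilon)|G'|$ contains an $H$-tiling missing at most $\varepsilon|G'|$ vertices. The remaining task is to remove the extra $\varepsilon$ in the minimum degree hypothesis.

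Given $\eta>0$, set $c:=\chi^*_{cr}(H)$ and choose $\varepsilon>0$ sufficiently small in terms of $\eta,|H|,c$. First, form an auxiliary ordered graph $G^+$ from the given $G$ by adjoining a set $S$ of $s=\Theta(\varepsilon n)$ new \emph{universal} vertices at the start of the vertex ordering, each joined to every other vertex of $V(G)\cup S$. A short computation (taking $s\geq \varepsilon c n/(1-\varepsilon c)$) shows that $\delta(G^+)\geq(1-1/c)n+s\geq(1-1/c+\varepsilon)(n+s)$, so Theorem~4.3 of~\cite{blt} applies to $G^+$ and produces an $H$-tiling $\mathcal M$ of $G^+$ missing at most $\varepsilon(n+s)$ vertices.

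To conclude, discard from $\mathcal M$ the (at most $s$) copies of $H$ that meet $S$; these account for at most $|H|s$ vertices of $V(G)$, so the surviving sub-tiling lies entirely in $G$ and leaves uncovered at most $\varepsilon(n+s)+|H|s$ vertices, which is at most $\eta n$ by the choice of $\varepsilon$. The only ordered-setting subtlety, which deserves a brief check, is that the padding really is harmless: since $S$ occupies the initial block of the vertex ordering of $G^+$ and each vertex of $S$ is adjacent to everything, any copy of $H$ in $G^+$ meeting $S$ assigns the vertices of $S$ precisely to the initial roles of the order-preserving embedding of $H$, and deleting such copies leaves a genuine $H$-tiling of the ordered graph $G$. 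The argument is essentially bookkeeping, with all the substantive work encapsulated in Theorem~4.3 of~\cite{blt}.
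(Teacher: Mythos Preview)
Your proposal is correct and is precisely the ``standard trick'' the paper alludes to in its footnote (citing the proof of Corollary~6.6 in~\cite{forum}): the paper does not spell out the argument but simply deduces Theorem~\ref{BalLiTre1} from \cite[Theorem~4.3]{blt} by this padding method. One small remark: the ``ordered-setting subtlety'' you flag is unnecessary, since you discard every copy of $H$ meeting $S$ regardless of its structure, and the surviving copies lie entirely in $V(G)$ with its ordering unchanged.
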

At first sight it is not  clear if the minimum degree threshold in Theorem~\ref{BalLiTre1} is best possible. However, 
Theorem~\ref{aptsthm} in Section~\ref{lowersec} shows that Theorem~\ref{BalLiTre1} is best possible in the sense that one cannot replace the $1-{1}/{\chi^*_{cr}(H)}$ term in the minimum degree condition with any other fixed constant term $a< 1-{1}/{\chi^*_{cr}(H)}$.  Thus, Theorem~\ref{aptsthm} and Theorem~\ref{BalLiTre1} provide an analogue of
Koml\'os' theorem in the ordered setting.

Unusually, in the proof of Theorem~\ref{aptsthm}, for most ordered graphs $H$ we do not 
 simply produce an explicit extremal example. 
  Indeed, if one has not explicitly computed the value of $\chi^*_{cr}(H)$ and the `reason' why it takes this value, then it seems difficult to produce such an explicit extremal example. Instead, the proof splits into a few cases and uses various tools and results that we introduce in the paper.

\smallskip

At this point the reader may wonder if the conclusion of Theorem~\ref{BalLiTre1} can
be strengthened to ensure a \emph{perfect} $H$-tiling.
For some ordered graphs $H$ this is possible. However, for other ordered graphs one will require a significantly higher minimum degree condition.
The following definition is the critical concept for articulating this dichotomy
for $H$ with $\chi _<(H) \geq 3$.

\begin{define}[Local barrier]\label{localbarrier}
Let $H$ be an ordered graph on $[h]$ with $r:=\chi_<(H)\geq 2$. We say that $H$ has a \emph{local barrier} if for some fixed $i\ne j\in[r+1]$ the following condition holds.
Given any interval $(r+1)$-colouring of $H$ with colour classes 
$V_1<\cdots<V_{r+1}$  such that $V_i=\{v\}$ is a singleton class, there is at least one edge between $v$ and $V_j$ in $H$.
\end{define}
Note that in this definition we may have that a colour class $V_k$ is empty.
If $H$ is the ordered complete graph on $r$ vertices then $H$ does not have a local barrier; it is also easy to check that $\chi^*_{cr}(H)=\chi_<(H)=r$. Given $r \geq 2,$
let $H'$ be any complete $r$-partite (unordered) graph with at least $2$ vertices in each colour class. Let $H$ be any ordered graph obtained from $H'$ by assigning an interval labelling to $H'$; so $\chi _<(H)=r$. Then one can check that $H$ has a local barrier with parameters $i=1$ and $j=r+1$ as in Definition~\ref{localbarrier}.

\smallskip

We are now able to state our main result which resolves Problem~\ref{mainprob} for all
ordered graphs $H$ with $\chi_<(H) \geq 3$.

\begin{thm}\label{mainthm}
Let  $H$ be an ordered graph with $\chi_<(H)\geq 3$. Given any $\eta >0$, there exists an integer $n_0=n_0(H,\eta)$ so that if $n\geq n_0$ and $|H|$ divides $n$ then
\begin{itemize}
    \item[(i)] $ \left (1-\frac{1}{\chi^*_{cr}(H)}  \right )n \leq  \delta_<(H,n)\leq\left (1-\frac{1}{\chi^*_{cr}(H)} +\eta \right )n $ \ \  if
    $\chi^*_{cr}(H) \geq \chi_<(H)$;
\item[(ii)] $ \left (1-\frac{1}{\chi_<(H)} \right )n< \delta_<(H,n)\leq   \left (1-\frac{1}{\chi_<(H)} +\eta \right )n $  \ \  if 
 $\chi^*_{cr}(H) < \chi_<(H)$ and $H$ has a local barrier;
 \item[(iii)] $ \left (1-\frac{1}{\chi^*_{cr}(H)}  \right )n \leq  \delta_<(H,n)\leq\left (1-\frac{1}{\chi^*_{cr}(H)} +\eta \right )n $ \ \  if 
    $\chi^*_{cr}(H) < \chi_<(H)$ and $H$ has no local barrier.
    
\end{itemize}
\end{thm}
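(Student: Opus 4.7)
I would split the proof into lower bound and upper bound statements. For the lower bounds in cases (i) and (iii), the bound $(1-1/\chi^*_{cr}(H))n$ already follows from Theorem~\ref{aptsthm}, which provides an ordered graph with that minimum degree but no almost perfect $H$-tiling, hence no perfect $H$-tiling. For case (ii), one must strictly exceed $(1-1/\chi_<(H))n$. Exploiting the local barrier at parameters $i,j$, I would construct $G$ on $[n]$ (with $|H|\mid n$) with an interval $(r+1)$-partition $W_1<\cdots<W_{r+1}$ where $W_i=\{v\}$ is a singleton, the other parts have size essentially $(n-1)/r$ (adjusted for divisibility), and $G$ is the complete $(r+1)$-partite graph with respect to this partition minus the edges between $v$ and $W_j$. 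Then $\delta(G)=n-1-|W_j|>(1-1/\chi_<(H))n$. Any embedding of $H$ using $v$ corresponds to an interval $(r+1)$-colouring of $H$ with one colour class $V_i$ a singleton; the local barrier forces an edge from $V_i$ to $V_j$ in $H$ that is absent in $G$, so $v$ lies in no copy of $H$ and there is no perfect $H$-tiling.

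For the upper bounds, I would follow the standard absorbing framework: (a) find an absorbing set $A\subseteq V(G)$ with $|A|=o(n)$ such that for every $U\subseteq V(G)\setminus A$ with $|U|\leq \xi n$ and appropriate divisibility, $G[A\cup U]$ admits a perfect $H$-tiling; (b) apply Theorem~\ref{BalLiTre1} to $G-A$ to produce an $H$-tiling leaving at most $\eta' n\ll \xi n$ vertices uncovered; (c) absorb the leftover using $A$. The only non-routine ingredient is the absorbing lemma in step (a), whose difficulty varies sharply with the case.

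In case (ii), the hypothesis $\delta(G)\geq (1-1/\chi_<(H)+\eta)n$ strictly exceeds the almost perfect threshold $(1-1/\chi^*_{cr}(H))n$ by a constant amount. The general absorbing lemma of~\cite{blt}, phrased in terms of $\chi_<(H)$, applies directly, and steps (b) and (c) follow without further work.

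The main obstacle is cases (i) and (iii), where only $\eta n$ of slack sits above the almost perfect threshold; the absorbing lemma of~\cite{blt} is not sharp enough here. I would design a new absorbing argument whose absorbers are patterned on an optimal bottlegraph $B$ of $H$, invoking Proposition~\ref{propbottle} to assume $B$ has parts of equal size except for at most one smaller part. Concretely, for each $v\in V(G)$ I would build many ordered \emph{$v$-absorbers}: small ordered subgraphs $A_v$ for which both $G[A_v]$ and $G[A_v\cup\{v\}]$ admit perfect $H$-tilings, constructed from suitable ordered blow-ups of $B$ around $v$. The delicate point is that the position of $v$ within the ordering of $V(G)$ dictates which permutation $\sigma$ and interval labelling of $B$ is available, so I would use the full flexibility of Definition~\ref{bottlegraphdef} over all $\sigma$. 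After showing every vertex has at least $\Omega(n^{|V(B)|})$ absorbers, standard probabilistic sampling yields a single $A$ that simultaneously absorbs any small leftover set. The hardest step will be producing an absorber at the tight degree $(1-1/\chi^*_{cr}(H)+\eta)n$: here one must verify that the `small' bottle-part of $B$ can always be positioned within the interval structure of $G$ around $v$, and in case (iii) the absence of a local barrier is precisely what allows the small part to be slid past $v$ when the minimum degree alone does not furnish the required edges.
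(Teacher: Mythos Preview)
Your outline has two substantive gaps.

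\textbf{Case (i) needs no new absorbing.} You group cases (i) and (iii) together as requiring an absorbing lemma sharper than the one in~\cite{blt}, but in case (i) we have $\chi^*_{cr}(H)\geq\chi_<(H)$, so the hypothesis $\delta(G)\geq(1-1/\chi^*_{cr}(H)+\eta)n$ already implies $\delta(G)\geq(1-1/\chi_<(H)+\eta)n$. Hence Theorem~\ref{BalLiTre2} applies directly in case (i) exactly as in case (ii), and no new absorbing is needed there. The only case demanding a new absorbing result is (iii).

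\textbf{Your absorber is ill-posed.} You ask for sets $A_v$ such that both $G[A_v]$ and $G[A_v\cup\{v\}]$ admit perfect $H$-tilings; this is impossible for $|H|\geq 2$ since $|A_v|$ and $|A_v|+1$ cannot both be divisible by $|H|$. Presumably you intend Lo--Markstr\"om style chains (sets $X$ of size $sh-1$ such that $G[X\cup\{x\}]$ and $G[X\cup\{y\}]$ both tile), but even granting that, your plan to pattern absorbers on an optimal bottlegraph $B$ is vague at the crucial point: when $\chi^*_{cr}(H)<\chi_<(H)$ the bottlegraph $B$ has exactly $r:=\chi_<(H)$ parts, and it is not clear how a single ordered blow-up of $B$ around a vertex $v$ in arbitrary position yields perfect $H$-tilings for \emph{two} different vertices simultaneously; the bottlegraph definition only guarantees a tiling for one fixed labelling.

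The paper's route in case (iii) is quite different and does not use bottlegraphs for absorbing at all. It proves a stronger absorbing theorem (Theorem~\ref{absorbingthm}) valid already at the lower threshold $(1-1/(\chi_<(H)-1)+\eta)n$. The two hypotheses $\chi^*_{cr}(H)<\chi_<(H)$ and ``no local barrier'' are used separately: the first implies that $H$ is \emph{flexible} (Corollary~\ref{flexibilitycol2}), meaning one vertex can be slid between adjacent colour classes in an interval $r$-colouring, and this yields a single complete $r$-partite ordered graph $F$ (Lemma~\ref{flexibilitylemma1}) that tiles perfectly even after small perturbations of its part sizes; the second guarantees that an arbitrary vertex $x$ can be covered by a copy of $H$ using only $r-1$ of the $r$ clusters in a clique of the reduced graph. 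Chains between pairs $x,y$ are then built by concatenation through copies of $F$ inside cliques of the reduced graph, and Lemma~\ref{absorbingtool} finishes. This buys a cleaner absorbing threshold independent of $\chi^*_{cr}(H)$.

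A minor point: citing Theorem~\ref{aptsthm} for the lower bounds in (i) and (iii) is circular, since the proof of Theorem~\ref{aptsthm} (Case~1) itself invokes the lower bound on $\delta_<(H,n)$ from Corollary~\ref{ex2col}. Use Corollary~\ref{ex2col} directly.
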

Therefore Problem~\ref{mainprob} is now asymptotically resolved. 
The reader might find it hard to see why the value of $\delta_<(H,n)$ behaves as in Theorem~\ref{mainthm}, and indeed, it took the authors quite some time to discover the correct behaviour of this problem. 
In Section~\ref{compare} we give further intuition on this result.
In Section~\ref{sec:examples1} we give examples  of $H$ in each case (i)--(iii) of the theorem.
In Section~\ref{sec:extremalconstructions} we give the extremal constructions for Theorem~\ref{mainthm}. In particular, in cases (i) and (iii) the extremal examples are `bottlegraphs' -- complete multipartite ordered graphs where each part has the same size except at most one smaller part; in Section~\ref{sec:properties} we explicitly  compute the value of $\chi^*_{cr}(H)$  for a range of $H$, and thus the minimum degree threshold in Theorem~\ref{mainthm} also. The explanation for why these extremal `bottlegraphs' 
do not have perfect $H$-tilings revolve around `space barrier' constraints, (i.e., one runs out of space in some subset of vertices in the extremal bottlegraph).\footnote{See~\cite{keevashmycroft} for a discussion on space barriers.} However, the `reason' for obtaining a space barrier can be somewhat involved (and unlike any other space barrier extremal example we have ever seen before); we discuss this in Section~\ref{examp1}.

The proof of Theorem~\ref{mainthm} applies an absorbing theorem from~\cite[Theorem 4.1]{blt} and Theorem~\ref{BalLiTre1} above. The main novelty is to prove an absorbing theorem for ordered graphs $H$ as in Theorem~\ref{mainthm}(iii). Whilst our argument  makes use of a lemma of Lo and Markstr\"om~\cite{lo}, and seems rather natural, it is  different to any absorbing proof we have previously seen  (in particular, we do not use \emph{local-global absorbing} as in~\cite{blt}). See Section~\ref{sec:proofsketch} for an overview of our absorbing strategy.

\subsection{A Dirac-type theorem for vertex covers}\label{sec:cover}
Given  (ordered) graphs $H$ and $G$, we say that $G$ has an \emph{$H$-cover} if every vertex in $G$ lies in a copy of $H$. Note that the notion of an $H$-cover is an `intermediate' between seeking a single copy of $H$ and a perfect $H$-tiling; in particular, a perfect $H$-tiling in $G$ is itself an $H$-cover.
Given any $n \in \mathbb N$ and any (ordered) graph $H$, let $\delta_{\text{cov}}(H,n)$
denote the smallest integer $k$ such that  every $n$-vertex (ordered) graph $G$  with $\delta(G)\geq k$ contains an  $H$-cover.
As noted in~\cite{orekot}, an easy application of Szemer\'edi's regularity lemma asymptotically determines $\delta_{\text{cov}}(H,n)$ for all graphs $H$.
\begin{prop}\cite[Proposition~6]{orekot}\label{prop1}
For every graph $H$ and every $\eta >0$, there exists an integer $n_0=n_0(H,\eta)$ so that if $n \geq n_0$ then
$$ \left (1-\frac{1}{\chi(H)-1}  \right )n-1
\leq \delta_{\text{cov}}(H,n)\leq \left (1-\frac{1}{\chi(H)-1} +\eta \right )n.$$
\end{prop}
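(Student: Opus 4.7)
\emph{Lower bound.} Take $G := T(n, \chi(H)-1)$, the balanced complete $(\chi(H)-1)$-partite graph on $n$ vertices. Since $\chi(G) = \chi(H) - 1 < \chi(H)$, $G$ contains no copy of $H$ at all, and in particular no vertex of $G$ is covered by a copy of $H$. As $\delta(T(n, \chi(H)-1)) = n - \lceil n/(\chi(H)-1)\rceil \geq (1-1/(\chi(H)-1))n - 1$, this yields $\delta_{\text{cov}}(H,n) > (1 - 1/(\chi(H)-1))n - 1$, as required.

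\emph{Upper bound --- reduction and regularity setup.} Write $r := \chi(H) - 1$ and suppose $\delta(G) \geq (1 - 1/r + \eta)n$. Any proper $\chi(H)$-colouring of $H$ exhibits $H$ as a subgraph of $K_{r+1}(|H|)$ (sending each colour class into its own part; each class has size at most $|H|$), so it suffices to show every vertex $v$ of $G$ lies in a copy of $K_{r+1}(|H|)$. The plan is to apply Szemer\'edi's regularity lemma with parameters $\eps\ll d\ll\eta$, obtaining a cluster partition $V_0,V_1,\ldots,V_k$ and reduced graph $R$ with $\delta(R)\geq (1-1/r+\eta/2)k$. A short inductive argument (Erd\H{o}s--Stone--Simonovits applied to $R[N_R(V_i)]$, whose normalised minimum degree exceeds the Tur\'an threshold $1-1/(r-1)$ for $K_r$) shows every cluster lies in a copy of $K_{r+1}$ in $R$, and the counting lemma produces a copy of $K_{r+1}(|H|)$ in $G$ on the corresponding $(r+1)$ clusters. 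After a standard clean-up moving regularity-atypical vertices into $V_0$, every $v \in V_i\setminus V_0$ is incorporated into such a blow-up with $v$ and $|H|-1$ further typical vertices of $V_i$ forming one part.

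\emph{Main obstacle: the exceptional vertices.} The crux is handling $v \in V_0$, which lack any regularity structure. A density-averaging argument using the minimum-degree hypothesis shows $v$ has density at least $d$ to a set $T\subseteq V(R)$ of clusters with $|T|\geq (1-1/r+\eta/4)k$; then $R[T]$ still has minimum degree above $(1-1/(r-1))|T|$ and hence (by Erd\H{o}s--Stone--Simonovits again) contains a copy of $K_r$, yielding $r$ pairwise-regular clusters $V_{j_1},\ldots,V_{j_r}$ each meeting $N(v)$ in a dense subset. A common neighbour cluster $V_{j_0}$ of $V_{j_1},\ldots,V_{j_r}$ in $R$ exists by the minimum-degree bound (inclusion--exclusion yields at least $(r\eta/2)k$ such clusters), extending the $K_r$ to a $K_{r+1}$ in $R$. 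A final application of the counting lemma (using the slicing lemma to restrict each $V_{j_i}$ to $N(v)\cap V_{j_i}$ for $i\geq 1$) embeds $K_{r+1}(|H|)$ in this blow-up with one part consisting of $v$ together with $|H|-1$ typical vertices of $V_{j_0}$; this produces the required copy of $K_{r+1}(|H|)$ through $v$ and so a copy of $H$ covering $v$.
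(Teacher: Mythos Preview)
The paper does not give its own proof of this proposition; it is quoted from \cite{orekot} with only the remark that it follows from ``an easy application of Szemer\'edi's regularity lemma''. Your argument is precisely such an application and is correct: the Tur\'an graph $T(n,\chi(H)-1)$ handles the lower bound, and for the upper bound your two-case treatment---non-exceptional vertices via a $K_{r+1}$ in the reduced graph through their cluster, exceptional vertices $v$ via a $K_r$ among the clusters to which $v$ is dense (the inherited minimum degree of $R$ restricted to this set $T$ does exceed $(1-1/(r-1))|T|$, as a short calculation confirms), extended to a $K_{r+1}$ by a common neighbour in $R$, followed by the key lemma after slicing down to $N(v)$---is the standard regularity approach and all the estimates check out.
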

Proposition~\ref{prop1} implies that, asymptotically, the minimum degree threshold for ensuring an $H$-cover in a graph $G$ is the same as the minimum degree threshold for ensuring a \emph{single copy} of $H$ in $G$. 
In~\cite[Theorem~5]{orekot}, K\"uhn, Osthus and Treglown asymptotically determined the Ore-type degree condition that forces an $H$-cover for any fixed graph $H$. 
There has also been several recent papers concerning  minimum $\ell$-degree conditions that force $H$-covers in $k$-uniform hypergraphs; see, e.g.,~\cite{vfrzhao2, vfrzhao, hanetc}.

Falgas-Ravry~\cite{vfrpc} raised the question of determining $\delta_{\text{cov}}(H,n)$ for all ordered graphs $H$. Our next result asymptotically answers this question.
\begin{thm}\label{thmcover}
Let  $H$ be an ordered graph and  $\eta >0$. Then there exists an integer $n_0=n_0(H,\eta)$ so that if $n \geq n_0$ then
\begin{itemize}
    \item[(i)] $ \left (1-\frac{1}{\chi_<(H)-1}  \right )n
< \delta_{\text{cov}}(H,n)\leq \left (1-\frac{1}{\chi_<(H)-1} +\eta \right )n $ \ \ if $H$ has no local barrier;
\item[(ii)] $ \left (1-\frac{1}{\chi_<(H)}  \right )n
<\delta_{\text{cov}}(H,n)\leq \left (1-\frac{1}{\chi_<(H)} +\eta \right )n $ \ \ if $H$ has a local barrier.
\end{itemize}
\end{thm}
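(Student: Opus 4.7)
The plan is to prove upper and lower bounds separately in each of cases (i) and (ii); we write $r := \chi_<(H)$ throughout.

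For the lower bound of (i), we take $G$ to be the complete $(r-1)$-partite ordered graph whose parts are intervals of $[n]$ with sizes as balanced as possible. Then $\delta(G) \geq (1 - 1/(r-1))n - 1$, and since $\chi_<(H) = r$, no copy of $H$ embeds into $G$ at all, hence certainly no $H$-cover; a minor perturbation of the part sizes yields $\delta(G)$ strictly exceeding $(1 - 1/(r-1))n$. For the lower bound of (ii), let $(i,j) \in [r+1]^2$ witness the local barrier of $H$; assume $i < j$, the other case being symmetric. We place a designated vertex $u \in [n]$ so that to its left lie intervals $A_1 < \cdots < A_{i-1}$ and to its right intervals $A_{i+1} < \cdots < A_{r+1}$, with $|A_k| = \lfloor n/r \rfloor$ for $k \neq i, j$ and $|A_j| = \lfloor n/r \rfloor - 1$. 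Let $G$ be the complete $r$-partite ordered graph on parts $\{A_k : k \neq i\}$, augmented with all edges from $u$ to $V(G) \setminus (A_j \cup \{u\})$. A direct computation gives $\delta(G) > (1 - 1/r)n$. To see $u$ lies in no copy of $H$: any embedding $\phi : V(H) \to V(G)$ with $\phi(v^*) = u$ induces an interval $(r+1)$-colouring of $H$ via $V_k := \phi^{-1}(A_k)$ (for $k \neq i$) and $V_i := \{v^*\}$, with $V_i$ singleton; the local barrier forces an $H$-edge from $v^*$ to $V_j$, hence a $G$-edge from $u$ to $A_j$, contradicting the construction.

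Both upper bounds proceed by applying the ordered Szemer\'edi regularity lemma of \cite{blt} to $G$ with regularity parameter $\epsilon \ll \eta$, yielding a reduced graph $R$ on $M$ clusters (intervals of $[n]$) whose minimum degree density approximates that of $G$. In case (i), $\delta(R) > (1 - 1/(r-1) + \eta/3)M$ exceeds the Pach--Tardos threshold for $K_r^<$, so $R$ is rich in copies of the ordered $r$-clique; in case (ii), $\delta(R) > (1 - 1/r + \eta/3)M$ likewise produces many copies of $K_{r+1}^<$. Fix any $u \in V(G)$. Since $u$'s non-neighborhood in $G$ has size at most $(1/(r-1) - \eta)n$ in case (i) (respectively at most $(1/r - \eta)n$ in case (ii)), a counting argument over $R$ allows us to pick an ordered $r$-clique (respectively $(r+1)$-clique) $C_1 < C_2 < \cdots$ in $R$ that is well-positioned relative to $u$ and in which $u$ has a high-density neighborhood into every chosen cluster save possibly one ``bad'' cluster $C_{j'}$. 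In case (i) we invoke the no-local-barrier property with $(i, j) \in [r+1]^2$ chosen so that $i$ is the slot index of $u$ and $j$ is the slot index of $C_{j'}$: this yields an interval $(r+1)$-colouring of $H$ with $V_i$ singleton and no $H$-edge between $V_i$ and $V_j$, and an ordered blow-up-style embedding lemma then gives a copy of $H$ through $u$ with $V_i \mapsto \{u\}$ and other classes mapped into the regular clusters. In case (ii) no such escape is needed: any interval $(r+1)$-colouring of $H$ with $V_i$ singleton placed at the position of $u$ can be embedded via the $(r+1)$-clique, since the higher minimum degree ensures $u$ has many neighbors in every cluster.

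The main obstacle lies in carrying out the selection step in case (i): for \emph{every} vertex $u$ we must find a copy of $K_r^<$ in $R$ whose slot structure relative to $u$'s position admits a matching no-local-barrier witness pair $(i, j)$, and whose single bad cluster (with respect to $u$'s non-neighborhood) sits in slot $j$. Extreme positions of $u$ near the ends of the ordering, and situations where $u$'s non-neighbors are spread across multiple clusters of $R$, each demand care in the counting argument. Once the selection is made, the embedding step reduces to a standard application of an ordered blow-up lemma.
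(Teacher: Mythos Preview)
Your lower-bound constructions are essentially those of the paper (Extremal Examples~1 and~3); the ``perturbation'' remark in case~(i) is unnecessary, since $\delta_{\text{cov}}(H,n)\ge\delta(G)+1$ already yields the strict inequality.

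For the upper bound in case~(i) your plan coincides with the paper's (Theorem~\ref{covercol}, proved via the argument of Claim~\ref{claim4}): apply regularity, greedily build an ordered $K_r$ in the reduced graph with at most one cluster outside the dense-neighbour set $N^*(u)$, and invoke the no-local-barrier witness for the resulting pair $(i,j)$. Your ``main obstacle'' paragraph overstates the difficulty, however: since $H$ has no local barrier, a witnessing $(r+1)$-colouring exists for \emph{every} pair $i\ne j\in[r+1]$, so no matching problem arises --- whatever $i$ is forced by the position of $u$ and whatever $j$ is forced by the single bad cluster, a suitable colouring is guaranteed.

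For the upper bound in case~(ii) the paper takes a different route: it simply quotes the absorbing result of~\cite{blt} (Theorem~\ref{BalLiTre2}), whose proof already shows that every vertex lies in a copy of $H$. Your direct regularity route is viable, but as written it is inconsistent: you first allow one bad cluster in the $(r+1)$-clique and then assert that ``the higher minimum degree ensures $u$ has many neighbours in every cluster''. The second claim is false --- under $\delta(R)\ge(1-1/r+\eta/3)|R|$ one cannot in general find a $K_{r+1}$ in $R$ lying entirely inside $N^*(u)$. The clean fix is not to seek a $K_{r+1}$ at all: a greedy argument \emph{does} produce an ordered $K_r$ entirely inside $N^*(u)$ (and avoiding $u$'s interval), and then, with $u$ in gap~$i$, one embeds any interval $r$-colouring $V_1<\cdots<V_r$ of $H$ by sending $V_k\to T_k$ and letting $u$ play the smallest vertex of $V_i$ (or the largest of $V_{i-1}$). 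This makes case~(ii) strictly easier than case~(i), as one would expect.
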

 Theorem~\ref{thmcover} is a direct consequence of some of the auxiliary results we use in the proof of Theorem~\ref{mainthm}.
Note that the behaviour of the threshold in Theorem~\ref{thmcover} is perhaps unexpected.
Indeed, unlike in the unordered setting, Theorem~\ref{thmcover} and the Erd\H{o}s--Stone--Simonovits theorem for ordered graphs imply that the asymptotic minimum degree thresholds for forcing a copy of $H$ and an $H$-cover are different if $H$ has a local barrier.

Furthermore, a key moral of the Erd\H{o}s--Stone--Simonovits theorem (and Proposition~\ref{prop1}) is that 
once a graph $G$ is dense enough (or has large enough minimum degree) so as to ensure a copy of $K_r$ (or a $K_r$-cover) then $G$ must contain every fixed graph $H$ (or an $H$-cover) for every $H$ of chromatic number $r$. An intuition for this comes from Szemer\'edi's regularity lemma.
However, the analogous moral is not true for $H$-covers in ordered graphs. Indeed, if $H$ is an ordered complete graph on $r$ vertices (so $\chi_<(H)=r$ and $H$ has no local barrier) then Theorem~\ref{thmcover} tells us the minimum degree threshold for forcing an $H$-cover in an $n$-vertex ordered graph is $(1-\frac{1}{r-1}+o(1)   )n$ whilst
the corresponding threshold for any `blow-up' $H'$ of $H$ (so $\chi_<(H')=\chi_<(H)=r$ and $H$ has a local barrier) is significantly higher, namely 
$(1-\frac{1}{r}+o(1)  )n$. This should hint to the reader that the regularity method behaves differently in the ordered setting; in particular, if $H$ has a  local barrier this provides an  obstruction when applying the regularity lemma. More discussion on the regularity method for ordered graphs can be found in~\cite[Section 3.1]{blt}.

\subsection{Intuition behind the threshold in Theorem~\ref{mainthm}}\label{compare}
In this subsection we build up further intuition behind the threshold in 
Theorem~\ref{mainthm}. For this it will be useful to first take a step back and consider perfect $H$-tilings in unordered graphs. In this setting the Dirac-type threshold is governed by two factors:
\begin{itemize}
    \item[$(C1)$] The minimum degree needs to be large enough to force an almost perfect $H$-tiling. 
    \item[$(C2)$] The minimum degree must be large enough to prevent `divisibility' barriers within the host graph that constrain us from turning an almost perfect $H$-tiling into a perfect $H$-tiling.
\end{itemize}
This is made precise by the following theorem of K\"uhn and Osthus~\cite{kuhn2}. 
(See~\cite[Section 1.2]{kuhn2} for the definition of $\text{hcf}(H)=1$.)
\begin{thm}[K\"uhn and Osthus~\cite{ kuhn2}]\label{kothm}
Let $\delta(H, n)$ denote the smallest integer $k$ such that every graph $G$ whose order $n$ is divisible by $|H|$ and with $\delta(G)\geq k$ contains a perfect $H$-tiling. 
For every unordered graph $H$, 
\[
\delta(H, n)=\left(1 - \frac{1}{\chi^*(H)}\right)n +O(1), 
\]
where  $\chi^*(H):=\chi_{cr} (H)$ if $\text{hcf}(H)=1$ and $\chi^*(H):=\chi (H)$ otherwise.
\end{thm}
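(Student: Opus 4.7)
The plan is to prove both bounds on $\delta(H,n)$ by a case analysis based on whether $\mathrm{hcf}(H)=1$, since this dictates whether a space barrier or a divisibility barrier governs the extremal behaviour.

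For the lower bound, I would construct two families of extremal graphs. When $\mathrm{hcf}(H)=1$, the tight construction is a space-barrier bottlegraph: a complete $\chi(H)$-partite graph where one colour class has size approximately $\sigma(H)n/|H|-1$ and the remaining vertices are distributed equally among the other $\chi(H)-1$ parts. A counting argument then shows that any copy of $H$ must use at least $\sigma(H)$ of its vertices in some one part, so the small part is oversubscribed in any perfect tiling. When $\mathrm{hcf}(H)\neq 1$, the threshold jumps up to $(1-1/\chi(H))n+O(1)$ and the extremal example is a divisibility barrier: a graph whose vertex set admits a $\chi(H)$-partition whose part sizes are incompatible with the colour-class proportions forced by the components of $H$.

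For the upper bound, I would apply Szemer\'edi's regularity lemma to $G$ to obtain a reduced graph $R$ of bounded size which inherits (up to a small error) the minimum degree condition. Koml\'os' theorem applied through $R$ yields an almost perfect $H$-tiling $\mathcal{M}_0$ in $G$ covering all but $o(n)$ vertices. To upgrade $\mathcal{M}_0$ to a perfect tiling, I would build in advance a small absorbing set $A\subseteq V(G)$ such that for every sufficiently small leftover $L\subseteq V(G)\setminus A$ (with $|L|$ divisible by $|H|$ and of an admissible type), $G[A\cup L]$ has a perfect $H$-tiling. Standard probabilistic arguments supply enough absorbers per vertex from the minimum degree condition, and one then applies the almost perfect tiling to $G\setminus A$ and absorbs the remainder.

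The main obstacle is the case $\mathrm{hcf}(H)\neq 1$. Koml\'os' theorem alone only requires minimum degree $(1-1/\chi_{cr}(H))n$, well below the target $(1-1/\chi(H))n+O(1)$, so nominally there is slack; but this slack must be spent to overcome divisibility. I would split according to whether $G$ is close to the extremal divisibility structure: in the non-extremal case, the extra room in the minimum degree yields the robustness needed to build absorbers capable of realigning any leftover vertex type; in the (near-)extremal case one argues directly that the additive $O(1)$ slack either forces the divisibility constraints to be satisfied by $G$ itself, or permits a small local modification of the partition that re-balances the parts before the main absorbing step. Setting up a clean dichotomy that makes both branches work in tandem is where I expect the real technical difficulty to lie.
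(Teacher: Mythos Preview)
This theorem is not proved in the paper. It is stated as Theorem~\ref{kothm} and attributed to K\"uhn and Osthus~\cite{kuhn2}; the paper uses it only as background in Section~\ref{compare} to contrast the unordered perfect tiling problem with the ordered one. There is therefore no ``paper's own proof'' against which to compare your proposal.

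For what it is worth, your sketch captures the broad architecture one might expect (space/divisibility barriers for the lower bound, regularity plus an almost-perfect-tiling step plus a finishing mechanism for the upper bound), but the actual proof in~\cite{kuhn2} does not use the Lo--Markstr\"om/R\"odl--Ruci\'nski--Szemer\'edi style absorbing method you describe; it proceeds via the regularity and blow-up lemmas together with a delicate ad hoc argument to handle the final $O(1)$ uncovered vertices in each case. Your proposed non-extremal/extremal dichotomy is a plausible alternative route, but the place you flag as ``the real technical difficulty'' is genuinely nontrivial: with only an additive $O(1)$ of slack above the true threshold, standard absorbing (which typically needs an $\eta n$ margin) does not apply directly, and a full stability analysis would be required. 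None of this, however, is addressed in the present paper.
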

Recall that every graph $H$ satisfies $\chi_{cr} (H)\le \chi(H)$. So by Koml\'os' aforementioned almost perfect tiling theorem~\cite{komlos}, the minimum degree condition in Theorem~\ref{kothm} is enough to ensure $(C1)$ holds.
Meanwhile those graphs $H$ with $\text{hcf}(H)=1$ are precisely the graphs for which, at the almost  perfect tiling threshold, $(C2)$ is satisfied. Furthermore, for graphs $H$ with $\text{hcf}(H)\ne 1$, 
 $(C2)$ is only guaranteed to be satisfied once the $n$-vertex host graph has minimum degree around $(1-1/\chi(H))n$.
 
We do not state the precise definition 
 of $\text{hcf}(H)=1$ here, however, the following example is instructive. Let $H$ be any connected bipartite graph and let $n \in \mathbb N $ be divisible by $|H|$. Consider the $n$-vertex graph $G$ that  consists of two disjoint cliques whose sizes are as equal as possible so that neither is divisible by $|H|$.
 Then whilst $G$ contains an almost perfect $H$-tiling, the divisibility constraint on the clique sizes prevents a perfect $H$-tiling.
 Thus all such $H$ are examples of graphs with $\text{hcf}(H) \ne 1$.
In particular, $\delta(G)=n/2-O(1)=(1-1/\chi(H))n-O(1)$, so $G$ is an extremal example for 
 Theorem~\ref{kothm} in this case.
 
 As mentioned earlier, another necessary condition for a Dirac-type threshold for perfect $H$-tilings is the following:
\begin{itemize}
    \item[$(C3)$] The minimum degree needs to be large enough to force an $H$-cover. 
\end{itemize}
Condition $(C3)$, however, does not factor into the statement of Theorem~\ref{kothm} as
Proposition~\ref{prop1} shows that one can ensure an $H$-cover `earlier' than 
an almost perfect $H$-tiling (recall that $\chi(H)-1< \chi_{cr}(H)$).

Interestingly, the opposite is true in the ordered graph setting when $H$ has a local barrier and $\chi_<(H)\geq 3$ is such that $\chi _<(H)> \chi_{cr}^* (H)$.
Indeed, in this case, Theorem~\ref{mainthm}(ii) essentially states that 
the $H$-cover condition is the `last' of  conditions $(C1)$--$(C3)$  to be satisfied.
In particular,
Extremal Example~1 in Section~\ref{sec:extremalconstructions} shows that for every $H$ satisfying Definition~\ref{localbarrier}, there are $n$-vertex ordered graphs with $\delta(G)> (1-1/\chi_<(H))n-1$ for which a certain vertex does not lie in a copy of $H$.

In all other cases when $\chi_<(H)\geq 3$, Theorem~\ref{mainthm} essentially states that 
the almost perfect tiling condition is the `last' of  conditions $(C1)$--$(C3)$  to be satisfied. 
Therefore,  surprisingly (at least to the authors!), divisibility barriers play no role in Problem~\ref{mainprob} for $H$ with 
$\chi_<(H)\geq 3$.
In contrast, Theorem~1.9  in~\cite{blt} shows that in the case when $\chi _<(H)=2$,   each of  $(C1)$, $(C2)$ and $(C3)$ can be the condition
that governs the Dirac-type threshold for perfect $H$-tiling, depending on the choice of $H$.

\subsection{A Dirac-type theorem for $H$-tilings}\label{sec:x}
In addition to determining the Dirac-type threshold for almost perfect tilings in 
unordered graphs, Koml{\'o}s~\cite{komlos} provided a best-possible minimum degree condition for forcing an $H$-tiling covering a certain proportion of the vertices in a graph $G$.

\begin{define}[$(x,H)$-tilings]
Let $G$ and $H$ be (ordered) graphs and $x\in[0,1]$. An \emph{$(x,H)$-tiling in  $G$} is an $H$-tiling covering at least $x|G|$  vertices. So a $(1,H)$-tiling is simply a perfect $H$-tiling.
\end{define}
\begin{thm}[Koml{\'o}s \cite{komlos}]\label{Komlos}
Let $H$ be a graph and $x\in (0,1)$. Define
$$g(x,H):=\left(1-\frac{1}{\chi(H)-1}\right)(1-x)+\left(1-\frac{1}{\chi_{cr}(H)}\right)x.$$
Given any $\eta >0$,
there exists some $n_0=n_0(x,H,\eta)\in\mathbb{N}$ such that if $G$ is a graph on $n$ vertices where $n\geq n_0$ and $\delta(G)\geq g(x,H)\cdot n$ 
then there exists an  $(x-\eta,H)$-tiling in $G$. 
\end{thm}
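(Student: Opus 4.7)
The plan is to follow the regularity method: apply Szemer\'edi's Regularity Lemma, extract a suitable structure from the reduced graph, and lift it to $G$ via the Key Lemma.

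First, apply the Regularity Lemma to $G$ with parameters $\eps \ll d \ll \eta, 1/|H|$, obtaining an $\eps$-regular partition $\{V_0, V_1, \ldots, V_k\}$ of $V(G)$ with $|V_1| = \cdots = |V_k| = L$ and $|V_0| \leq \eps n$. Let $R$ be the reduced graph on $[k]$ in which $ij \in E(R)$ whenever $(V_i, V_j)$ is $\eps$-regular of density at least $d$. A standard computation gives $\delta(R) \geq (g(x,H) - 2d)k$.

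Second, I would find in $R$ a family $\mathcal{F}$ of vertex-disjoint copies of $K_{\chi(H)}$, each equipped with a designated ``small'' part, whose total vertex coverage is at least $(x - \eta/2)k$ and whose prescribed part-size proportions match the bottle ratio $\sigma(H):\frac{|H|-\sigma(H)}{\chi(H)-1}:\cdots:\frac{|H|-\sigma(H)}{\chi(H)-1}$ needed to tile the corresponding blow-up with $H$-copies. The calibration is that $g(x,H)$ is a convex combination of $1-1/(\chi(H)-1)$ (the Tur\'an threshold, which already guarantees copies of $K_{\chi(H)}$ in $R$) and $1-1/\chi_{cr}(H)$ (the threshold of Koml\'os's almost perfect tiling theorem, which guarantees a bottle-proportioned $K_{\chi(H)}$-tiling covering almost all of $R$). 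I would obtain $\mathcal{F}$ via either a weighted LP on $R$ --- maximise the weighted packing subject to the bottle constraints and round the optimum --- or an iterative greedy procedure: at each step pick a fresh copy of $K_{\chi(H)}$ in $R$ whose clusters are still available, with the bookkeeping showing that as long as fewer than $xk$ clusters have been used, the min-degree budget $\delta(R) \geq g(x,H)k$ still suffices to locate a new copy with an admissible designation.

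Third, for each $F \in \mathcal{F}$ with clusters $W_1, \ldots, W_{\chi(H)}$, apply the Key Lemma (or the Blow-up Lemma) to embed an almost perfect $H$-tiling of $W_1 \cup \cdots \cup W_{\chi(H)}$ respecting the designated bottle proportions, so that each $H$-copy uses exactly $\sigma(H)$ vertices of the small part and $(|H|-\sigma(H))/(\chi(H)-1)$ vertices of each other part. Summing across $\mathcal{F}$, and absorbing the usual losses from $V_0$, from $\eps$-irregular pairs, and from the $(1-x)k$ intentionally unused clusters, yields an $H$-tiling of $G$ covering at least $(x-\eta)n$ vertices. The main obstacle is the structural step in $R$: turning $\delta(R) \geq (g(x,H)-o(1))k$ into a bottle-tiling covering an $x$-fraction with the correct proportions. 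This is essentially a quantitative hybrid of the Tur\'an theorem and Koml\'os's almost perfect tiling theorem, and getting the rounding/greedy bookkeeping to exactly match the linear interpolation defining $g$ is the technical heart of the argument.
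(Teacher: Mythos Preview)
This theorem is cited from Koml\'os~\cite{komlos}; the paper does not supply its own proof but invokes the result as a black box (notably in the proof of Theorem~\ref{OrderedKomlos}). So there is no proof in the paper to compare your argument against.

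On the merits: the regularity framework is correct, and steps~1 and~3 are routine. The gap is in step~2, which you rightly flag as the heart of the matter but do not carry out. Your greedy proposal fails as stated. Writing $r := \chi(H)$, after removing $yk$ whole clusters the remainder has $(1-y)k$ vertices and minimum degree at least $(g(x,H) - y)k$; Tur\'an forces a further $K_r$ only while $g(x,H) - y > \bigl(1 - \tfrac{1}{r-1}\bigr)(1-y)$, which rearranges to $y < x\bigl(1 - (r-1)/\chi_{cr}(H)\bigr) \leq x/r$. So the greedy halts after covering at most $xk/r$ clusters, far short of the $xk$ you claim; and since each $K_r$ in $R$ lifts via bottle proportions to an $H$-tiling covering only a $\chi_{cr}(H)/r \leq 1$ fraction of its blow-up in $G$, no choice of per-$K_r$ proportions recovers the shortfall.

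The missing idea is that clusters must be shared: a given cluster of $R$ should serve as the designated small part in some super-cliques and as a large part in others, with the weights balanced so that total coverage reaches $xk$. This is what your LP suggestion gestures at, and it is exactly what Koml\'os's proof supplies---either via a fractional argument in $R$, or by the reduction of replacing $H$ with a modified bottle graph $B_x$ (complete $r$-partite with one small part proportional to $x\sigma(H)$ and $r-1$ equal larger parts) satisfying $1 - 1/\chi_{cr}(B_x) = g(x,H)$ and then applying the almost-perfect tiling theorem to $B_x$. Either way, the substance you have deferred is precisely the content of Koml\'os's paper.
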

Note that the minimum degree condition in Theorem~\ref{Komlos} is best possible in the sense that given any fixed \(H\) and \(x \in (0,1)\), one cannot replace \(g(x,H)\) with any fixed \(g'(x,H)<g(x,H)\) (see~\cite[Theorem~7]{komlos} for a proof of this).

The function $g(x,H)$ is quite well-behaved. Indeed, for fixed $H$, $g(x,H)$ grows linearly in $x$. Note that $g(0,H)\cdot n$ and $g(1,H)\cdot n$ are the asymptotic minimum degree thresholds for ensuring an $n$-vertex graph contains a copy of $H$ and an almost perfect $H$-tiling respectively. From this prospective, the function $g(x,H)$ can be viewed as a linear interpolation of these two thresholds. 

The question of obtaining an ordered graph analogue of Theorem~\ref{Komlos} was raised in~\cite[Question~8.2]{blt}. We provide an answer to this problem; for this we require the following definitions.

\begin{define}[$x$-bottlegraphs]\label{defx}
Let $H$ be an ordered graph and $x\in(0,1]$. An unordered graph $B$ is an \emph{$x$-bottlegraph of $H$} if it satisfies the following properties:
\begin{itemize}
\item[(i)] $B$ is a complete $k$-partite graph with parts $U_1,U_2,\dots,U_k$, for some $k\in\mathbb{N}$.
\item[(ii)] There exists some $m\in\mathbb{N}$ such that $|U_1|\leq m$ and $|U_i|=m$ for every $i>1$.
\item[(iii)] Given any permutation $\sigma$ of $[k]$ and any interval labelling of $B$ with respect to $\sigma$, the resulting ordered graph
 contains an $(x,H)$-tiling.
\end{itemize}
\end{define}

\begin{define}
Let $H$ be an ordered graph and $x\in(0,1]$. 
We define  $\chi_{cr}^*(x,H)$ as
$$\chi_{cr}^*(x,H):=\inf \{\chi_{cr}(B) \, : \, B \, \text{is an $x$-bottlegraph of $H$}\}.$$
\end{define}
Given any ordered graph $H$,
 if $B$ is an $x$-bottlegraph of $H$ then $\chi (B) \geq \chi _<(H)$. This implies
that $\chi _{cr}(B) > \chi_<(H)-1$ and so
\begin{align}\label{goodlowerbound}
    \chi ^* _{cr}(x,H) \geq \chi _< (H)-1.
\end{align}

An application of Theorem~\ref{Komlos}
together with a tool from~\cite[Lemma~6.2]{blt} 
yields the following minimum degree condition for the existence of $(x,H)$-tilings in ordered graphs.

\begin{thm}\label{OrderedKomlos}
Let $H$ be an ordered graph, $x\in(0,1)$ and define
$$f(x,H):=\left(1-\frac{1}{\chi_{cr}^*(x,H)}\right).$$
Given any $\eta>0$, there exists some $n_0=n_0(x,H,\eta)\in\mathbb{N}$ such that if $G$ is an ordered graph on $n$ vertices with $n\geq n_0$ and $\delta(G)\geq (f(x,H)+\eta) n$ 
then $G$ contains an $(x,H)$-tiling.
\end{thm}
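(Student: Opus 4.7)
My strategy is to combine Szemer\'edi's regularity lemma, the (unordered) Koml\'os theorem (Theorem~\ref{Komlos}) applied to the reduced graph, and the ordered graph tool Lemma~6.2 of~\cite{blt} which translates tilings in the reduced graph back into the host ordered graph. The role of the $x$-bottlegraph is to bridge these two ingredients: unordered almost-perfect tilings by $B$ in the reduced graph naturally upgrade, thanks to the defining property of an $x$-bottlegraph, into ordered $(x, H)$-tilings in $G$.

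First I would fix $\eta >0$ and pick an $x$-bottlegraph $B$ of $H$ with $\chi_{cr}(B)\le \chi_{cr}^*(x,H)+\eta/4$, so that $1-1/\chi_{cr}(B) \le f(x,H)+\eta/2$. I would then apply Szemer\'edi's regularity lemma to the underlying unordered graph of $G$ with $\eps \ll \eta$, obtaining equal-sized clusters $V_1,\dots,V_L$ (plus a small exceptional set) and the $\eps$-regular reduced graph $R$. Ordering the clusters according to $G$'s ordering gives $R$ a natural linear order, and the standard inheritance calculation yields $\delta(R)\ge (1-1/\chi_{cr}(B)+\eta/3)L$.

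Next, since $B$ is unordered and this degree condition matches $g(1,B)\cdot L$ with room to spare, I would apply Theorem~\ref{Komlos} with $B$ in place of $H$ and $x=1-\eta'$ for some small $\eta'\ll \eta$, to obtain a $B$-tiling of $R$ covering all but at most $\eta' L$ clusters. Each copy of $B$ in this tiling is a collection of $|B|$ clusters of $G$; the induced total order on these clusters specifies a permutation $\sigma$ of $[k]$ together with an interval labelling $\phi$ of $B$. By the defining property of an $x$-bottlegraph, the ordered graph $(B,\phi)$ already contains an $(x,H)$-tiling.

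The final step is to convert this back to $G$: I would apply Lemma~6.2 of~\cite{blt} to each copy of $B$ in the $R$-tiling, using the $\eps$-regularity of the corresponding cluster blow-up to realise the $(x,H)$-tiling of $(B,\phi)$ as an $H$-tiling of the cluster blow-up in $G$, covering essentially an $x$-fraction of those $|B|m$ vertices. Summing over all copies of $B$ gives the desired $(x,H)$-tiling in $G$. The main technical obstacle is ensuring that the total coverage reaches exactly $xn$ rather than only $(x - o(1))n$ once losses from the exceptional set, the incomplete $R$-tiling, and local regularity cleanups are aggregated; this is handled by choosing $\eps,\eta'$ sufficiently small compared to $\eta$ and exploiting the $\eta n$ slack in the minimum degree condition, which allows Lemma~6.2 to be applied with enough margin in each blow-up.
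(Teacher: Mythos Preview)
Your overall strategy is close in spirit to the paper's, but there is one genuine gap and one unnecessary complication.

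The paper does \emph{not} go through the regularity lemma at all. It applies Koml\'os' theorem directly to the underlying unordered graph of $G$ to obtain a $B(t)$-tiling covering almost everything, and then uses Lemma~6.2 of~\cite{blt} (the B\'ar\'any--Valtr ordering lemma) inside each copy of $B(t)$ to extract many vertex-disjoint copies of $B$ whose parts are genuinely ordered as intervals. Your detour through regularity is harmless but redundant, and your claim that ``ordering the clusters according to $G$'s ordering gives $R$ a natural linear order'' is not correct for the standard regularity lemma: clusters interleave. You seem to realise this, since you later invoke Lemma~6.2, but the two statements are inconsistent. Lemma~6.2 is precisely what resolves the interleaving, so you should apply it to the parts of each unordered copy of $B$ (or $B(t)$), not use it as an embedding lemma after the fact.

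The real gap is the final accounting. Each ordered copy of $B$ contributes only $x|B|$ covered vertices, and the copies of $B$ together cover at most $(1-o(1))n$ vertices of $G$; so your construction yields only an $(x-o(1),H)$-tiling, not an $(x,H)$-tiling. The $\eta n$ slack in the minimum degree does \emph{not} fix this: it lets you choose $\eps,\eta'$ small, but the coverage fraction inside each $B$ is pinned at $x$ by the definition of $x$-bottlegraph, and you have no a priori continuity of $\chi_{cr}^*(\cdot,H)$ that would let you upgrade to an $x'$-bottlegraph with $x'>x$. The paper handles this with a separate idea you are missing: before doing anything else, it uses the ordered Erd\H{o}s--Stone theorem to greedily set aside a buffer $W\subseteq V(G)$ of size about $\eta n/2$ carrying a perfect $H$-tiling. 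The Koml\'os/ordering argument is then run on $G\setminus W$, and at the end the buffer contributes $|W|$ fully covered vertices, so the total coverage is at least $x(1-o(1))(n-|W|)+|W|\ge xn$ since $(1-x)|W|$ dominates the $o(n)$ losses. You should incorporate this buffer step.
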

The minimum degree condition in Theorem~\ref{OrderedKomlos} is best possible in the following sense.
Let $H$ and  $x\in(0,1)$ be fixed. Given any $0<a <1-1/\chi_{cr}^*(x,H)$, and any sufficiently large
 $n \in \mathbb N$, consider any $n$-vertex graph $B$ that satisfies (i) and (ii) of Definition~\ref{defx} for some choice of $k,m \in \mathbb N$
 and where
 $$an < \delta (B) = n-m= \left(1-\frac{1}{\chi_{cr}(B)}\right)n< f(x,H)\cdot n.$$
So $\chi_{cr}(B) <\chi_{cr}^*(x,H)$.
 (Note such a graph $B$ exists for any choice of  $0<a <1-1/\chi_{cr}^*(x,H)$.)
 Then by definition of $\chi_{cr}^*(x,H)$ there is a permutation $\sigma$ of $[k]$ and an interval labelling $\phi$ of $B$ with respect to $\sigma$, such the resulting ordered graph $(B,\phi)$ does not
 contain an $(x,H)$-tiling.


A draw-back of Theorem~\ref{OrderedKomlos} is that 
it seems hard to compute $\chi_{cr}^*(x,H)$ in general. However, in Section~\ref{sec:xtiling}
we describe the behaviour of the function $f(x,H)$ for some fixed ordered graphs $H$. In particular,
akin to Theorem~\ref{Komlos}, if $H$ has $\chi_<(H)=2$ then $f(x,H)$ is linear in $x$.
Perhaps surprisingly though, there are ordered graphs where $f(x,H)$ is only piecewise linear. We also compute $f(x,H)$ for every ordered graph $H$ and every $x$ that is not too big.

\subsection{Organisation of the paper}
The paper is organised as follows. In Section~\ref{sec:extremalconstructions} we give the extremal constructions for Theorems~\ref{mainthm} and~\ref{thmcover}. 
In Section~\ref{sec:examples1} we give some examples of ordered graphs $H$ that fall into each of the three cases of Theorem~\ref{mainthm}.
In Section~\ref{sec:proofmainthm}
we state a new absorbing theorem (Theorem~\ref{absorbingthm}) and an absorbing theorem from~\cite{blt}
and combine them with Theorem~\ref{BalLiTre1} to prove Theorem~\ref{mainthm}. The subsequent sections therefore build up tools for the proof of Theorem~\ref{absorbingthm}: in Section~\ref{sec:bottle} we state a couple of useful properties of bottlegraphs;
in Section~\ref{sec:regularitylemma} we introduce Szemer\'edi's regularity lemma and related useful results; some tools for absorbing are given in Section~\ref{sec:absorbingtools}; Section~\ref{sec:usefulresults} 
contains several results which  give  flexibility in how one can interval colour certain ordered graphs $H$. 

In Section~\ref{sec:proofsketch} we give a sketch of the proof of Theorem~\ref{absorbingthm} before proving it and Theorem~\ref{thmcover} in Section~\ref{sec:proofs}.
In Section~\ref{sec:properties} we give  general upper and lower bounds on $\chi^*_{cr}(H)$ and also compute $\chi^*_{cr}(H)$ for a few general classes of ordered graphs $H$.

In Section~\ref{lowersec} we prove that the minimum degree condition in Theorem~\ref{BalLiTre1} is best possible. 
The proof of Theorem~\ref{OrderedKomlos} is given in Section~\ref{sec:ok}; in the subsequent section we describe the behaviour of the function $f(x,H)$ for some choices of $H$. We conclude the paper with some open problems in Section~\ref{sec:conc}.

\subsection{Notation}
Given  $n\in \mathbb N$, let  $[n]:=\{1, \ldots, n\}$.
A \emph{nearly balanced interval partition} of $[n]$ is a partition of $[n]$ into intervals
$W_1<\dots <W_t$ where $||W_i|-|W_j||\leq 1$ for every $1\leq i, j \leq t$.
Similarly, a $t$-partite graph with vertex classes $V_1,\ldots, V_t$ is \textit{nearly balanced} if $||V_i|-|V_j||\leq 1$ for every $1\leq i, j \leq t$.

If $G$ is an (ordered) graph, $|G|$ denotes the size of its vertex set, and $e(G)$ denotes the number of edges in $G$.
Given $A\subseteq V (G)$, the \emph{induced subgraph $G[A]$} is the subgraph of $G$ whose vertex set is $A$ and whose edge set consists of all of the edges of $G$ with both endpoints in $A$. We define $G\setminus A:=G[V(G)\setminus A]$.
For two disjoint subsets $A, B\subseteq V (G)$, the \emph{induced bipartite subgraph $G[A, B]$} is the subgraph of $G$ whose vertex set is $A\cup B$ and whose edge set
consists of all of the edges of $G$ with one endpoint in $A$ and the other endpoint in $B$. 
We write $e(A,B):=e(G[A,B])$.

For an (ordered) graph $G$ and a vertex $x \in V(G)$, we define 
$N_G(x)$ as the set of neighbours of $x$ in $G$ and
$d_G (x):=|N_G(x) |$. For $X \subseteq V(G)$ we define $d_G (x,X):=|N_G(x)\cap X |$.
Given  (ordered) graphs $G$ and $H$ and $X \subseteq V(G)$ we say that $G[X]$ \emph{spans a copy of $H$ in $G$} if $H$ is a spanning subgraph of $G[X]$.


Given an ordered graph $G$ we say that \emph{$V_1<\dots<V_r$ is an interval $r$-colouring of $G$}
to mean that there is an interval $r$-colouring of $G$ with colour classes $V_1< \dots <V_r$.
We say that an ordered graph $G$ is \emph{complete $r$-partite} if there exists an interval $r$-colouring $V_1<\cdots<V_r$ such that $xy\in E(G)$ for every $x\in V_i$ and $y\in V_j$ with $i\not=j$. We refer to the $V_i$'s as the \emph{parts} of $G$.

Given an unordered graph $G$ and a positive integer $t$, let $G(t)$ be the graph obtained from $G$ by replacing every vertex $x\in V(G)$ by a set $V_x$ of $t$ vertices spanning an independent set, and joining $u\in V_x$ to $v\in V_y$ precisely when $xy$ is an edge in $G$; that is, we replace the edges of $G$ by copies of $K_{t,t}$. We will refer to $G(t)$ as a \textit{blown-up} copy of $G$.
If $U_i$ is a vertex class in $G$ then we write $U_i(t)$ for the corresponding vertex class in $G(t)$. We use analogous notation when considering  blown-up copies of complete $k$-partite ordered graphs. In particular, given a
complete $k$-partite ordered graph $B$ with parts $B_1<\dots < B_k$, the \emph{ordered blow-up $B(t)$ of $B$} consists of parts $B_1(t)<\dots < B_k(t)$ where $|B_i(t)|=t|B_i|$ for all $i \in [k]$.

Throughout the paper, we
omit all floor and ceiling signs whenever these are not crucial.
The constants in the hierarchies used to state our results are chosen from right to left.
For example, if we claim that a result holds whenever $0< a\ll b\ll c\le 1$, then 
there are non-decreasing functions $f:(0,1]\to (0,1]$ and $g:(0,1]\to (0,1]$ such that the result holds
for all $0<a,b,c\le 1$  with $b\le f(c)$ and $a\le g(b)$. 
Note that $a \ll b$ implies that we may assume in the proof that, e.g., $a < b$ or $a < b^2$.

\section{Extremal constructions}\label{sec:extremalconstructions}
In this section we provide the extremal examples for Theorems~\ref{mainthm} and~\ref{thmcover}. 
First, consider the case when $H$ has a local barrier. We now construct an $n$-vertex ordered graph  which does not contain an $H$-cover (and thus no perfect $H$-tiling), and whose minimum degree is more than $(1-1/\chi_<(H))n-1$, thereby giving the lower bounds in Theorem~\ref{mainthm}(ii) and Theorem~\ref{thmcover}(ii).

\begin{exex}\label{ex1}
Let $n,r\in\mathbb{N}$ and $i,j\in[r+1]$ with $i\not=j$. Let $F_1(n,r,i,j)$ be an $n$-vertex ordered graph  consisting of vertex classes $U_1<\cdots<U_{r+1}$ which satisfy the following conditions: 
\begin{itemize}
\item $U_i=\{u\}$ is a singleton class while the remaining vertex classes are as equally sized as possible, and in particular,  $|U_j|=\left\lfloor\frac{n-1}{r}\right\rfloor$;
\item $F_1(n,r,i,j) \setminus\{u\}$ is a complete $r$-partite ordered graph with parts $U_1,\dots, U_{i-1}, U_{i+1}, \dots U_{r+1}$;
\item $u$ is adjacent to all other vertices except those in  $U_j$.
\end{itemize}
\end{exex}
Note that
\begin{align}\label{eq10}
\delta(F_1(n,r,i,j))=n-1-|U_j|=n-1-\left\lfloor\frac{n-1}{r}\right\rfloor >\left(1-\frac{1}{r}\right)n-1.
\end{align}
Furthermore, we now prove that $F_1(n,r,i,j)$ does not contain an $H$-cover (nor a perfect $H$-tiling) provided that   $\chi _<(H)=r$ and $H$ has a local barrier with respect to parameters $i,j\in[r+1]$. 

\begin{lemma}\label{ex1lemma}
Let $H$ be an ordered graph, let $r:=\chi_<(H)$ and let $n\in\mathbb{N}$. If $H$ has a local barrier then there exist $i,j\in\mathbb{N}$, with $i\not=j$, and a vertex $u\in F_1(n,r,i,j)$ such that there is no copy of $H$ in $F_1(n,r,i,j)$ covering the vertex $u$. In particular, $F_1(n,r,i,j)$ does not contain an $H$-cover nor a perfect $H$-tiling.
\end{lemma}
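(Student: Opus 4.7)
The plan is to take $i,j\in[r+1]$ to be the indices witnessing the local barrier of $H$ (see Definition~\ref{localbarrier}), and to show that the distinguished vertex $u$ of $F_1(n,r,i,j)$ (i.e.\ the unique vertex in $U_i$) lies in no copy of $H$. The strategy is to argue that any hypothetical embedding of $H$ onto a set of vertices containing $u$ would pull back the ordered $(r+1)$-partition $U_1<\cdots<U_{r+1}$ of $F_1:=F_1(n,r,i,j)$ to an interval $(r+1)$-colouring of $H$ that violates the local barrier condition at $i,j$.

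More concretely, suppose for contradiction there is an order-preserving embedding $\phi:V(H)\to V(F_1)$ with $\phi(v)=u$ for some $v\in V(H)$. Since $U_1<\cdots<U_{r+1}$ are intervals in $V(F_1)$ and $\phi$ is order-preserving, setting $V_k:=\phi^{-1}(U_k)$ for each $k\in[r+1]$ yields an interval partition $V_1<\cdots<V_{r+1}$ of $V(H)$ (some $V_k$ may be empty, which is permitted in Definition~\ref{localbarrier}). Moreover, since $F_1\setminus\{u\}$ is complete $r$-partite with parts $U_k$ ($k\neq i$) and $U_i=\{u\}$ is a singleton, the $U_k$'s are independent sets in $F_1$; because $\phi$ is an embedding, each $V_k$ is therefore independent in $H$. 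Hence $V_1<\cdots<V_{r+1}$ is a genuine interval $(r+1)$-colouring of $H$. As $\phi$ is injective and $U_i=\{u\}$, we have $V_i=\{v\}$.

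Now I would invoke the local barrier hypothesis on this colouring: with $i,j$ the fixed indices from Definition~\ref{localbarrier} and $V_i=\{v\}$ a singleton, there exists an edge $vw\in E(H)$ with $w\in V_j$. Then $\phi(w)\in U_j$, and the edge $vw$ must be mapped by $\phi$ to the edge $u\phi(w)$ in $F_1$. But by construction $u$ has no neighbours in $U_j$, a contradiction. Thus no copy of $H$ in $F_1$ contains $u$, which immediately implies that $F_1$ has neither an $H$-cover nor a perfect $H$-tiling (since in a perfect tiling every vertex, including $u$, would need to be covered by some copy of $H$).

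There is no real obstacle here once the definitions are unpacked; the only point that requires a moment of care is checking that the pullback partition is a valid interval $(r+1)$-colouring of $H$, i.e.\ that the $V_k$'s are indeed intervals (which follows from the order-preserving property of $\phi$ together with the fact that the $U_k$'s are consecutive intervals in $V(F_1)$) and independent in $H$ (which follows from $\phi$ being an embedding into the complete $(r+1)$-partite structure $F_1$). Combining \eqref{eq10} with this lemma then yields the lower bounds in Theorem~\ref{mainthm}(ii) and Theorem~\ref{thmcover}(ii).
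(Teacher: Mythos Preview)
Your proof is correct and follows essentially the same approach as the paper: pull back the interval partition $U_1<\cdots<U_{r+1}$ of $F_1(n,r,i,j)$ along a hypothetical embedding covering $u$ to obtain an interval $(r+1)$-colouring of $H$ with $V_i=\{v\}$ a singleton, and derive a contradiction between the local barrier condition (forcing an edge from $v$ to $V_j$) and the fact that $u$ has no neighbours in $U_j$. If anything, you are more explicit than the paper in verifying that the $V_k$'s form a valid interval $(r+1)$-colouring.
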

\begin{proof}
Suppose $H$ has a local barrier with respect to $i\ne j\in[r+1]$, as defined in Definition~\ref{localbarrier}. Let $u$ be the vertex in the singleton class $U_i$ of $F_1(n,r,i,j)$. Suppose there is a copy of $H$ in $F_1(n,r,i,j)$ covering the vertex $u$. Then the interval $(r+1)$-colouring  $U_1<\cdots<U_{r+1}$ of $F_1(n,r,i,j)$   induces an interval $(r+1)$-colouring $V_1<\cdots<V_{r+1}$ of $H$ such that $V_i=\{v\}$ is a singleton class and there is no edge between $v$ and $V_j$. This contradicts the assumption that $H$ has a local barrier with respect to $i,j$; thus, there is no copy of $H$ in $F_1(n,r,i,j)$ covering the vertex $u$.
\end{proof}
We immediately obtain the following corollary of Lemma~\ref{ex1lemma} and (\ref{eq10}).
\begin{col}\label{ex1col}
Let $H$ be an ordered graph and let $n\in \mathbb N$. If $H$ has a local barrier then 
$$\delta_{cov}(H,n)>\left(1-\frac{1}{\chi_<(H)}\right)n$$
and, if $|H|$ divides $n$
$$\delta_<(H,n)>\left(1-\frac{1}{\chi_<(H)}\right)n.$$ 
\qed
\end{col}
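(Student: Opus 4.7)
The plan is short because Corollary~\ref{ex1col} really is immediate once Extremal Example~\ref{ex1}, equation~(\ref{eq10}), and Lemma~\ref{ex1lemma} are in hand. Set $r:=\chi_<(H)$ and let $i\ne j\in[r+1]$ be the indices supplied by the local barrier assumption. I would consider the graph $G:=F_1(n,r,i,j)$ from Extremal Example~\ref{ex1}. By Lemma~\ref{ex1lemma}, $G$ contains no $H$-cover, and so certainly no perfect $H$-tiling. Hence, by the definitions of $\delta_{cov}(H,n)$ and (when $|H|$ divides $n$) of $\delta_<(H,n)$, both thresholds strictly exceed $\delta(G)$.

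Next I would insert the minimum degree value supplied by~(\ref{eq10}), namely $\delta(G)=n-1-\lfloor (n-1)/r\rfloor$. Since the thresholds and $\delta(G)$ are integers, strict inequality gives
\[
\delta_{cov}(H,n)\ \geq\ n-\left\lfloor\frac{n-1}{r}\right\rfloor,
\]
and the elementary estimate $n-\lfloor (n-1)/r\rfloor\geq n-(n-1)/r=(1-1/r)n+1/r>(1-1/r)n$ yields the desired strict lower bound $\delta_{cov}(H,n)>(1-1/\chi_<(H))n$. The argument for $\delta_<(H,n)$ when $|H|\mid n$ is identical, using the ``in particular'' clause of Lemma~\ref{ex1lemma} that $G$ has no perfect $H$-tiling.

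There is essentially no obstacle here: the only mild care needed is the integrality step, to convert the bound $\delta(G)>(1-1/r)n-1$ from~(\ref{eq10}) into the strict inequality $(1-1/r)n$ required in the corollary statement. The rest is bookkeeping.
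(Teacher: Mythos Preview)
Your proposal is correct and follows exactly the route the paper intends: the paper simply states that the corollary is immediate from Lemma~\ref{ex1lemma} and~(\ref{eq10}) and marks it \qed, and you have faithfully unpacked that one-line deduction, including the integrality step needed to pass from $\delta(G)>(1-1/r)n-1$ to the strict bound $\delta_{cov}(H,n)>(1-1/r)n$.
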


Next we prove a general lower bound on $\delta_<(H,n)$ which  is asymptotically sharp if the ordered graph $H$ does not have a local barrier. Similarly to before, we  construct an $n$-vertex ordered graph  which does not contain a perfect $H$-tiling and whose minimum degree is at least $(1-1/\chi_{cr}^*(H))n-1$, thereby giving the lower bound in cases (i) and (iii) of Theorem~\ref{mainthm}. 

\begin{exex}\label{ex2}
Let $H$ be an ordered graph and $n\in\mathbb{N}$. Set $\ell:=\left\lfloor\frac{n}{\chi_{cr}^*(H)}+1\right\rfloor$. Define $F_2(H,n)$ to be the unordered complete $\lceil n/\ell\rceil$-partite graph on $n$ vertices such that all classes have size $\ell$ except for one class of size at most $\ell$.
\end{exex}

It is easy to check that the minimum degree of $F_2(H,n)$ is
\begin{align}\label{eq101}
\delta(F_2(H,n))=n-\ell=n-\left\lfloor\frac{n}{\chi_{cr}^*(H)}+1\right\rfloor\geq\left(1-\frac{1}{\chi_{cr}^*(H)}\right)n-1.
\end{align}
Additionally, there exists a certain ordering of the vertices of $F_2(H,n)$ such that the resulting ordered graph  does not contain a perfect $H$-tiling:

\begin{lemma}\label{ex2lemma}
Let $H$ be an ordered graph and $n\in\mathbb{N}$ such that $|H|$ divides $n$. There exists an interval labelling $\phi$ of $F_2(H,n)$ such that the ordered graph $(F_2(H,n),\phi)$ does not contain a perfect $H$-tiling.
\end{lemma}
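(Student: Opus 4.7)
The plan is to argue by direct appeal to the definition of $\chi_{cr}^*(H)$. First I would compute $\chi_{cr}(F_2(H,n))$. Writing $k:=\lceil n/\ell\rceil$ for the number of parts and $\sigma:=n-(k-1)\ell$ for the size of the smallest class (which equals $\ell$ when $\ell$ divides $n$), the definition of the critical chromatic number yields
\[
\chi_{cr}(F_2(H,n))=(k-1)\cdot\frac{n}{n-\sigma}=(k-1)\cdot\frac{n}{(k-1)\ell}=\frac{n}{\ell}.
\]
Since $\ell=\lfloor n/\chi_{cr}^*(H)+1\rfloor>n/\chi_{cr}^*(H)$ strictly, this immediately gives $\chi_{cr}(F_2(H,n))=n/\ell<\chi_{cr}^*(H)$.

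Next, by definition of $\chi_{cr}^*(H)$ as the infimum of $\chi_{cr}(B)$ taken over all bottlegraphs $B$ of $H$, the strict inequality above forces $F_2(H,n)$ to \emph{not} be a bottlegraph of $H$. Negating Definition~\ref{bottlegraphdef}, there must exist a permutation $\sigma$ of $[k]$ and an interval labelling $\phi$ of $F_2(H,n)$ with respect to $\sigma$ such that for \emph{every} positive integer $t$, the ordered blow-up $(F_2(H,n)(t),\phi)$ contains no perfect $H$-tiling. Specialising to $t=1$, and recalling the hypothesis that $|H|$ divides $|F_2(H,n)|=n$ (so the non-existence is not merely a vacuous divisibility statement), the ordered graph $(F_2(H,n),\phi)$ itself admits no perfect $H$-tiling, which is exactly what is required.

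I do not anticipate a serious obstacle; the argument is essentially a direct translation of the infimum definition of $\chi_{cr}^*(H)$ combined with a routine computation. The only delicacies are ensuring the \emph{strict} inequality $\ell>n/\chi_{cr}^*(H)$ (otherwise one would only get $\chi_{cr}(F_2(H,n))\leq\chi_{cr}^*(H)$, which is too weak to conclude $F_2(H,n)$ is not a bottlegraph), and correctly negating the nested universal/existential quantifiers in Definition~\ref{bottlegraphdef} so that the same pair $(\sigma,\phi)$ witnesses failure of a perfect $H$-tiling for \emph{all} blow-up parameters $t$, including $t=1$.
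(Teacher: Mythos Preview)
Your proposal is correct and follows essentially the same approach as the paper: compute $\chi_{cr}(F_2(H,n))=n/\ell<\chi_{cr}^*(H)$, deduce that $F_2(H,n)$ is not a bottlegraph, and negate Definition~\ref{bottlegraphdef} to obtain the desired labelling $\phi$. You are in fact slightly more careful than the paper, which jumps directly from ``not a bottlegraph'' to ``there exists $(\sigma,\phi)$ such that $(F_2(H,n),\phi)$ has no perfect $H$-tiling'' without explicitly noting that the negation yields the conclusion for all $t$ and then specialising to $t=1$.
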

\begin{proof}
The critical chromatic number of $F_2(H,n)$ is
$$\chi_{cr}(F_2(H,n))=\frac{n}{\ell}<\frac{n}{n/\chi_{cr}^*(H)}=\chi_{cr}^*(H).$$
It follows that $F_2(H,n)$ is not a bottlegraph of $H$. Hence, by definition, there exists a permutation $\sigma$ of $[\lceil n/\ell \rceil]$ and an interval labelling $\phi$ of $F_2(H,n)$ with respect to $\sigma$ such that $(F_2(H,n),\phi)$ does not contain a perfect $H$-tiling.
\end{proof}
Lemma~\ref{ex2lemma} and~(\ref{eq101}) immediately imply the following corollary.
\begin{col}\label{ex2col}
Let $H$ be an ordered graph. Then given any $n \in \mathbb N$ divisible by $|H|$,
$$\delta_<(H,n)\geq \delta(F_2(H,n))+1 \geq\left(1-\frac{1}{\chi_{cr}^*(H)}\right)n.$$\qedhere
\end{col}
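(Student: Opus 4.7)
The plan is to combine Lemma~\ref{ex2lemma} with the minimum degree computation in~(\ref{eq101}) in a single step; both ingredients have already been established, so the corollary is essentially a bookkeeping exercise. First I would invoke Lemma~\ref{ex2lemma} to obtain an interval labelling $\phi$ of $F_2(H,n)$ such that the ordered graph $(F_2(H,n),\phi)$ does not contain a perfect $H$-tiling. Since $|H|$ divides $n$, this ordered graph is a legitimate candidate in the definition of $\delta_<(H,n)$, so its mere existence forces $\delta_<(H,n) > \delta(F_2(H,n))$, i.e., $\delta_<(H,n) \geq \delta(F_2(H,n))+1$.

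To finish, I would simply append the inequality~(\ref{eq101}), which states that $\delta(F_2(H,n)) \geq (1-1/\chi_{cr}^*(H))n - 1$, and hence $\delta(F_2(H,n))+1 \geq (1-1/\chi_{cr}^*(H))n$. Chaining these two inequalities yields exactly the statement of the corollary. There is no real obstacle here: the only conceptual content is contained in Lemma~\ref{ex2lemma} (where the fact that $\chi_{cr}(F_2(H,n)) < \chi_{cr}^*(H)$ is used to conclude $F_2(H,n)$ is not a bottlegraph of $H$, and thus some ordering avoids a perfect $H$-tiling) and in the arithmetic bound~(\ref{eq101}) that comes from the choice $\ell = \lfloor n/\chi_{cr}^*(H) + 1\rfloor$. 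The corollary itself only records the consequence of these two facts for $\delta_<(H,n)$.
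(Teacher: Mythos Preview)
Your argument is correct and matches the paper's own reasoning exactly: the corollary is stated as an immediate consequence of Lemma~\ref{ex2lemma} and~(\ref{eq101}), with no further proof given. Your writeup simply spells out the two-line deduction that the paper leaves implicit.
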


Next we give a general lower bound for $\delta_{cov}(H,n)$ which  is asymptotically sharp if the ordered graph $H$ does not have a local barrier, thereby giving the lower bound in Theorem~\ref{thmcover}(i). 

\begin{exex}\label{ex3}
Let $H$ be an ordered graph and $n\in\mathbb{N}$. 
Let $F_3(H,n)$ be the complete $(\chi_<(H)-1)$-partite ordered graph on $n$ vertices with parts of size as equal as possible.
\end{exex}

It is easy to check that the minimum degree of $F_3(H,n)$ is
$$\delta(F_3(H,n))=n-\left\lceil\frac{n}{\chi_<(H)-1}\right\rceil>\left(1-\frac{1}{\chi_<(H)-1}\right)n-1.$$
As $\chi_<(F_3(H,n))< \chi_<(H)$, $F_3(H,n)$  does not contain a copy of $H$ and thus does not contain an $H$-cover. We therefore obtain the following result.

\begin{lemma}\label{ex3lemma}
Let $H$ be an ordered graph and $n \in \mathbb N$. Then 
$$\delta_{cov}(H,n)\geq \delta(F_3(H,n))+1 >\left(1-\frac{1}{\chi_<(H)-1}\right)n.$$\qed
\end{lemma}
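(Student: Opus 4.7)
The plan is to take the construction $F_3(H,n)$ already given and show directly that it witnesses the claimed lower bound on $\delta_{\text{cov}}(H,n)$. The argument splits cleanly into a minimum-degree calculation and an $H$-freeness verification; the conclusion then follows from the very definition of $\delta_{\text{cov}}(H,n)$.

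First I would verify the minimum degree bound. Since $F_3(H,n)$ is a complete $(\chi_<(H)-1)$-partite graph on $n$ vertices with parts as balanced as possible, the largest part has size $\lceil n/(\chi_<(H)-1)\rceil$, so the minimum degree equals $n-\lceil n/(\chi_<(H)-1)\rceil$. Using the trivial inequality $\lceil x\rceil < x+1$, this is strictly larger than $(1-1/(\chi_<(H)-1))n-1$, which is what we need.

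Next I would show that $F_3(H,n)$ contains no copy of $H$ at all (which is much stronger than having no $H$-cover). Indeed, the vertex ordering of $F_3(H,n)$ together with its parts gives an interval $(\chi_<(H)-1)$-colouring $V_1<\cdots<V_{\chi_<(H)-1}$. If some order-preserving injection $\phi:V(H)\to V(F_3(H,n))$ embedded $H$ as an ordered subgraph, then intersecting each $V_i$ with $\phi(V(H))$ and pulling back through $\phi$ would yield an interval $(\chi_<(H)-1)$-colouring of $H$ (interval partitions restrict to interval partitions under the induced order, and no monochromatic edges can appear since none existed in $F_3(H,n)$). This contradicts $\chi_<(H)$ being the interval chromatic number of $H$. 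In particular, no vertex of $F_3(H,n)$ lies in a copy of $H$, so $F_3(H,n)$ has no $H$-cover.

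Combining the two steps, $F_3(H,n)$ is an $n$-vertex ordered graph of minimum degree strictly above $(1-1/(\chi_<(H)-1))n-1$ with no $H$-cover, forcing $\delta_{\text{cov}}(H,n)\geq \delta(F_3(H,n))+1 > (1-1/(\chi_<(H)-1))n$. There is no genuine obstacle here; the only point requiring a moment's thought is the interval-chromatic-number argument, and it is immediate once one observes that the restriction of an interval partition to any subset of vertices is again an interval partition (in the induced order) with the same number of nonempty classes or fewer.
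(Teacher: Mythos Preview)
Your proposal is correct and follows exactly the same approach as the paper: compute $\delta(F_3(H,n))=n-\lceil n/(\chi_<(H)-1)\rceil$, observe that $F_3(H,n)$ is $H$-free because its interval chromatic number is strictly below $\chi_<(H)$, and conclude. The paper states the $H$-freeness in one line via $\chi_<(F_3(H,n))<\chi_<(H)$, whereas you spell out the pull-back argument, but the content is identical.
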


For $n$ divisible by $\chi_<(H)-1$,
 $F_3(H,n)$ also shows  that the minimum degree threshold that  ensures an almost perfect $H$-tiling in an $n$-vertex ordered graph is  more than $(1-1/(\chi_<(H)-1))n$. Thus, combined with Theorem~\ref{BalLiTre1} this immediately implies that, for all ordered graphs $H$,
 \begin{align}\label{cruciallower}
    \chi_<(H)-1 < \chi^*_{cr}(H).
 \end{align}
Actually, we close the section by proving an even stronger lower bound on $\chi_{cr}^*(H)$.

\begin{prop}[A lower bound for $\chi_{cr}^*(H)$]\label{stronglowerb}
Let $H$ be an ordered graph on $h$ vertices and $r:=\chi_<(H)$. Then, $$\chi_{cr}^*(H)\geq(r-1)+\frac{r-1}{h-1}.$$
\end{prop}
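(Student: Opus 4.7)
The plan. I take the proposition to assert that $\chi_{cr}^*(H)\geq(r-1)+\frac{r-1}{h-1}$ with $r:=\chi_<(H)$; this is the only reading that strengthens~\eqref{cruciallower} (which would be redundant otherwise, since $\chi^*_{cr}(H)\le h$ is already observed in the introduction). To prove it, it suffices to show that for \emph{every} bottlegraph $B$ of $H$ one has $\chi_{cr}(B)\geq (r-1)+\frac{r-1}{h-1}$; taking the infimum gives the claim. Fix such a $B$, let $k:=\chi(B)$, write its parts as $U_1,\dots,U_k$ with $u_1:=|U_1|=\sigma(B)$ (the smallest part size), and recall $\chi_{cr}(B)=(k-1)\frac{|B|}{|B|-u_1}$. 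Note first that $k\geq r$: indeed, any ordered blow-up $(B(t),\phi)$ has interval chromatic number $k$ and must contain $H$, so $k\geq\chi_<(H)=r$.

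The argument then splits on whether this inequality is tight. If $k\geq r+1$, the estimate is easy: $\chi_{cr}(B)>k-1\geq r$, and since $r\leq h$ we have $\frac{r-1}{h-1}\leq 1$, so $r\geq (r-1)+\frac{r-1}{h-1}$, giving the bound with room to spare.

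The substantive case is $k=r$. Here I would invoke the bottlegraph definition with a carefully chosen permutation $\sigma$, namely one that places $U_1$ (the smallest part) first, producing an interval labelling $\phi$ with
$$U_1(t)<U_2(t)<\dots<U_r(t).$$
By Definition~\ref{bottlegraphdef}, for some $t$ the ordered blow-up $(B(t),\phi)$ admits a perfect $H$-tiling. Each copy of $H$ in this tiling, read off from the embedding, induces an interval colouring of $V(H)$ with at most $r$ nonempty colour classes — one for each $U_i(t)$ the copy meets. Since $\chi_<(H)=r$, every such colouring must have \emph{all} $r$ classes nonempty. Consequently each copy of $H$ uses at least one vertex from $U_1(t)$. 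Counting vertices in $U_1(t)$, the number of copies is $t|B|/h$, so
$$\frac{t|B|}{h}\leq |U_1(t)|=tu_1,\qquad\text{hence}\qquad u_1\geq \frac{|B|}{h}.$$
Substituting into the definition of $\chi_{cr}(B)$ yields
$$\chi_{cr}(B)=(r-1)\frac{|B|}{|B|-u_1}\geq (r-1)\cdot\frac{h}{h-1}=(r-1)+\frac{r-1}{h-1},$$
as required.

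The main obstacle I expect is the $k=r$ case: naive counting in an arbitrary ordering does not force every copy of $H$ to meet the smallest part, so the flexibility built into the bottlegraph definition (one may choose \emph{any} permutation $\sigma$) is used crucially — placing $U_1$ first leaves $H$ no room to avoid it, because $H$ already needs all $r=k$ available parts. The case $k\geq r+1$ is handled by the cheap bound $\chi_{cr}(B)>k-1$, and one could alternatively reduce first to bottlegraphs of the structure given by Proposition~\ref{propbottle} (all parts equal except one smaller), but this reduction is not needed.
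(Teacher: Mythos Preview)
Your proof is correct and follows essentially the same approach as the paper's: split according to whether the bottlegraph has exactly $r$ parts, and in the tight case count how many copies of $H$ can meet the smallest part. One small remark: your insistence on placing $U_1$ first is unnecessary---once $k=r$, every copy of $H$ in any ordered blow-up $(B(t),\phi)$ must meet \emph{all} $r$ parts (since $\chi_<(H)=r$), so the bound $u_1\geq |B|/h$ holds for any choice of $\phi$, exactly as the paper observes.
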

\begin{proof}
Let $B$ be an arbitrary bottlegraph of $H$. It suffices to show that $\chi_{cr}(B)\geq(r-1)+\frac{r-1}{h-1}$. If $\chi_{cr}(B)\geq r$ then we are done (since $h\geq r$), so for the rest of the proof we assume that $\chi_{cr}(B)<r$. In particular, as (\ref{cruciallower}) implies that $\chi_{cr}(B)>r-1$, this means that $B$ has exactly $r$ parts. Let $B_1$ denote the part of $B$ of smallest size. Pick any interval labelling $\phi$ of $B$; then there exists some $t\in\mathbb{N}$ such that the ordered blow-up $(B(t),\phi)$ contains a perfect $H$-tiling $\mathcal{H}$. Since $B$ has exactly $r$ parts, it follows that every copy of $H$ in $(B(t),\phi)$ intersects all parts of $B$. Hence,
$$|B_1(t)|\geq|\mathcal{H}|=\frac{|B(t)|}{|H|}=t|B|/h\implies|B_1|\geq|B|/h,$$
and so
$$\chi_{cr}(B)=(r-1)\frac{|B|}{|B|-|B_1|}\geq(r-1)\frac{|B|}{|B|-|B|/h}=(r-1)+\frac{r-1}{h-1}.$$
\end{proof}
In Section~\ref{examp3} we give a family of ordered graphs $H$ for which the lower bound on $\chi_{cr}^*(H)$ in Proposition~\ref{stronglowerb} is tight.

\section{Motivating examples}\label{sec:examples1}
\subsection{An example for Theorem~\ref{mainthm}(i)}\label{examp1}
Recall that Extremal Example~2 yields the lower bound in cases (i) and (iii) of Theorem~\ref{mainthm}.
The argument in Lemma~\ref{ex2lemma} is rather straightforward. This is because of the definition of $\chi^*_{cr}(H)$; if one takes a complete multipartite graph $G$ with 
$\chi_{cr}(G)<\chi^*_{cr}(H)$, then by definition there is a vertex labelling of $G$ so that the resulting ordered graph does not contain a perfect $H$-tiling.

Therefore, if one provides an argument that justifies why a bottlegraph of $H$ is optimal, this equivalently can be translated into an argument which explains why an ordered graph is an extremal example for cases (i) and (iii) of 
Theorem~\ref{mainthm}. In this way, one can view $\chi^*_{cr}(H)$ as `encoding' properties of the extremal example.

In Section~\ref{sec:properties} we will compute $\chi^*_{cr}(H)$ for various classes of ordered graphs $H$.
Often these arguments will be somewhat involved; thus, in these cases  
 the reason why the extremal example for Theorem~\ref{mainthm} does not contain a perfect $H$-tiling is also `involved'. That is, in general the reason why  extremal examples do not contain perfect $H$-tilings is not as immediate as Lemma~\ref{ex2lemma} might suggest.
We illustrate this point through the following example.

\begin{examp}[An example for Theorem~\ref{mainthm}(i)]\label{example1}
Let  $\ell\geq2$ and let $H$ be the complete $3$-partite ordered graph with parts $H_1<H_2<H_3$ of size $\ell,1,\ell$ respectively. 
\end{examp}
For $H$ as in Example~\ref{example1}, 
we have that $\chi_{cr}^*(H)=(4\ell^2-1)/\ell^2>3=\chi_<(H)$. (In fact, in Proposition~\ref{comp3partitecase} we compute $\chi_{cr}^*(F)$ for all complete $3$-partite ordered graphs $F$.) Thus, for such $H$ we are in case (i) of Theorem~\ref{mainthm} and so
$$\delta_<(H,n)=\left(1-\frac{\ell^2}{4\ell^2-1}+o(1)\right)n.$$

We now describe an extremal example for Theorem~\ref{mainthm} for such $H$.
Let $n\in\mathbb{N}$ such that $|H|$ divides $n$ and $n\geq 20$. Let $G$ be the complete $4$-partite ordered graph on $n$ vertices with parts $G_1<G_2<G_3<G_4$ where
$$|G_1|=|G_2|=|G_3|=\left\lfloor\frac{n\ell^2}{4\ell^2-1}\right\rfloor+1\quad\text{and}\quad|G_4|=n-3\left\lfloor\frac{n\ell^2}{4\ell^2-1}\right\rfloor-3.
$$
Note that $G_4$ is the smallest part since $|G_i|\geq n/4$ for $i=1,2,3$ and $|G_4|\leq n/4$. In particular,
\begin{align*}
\delta(G)=n-\left\lfloor\frac{n\ell^2}{4\ell^2-1}\right\rfloor-1\geq n-\frac{n\ell^2}{4\ell^2-1}-1=\left(1-\frac{\ell^2}{4\ell^2-1}\right)n-1.
\end{align*}
Suppose for a contradiction that $G$ contains a perfect $H$-tiling $\mathcal{H}$. Let $\mathcal{A}\subseteq\mathcal{H}$ be the set of copies of $H$ in $\mathcal{H}$ which have exactly $\ell$ vertices in $G_1$ and set $\mathcal{B}:=\mathcal{H}\setminus\mathcal{A}$. This immediately implies
\begin{align}\label{eqAB2}
|G_1|\geq \ell|\mathcal{A}|.
\end{align}
Note that if $H'\in\mathcal{A}$ then $H'$ has at most $\ell+1$ vertices in $G_1\cup G_2$ while if $H'\in\mathcal{B}$ then $H'$ has at most $\ell$ vertices in $G_1\cup G_2$. It follows that
\begin{align}\label{eqAB1}
|G_1|+|G_2|\leq (\ell+1)|\mathcal{A}|+\ell|\mathcal{B}|.
\end{align}
Combining (\ref{eqAB2}) and (\ref{eqAB1}) yields the following:
\begin{align*}
|\mathcal{A}|+|\mathcal{B}|\stackrel{(\ref{eqAB1})}{\geq}&\frac{|G_1|+|G_2|-|\mathcal{A}|}{\ell}\stackrel{(\ref{eqAB2})}{\geq}\frac{\ell|G_1|+\ell|G_2|-|G_1|}{\ell^2} \\
=&\frac{2\ell-1}{\ell^2}\left(\left\lfloor\frac{n\ell^2}{4\ell^2-1}\right\rfloor+1\right)>\frac{2\ell-1}{\ell^2}\left(\frac{n\ell^2}{4\ell^2-1}\right)=\frac{n}{2\ell+1}.
\end{align*}
The above is a contradiction since
$$|\mathcal{A}|+|\mathcal{B}|=|\mathcal{H}|=\frac{|G|}{|H|}=\frac{n}{2\ell+1}.$$
Hence, $G$ does not contain a perfect $H$-tiling.

\smallskip

Note that $G$ is a `space barrier' construction as our argument tells us that $G_1 \cup G_2$ is `too big' to ensure a perfect $H$-tiling in $G$; moreover, the reason why $G_1 \cup G_2$ is `too big', whilst not difficult, is not at first sight, obvious (i.e., we needed to consider how two types of copies of $H$ intersect $G_1\cup G_2$).

Space barrier constructions occur in many other settings too (e.g., the K\"uhn--Osthus perfect tiling theorem~\cite{kuhn}). However,
all previous graph space barrier constructions 
we are aware of have a different flavour to the above space barrier $G$. Indeed, previously known examples  fail to contain the desired substructure due to some very immediate property that means 
 \emph{one} vertex class is `too small' or `too big'.
 
 In Section~\ref{sec:properties} we compute $\chi^*_{cr}(H)$ precisely for several classes of ordered graphs.
In particular,
we  give other ordered graphs $H$ that fall into case (i) of Theorem~\ref{mainthm}, namely all complete $3$-partite ordered graphs and all complete $r$-partite ordered graphs whose smallest part is the first or last part (see Propositions~\ref{complete1class} and~\ref{comp3partitecase}).

\subsection{An example for Theorem~\ref{mainthm}(ii)}
The next example provides a family of ordered graphs that fall into case (ii) of Theorem~\ref{mainthm}. 

\begin{examp}[An example for Theorem~\ref{mainthm}(ii)]\label{example2}
Let $r,k\geq 3$ and let $H$ be the ordered graph with vertex set $V(H)=[(r-1)k+2]$ and edge set $E(H)=\{(1,(r-1)k+2)\}\cup\{((s-1)k+2,sk+2):s\in[r-1]\}$ (see Figure~\ref{fig:1}). So $\chi_<(H)=r$.
\end{examp}

\begin{figure}[!h]
\centering
\begin{tikzpicture}
\filldraw[black] (0,0) circle (3pt) node[below=3pt]{$1$};
\filldraw[black] (1,0) circle (3pt) node[below=3pt]{$2$};
\filldraw[black] (2,0) circle (3pt) node[below=3pt]{$3$};
\filldraw[black] (3,0) circle (3pt) node[below=3pt]{$4$};
\filldraw[black] (4,0) circle (3pt) node[below=3pt]{$5$};
\filldraw[black] (5,0) circle (3pt) node[below=3pt]{$6$};
\filldraw[black] (6,0) circle (3pt) node[below=3pt]{$7$};
\filldraw[black] (7,0) circle (3pt) node[below=3pt]{$8$};
\filldraw[black] (8,0) circle (3pt) node[below=3pt]{$9$};
\filldraw[black] (9,0) circle (3pt) node[below=3pt]{$10$};
\filldraw[black] (10,0) circle (3pt) node[below=3pt]{$11$};
\draw (0,0) arc (135:45:7.071);
\draw (1,0) arc (135:45:2.121);
\draw (4,0) arc (135:45:2.121);
\draw (7,0) arc (135:45:2.121);
\end{tikzpicture}
\caption{The ordered graph $H$ as in Example~\ref{example2} for $r=4$ and $k=3$.}
\label{fig:1}
\end{figure}

Let $B$ the complete $r$-partite graph with parts $B_1,\dots,B_r$ where $|B_i|=k$ for $i\in[r-1]$ and $|B_r|= 2$. Observe that $\chi_{cr}(B)=(r-1)+2/k$. It is straightforward to check that for any permutation $\sigma$ of $[r]$ and any interval labelling $\phi$ of $B$ with respect to $\sigma$, the ordered graph $(B,\phi)$ contains a spanning copy of $H$; hence $B$ is a simple bottlegraph of $H$. It follows that
$$\chi_{cr}^*(H)\leq\chi_{cr}(B)=(r-1)+\frac{2}{k}<r=\chi_<(H).$$
Furthermore, $H$ has a local barrier: for any interval $(r+1)$-colouring $\{1\}<V_1<\cdots<V_r$ of $H$ we have that $(r-1)k+2\in V_r$ and thus there is one edge between $\{1\}$ and $V_r$. 

\subsection{An example for Theorem~\ref{mainthm}(iii)}\label{examp3}
Next we consider a family of ordered graphs which fall into case (iii) of Theorem~\ref{mainthm}. 

\begin{examp}[An example for Theorem~\ref{mainthm}(iii)]\label{example3}
Let $r,k\geq 2$ and let $H$ be the ordered graph with vertex set $V(H)=[(r-1)k+1]$ and edge set $E(H)=\{((s-1)k+1,sk+1):s\in[r-1]\}$. So $H$ is  the path $(1)(k+1)(2k+1)\dots((r-1)k+1)$ and $\chi_<(H)=r$ (see Figure~\ref{fig:2}).
\end{examp}
\begin{figure}[!h]
\centering
\begin{tikzpicture}
\filldraw[black] (0,0) circle (3pt) node[below=3pt]{$1$};
\filldraw[black] (2,0) circle (3pt) node[below=3pt]{$2$};
\filldraw[black] (4,0) circle (3pt) node[below=3pt]{$3$};
\filldraw[black] (6,0) circle (3pt) node[below=3pt]{$4$};
\filldraw[black] (8,0) circle (3pt) node[below=3pt]{$5$};
\filldraw[black] (10,0) circle (3pt) node[below=3pt]{$6$};
\filldraw[black] (12,0) circle (3pt) node[below=3pt]{$7$};
\draw (0,0) arc (135:45:2.828);
\draw (4,0) arc (135:45:2.828);
\draw (8,0) arc (135:45:2.828);
\end{tikzpicture}
\caption{The ordered graph $H$ as in Example~\ref{example3} for $r=4$ and $k=2$.}
\label{fig:2}
\end{figure}

We will explicitly compute $\chi^*_{cr}(H)$. In particular, we prove that $\chi^*_{cr}(H)<\chi_<(H)$ and that $H$ does not have a local barrier. 
We first construct a bottlegraph of $H$. Let $B$ denote the complete $r$-partite graph with parts $B_1,\dots,B_r$ where $|B_i|=k$ for $i\in[r-1]$ and $|B_r|= 1$. Observe that $\chi_{cr}(B)=(r-1)+1/k$. It is straightforward to check that for any permutation $\sigma$ of $[r]$ and any interval labelling $\phi$ of $B$ with respect to $\sigma$, the ordered graph $(B,\phi)$ contains a spanning copy of $H$. Thus, $B$ is a simple bottlegraph of $H$ and so $\chi_{cr}^*(H)\leq\chi_{cr}(B)=(r-1)+1/k$.
Moreover,  Proposition~\ref{stronglowerb} implies that in fact 
$\chi_{cr}^*(H)= (r-1)+1/k$.



Finally, we show that $H$ does not have a local barrier.
 Let $i\not=j\in[r+1]$.
If $i\not\in\{1,r+1\}$, there exists an interval $(r+1)$-colouring $V_1<\cdots<V_{r+1}$ of $H$ such that $V_i=\{x\}$ with $x\not=(s-1)k+1$ for every $s\in[r]$. Then $x$ is isolated in $H$ and so clearly there is no edge between $x$ and $V_j$.  
If $i=1$, there exists an interval $(r+1)$-colouring $V_1<\cdots<V_{r+1}$ of $H$ such that $V_i=\{1\}$ and  $V_j=\emptyset$; so again, there is no edge between $V_i$ and $V_j$. The case $i=r+1$ is analogous; we take $V_i=\{(r-1)k+1\}$ and $V_j=\emptyset$.


\section{Proof of Theorem~\ref{mainthm}}\label{sec:proofmainthm}
In this section we present some intermediate results and explain how they imply Theorem~\ref{mainthm}. Crucial to our approach will be the use of the \emph{absorbing method}, a technique that was introduced systematically by R\"odl, Ruci\'nski
and Szemer\'edi~\cite{rrs2}, but that has roots in earlier work (see, e.g.,~\cite{kriv}).
Given ordered graphs $G,H$ and a set $W\subseteq V(G)$, a set $S\subseteq V(G)$ is called an \emph{$H$-absorbing set for $W$} if both $G[S]$ and $G[W\cup S]$ contain  perfect $H$-tilings. In \cite[Theorem~4.1]{blt}, Balogh, Li and the second author provided a minimum degree condition that ensures an ordered graph $G$ contains a set $Abs$ that is an $H$-absorbing set for {every} not too large set $W\subseteq V(G)\setminus Abs$.

\begin{thm}[Balogh, Li and Treglown \cite{blt}]
\label{BalLiTre2}
Let $H$ be an ordered graph on $h$ vertices and let $\eta>0$. Then there exists an $n_0\in\mathbb{N}$ and $0<\nu\ll\eta$ so that the following holds. Suppose that $G$ is an ordered graph on $n\geq n_0$ vertices and 
$$\delta(G)\geq\left(1-\frac{1}{\chi_<(H)}+\eta\right)n.$$
Then $V(G)=[n]$ contains a set $Abs$ so that
\begin{itemize}
\item $|Abs|\leq\nu n$;
\item $Abs$ is an $H$-absorbing set for every $W\subseteq V (G)\setminus Abs$ such that $|W|\in h\mathbb{N}$ and $|W|\leq\nu^3n$.
\end{itemize}
\end{thm}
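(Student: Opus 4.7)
The plan is to apply the absorbing method of R\"odl, Ruci\'nski and Szemer\'edi, adapted to the ordered graph setting. The argument splits into a \emph{local} step, which produces many absorbers for each ordered $h$-tuple of vertices, and a \emph{global} step, which uses a probabilistic selection to combine them into a small set $\mathrm{Abs}$.

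For the local step, I would prove that there is a constant $c=c(H)$ such that every ordered $h$-tuple $T=\{v_1<\cdots<v_h\}\subseteq V(G)$ admits $\Omega(n^{ch})$ sets $A\subseteq V(G)\setminus T$ of size $ch$ for which both $G[A]$ and $G[A\cup T]$ contain perfect $H$-tilings; call such an $A$ an \emph{absorber for $T$}. Fix an interval $r$-colouring $W_1<\cdots<W_r$ of $H$, where $r=\chi_<(H)$. The core tool is ordered supersaturation: since $\delta(G)\geq(1-1/r+\eta)n$ strictly exceeds the Pach--Tardos ordered Tur\'an threshold for $H$, for every small ordered set $S\subseteq V(G)$ and every admissible way to position $S$ in a copy of $H$, there are $\Omega(n^{h-|S|})$ extensions of $S$ to copies of $H$ in $G$. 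An absorber for $T$ is then built as a chain: one copy of $H$ carries $T$ distributed across the colour classes in the essentially unique order-preserving way, and $c$ auxiliary copies of $H$ are glued onto it via controlled overlaps, so that removing $T$ from the chain leaves $c$ disjoint copies of $H$ (tiling $G[A]$), whilst keeping $T$ in place yields the tiling of $G[A\cup T]$. The $\eta$-slack provides the supersaturation needed to choose the auxiliary vertices in $\Omega(n^{ch})$ ways.

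The global step is by now standard. Choose $p:=\nu^2/n^{ch-1}$ and let $\mathcal{F}$ be the random family of $ch$-sets of $V(G)$ obtained by independent selection with probability $p$. With positive probability $|\mathcal{F}|=O(\nu^2 n)$, the number of intersecting pairs in $\mathcal{F}$ is $O(\nu^4 n)$, and every $T$ has at least $\Omega(\nu^2 n)$ absorbers in $\mathcal{F}$. Deleting one set from each intersecting pair yields a disjoint subfamily $\mathcal{F}'$; its union $\mathrm{Abs}$ then satisfies $|\mathrm{Abs}|\leq ch\cdot|\mathcal{F}'|\leq\nu n$ and each $T$ still has $\geq\nu^3 n$ absorbers in $\mathcal{F}'$. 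The perfect $H$-tiling of $G[\mathrm{Abs}]$ is assembled from the tilings of the individual $G[A]$; for any $W\subseteq V(G)\setminus\mathrm{Abs}$ with $h\mid|W|$ and $|W|\leq\nu^3 n$, we partition $W$ into $h$-tuples and absorb them one at a time with fresh absorbers from $\mathcal{F}'$, which the bound $\nu^3 n\geq|W|/h$ guarantees never run out.

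The principal obstacle is the local step. In the unordered setting, a classical construction pairs each vertex with a `twin' of the same type and chains swap gadgets together. In the ordered setting the vertex order constrains which role each $v_i\in T$ can play in a copy of $H$: the relative rank of $v_i$ in $[n]$ must be compatible with the position of its image in the interval colouring. Overcoming this requires choosing the auxiliary vertices in the chain so that their ordering dovetails with that of $T$ across every auxiliary copy of $H$; this is where the $\eta$-slack becomes essential, and where the argument genuinely departs from the unordered analogue.
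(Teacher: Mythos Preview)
The paper does not prove this theorem: it is quoted as \cite[Theorem~4.1]{blt} and used as a black box in the proof of Theorem~\ref{mainthm}. There is therefore no proof in the present paper to compare your attempt against.

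That said, your local step has a genuine gap. You write that an absorber for an arbitrary ordered $h$-tuple $T=\{v_1<\cdots<v_h\}$ is a chain in which ``one copy of $H$ carries $T$ distributed across the colour classes''. But $G[T]$ need not span a copy of $H$; $T$ may well be independent in $G$. Even in the unordered setting, the classical $h$-tuple absorber does not put $T$ into a single copy of $H$: each $v_i$ lies in its own copy $H_i$, and the absorber must provide an alternative perfect $H$-tiling of $\bigcup_i (V(H_i)\setminus\{v_i\})$. Your sketch conflates these two pictures, and it is precisely this retiling step that is delicate in the ordered world, since the position of each $v_i$ in $[n]$ constrains which colour class of $H$ it can occupy. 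Appealing to ``ordered supersaturation'' above the Pach--Tardos threshold does not immediately give you extensions of an \emph{arbitrary} prescribed set $S$ to many copies of $H$; that requires something like the key lemma applied inside the common neighbourhood of $S$, and the common neighbourhood structure depends heavily on where the vertices of $S$ sit in the order.

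For orientation, the paper does hint at what \cite{blt} actually does: it calls the method there \emph{local--global absorbing} and records the ordered Lo--Markstr\"om lemma (Lemma~\ref{absorbingtool}) as coming from \cite{blt}. That lemma is phrased in terms of \emph{pairs} of vertices rather than $h$-tuples: one shows that for any $x,y\in V(G)$ there are many $(sh-1)$-sets $X$ with both $G[X\cup\{x\}]$ and $G[X\cup\{y\}]$ perfectly $H$-tiled, and then Lemma~\ref{absorbingtool} packages these into the absorbing set. The present paper's own Theorem~\ref{absorbingthm} is proved exactly along these lines (via regularity and a chain of reachability claims), and the proof in \cite{blt} presumably has the same architecture at the higher threshold $(1-1/\chi_<(H)+\eta)n$, rather than the direct $h$-tuple absorber construction you outline.
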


 Theorems~\ref{BalLiTre1} and~\ref{BalLiTre2} can be combined to yield a minimum degree condition that forces a perfect $H$-tiling. Indeed, let $G$ and $H$ be ordered graphs and suppose  that
 $$\delta(G)\geq\left(1-\frac{1}{\max\{\chi_<(H),\chi_{cr}^*(H)\}}+o(1)\right)n.$$
  We first invoke Theorem~\ref{BalLiTre2} to find a set $Abs\subseteq V(G)$ which is an $H$-absorbing set for any not too large set $W\subseteq V(G)\setminus Abs$. Then we apply Theorem~\ref{BalLiTre1} to $G\setminus Abs$ to find an $H$-tiling $\mathcal{M}_1$ which covers all but a small proportion of vertices in $G\setminus Abs$. Let $W$ denote the set of such vertices in $G\setminus Abs$. Since $W$ is relatively small, $Abs$ is an $H$-absorbing set for $W$, and thus $G[W\cup Abs]$ contains a perfect $H$-tiling $\mathcal{M}_2$. Finally, observe that $\mathcal{M}_1\cup\mathcal{M}_2$ is a perfect $H$-tiling in $G$.

Thus we have proven that
$$\delta_<(H,n)\leq\left(1-\frac{1}{\max\{\chi_<(H),\chi_{cr}^*(H)\}}+o(1)\right)n.$$ 
In particular, this is asymptotically sharp if $\chi_{cr}^*(H)\geq\chi_<(H)$ (by Corollary~\ref{ex2col}) or if $\chi_{cr}^*(H)<\chi_<(H)$ and $H$ has a local barrier (by Corollary~\ref{ex1col}), therefore proving cases (i) and (ii) of Theorem~\ref{mainthm}. However,  if $\chi_{cr}^*(H)<\chi_<(H)$ and $H$ does not have a local barrier then this minimum degree condition can be substantially lowered. To achieve this, we need a new absorbing result:

\begin{thm}[Absorbing theorem for non-local barriers]\label{absorbingthm}
Let $H$ be an ordered graph on $h$ vertices with $\chi_<(H)\geq 3$ and let $\eta>0$. If $H$ does not have a local barrier and $\chi_{cr}^*(H)<\chi_<(H)$, then there exists an $n_0\in\mathbb{N}$ and $0<\nu\ll\eta$ so that the following holds. Suppose that $G$ is an ordered graph on $n\geq n_0$ vertices and 
$$\delta(G)\geq\left(1-\frac{1}{\chi_<(H)-1}+\eta\right)n.$$
Then $V(G)=[n]$ contains a set $Abs$ so that
\begin{itemize}
\item $|Abs|\leq\nu n$;
\item $Abs$ is an $H$-absorbing set for every $W\subseteq V(G)\setminus Abs$ such that $|W|\in h\mathbb{N}$ and $|W|\leq\nu^3n$.
\end{itemize}
\end{thm}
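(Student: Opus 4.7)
The plan is to apply the absorbing framework of Lo and Markstr\"om~\cite{lo}: I will show that every pair of vertices $u,v \in V(G)$ is \emph{reachable} in the sense that there are at least $\delta n^{\ell h-1}$ sets $S \subseteq V(G)\setminus\{u,v\}$ with $|S|=\ell h-1$ such that both $G[S\cup\{u\}]$ and $G[S\cup\{v\}]$ admit a perfect $H$-tiling, where $\ell \in \mathbb{N}$ and $\delta>0$ are constants depending only on $H$ and $\eta$. Once this uniform reachability is verified, Lo--Markstr\"om's lemma produces the desired set $Abs$ by a standard probabilistic argument: choose a small random family of connectors to form a skeleton, and for each leftover set $W$ process its vertices in pairs, using a connector to ``swap'' each pair into the skeleton while maintaining a perfect $H$-tiling.

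First I would establish a quantitative $H$-cover: every vertex of $G$ lies in $\Omega(n^{h-1})$ copies of $H$. The minimum degree condition $(1-1/(r-1)+\eta)n$, where $r:=\chi_<(H)$, together with the no-local-barrier hypothesis, sits exactly at the threshold of Theorem~\ref{thmcover}(i), so every vertex is in at least one copy of $H$. A standard supersaturation argument, using Szemer\'edi's regularity lemma and the Pach--Tardos ordered Erd\H{o}s--Stone--Simonovits theorem, upgrades this to $\Omega(n^{h-1})$ copies per vertex. The flexibility results of Section~\ref{sec:usefulresults} play a key role here, since they describe which interval $r$-colourings of $H$ can be realised inside $G$ around a given vertex.

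The crucial step is to build the connectors witnessing reachability, and this is where both hypotheses are used in tandem. Fix an optimal bottlegraph $B$ of $H$, so $\chi(B)=r$ and $\chi_{cr}(B)<r$; in particular $B$ has $r$ parts with one strictly smaller ``bottleneck'' part. Sufficiently large ordered blow-ups of $B$ admit a perfect $H$-tiling, and the small part is precisely what leaves room to \emph{swap} individual vertices between positions inside such a blow-up. For an arbitrary pair $u,v\in V(G)$, a connector is produced by first selecting an interval $(r+1)$-colouring of $H$ with a singleton colour class compatible with the ordinal positions of $u$ and $v$ and non-adjacent to a designated colour class; the no-local-barrier property, applied to an appropriate pair of indices in $[r+1]$, supplies exactly such a colouring. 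Then, using the min degree condition restricted to the joint neighbourhood of $u$ and $v$, one locates, with the help of the bottlegraph $B$ and the tools of Section~\ref{sec:bottle}, many $B$-blow-up-like substructures in $G$ in which $u$ and $v$ occupy singleton slots and can be exchanged; the non-adjacency furnished by the no-local-barrier assumption absorbs any missing edges between a swapped-in endpoint and the remainder of the substructure.

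The main obstacle I anticipate is the construction sketched in the previous paragraph: simultaneously meeting the ordering constraint (which restricts where $u$ and $v$ can sit in any interval colouring of $H$) and the neighbourhood constraint (which restricts which edges from $u$ or $v$ may be absent). Overcoming it requires the full strength of the no-local-barrier property, applied uniformly over all pairs of indices $(i,j)\in[r+1]$: one must match the relative ordering of $u$ and $v$, together with their non-neighbourhoods in $G$, to some pair $(i,j)$ for which the guaranteed interval $(r+1)$-colouring of $H$ is compatible. Once this case analysis is carried out, assembling the connectors and feeding them into the Lo--Markstr\"om framework is a technical but routine exercise combining the regularity lemma with the auxiliary results of Sections~\ref{sec:bottle}--\ref{sec:usefulresults}.
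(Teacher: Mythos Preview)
Your high-level framework (Lo--Markstr\"om reachability) matches the paper, but the core connector construction has a genuine gap. You propose building a connector for an arbitrary pair $u,v$ by working inside their \emph{joint} neighbourhood and placing both into singleton slots of a bottlegraph blow-up. At minimum degree $(1-1/(r-1)+\eta)n$ with $r=3$, the common neighbourhood $N(u)\cap N(v)$ is only guaranteed to have about $2\eta n$ vertices, far too few to host the required $K_r$-like structure robustly; and the no-local-barrier hypothesis speaks to a \emph{single} singleton class, so it does not directly control the simultaneous ordinal positions and non-adjacencies of two vertices. Your suggestion that the small part of an optimal bottlegraph ``leaves room to swap'' is also off target: the bottlegraph property says every \emph{ordering of the parts} admits a perfect $H$-tiling in some blow-up, which is not the same as being able to move one vertex between two fixed adjacent parts and retain a tiling.

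The paper avoids both problems by never trying to connect $u$ and $v$ directly. It applies the multipartite regularity lemma to obtain a reduced graph $R$ with $\delta(R)\ge(1-1/(r-1)+\eta/3)|R|$, and then builds chains in stages: (i) inside a single cluster of a $K_r$ in $R$; (ii) between two clusters of the same $K_r$, using not a bottlegraph but the complete $r$-partite graph $F$ of Lemma~\ref{flexibilitylemma1}, whose existence follows from flexibility (Corollary~\ref{flexibilitycol2}, a consequence of $\chi_{cr}^*(H)<\chi_<(H)$) and which is engineered precisely so that shifting one vertex to an adjacent class preserves the perfect $H$-tiling; (iii) between \emph{any} two clusters, via a common $R$-neighbour --- this is where $r\ge3$ is essential; and (iv) from an arbitrary vertex $x$ to most of some cluster, by greedily finding $r$ clusters $T_1<\cdots<T_r$ with $x$ sandwiched at position $i$ and $x$ adjacent to all $T_k$ except possibly $T_j$, then invoking the no-local-barrier property for that specific $(i,j)$ together with $F$. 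Concatenating these via Lemma~\ref{chainlemma} gives uniform reachability. The two hypotheses are thus used in \emph{separate} steps rather than ``in tandem'' on a single pair, and the object doing the swapping work is $F$, not a bottlegraph.
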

Note that the statement of Theorem~\ref{absorbingthm} is false if one allows $\chi_<(H)=2$; indeed,  the conclusion of the theorem fails for  so-called divisibility barriers $H$.\footnote{More precisely, here \emph{divisibility barrier} means an ordered graph $H$ with $\chi_<(H)=2$ that satisfies Property B as defined in~\cite[Page 3]{blt}.}  However,
one can adapt our proof and relax the hypothesis of Theorem~\ref{absorbingthm} to  $\chi_<(H)\geq 2$ if one additionally assumes $H$ is not a divisibility barrier.
We will not do this in this paper, however, as~\cite[Theorem~1.9]{blt} already resolves the perfect $H$-tiling problem for ordered graphs $H$ with  $\chi_<(H)=2$.

We postpone the proof of Theorem~\ref{absorbingthm} to Section~\ref{sec:proofs}.
With Theorem~\ref{absorbingthm} at hand, we can now give 
the  proof of Theorem~\ref{mainthm}.

\begin{proofofmainthm}
First note that the lower bounds in parts (i)--(iii) of the theorem follow immediately from Corollary~\ref{ex2col} (for (i) and (iii)) and Corollary~\ref{ex1col} (for (ii)). Thus it remains to prove the upper bounds.

Let  $H$ be an ordered graph with $\chi_<(H)\geq 3$ and let $\eta>0$. Let $n\in\mathbb{N}$ be sufficiently large and such that $|H|$ divides $n$. Let $G$ be an ordered graph on $n$ vertices with minimum degree so that
\begin{itemize}
    \item[(i)] $  \delta(G)\geq\left (1-\frac{1}{\chi^*_{cr}(H)}+\eta\right )n $ \ \  if
    $\chi^*_{cr}(H) \geq \chi_<(H)$;
\item[(ii)] $  \delta(G)\geq\left (1-\frac{1}{\chi_<(H)}+\eta\right )n $  \ \  if 
 $\chi^*_{cr}(H) < \chi_<(H)$ and $H$ has a local barrier;
 \item[(iii)] $\delta(G)\geq\left (1-\frac{1}{\chi^*_{cr}(H)}+\eta\right )n $ \ \  if 
    $\chi^*_{cr}(H) < \chi_<(H)$ and $H$ has no local barrier.
\end{itemize}
Recall that $\chi^* _{cr}(H) > \chi_<(H)-1$. Thus,
by Theorem \ref{BalLiTre2} (for cases (i) and (ii)) and Theorem \ref{absorbingthm} (for case (iii)), there exists some $0<\nu\ll \eta$ and a set $Abs\subseteq V(G)$ such that
\begin{itemize}
\item $|Abs|\leq\nu n$;
\item $Abs$ is an $H$-absorbing set for every $W\subseteq V (G)\setminus Abs$ such that $|W|\in |H|\mathbb{N}$ and $|W|\leq\nu^3n$.
\end{itemize}
Let $G':=G\setminus Abs$. In all cases we have that
$\delta(G')\geq  (1-{1}/{\chi^*_{cr}(H)} )|G'| .$

Since $n$ was chosen to be sufficiently large, by Theorem~\ref{BalLiTre1} there exists an $H$-tiling $\mathcal{M}_1$ in $G'$ covering all but at most $\nu^3n$ vertices. Let $W\subseteq V(G')$ denote the set of vertices which are not covered by $\mathcal{M}_1$. Since $|H|$ divides $n$, $|V(\mathcal{M}_1)|$ and $|Abs|$ we have that $|H|$ divides $|W|$ too. Also, $|W|\leq\nu^3n$, hence $G'[W\cup Abs]$ contains a perfect $H$-tiling $\mathcal{M}_2$. Finally, observe that $\mathcal{M}_1\cup\mathcal{M}_2$ is a perfect $H$-tiling of $G$, as desired.\qed

\end{proofofmainthm}

\section{Bottlegraphs}\label{sec:bottle}
In the following proposition we show that
it suffices to consider bottlegraphs where  all parts are of the same size except for perhaps one smaller part.
\begin{prop}\label{propbottle}
Let $H$ be an ordered graph and
 $B$ be a bottlegraph of $H$. There exists a bottlegraph $B'$ of $H$ and an integer $m\in\mathbb{N}$ such that $\chi_{cr}(B')=\chi_{cr}(B)$ and all parts of $B'$ have size $m$ except for one part with size at most $m$.
\end{prop}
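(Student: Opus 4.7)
The plan is to define $B'$ explicitly, verify the critical-chromatic-number equality by direct calculation, and then show it inherits the bottlegraph property of $B$ via a Latin-square decomposition into $k-1$ vertex-disjoint copies of $B$. Let $B$ have parts $U_1, \ldots, U_k$ of sizes $a_1 \leq \cdots \leq a_k$, set $m := \sum_{i=2}^k a_i$ and $b_1 := (k-1)a_1$; since $a_i \geq a_1$ for $i \geq 2$, we have $b_1 \leq m$. Let $B'$ be the complete $k$-partite graph with parts $V_1, \ldots, V_k$ of sizes $b_1, m, m, \ldots, m$. Then $|B'| = (k-1)|B|$ and $\sigma(B') = b_1$, so
$$\chi_{cr}(B') = (k-1) \cdot \frac{(k-1)|B|}{(k-1)m} = \frac{(k-1)|B|}{m} = (k-1) \cdot \frac{|B|}{|B|-a_1} = \chi_{cr}(B),$$
and the structural requirement on $B'$ holds. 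It remains to verify that $B'$ is a bottlegraph of $H$.

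I would first prove the key combinatorial step that $B'$ decomposes as the vertex-disjoint union of $k-1$ copies of $B$. Partition $V_1$ into sub-blocks $V_1^{(j)}$ of size $a_1$ indexed by $j \in \{2, \ldots, k\}$, and for each $\ell \geq 2$ partition $V_\ell$ into sub-blocks $V_\ell^{(i)}$ of size $a_i$ indexed by $i \in \{2, \ldots, k\}$. Fix a Latin square of order $k-1$ on $\{2, \ldots, k\}$ and let $\pi_j$ denote its $j$-th column interpreted as a permutation of $\{2, \ldots, k\}$. Define $B_{(j)}$ as the copy of $B$ in $B'$ that maps the first part of $B$ to $V_1^{(j)}$ and, for each $i \geq 2$, maps the $i$-th part of $B$ to $V_{\pi_j(i)}^{(i)}$. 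The Latin-square property, namely that $j \mapsto \pi_j(i)$ is a bijection for every fixed $i$, guarantees that for any $j \neq j'$ the sub-blocks used in each $V_\ell$ are distinct; hence the $B_{(j)}$'s are vertex-disjoint, and the count $(k-1)|B| = |B'|$ shows they cover $V(B')$.

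To lift this decomposition to a perfect $H$-tiling in an ordered blow-up, I would first enhance the bottlegraph property of $B$ to hold with a uniform integer: set $t^*$ to be the lcm of $t(B, H, \phi')$ over the finitely many interval labellings $\phi'$ of $B$. For any such $\phi'$, one partitions each part of $B(t^*)$ into $t^*/t(B, H, \phi')$ consecutive sub-intervals and takes matching sub-intervals across parts to get disjoint sub-copies of $(B(t(B, H, \phi')), \phi')$, each containing a perfect $H$-tiling by assumption; hence $(B(t^*), \phi')$ does too. Now given any interval labelling $\phi$ of $B'$, each copy $B_{(j)}$ inherits a total order on its parts from the $\phi$-positions of the $V_\ell$'s that contain them, realising $B_{(j)}(t^*) \subseteq (B'(t^*), \phi)$ as an ordered blow-up of $B$ under some labelling $\phi'_j$. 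The enhanced property then supplies a perfect $H$-tiling of each $B_{(j)}(t^*)$, and their union is a perfect $H$-tiling of $(B'(t^*), \phi)$, so $B'$ is a bottlegraph of $H$ as required.

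The main subtlety will be the Latin-square bookkeeping in the second step — simultaneously ensuring vertex-disjointness, total coverage of $B'$, and that the ordering inherited by each $B_{(j)}$ from $\phi$ is a legitimate interval labelling of $B$, so that the (enhanced) bottlegraph property of $B$ can be invoked on each piece without any compatibility issue.
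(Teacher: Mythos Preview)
Your proof is correct and follows essentially the same route as the paper: define $B'$ with one small part of size $(k-1)a_1$ and $k-1$ equal parts of size $|B|-a_1$, check $\chi_{cr}(B')=\chi_{cr}(B)$, decompose $B'$ into $k-1$ vertex-disjoint copies of $B$, and then use the bottlegraph property of $B$ on each piece with a common blow-up factor. The only cosmetic differences are that you make the decomposition explicit via a Latin square (the paper simply asserts a perfect $B$-tiling exists from the size count) and you take $t^*$ as an lcm over all interval labellings of $B$, whereas the paper takes the product of the $t_i$'s arising from the specific induced labellings on the $k-1$ copies.
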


\begin{proof}
Let $B$ be a bottlegraph of $H$; so $B$ is a complete $k$-partite unordered graph with parts $B_1,\dots,B_k$ for some $k\in\mathbb{N}$. Without loss of generality, we may assume that $|B_1|\leq|B_i|$ for every $i>1$. Let $B'$ be the complete $k$-partite unordered graph with parts $B'_1,\dots,B'_k$ where
$$|B'_i|=\begin{cases}
(k-1)|B_1| &\text{ if $i=1$}\\
|B|-|B_1| &\text{ otherwise.}
\end{cases}$$
So $|B'|=(k-1)|B|$. Furthermore, we have
$$|B'_1|=(k-1)|B_1|=k|B_1|-|B_1|\leq|B|-|B_1|=|B'_i|$$
for every $i>1$. It follows that
$$\chi_{cr}(B')=\frac{(k-1)|B'|}{|B'|-|B'_1|}=\frac{(k-1)^2|B|}{(k-1)|B|-(k-1)|B_1|}=
\frac{(k-1)|B|}{|B|-|B_1|}=\chi_{cr}(B).$$
It remains to show that $B'$ is a bottlegraph of $H$. Observe that the vertices in $B'_1$ can be partitioned into $(k-1)$ sets of size $|B_1|$ while the vertices in $B'_i$ can be partitioned into $(k-1)$ sets of sizes $|B_2|,\dots,|B_k|$ respectively, for every $i>1$. This implies  that $B'$ contains a perfect $B$-tiling consisting of $(k-1)$ copies of $B$. 

Let $\{C_1,\dots,C_{k-1}\}$ be a perfect $B$-tiling in $B'$ (i.e., each $C_i$ is a copy of $B$ in $B'$). Let $\sigma$ be a permutation of $[k]$ and let $\phi$ be an interval labelling of $B'$ with respect to $\sigma$. For every $C_i$, $\phi$ induces an interval labelling $\phi_i$ of $C_i$ with respect to some permutation $\sigma_i$ of $[k]$. Since $B$ is a bottlegraph,  given any $i\in[k-1]$, there exists some $t_i\in\mathbb{N}$ such that the ordered blow-up $(C_i(t_i),\phi_i)$ contains a perfect $H$-tiling.
Set $t:=t_1t_2\dots t_{k-1}$. Then the ordered blow-up $(C_i(t),\phi_i)$ contains a perfect $H$-tiling $\mathcal{M}_i$, for each $i\in[k-1]$.

Finally, $\mathcal{M}_1\cup\cdots\cup\mathcal{M}_{k-1}$ is a perfect $H$-tiling of the ordered blow-up $(B'(t),\phi)$. Since $\sigma,\phi$ were arbitrary, $B'$ is a bottlegraph of $H$.
\end{proof}
Note that the notion of a bottlegraph of $H$ (Definition~\ref{bottlegraphdef}) and $1$-bottlegraph (Definition~\ref{defx}) are not quite the same.
However, the next result implies that $\chi^*_{cr}(H)=\chi^*_{cr}(1,H)$.
\begin{prop}\label{propeq}
Let $H$ be an ordered graph. Then $\chi^*_{cr}(H)=\chi^*_{cr}(1,H)$.
\end{prop}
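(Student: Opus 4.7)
The plan is to prove both inequalities $\chi^*_{cr}(H)\le \chi^*_{cr}(1,H)$ and $\chi^*_{cr}(1,H)\le\chi^*_{cr}(H)$ separately. For the first, I would observe that condition (iii) of Definition~\ref{defx} with $x=1$ simply says that every interval labelling of $B$ yields an ordered graph containing a perfect $H$-tiling, which is exactly the \emph{simple} bottlegraph condition ($t=1$) in Definition~\ref{bottlegraphdef}. Hence every $1$-bottlegraph of $H$ is automatically a bottlegraph of $H$, and taking infima gives $\chi^*_{cr}(H)\le \chi^*_{cr}(1,H)$.

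For the reverse inequality, I would start with an arbitrary bottlegraph $B$ of $H$ and aim to produce a $1$-bottlegraph $B^\ast$ with $\chi_{cr}(B^\ast)=\chi_{cr}(B)$. Two issues arise: $B$ need not satisfy the part-size shape required by Definition~\ref{defx}(ii), and $B$ itself need not contain a perfect $H$-tiling -- only some blow-up $B(t)$ does. I would resolve the first issue by applying Proposition~\ref{propbottle} to pass to a bottlegraph $B'$ of $H$ with $\chi_{cr}(B')=\chi_{cr}(B)$ and with all parts of equal size $m$ except for possibly one smaller part. Then, since $B'$ has only finitely many parts (say $k$), there are only $k!$ permutations $\sigma$ of $[k]$ to consider; for each such $\sigma$ there is an integer $t_\sigma$ such that $(B'(t_\sigma),\phi)$ contains a perfect $H$-tiling for every interval labelling $\phi$ of $B'$ with respect to $\sigma$. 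Setting $T$ to be any common multiple of the $t_\sigma$ (e.g.\ their product), I would take $B^\ast:=B'(T)$.

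The key verification is that $B^\ast$ is a $1$-bottlegraph. Given any permutation $\sigma$ and any interval labelling $\phi$ of $B^\ast$ with respect to $\sigma$, I would partition each part of $B^\ast$ into $T/t_\sigma$ consecutive blocks of equal size along the order induced by $\phi$, and then group together the $j$-th block from each part to obtain $T/t_\sigma$ pairwise vertex-disjoint copies of $B'(t_\sigma)$. Since the vertices inside any single part of $B^\ast$ are order-equivalent (they have identical neighbourhoods), each such copy inherits an interval labelling with respect to the same $\sigma$, and therefore contains a perfect $H$-tiling by the choice of $t_\sigma$. Taking the union yields a perfect $H$-tiling of $(B^\ast,\phi)$, so $B^\ast$ satisfies Definition~\ref{defx}(iii) with $x=1$. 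A short check gives $\chi_{cr}(B^\ast)=\chi_{cr}(B'(T))=\chi_{cr}(B')=\chi_{cr}(B)$ (uniform blow-ups preserve $\chi_{cr}$ of complete multipartite graphs), and the part-size shape required by Definition~\ref{defx}(ii) is inherited from $B'$ with $m$ replaced by $Tm$. Thus $\chi^*_{cr}(1,H)\le\chi_{cr}(B)$, and taking the infimum over all bottlegraphs $B$ of $H$ completes the proof.

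The only delicate point is the blow-up-decomposition step used to verify the $1$-bottlegraph condition for $B^\ast$; once one notices that vertices within a single part are interchangeable as far as the ordered graph structure is concerned, this reduces to a routine observation, so I do not expect any serious obstacle.
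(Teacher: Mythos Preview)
Your proposal is correct and follows essentially the same route as the paper: show $\mathcal X_1\subseteq\mathcal X$ directly, then for the reverse pass through Proposition~\ref{propbottle} to a bottlegraph $B'$ of the right shape and take a suitable blow-up $B'(T)$ as the desired $1$-bottlegraph. The paper simply asserts that ``there is some $t\in\mathbb N$ so that $B'(t)$ satisfies condition (iii)'', whereas you spell out why a single $t$ can be chosen uniformly over all permutations $\sigma$ (finitely many $t_\sigma$, take a common multiple, decompose $B'(T)$ into ordered copies of $B'(t_\sigma)$); this extra detail is exactly the justification the paper leaves implicit.
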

\begin{proof}
Let $\mathcal X:= \{ \chi_{cr}(B) : \ B \text{ is a bottlegraph of } H\}$ and 
$\mathcal X_1:= \{ \chi_{cr}(B) : \ B \text{ is a $1$-bottlegraph of $H$}\}.$
Thus, $\inf \mathcal X= \chi^*_{cr} (H)$ and $\inf \mathcal X_1= \chi^*_{cr} (1,H)$.
By definition of a bottlegraph and $1$-bottlegraph we have that $\mathcal X_1 \subseteq \mathcal X$; so to prove the proposition it suffices to show that $\mathcal X \subseteq \mathcal X_1$.

Given any bottlegraph $B$ of $H$, let $B'$ be the bottlegraph of $H$ obtained by applying Proposition~\ref{propbottle}. So $B'$ satisfies conditions (i) and (ii) in the definition of a $1$-bottlegraph of $H$ and $\chi_{cr}(B')=\chi_{cr}(B)$. As $B'$ is a bottlegraph of $H$, there is some $t\in \mathbb N$ so that $B'(t)$ satisfies condition (iii) of the definition of a $1$-bottlegraph of $H$.
Then $B'(t)$ is a $1$-bottlegraph of $H$ with $\chi_{cr}(B'(t))=\chi_{cr}(B')=\chi_{cr}(B)$.
Thus, $\mathcal X \subseteq \mathcal X_1$, as desired.

\end{proof}

\section{The regularity lemma}\label{sec:regularitylemma}
In the proof of Theorem~\ref{absorbingthm} we will make use of the regularity method.
In this section we state a multipartite version of  Szemer{\'e}di's regularity lemma and some other related tools. First, we introduce some basic notation.

The \emph{density} of an (ordered) bipartite graph with vertex classes~$A$ and~$B$ is
defined to be
$$d(A,B):=\frac{e(A,B)}{|A||B|}.$$
Given $\eps>0$, a graph $G$ and two disjoint sets $A, B\subset V(G)$, we say that the pair $(A, B)_G$ (or simply $(A, B)$ when the underlying graph is clear) is \emph{$\eps$-regular} if for all sets
$X \subseteq A$ and $Y \subseteq B$ with $|X|\ge \eps |A|$ and
$|Y|\ge \eps |B|$, we have $|d(A,B)-d(X,Y)|< \eps$. 
Given $d\in [0, 1]$, the pair $(A, B)_G$ is \emph{$(\eps,d)$-regular} if $G$ is $\eps$-regular, and $d(A,B)\geq d$.

We now state some well-known properties of $\eps$-regular pairs.
The first (see, e.g.,~\cite[Fact~1.5]{KomlosSimonovits}) implies that
one can delete many vertices from an $(\eps,d)$-regular pair and still retain
such a regularity property.

\begin{lemma}[Slicing lemma]\label{slicinglemma}
Let $(A,B)_G$ be an $\eps$-regular pair of density $d$, and for some $\alpha>\eps$, let $A'\subseteq A$, $B'\subseteq B$ with $|A'|\geq\alpha|A|$ and $|B'|\geq\alpha|B|$. Then $(A', B')_G$ is $(\eps', d-\eps)$-regular with $\eps':=\max\{\eps/\alpha, 2\eps\}$.
\end{lemma}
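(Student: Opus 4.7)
The plan is to verify the two conditions defining $(\eps',d-\eps)$-regularity for the pair $(A',B')_G$ in turn, using only the original $\eps$-regularity of $(A,B)_G$ together with the size bounds $|A'|\geq \alpha|A|$, $|B'|\geq\alpha|B|$.

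First I would establish the density bound. Since $\alpha > \eps$, the sets $A'\subseteq A$ and $B'\subseteq B$ satisfy $|A'|\geq \alpha|A| > \eps|A|$ and $|B'|\geq\alpha|B|>\eps|B|$, so they qualify as test sets in the $\eps$-regularity condition for $(A,B)_G$. Hence $|d(A',B')-d(A,B)|<\eps$, which immediately gives $d(A',B')>d-\eps$, as required.

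Next I would verify the $\eps'$-regularity condition. Take arbitrary $X\subseteq A'$ and $Y\subseteq B'$ with $|X|\geq\eps'|A'|$ and $|Y|\geq\eps'|B'|$. Since $\eps'\geq\eps/\alpha$, I obtain
\[
|X|\geq \frac{\eps}{\alpha}\cdot|A'|\geq\frac{\eps}{\alpha}\cdot\alpha|A|=\eps|A|,
\]
and analogously $|Y|\geq\eps|B|$. Thus $X,Y$ are again legitimate test sets for the $\eps$-regularity of $(A,B)_G$, yielding $|d(X,Y)-d(A,B)|<\eps$. Combining this with the bound $|d(A',B')-d(A,B)|<\eps$ from the previous step via the triangle inequality gives
\[
|d(X,Y)-d(A',B')|<2\eps\leq\eps',
\]
which is exactly what $\eps'$-regularity requires.

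There is no serious obstacle here: the whole argument is a one-step reduction to the hypothesised $\eps$-regularity of $(A,B)_G$, with the two summands in $\eps'=\max\{\eps/\alpha,2\eps\}$ precisely accounting for (a) the rescaling of threshold sizes from $A',B'$ back to $A,B$, and (b) the triangle-inequality loss when comparing $d(X,Y)$ to $d(A',B')$ rather than to $d(A,B)$. No further tools are needed.
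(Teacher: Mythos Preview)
Your argument is correct and is precisely the standard proof of the slicing lemma. The paper does not give its own proof of this lemma; it simply cites it as~\cite[Fact~1.5]{KomlosSimonovits}, so there is nothing further to compare.
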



The following theorem is a multipartite version of Szemer{\'e}di's regularity lemma~\cite{sze} (presented, e.g., as Lemma~5.5 in~\cite{blt}).

\begin{thm}[Multipartite regularity lemma]\label{multiregularity}
Given any integer $t\geq2$, any $\eps>0$ and any $\ell_0\in\mathbb{N}$ there exists $L_0=L_0(\eps, t, \ell_0)\in\mathbb{N}$ such that for every $d\in (0, 1]$ and for every nearly balanced $t$-partite graph $G=(W_1,\dots, W_t)$ on $n\geq L_0$ vertices, there exists an $\ell\in\mathbb{N}$, a partition $W^0_i, W^1_i,\dots, W^\ell _i$ of $W_i$ for each $i\in[t]$ and a spanning subgraph $G'$ of $G$, such that the following conditions hold:
\begin{enumerate}
\item $\ell_0\leq \ell\leq L_0$;
\item $d_{G'}(x)\geq d_G(x)-(d+\eps)n$ for every $x\in V(G)$;
\item $|W^0_i|\leq\eps n/t$ for every $i\in[t]$;
\item $|W^j_i| = |W^{j'}_{i'}|$ for every $i,i'\in[t]$ and $j,j'\in[\ell]$;
\item for every $i,i'\in[t]$ and $j,j'\in[\ell]$ either $(W^j_i, W^{j'}_{i'})_{G'}$ is an $(\eps,d)$-regular pair or $G'[W^j_i, W^{j'}_{i'}]$ is empty.
\end{enumerate}
\end{thm}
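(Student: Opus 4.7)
The plan is to derive this multipartite variant from the index-increment argument behind Szemer\'edi's regularity lemma, applied so as to respect the given multipartite structure, and then to relocate vertices to the exceptional sets so that the per-vertex degree guarantee (2) holds uniformly. First I would run the standard defect Cauchy--Schwarz / mean-square-density iteration starting from the initial partition $\{W_1,\dots,W_t\}$ with regularity parameter $\eps'\le (\eps/(20t))^2$; because the initial partition is kept fixed, every refinement class is contained in some $W_i$, and at most an $\eps'$-fraction of cross-cluster pairs fails to be $\eps'$-regular. Let $\ell$ be the resulting number of classes inside each $W_i$; the index-increment bound forces $\ell\le L_0(\eps,t,\ell_0)$ and running the iteration a few extra refinement steps ensures $\ell\ge \ell_0$.

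Next I would relocate vertices to the exceptional sets $W_i^0$ in two passes. In the first pass, call a cluster $W_i^j$ \emph{bad} if it lies in more than $\sqrt{\eps'}\,\ell t$ non-regular pairs; a double count against the $\le \eps'(\ell t)^2$ non-regular pairs yields at most $2\sqrt{\eps'}\,\ell t$ bad clusters, hence $\le 2\sqrt{\eps'}\, n$ vertices in bad clusters, all of which I dump into the appropriate $W_i^0$. In the second pass, call $x\in W_i^j$ \emph{irregular in} $(W_i^j,W_{i'}^{j'})$ if $d_G(x, W_{i'}^{j'})>(d(W_i^j,W_{i'}^{j'})+\eps')|W_{i'}^{j'}|$; the definition of $\eps'$-regularity implies at most $\eps'|W_i^j|$ such $x$ per $\eps'$-regular pair, so a further double count shows that at most $O(\sqrt{\eps'}\, n)$ vertices are irregular in more than $\sqrt{\eps'}\,\ell t$ pairs, and these too are placed in $W_i^0$. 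Finally I would move a few additional vertices per cluster into $W_i^0$ to make all remaining $W_i^j$ (with $j\ge 1$) have the same size, using that $G$ is nearly balanced. The choice of $\eps'$ keeps $|W_i^0|\le \eps n/t$, yielding (1), (3) and (4).

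Finally, define $G'$ by deleting from $G$ every edge between two non-exceptional clusters $W_i^j,W_{i'}^{j'}$ whose pair is either not $\eps$-regular or has density less than $d$; (5) is then immediate. No edge incident to an exceptional vertex is ever deleted, so (2) holds trivially on $\bigcup_i W_i^0$. For $x$ in a non-exceptional cluster $W_i^j$ the edges removed at $x$ split into: at most $\sqrt{\eps'}\,\ell t$ non-regular pairs containing $W_i^j$, each contributing at most $n/(\ell t)$ edges at $x$, totalling $\le \sqrt{\eps'}\,n$; and regular pairs of density below $d$, each contributing at most $(d(W_i^j,W_{i'}^{j'})+\eps')|W_{i'}^{j'}|\le (d+\eps')|W_{i'}^{j'}|$ edges at $x$ (because $x$ is not irregular there), summing to at most $(d+\eps')n$. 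Combining gives $d_{G'}(x)\ge d_G(x)-(d+\eps)n$ after a harmless rescaling of $\eps$, which is (2).

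The main obstacle is precisely this uniform per-vertex bound in (2): the bare output of Szemer\'edi's lemma only controls the \emph{fraction} of non-regular pairs and only the \emph{average} per-vertex degree inside a low-density pair, neither of which implies (2) at every individual vertex. The two-pass relocation above is designed to move the pathological vertices into the exceptional sets, and the constants $\eps'\ll \eps \ll 1/t$ must be tuned so that these additional exceptional vertices still fit within the $\eps n/t$ budget imposed by (3).
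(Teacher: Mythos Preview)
The paper does not prove Theorem~\ref{multiregularity}; it is quoted without proof as a known tool, with a pointer to Lemma~5.5 of~\cite{blt}. So there is nothing in the paper to compare your argument against.

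Your approach is the standard derivation of the degree-form (multipartite) regularity lemma and is essentially correct. Two small points are worth tightening. First, in your final degree count you assert that for every regular low-density pair ``$x$ is not irregular there''; this is not quite right, since your second pass only discarded vertices irregular in \emph{more than} $\sqrt{\eps'}\,\ell t$ regular pairs, so a surviving $x$ may still be irregular in up to $\sqrt{\eps'}\,\ell t$ of them. Those pairs should be grouped with the non-regular ones, contributing a further $\le \sqrt{\eps'}\,n$ to the degree loss at $x$; your choice $\eps'\le(\eps/(20t))^2$ still absorbs this. Second, you should say explicitly that after the two relocation passes the surviving pairs remain $\eps$-regular (not merely $\eps'$-regular): only an $O(\sqrt{\eps'})$ fraction of each non-bad cluster is removed, so Lemma~\ref{slicinglemma} upgrades $\eps'$-regularity to $\eps$-regularity with room to spare. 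With these fixes the argument goes through.
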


We call the $W^j_i$ \emph{clusters}, the $W^0_i$ the \emph{exceptional sets} and the vertices in the $W^0_i$ \emph{exceptional vertices}. We refer to $G'$ as the pure graph. The \emph{reduced graph $R$ of $G$ with parameters $\eps$, $d$ and $\ell_0$} is the graph whose vertices are the $W^j_i$ (where $i \in[t]$ and $j\in[\ell])$ and in which $W^j_i W^{j'}_{i'}$ is an edge precisely when $(W^j_i, W^{j'}_{i'})_{G'}$ is $(\eps, d)$-regular.
The following well-known corollary of the regularity lemma shows that the reduced graph almost inherits the minimum degree of the original graph.

\begin{prop}\label{reducedgraph}
Let $0<\eps,d,k<1$ and let $G$ be an $n$-vertex graph with $\delta(G)\geq kn$. If $R$ is the reduced graph of $G$ obtained by applying Theorem~\ref{multiregularity} with parameters $\eps,d,\ell_0$, then $\delta(R)\geq(k-2\eps-d)|R|$.
\end{prop}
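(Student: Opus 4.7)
My plan is the standard neighborhood counting argument in the reduced graph, using the properties guaranteed by Theorem~\ref{multiregularity}. Write $m$ for the common cluster size and let $V$ be an arbitrary vertex of $R$, that is, $V = W^j_i$ for some $i \in [t]$ and $j \in [\ell]$. I will lower-bound $d_R(V)$ by looking at a single vertex $x \in V$ and tracking where its neighbours in the pure graph $G'$ can go.

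First I would use property (2) of Theorem~\ref{multiregularity} to obtain
\[
d_{G'}(x) \geq d_G(x) - (d+\eps)n \geq (k - d - \eps)n.
\]
By property (3), the union of all exceptional sets has size at most $\eps n$, so at most $\eps n$ of these neighbours lie in exceptional vertices. Hence $x$ has at least $(k - d - 2\eps)n$ neighbours in $G'$ that lie in clusters. Next, by property (5), if a cluster $V'$ is not adjacent to $V$ in $R$ (equivalently, $(V, V')_{G'}$ is not $(\eps,d)$-regular), then $G'[V, V']$ is empty, so $x$ has no $G'$-neighbours in $V'$. Therefore all $G'$-neighbours of $x$ outside the exceptional sets lie in the $d_R(V)$ clusters adjacent to $V$ in $R$, each of size $m$, giving
\[
d_R(V)\, m \geq (k - d - 2\eps)n.
\]

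Finally, the $|R|$ clusters have total size at most $n$ (the remainder is accounted for by exceptional vertices), so $|R|\, m \leq n$, i.e.\ $n/m \geq |R|$. Combining,
\[
d_R(V) \geq (k - d - 2\eps)\,\frac{n}{m} \geq (k - 2\eps - d)\,|R|.
\]
Since $V$ was an arbitrary vertex of $R$, this gives the claimed minimum degree bound. No step looks difficult; the only thing to be careful about is cleanly separating the contribution of exceptional vertices (handled by (3)) from the contribution of non-neighbour clusters (handled by (5)), so that the minimum degree loss in $R$ is exactly $2\eps + d$.
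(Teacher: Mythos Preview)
Your argument is the standard one and is essentially correct; the paper itself does not spell out a proof, calling the proposition a ``well-known corollary'' of the regularity lemma, so there is nothing different to compare against.

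One small point worth tightening: when you assert that ``all $G'$-neighbours of $x$ outside the exceptional sets lie in the $d_R(V)$ clusters adjacent to $V$ in $R$'', you are implicitly using that $x$ has no $G'$-neighbours inside its own cluster $V$ (since $V$ is not adjacent to itself in $R$). This is indeed the standard convention for the degree form of the regularity lemma---edges within clusters are among those discarded to obtain the $(d+\eps)n$ loss in property~(2)---and it is consistent with reading property~(5) as forcing $G'[W^j_i]$ to be empty. It would be cleaner to state this explicitly rather than leave it implicit.
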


A useful tool to embed subgraphs into $G$ using the reduced graph $R$ is the so-called \emph{key lemma}. 

\begin{lemma}[Key lemma~\cite{KomlosSimonovits}]\label{keylemma} 
Let $0<\eps<d$ and $q,t\in\mathbb{N}$. Let $R$ be a graph with $V(R)=\{v_1,\dots,v_k\}$. We construct a graph $G$ as follows: replace every vertex $v_i\in V(R)$ with a set $V_i$ of $q$ vertices and replace each edge of $R$ with an $(\eps,d)$-regular pair. For each $v_i\in V(R)$, let $U_i$
denote the set of $t$ vertices in $R(t)$ corresponding to $v_i$. Let $H$ be a subgraph of $R(t)$ on $h$ vertices with maximum degree $\Delta$. Set $\delta:=d-\eps$ and $\eps_0:=\delta^\Delta/(2+\Delta)$. If $\eps\leq\eps_0$ and $t-1\leq\eps_0q$ then there are at least $(\eps_0q)^h$ labelled copies of $H$ in $G$ so that if $x\in V(H)$ lies in $U_i$ in $R(t)$, then $x$ is embedded into $V_i$ in $G$.
\end{lemma}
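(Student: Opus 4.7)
\smallskip
\noindent\textbf{Proof plan.} The strategy is a standard greedy vertex-by-vertex embedding of $H$ into $G$. Fix an ordering $x_1,\dots,x_h$ of $V(H)$, and for $x_j\in U_{i(j)}$ write $V_{i(j)}$ for the corresponding class in $G$. For each not-yet-embedded $x_j$ we maintain a \emph{candidate set} $C_j\subseteq V_{i(j)}$ consisting of those vertices adjacent in $G$ to every already-embedded neighbour of $x_j$; initially $C_j=V_{i(j)}$, so $|C_j|=q$. When $x_i$ is embedded into some $v\in C_i$, we update $C_j\leftarrow C_j\cap N_G(v)$ for every not-yet-embedded neighbour $x_j$ of $x_i$.

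The key invariant to maintain is $|C_j|\geq\delta^{d_j^-}q$ throughout the procedure, where $d_j^-$ denotes the number of neighbours of $x_j$ already embedded. This rests on the standard consequence of $(\eps,d)$-regularity: in any such pair $(A,B)$ and any $B'\subseteq B$ with $|B'|\geq\eps|B|$, fewer than $\eps|A|$ vertices $v\in A$ satisfy $|N_G(v)\cap B'|<\delta|B'|$. Because $|C_j|\geq\delta^\Delta q\geq\eps q$ at all relevant times (using $\eps\leq\eps_0\leq\delta^\Delta$), this lemma applies to the pair $(V_{i(i)},V_{i(j)})$ whenever $x_ix_j\in E(H)$.

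To embed $x_i$ we choose $v\in C_i$ avoiding two types of forbidden vertices: (a) the at most $t-1$ vertices of $V_{i(i)}$ already used by previously embedded members of $U_{i(i)}$; and (b) for each of the at most $\Delta$ forward neighbours $x_j$ of $x_i$ in $H$, the at most $\eps q$ ``bad'' vertices whose use would shrink $C_j\cap N_G(v)$ below $\delta|C_j|$. Hence the number of valid choices for $v$ is at least
\[
|C_i|-(t-1)-\Delta\eps q \;\geq\; \delta^\Delta q-\eps_0 q-\Delta\eps_0 q \;=\; (2+\Delta)\eps_0 q-(1+\Delta)\eps_0 q \;=\; \eps_0 q,
\]
using $\eps\leq\eps_0$, $t-1\leq\eps_0 q$ and the defining identity $(2+\Delta)\eps_0=\delta^\Delta$. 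Multiplying over the $h$ embedding steps produces at least $(\eps_0 q)^h$ labelled copies of $H$, each placed so that $x\in U_i$ maps into $V_i$, as required.

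The only real obstacle is the bookkeeping that calibrates $\eps_0$: it must be small enough that the invariant $|C_j|\geq\delta^{d_j^-}q$ keeps every candidate set above the regularity threshold $\eps q$, yet large enough that after subtracting $(t-1)+\Delta\eps q$ forbidden vertices from $|C_i|$ at each step one still retains order $\eps_0 q$ choices. The specific value $\eps_0=\delta^\Delta/(2+\Delta)$ in the statement is tuned precisely so that both bounds fit simultaneously.
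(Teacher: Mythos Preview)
Your proof is correct and is precisely the standard greedy embedding argument; the paper does not give its own proof of this lemma but simply quotes it from Koml\'{o}s and Simonovits~\cite{KomlosSimonovits}, where the same argument appears.
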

As in~\cite{blt}, some of our applications of  Lemma~\ref{keylemma} will take the following form: suppose within an ordered graph $G$ we have vertex classes $V_1<\ldots<V_k$ so that each pair $(V_i,V_j)_G$ is $(\varepsilon,d)$-regular. Then Lemma~\ref{keylemma} tells us  $G$ contains many copies of any fixed size ordered graph $H$ with
 $\chi _< (H)=k$, where the $i$th vertex class of each such copy of $H$ is embedded into $V_i$.

\section{Absorbing tools}\label{sec:absorbingtools}
In this section we state a couple of results which are  useful for proving the existence of $H$-absorbing sets. The first  result is the following crucial
lemma of Lo and Markstr\"om~\cite{lo}; we present the ordered version of their result which appeared as Lemma~7.1 in~\cite{blt}.

\begin{lemma}[Lo and Markstr\"om~\cite{lo}]\label{absorbingtool} 
Let $h,s\in\mathbb{N}$ and $\xi>0$. Suppose that $H$ is an ordered graph on $h$ vertices. Then there exists an $n_0\in\mathbb{N}$ such that the following holds. Suppose that $G$ is an ordered graph on $n\geq n_0$ vertices so that, for any $x,y\in V(G)$, there are at least $\xi n^{sh-1}$
$(sh-1)$-sets $X\subseteq V(G)$ such that both $G[X\cup\{x\}]$ and $G[X\cup\{y\}]$ contain perfect $H$-tilings. Then $V(G)$ contains a set $M$ so that
\begin{itemize}
\item $|M|\leq(\xi/2)^hn/4$;
\item $M$ is an $H$-absorbing set for any $W\subseteq V (G)\setminus M$ such that $|W|\in h\mathbb{N}$ and $|W|\leq(\xi/2)^{2h}n/(32s^2h^3)$.
\end{itemize}
\end{lemma}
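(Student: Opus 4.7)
The plan is to apply a randomised absorbing argument in the spirit of R\"odl, Ruci\'nski and Szemer\'edi~\cite{rrs2}. For each pair $x,y \in V(G)$, let $\mathcal{A}(x,y)$ denote the family of all $(sh-1)$-sets $X$ such that both $G[X \cup \{x\}]$ and $G[X \cup \{y\}]$ admit perfect $H$-tilings; the hypothesis gives $|\mathcal{A}(x,y)| \geq \xi n^{sh-1}$. First I would choose a random family $\mathcal{F} \subseteq \binom{V(G)}{sh-1}$ by placing each $(sh-1)$-set into $\mathcal{F}$ independently with probability $p$ of order $\xi^h/n^{sh-2}$, tuned to match both target bounds simultaneously. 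Markov's inequality together with Chernoff concentration on each of the $\binom{n}{2}$ sums $\sum_{X \in \mathcal{A}(x,y)} \mathbf{1}[X \in \mathcal{F}]$, and a union bound, yield with positive probability an $\mathcal{F}$ with $|\mathcal{F}| \leq 2p\binom{n}{sh-1}$, with $|\mathcal{F} \cap \mathcal{A}(x,y)| \geq (p/2)\xi n^{sh-1}$ for every pair, and with only $O(p^2 n^{2sh-3})$ intersecting pairs in $\mathcal{F}$, which (for our $p$) is much less than $p\xi n^{sh-1}$. Deleting one set from each intersecting pair then gives a subfamily $\mathcal{F}'$ of pairwise disjoint $(sh-1)$-sets with $|\mathcal{F}' \cap \mathcal{A}(x,y)| \geq (p/4)\xi n^{sh-1} \geq |W|+1$ for every pair and every admissible $W$.

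Next I would augment $\bigcup \mathcal{F}'$ into an $H$-tileable set. Writing $N(X) := \{v : G[X \cup \{v\}] \text{ has a perfect } H\text{-tiling}\}$, note that for every $X \in \mathcal{A}(x,y)$ we have $x,y \in N(X)$; a double-count then gives $\sum_X |N(X)| \geq \xi n^{sh}$, so most $X \in \mathcal{F}'$ satisfy $|N(X)| = \Omega(\xi n)$. Via Hall's theorem I would greedily select pairwise distinct ``anchors'' $v_X \in N(X) \setminus \bigcup \mathcal{F}'$ such that the set $V^* := \{v_X : X \in \mathcal{F}'\}$ itself decomposes into $|\mathcal{F}'|/h$ vertex-disjoint copies of $H$ in $G$. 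Vertex-disjoint copies of $H$ in $G$ abound (each absorber certifies one), and the matching needed to align each absorber $X$ with an anchor from these copies exists by Hall's condition. Setting $M := \bigcup \mathcal{F}' \cup V^*$ gives $|M| = sh|\mathcal{F}'| \leq (\xi/2)^h n/4$, and $\{X \cup \{v_X\} : X \in \mathcal{F}'\}$ is a perfect $H$-tiling of $G[M]$.

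To verify the absorbing property, given an admissible $W$, let $\phi_0 : \mathcal{F}' \to V^*$ denote the default bijection $X \mapsto v_X$. Each $w \in W$ has degree at least $|\mathcal{F}' \cap \mathcal{A}(w,y)| > |W|$ (for any auxiliary $y$) in the bipartite graph on $V^* \cup W$ with $v \sim w$ iff $w \in N(\phi_0^{-1}(v))$. A second Hall-type argument---exploiting this large $W$-side degree and the freedom to choose which $|W|/h$ of the pre-built $H$-tiles of $V^*$ to ``sacrifice''---produces a subset $U \subseteq V^*$ that is a union of $|W|/h$ such tiles, together with a re-assignment $\phi' : \mathcal{F}' \to (V^* \setminus U) \cup W$ satisfying $\phi'(X) \in N(X)$ for every $X$. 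The desired perfect $H$-tiling of $M \cup W$ is then $\{X \cup \{\phi'(X)\} : X \in \mathcal{F}'\}$ together with the pre-built $H$-copies of $V^*$ whose union is $U$.

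The main obstacle I expect is the second step: simultaneously arranging the anchors $v_X$ to be distinct representatives of the sets $N(X)$ and to decompose $V^*$ into $H$-copies of $G$. This requires carefully converting the pairwise absorber abundance into a per-$X$ statement via the double-count, and invoking Hall's theorem in a way that respects the pre-built $H$-tile structure when setting up the absorption matching.
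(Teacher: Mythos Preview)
The paper does not prove this lemma; it is quoted (in its ordered form) from Lo--Markstr\"om via~\cite{blt}. So there is no in-paper argument to compare against, and I will simply assess your sketch against the standard proof.

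Your first step---random selection with probability $p \asymp \xi^h/n^{sh-2}$, Chernoff for each $|\mathcal F\cap\mathcal A(x,y)|$, Markov for $|\mathcal F|$ and for the number of intersecting pairs, then deletion of one set from every intersecting pair---is exactly the standard opening and is correct.

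The real gap is earlier than where you flag it. You write that the double-count $\sum_X |N(X)|\ge \xi n^{sh}$ implies ``most $X\in\mathcal F'$ satisfy $|N(X)|=\Omega(\xi n)$''. That inference fails: the sum is over \emph{all} $(sh-1)$-sets, and a large global average tells you nothing about the particular sets that happened to land in $\mathcal F'$. It is entirely consistent with the hypothesis that, for some fixed pair $(x,y)$, every $X\in\mathcal A(x,y)$ has $|N(X)|=2$ (namely $N(X)=\{x,y\}$), while the bulk of the sum comes from other sets; then every $X$ you retained in $\mathcal F'\cap\mathcal A(x,y)$ has only two possible anchors, and neither your Hall argument for choosing distinct $v_X$ nor the later matching $\phi'$ goes through. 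The double-count you need is the \emph{pairwise} one, $\sum_X\binom{|N(X)|}{2}=\sum_{x<y}|\mathcal A(x,y)|\ge \binom{n}{2}\xi n^{sh-1}$, but even this only controls averages over pairs and does not by itself give the per-pair statement you use.

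Your proposed remedy---arranging the anchor set $V^*$ to be $H$-tileable and then, during absorption, sacrificing whole $H$-tiles of $V^*$---is an interesting idea, but both Hall-type claims you invoke rest on the same unproven largeness of $|N(X)|$, so the scheme does not yet stand on its own. In the standard argument one avoids this detour entirely: one first discards from $\mathcal F'$ every $X$ with $N(X)=\emptyset$ (harmless, since such $X$ lie in no $\mathcal A(x,y)$), and then chooses anchors $v_X$ greedily; the point you are missing is that one must also argue that the greedy choice succeeds, and this is where the per-pair hypothesis does real work beyond what you have extracted from it. Once distinct anchors are in hand, absorbing $W$ is a direct greedy swap using the sets in $\mathcal F'\cap L(w)$, with no need for $V^*$ to carry its own $H$-tiling.
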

Informally, we will sometimes refer to a set $X$ satisfying the assumptions of Lemma~\ref{absorbingtool} as a \emph{chain} of size $|X|$ between vertices $x$ and $y$. The next lemma states that it is in some sense possible to concatenate chains.

\begin{lemma}\label{chainlemma}
Let $H$ be an (ordered) graph on $h$ vertices,
let $\alpha,\beta,\gamma >0$  and $s_1,s_2, n\in\mathbb{N}$ where
$$0<1/n \ll \gamma\ll\alpha,\beta, 1/h, 1/s_1, 1/s_2 .$$
Let $G$ be an (ordered) graph on $n$ vertices, $x,y\in V(G)$ and $A\subseteq V(G)\setminus\{x,y\}$ where $|A|\geq\alpha n$. Suppose that for every $z\in A$ there exist at least $\beta n^{s_1h-1}$ $(s_1h-1)$-sets $X\subseteq V(G)$ such that both $G[X\cup\{x\}]$ and $G[X\cup\{z\}]$ contain  perfect $H$-tilings and similarly there exist at least $\beta n^{s_2h-1}$ $(s_2h-1)$-sets $Y\subseteq V(G)$ such that both $G[Y\cup\{y\}]$ and $G[Y\cup\{z\}]$ contain  perfect $H$-tilings. Then there exist at least $\gamma n^{(s_1+s_2)h-1}$ $((s_1+s_2)h-1)$-sets $Z\subseteq V(G)$ such that both $G[Z\cup\{x\}]$ and $G[Z\cup\{y\}]$ contain  perfect $H$-tilings.
\end{lemma}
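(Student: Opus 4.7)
My plan is to prove Lemma~\ref{chainlemma} by a direct concatenation argument at a common endpoint $z \in A$, followed by a standard counting estimate.

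The key structural observation is the following. Fix $z \in A$ and suppose $X \subseteq V(G)\setminus\{x,y,z\}$ is an $(s_1h-1)$-chain between $x$ and $z$, and $Y \subseteq V(G)\setminus\{x,y,z\}$ is an $(s_2h-1)$-chain between $y$ and $z$, with $X \cap Y = \emptyset$. Set $Z := X \cup \{z\} \cup Y$, which has size $(s_1+s_2)h - 1$. Then $G[Z \cup \{x\}] = G[X \cup \{x\}] \cup G[Y \cup \{z\}]$ (as vertex-disjoint unions), and since both of these pieces admit perfect $H$-tilings by assumption, so does $G[Z \cup \{x\}]$; symmetrically for $G[Z \cup \{y\}]$. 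Hence every such triple $(z, X, Y)$ produces a valid chain $Z$ of the desired size.

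Next I will count triples. For each $z \in A$, there are at least $\beta n^{s_1h - 1}$ choices of $X$ and at least $\beta n^{s_2h - 1}$ choices of $Y$. Among the pairs $(X,Y)$, the number of ``bad'' ones—those where $X$ or $Y$ contains one of $x,y,z$, or where $X \cap Y \ne \emptyset$—is at most a $O(1/n)$ fraction of all pairs (each overlap condition forces one of the at most $(s_1+s_2)h$ coordinates to take one of $O(n^{s_ih - 2})$ values). Thus at least, say, $\tfrac{1}{2}\beta^2 n^{(s_1+s_2)h - 2}$ pairs $(X,Y)$ are good. Summing over $z \in A$ (using $|A| \geq \alpha n$) yields at least $\tfrac{1}{2}\alpha \beta^2 n^{(s_1+s_2)h - 1}$ good triples $(z, X, Y)$.

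Finally I need to pass from triples to sets $Z$. A given $Z$ of size $(s_1+s_2)h - 1$ is produced by at most $C := ((s_1+s_2)h - 1) \cdot \binom{(s_1+s_2)h - 2}{s_1h - 1}$ triples, since one chooses a distinguished vertex $z \in Z$ and then partitions $Z \setminus \{z\}$ into an $(s_1h-1)$-set and an $(s_2h-1)$-set; the constant $C$ depends only on $h, s_1, s_2$. Dividing, the number of distinct valid sets $Z$ is at least $\tfrac{\alpha \beta^2}{2C}\, n^{(s_1+s_2)h - 1}$, and the hierarchy $\gamma \ll \alpha, \beta, 1/h, 1/s_1, 1/s_2$ gives $\gamma \leq \alpha\beta^2/(2C)$, completing the proof. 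There is no serious obstacle here: the only thing to be careful about is the bookkeeping that ensures $X$, $Y$, and $\{x,y,z\}$ are pairwise disjoint so that the two partial tilings genuinely combine into a tiling of $Z \cup \{x\}$ (and of $Z \cup \{y\}$) without conflict.
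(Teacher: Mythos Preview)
Your proof is correct and follows essentially the same approach as the paper: form $Z=X\cup\{z\}\cup Y$ for a common endpoint $z\in A$, count triples $(z,X,Y)$, subtract those failing the disjointness constraints, and divide by a constant depending only on $h,s_1,s_2$ to pass from triples to sets. The paper's version is slightly terser (it uses the cruder overcount $((s_1+s_2)h-1)!$ rather than your sharper $C$, and handles the disjointness subtractions sequentially rather than all at once), but the argument is the same.
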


\begin{proof}
Let $z\in A$ and let $X,Y$ be an $(s_1h-1)$-set and an $(s_2h-1)$-set respectively which satisfy the above properties, with $X,Y$ disjoint so that $y \not \in X$ and $x \not \in Y$. Then $X\cup\{z\}\cup Y$ is an $((s_1+s_2)h-1)$-set and both $G[(X\cup\{z\}\cup Y)\cup\{x\}]$ and $G[(X\cup\{z\}\cup Y)\cup\{y\}]$ contain  perfect $H$-tilings. Thus, it is enough to lower bound the number of such triples $(z,X,Y)$. There are at least $\alpha n$ choices for $z$. Given a fixed choice of $z$ there are at least $\beta n^{s_1h-1}-n^{s_1h-2}\geq \beta n^{s_1h-1}/2$ suitable choices for $X$. For a fixed $X$ there are at least $\beta n^{s_2h-1}-s_1hn^{s_2h-2}\geq \beta n^{s_2h-1}/2$ choices for $Y$. Therefore,
in total there are at least 
$$\frac{(\alpha n)\cdot(\beta n^{s_1h-1}/2)\cdot(\beta n^{s_2h-1}/2)}{((s_1+s_2)h-1)!}\geq\gamma n^{(s_1+s_2)h-1}$$
choices for $Z$, where the denominator here is because different choices of the triple $(z,X,Y)$ can yield the same set $Z$.
\end{proof}




\section{Flexible colouring}\label{sec:usefulresults}
In this section we prove several auxiliary results which will be particularly important for the proofs of Theorem~\ref{absorbingthm} and Theorem~\ref{aptsthm}. We start with the following definition.

\begin{define}\label{flexibilitydef}
Let $H$ be an ordered graph and let $r:=\chi_<(H)$. We say that $H$ is \emph{flexible} if, for every $i\in[r-1]$, there exists an interval $(r+1)$-colouring
$$V_1<\cdots<V_i<\{x\}<V_{i+1}<\cdots<V_r$$
of $H$ such that both $V_1<\cdots<V_i\cup\{x\}<V_{i+1}<\cdots<V_r$ and $V_1<\cdots<V_i<V_{i+1}\cup\{x\}<\cdots<V_r$ are interval $r$-colourings of $H$.
\end{define}

In the next lemma we show that given a flexible ordered graph $H$, there exists a complete $\chi_<(H)$-partite ordered graph $F$ such that $F$ contains a perfect $H$-tiling and any complete $r$-partite ordered graph $F'$, whose parts have approximately the same sizes as the corresponding parts in $F$, contains a perfect $H$-tiling too. 

\begin{lemma}\label{flexibilitylemma1}
Let $H$ be an ordered graph on $h$ vertices and let $r:=\chi_<(H)$. If $H$ is flexible then the following holds. For every $i\in[r-1]$, let $V_1^i<\cdots<V_i^i<\{x_i\}<V_{i+1}^i<\cdots<V_r^i$ be an interval $(r+1)$-colouring of $H$ as described in Definition~\ref{flexibilitydef}. Let $F$ be the complete $r$-partite ordered graph with parts $F_1<\cdots<F_r$ such that 
$$|F_k|=\begin{cases}
rh+2rh\cdot\sum\limits_{i=1}^{r-1}|V_k^i| \text{\quad for $k\in\{1,r\};$} \\
2rh+2rh\cdot\sum\limits_{i=1}^{r-1}|V_k^i| \text{\quad otherwise.}
\end{cases}$$
Thus,  $|F|=2r(r-1)h^2$.
Let $s_1,\dots,s_r\in\mathbb{Z}$ such that $s_1+\cdots+s_r=0$ and $|s_i|\leq h$ for every $i\in[r]$. Let $F'$ be the complete $r$-partite ordered graph with parts $F'_1<\cdots<F'_r$ such that $|F'_k|=|F_k|+s_k$ for every $k\in[r]$. Then both $F$ and $F'$ contain  perfect $H$-tilings.
\end{lemma}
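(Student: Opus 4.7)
The plan is to build a perfect $H$-tiling of $F$ (and of $F'$) using, for each $i \in [r-1]$, two types of copies of $H$ coming from the flexibility colouring of Definition~\ref{flexibilitydef}. Call a copy of $H$ of \emph{type $L_i$} if it is embedded via the interval $r$-colouring that merges $x_i$ into $V_i^i$, and of \emph{type $R_i$} if $x_i$ is merged into $V_{i+1}^i$; both are valid interval $r$-colourings by hypothesis, so either can be embedded into any complete $r$-partite ordered graph whose class sizes supply enough vertices. A type $L_i$ copy consumes $|V_k^i|$ vertices from $F_k$ for $k\neq i$ and $|V_i^i|+1$ from $F_i$; a type $R_i$ copy is analogous with $i$ replaced by $i+1$.

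Let $a_i$ and $b_i$ denote the numbers of type $L_i$ and $R_i$ copies, respectively, and impose $a_i+b_i=2rh$ for every $i\in[r-1]$, so that the total number of copies equals $2r(r-1)h=|F|/h$. A direct summation shows that the total number of vertices used in $F_k$ is
\[
2rh\sum_{i=1}^{r-1}|V_k^i| \;+\; \begin{cases} a_1 & \text{if } k=1,\\ a_k+b_{k-1} & \text{if } 1<k<r,\\ b_{r-1} & \text{if } k=r.\end{cases}
\]
Matching these to the prescribed $|F_k|$ forces $a_1=rh$, $b_{r-1}=rh$, and $a_k+b_{k-1}=2rh$ for $1<k<r$. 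Combined with $a_i+b_i=2rh$, this propagates left-to-right to the unique solution $a_i=b_i=rh$ for every $i$, so a perfect $H$-tiling exists in $F$.

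For $F'$, replacing $|F_k|$ by $|F_k|+s_k$ in the same system yields boundary conditions $a'_1=rh+s_1$ and $b'_{r-1}=rh+s_r$, together with $a'_k+b'_{k-1}=2rh+s_k$ for $1<k<r$ and $a'_i+b'_i=2rh$. Setting $T_i:=s_1+\cdots+s_i$, the unique solution is $a'_i=rh+T_i$ and $b'_i=rh-T_i$, with the compatibility condition at $k=r$ reducing to $T_{r-1}=-s_r$, i.e., precisely $\sum_k s_k=0$. The bound $|s_j|\leq h$ gives $|T_i|\leq(r-1)h<rh$, so every $a'_i$ and $b'_i$ is non-negative, yielding a perfect $H$-tiling of $F'$ as well.

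The only real obstacle is the bookkeeping: the $rh$-versus-$2rh$ discrepancy between the endpoint and interior class sizes in the definition of $F$ is engineered exactly so that the linear system above admits the balanced solution, while the slack of $rh$ in each $a_i,b_i$ absorbs any perturbation $s_k$ of magnitude at most $h$. Once the counts $(a_i,b_i)$ or $(a'_i,b'_i)$ are fixed, the copies can be realised greedily inside the complete $r$-partite ordered graph, with any assignment of vertices to copies consistent with the per-part demands (since every copy is embedded via an interval $r$-colouring of $H$).
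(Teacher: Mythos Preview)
Your proof is correct and takes essentially the same approach as the paper's own proof: both build the tiling from $2rh$ copies of $H$ for each $i\in[r-1]$, split between the two flexibility colourings (your types $L_i$ and $R_i$), and both use the partial sums $T_i=s_1+\cdots+s_i$ to adjust the split for $F'$. The paper phrases the adjustment as ``moving $|T_k|$ vertices across the boundary $(F_k,F_{k+1})$'' rather than re-solving the linear system, but the arithmetic is identical.
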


\begin{proof}
First, we explicitly construct a perfect $H$-tiling in $F$. For every $i\in[r-1]$, consider $rh$ copies of the interval $r$-colouring $V_1^i<\cdots<V_i^i\cup\{x_i\}<V_{i+1}^i<\cdots<V_r^i$ of $H$ and $rh$ copies of the interval $r$-colouring  $V_1^i<\cdots<V_i^i<V_{i+1}^i\cup\{x_i\}<\cdots<V_r^i$ of $H$. For every $k\in[r]$ we take the union of all the $k$th colour classes from the interval $r$-colourings above to obtain $r$ new classes. It is easy to check that the size of the new $k$th class is exactly the size of $F_k$, so we have just constructed a perfect $H$-tiling in $F$. In particular, as there are $2r(r-1)h$ copies of $H$ in this tiling, $|F|=2r(r-1)h^2$.

Note that for every pair of consecutive classes $(F_k,F_{k+1})$ we could independently move $rh$ vertices from $F_k$ to $F_{k+1}$ to yield a new complete $r$-partite ordered graph which still contains a perfect $H$-tiling; similarly, we could move $rh$ vertices from $F_{k+1}$ to $F_k$ (these $2rh$ vertices fulfill the role of $x_k$ in their respective interval $r$-colourings of $H$). We are going to use this observation to construct a perfect $H$-tiling in $F'$. 

Set $t_k:=s_1+\cdots+s_k$ for $k\in[r]$ and $t_0:=0$. For every pair of consecutive classes $(F_k,F_{k+1})$, if $t_k\geq0$ move $t_k$ vertices from $F_{k+1}$ to $F_k$, otherwise move $-t_k$ vertices from $F_k$ to $F_{k+1}$. This is possible since $|t_k|\leq|s_1|+\cdots+|s_r|\leq rh$ by assumption. Note that the size of the new $k$th class is $|F_k|+t_k-t_{k-1}=|F_k|+s_k=|F'_k|$, hence we just constructed a perfect $H$-tiling in $F'$.
\end{proof}

Observe that the ordered graphs $F$ and $F'$ in Lemma~\ref{flexibilitylemma1} have the same number of vertices. In the next corollary we ease this restriction and allow $F'$ to have a few more vertices than $F$.



\begin{col}\label{flexibilitycol1}
Let $H$ be an ordered graph on $h$ vertices and let $r:=\chi_<(H)$. If $H$ is flexible then the following holds. Let $F$ be the complete $r$-partite ordered graph with parts $F_1<\cdots<F_r$ as in Lemma~\ref{flexibilitylemma1} and let $t\in\mathbb{N}$. For any $s_1,\dots,s_r,\ell\in\mathbb{N}\cup \{0\}$ such that $s_1+\cdots+s_r=\ell h\leq th$ and any complete $r$-partite ordered graph $F'$ with parts $F'_1<\cdots<F'_r$ of size $|F'_k|=t|F_k|+s_k$ for every $k\in[r]$, both  $F(t)$ and $F'$ contain perfect $H$-tilings.
\end{col}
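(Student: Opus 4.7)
The plan is to reduce Corollary~\ref{flexibilitycol1} to Lemma~\ref{flexibilitylemma1} by partitioning $F'$ into $t$ ``slices'' each isomorphic to a mild perturbation of $F$ allowed by Lemma~\ref{flexibilitylemma1}, together with $\ell$ extra copies of $H$ that absorb the surplus $\ell h$ vertices.

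For the special case $F(t)$ (corresponding to $s_k=0$ and $\ell=0$), I would partition each blown-up class $F_k(t)$ into $t$ consecutive sub-intervals of size $|F_k|$ and, for each $j\in[t]$, take the union $B^{(j)}$ of the $j$-th sub-intervals across all classes. Each $B^{(j)}$ induces an ordered subgraph of $F(t)$ isomorphic to $F$, so Lemma~\ref{flexibilitylemma1} gives a perfect $H$-tiling of $B^{(j)}$, and their union covers $F(t)$.

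For $F'$ with $\ell\ge 1$, I first fix any interval $r$-colouring of $H$ with class-size vector $(c_k)_{k\in[r]}$, so $\sum_k c_k=h$. Since $|F_k|\ge rh$ by the definition of $F$ in Lemma~\ref{flexibilitylemma1}, we have $\ell c_k\le \ell h\le th\le t|F_k|\le|F'_k|$, so I can select $\ell c_k$ vertices from each $F'_k$ and group them into $\ell$ disjoint ordered copies of $H$ (each copy takes $c_k$ vertices from class $k$ and is embedded via the fixed colouring; this works because $F'$ is complete $r$-partite). Removing these $\ell h$ vertices leaves a complete $r$-partite ordered graph $F''$ whose $k$-th class has $t|F_k|+s'_k$ vertices, where $s'_k:=s_k-\ell c_k$ satisfies $\sum_k s'_k=0$ and $|s'_k|\le \ell h\le th$. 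Writing $s'=\sum_{j=1}^{t}u^{(j)}$ with each $u^{(j)}\in\mathbb{Z}^r$ satisfying $\sum_k u^{(j)}_k=0$ and $|u^{(j)}_k|\le h$, I partition $F''$ into $t$ ordered sub-pieces in which the $k$-th class of the $j$-th sub-piece has $|F_k|+u^{(j)}_k$ vertices; Lemma~\ref{flexibilitylemma1} yields a perfect $H$-tiling of each sub-piece, and combining with the $\ell$ extracted copies of $H$ gives the desired perfect $H$-tiling of $F'$.

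The main obstacle is justifying the decomposition $s'=\sum_{j=1}^{t}u^{(j)}$, which amounts to a feasibility statement for an integer transportation problem: find an integer $t\times r$ matrix with row sums $0$, column sums $s'_k$, and entries in $[-h,h]$. The fractional solution $u^{(j)}_k=s'_k/t$ is feasible, and total unimodularity of the transportation constraint matrix delivers an integer solution; alternatively, one may construct the $u^{(j)}$ inductively, at each stage peeling off a balanced ``transport'' vector of the form $h(e_a-e_b)$, where $a,b$ are coordinates witnessing $|s'|_\infty>(t-1)h$ and opposite sign (which must coexist since $\sum_k s'_k=0$), thereby reducing the $\ell^\infty$-norm of the remaining perturbation by $h$ and allowing the $(t-1)$-case to finish.
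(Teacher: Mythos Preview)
Your argument is correct via the total unimodularity route, and it is a genuinely different approach from the paper's. The paper proves the corollary by a short induction on $\ell$: at each step it splits $F'$ into four pieces $X,Y,W,Z$ where $X$ is a clean $F(t-\ell)$, $W$ is a single copy of $H$ (using a chosen $r$-colouring $Q_1<\cdots<Q_r$), $Z$ is one perturbed copy of $F$ with offsets $s'_k-|Q_k|$ (handled directly by Lemma~\ref{flexibilitylemma1}), and $Y$ is an instance with parameters $(\ell-1,s_k-s'_k)$ on which the inductive hypothesis applies. This avoids any feasibility argument altogether: the perturbation absorbed at each step is manufactured to have entries bounded by~$h$ by construction, since $0\le s'_k\le h$ and $0\le |Q_k|\le h$. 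Your approach trades this elementary induction for a one-shot decomposition that is conceptually cleaner but leans on the integrality of capacitated transportation polytopes; both are short, but the paper's is more self-contained.

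One caveat: your alternative ``inductive peeling'' justification does not work as written. If several coordinates simultaneously satisfy $|s'_k|>(t-1)h$ (for instance $s'=(th,th,-th,-th,0,\dots,0)$, which is permitted since we only know $|s'_k|\le th$), then subtracting a single vector $h(e_a-e_b)$ fixes at most two of them and leaves the others still violating the $(t-1)h$ bound, so the recursion stalls. This is harmless for the proof because your total unimodularity argument is sound: after the shift $v^{(j)}_k:=u^{(j)}_k+h$, the system is a capacitated bipartite transportation problem with integer data, the fractional point $v^{(j)}_k=s'_k/t+h$ is feasible, and network-flow integrality yields an integer solution. If you want a self-contained peeling argument, take instead $u^{(t)}_k:=\lceil s'_k/t\rceil$ or $\lfloor s'_k/t\rfloor$, choosing the rounding direction coordinate-by-coordinate so that $\sum_k u^{(t)}_k=0$; this gives $|u^{(t)}_k|\le h$ and $|s'_k-u^{(t)}_k|\le (t-1)h$ in one step.
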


\begin{proof}
By Lemma~\ref{flexibilitylemma1}, $F$ contains a perfect $H$-tiling and thus clearly  the ordered blow-up $F(t)$ contains a perfect $H$-tiling too.

We prove that $F'$ contains a perfect $H$-tiling by induction on $\ell$. If $\ell=0$ then $F'=F(t)$ and so $F'$ contains a perfect $H$-tiling.

Assume $\ell>0$. For every $k\in[r]$, let $s'_k\in\mathbb{N}\cup \{0\}$ such that $s'_k\leq s_k$ and $s'_1+\cdots+s'_r=h$. Let $Q_1<\cdots<Q_r$ be an interval $r$-colouring of $H$. Notice that $V(F')$ can be partitioned into four sets $X,Y,W,Z$ such that
$$|X\cap F'_k|=(t-\ell)|F_k|,\quad|Y\cap F'_k|=(\ell-1)|F_k|+(s_k-s'_k),\quad|W\cap F'_k|=|Q_k|,\quad|Z|=|F_k|+s'_k-|Q_k|,$$
for every $k\in[r]$. Observe that
\begin{itemize}
    \item If non-empty then $F'[X]$ is a copy of $F(t-\ell)$, thus $F'[X]$ contains a perfect $H$-tiling.
    \item If $\ell=1$ then $s'_k=s_k$ for every $k\in[r]$ and so $F'[Y]$ is the empty graph. Otherwise, $F'[Y]$ contains a perfect $H$-tiling by the inductive hypothesis since $(s_1-s'_1)+\cdots+(s_r-s'_r)=(\ell-1)h$
    and $s_k-s'_k\geq0$ for every $k\in[r]$.
    \item $F'[W]$ spans a copy of $H$.
    \item $F'[Z]$ contains a perfect $H$-tiling by Lemma~\ref{flexibilitylemma1} since 
    $$\sum_{k=1}^r (s'_k-|Q_k|)=h-h=0$$
    and $|s'_k-|Q_k||\leq h$ for every $k\in[r]$.
\end{itemize}
Altogether this clearly implies $F'$ contains a perfect $H$-tiling.
\end{proof}

Our goal now is to show that if $\chi_{cr}^*(H)<\chi_<(H)$ then $H$ is flexible. This property is crucial 
for the proof of Theorem~\ref{absorbingthm}, and is a corollary of the following lemma.

\begin{lemma}\label{flexibilitylemma2}
Let $H$ be an ordered graph with vertex set $[h]$ and let $r:=\chi_<(H)$. If $H$ is not flexible, there exist some $i\in[r-1]$ such that the number of vertices lying in the first $i$ intervals of any given interval $r$-colouring of $H$ is fixed.
\end{lemma}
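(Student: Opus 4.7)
The plan is to prove the contrapositive: if for every $i\in[r-1]$ the quantity $b_i:=|V_1|+\cdots+|V_i|$ takes at least two distinct values over the interval $r$-colourings $V_1<\cdots<V_r$ of $H$, then $H$ is flexible. I would encode each interval $r$-colouring by its boundary vector $c=(c_1,\ldots,c_{r-1})\in\{0,1,\ldots,h\}^{r-1}$, and let $C$ denote the set of valid boundary vectors (those with $0\le c_1\le\cdots\le c_{r-1}\le h$ and each class $\{c_{j-1}+1,\ldots,c_j\}$ independent in $H$). The first step is to reformulate flexibility: \emph{$H$ is flexible at $i$ iff $C$ contains vectors $c,c'$ with $c'_i=c_i+1$ and $c'_j=c_j$ for all $j\ne i$}. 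The ``only if'' direction follows by separately merging $\{x\}$ into $V_i$ or $V_{i+1}$ in the $(r+1)$-colouring of Definition~\ref{flexibilitydef}, which gives two $r$-colourings differing by a unit step in $b_i$; the ``if'' direction follows by pulling out the vertex $c'_i$ as the singleton class $\{x\}$.

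Next I would verify that $C$ is closed under coordinate-wise minimum and maximum. For $d:=c\wedge c'$, if some class $\{d_{j-1}+1,\ldots,d_j\}$ of $d$ contained an edge $uv$ with $u<v$, then taking WLOG $d_{j-1}=c_{j-1}$ forces $u,v$ both to lie in class $j$ of $c$, contradicting $c\in C$; the $\vee$-case is symmetric. Combined with the reformulation, the task reduces to showing that under our standing assumption, $C$ contains a unit-step $i$-edge for every $i\in[r-1]$.

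For this I would establish the following ``Case~1 lemma'': if $c,c'\in C$ satisfy $c_i<c'_i$ and $c'_{i-1}\le c_i$, then taking $x:=c_i+1$ and stitching together the first $i-1$ classes of the colouring for $c'$, the set $V_i:=\{c'_{i-1}+1,\ldots,c_i\}$ (a subset of class $i$ of $c'$, so independent and containing no neighbour of $x$, as $x$ itself lies in that class), the singleton $\{x\}$, the set $V_{i+1}:=\{x+1,\ldots,c_{i+1}\}$ (a subset of class $i+1$ of $c$), and the last $r-i-1$ classes of $c$, yields an $(r+1)$-colouring witnessing flexibility at $i$. A symmetric argument (Case~$1'$) handles the side condition $c_{i+1}\ge c'_i$. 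For $i=1$ and $i=r-1$, the required side-condition is automatic via the boundary conventions $c_0:=0$ and $c_r:=h$, so flexibility at the extreme indices is immediate from the variation of $b_1$ and $b_{r-1}$ respectively.

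For an internal index $1<i<r-1$, my plan is to choose $c\in C$ achieving $c_i=m_i:=\min\{c''_i:c''\in C\}$ with $c_{i-1}$ maximal subject to this (well defined by the join-closure of $\{c''\in C:c''_i=m_i\}$), and dually choose $c'\in C$ with $c'_i>m_i$ and $c'_{i-1}$ minimal (using the join-closure of $\{c''\in C:c''_i>m_i\}$). The goal is to establish $c'_{i-1}\le c_i$, at which point the Case~1 lemma concludes flexibility at $i$. The main obstacle -- and the technical heart of the argument -- is to verify this inequality: if it failed, my plan is to take a colouring $c''\in C$ witnessing the variation of $b_{i-1}$ with $c''_{i-1}$ strictly smaller than $c'_{i-1}$ and form $c'\wedge c''$ to contradict the minimality of $c'_{i-1}$, iterating the meet operation with colourings witnessing the variation of earlier coordinates if necessary to avoid dropping $c'_i$ back to $m_i$. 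Making this descent argument sufficiently uniform to handle every internal $i$ is the delicate part of the proof.
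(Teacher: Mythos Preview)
Your approach via the contrapositive and the lattice structure of the set $C$ of boundary vectors is genuinely different from the paper's, but it has a real gap and is considerably more complicated than necessary.

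The gap is exactly where you flag it: the ``descent argument'' for an internal index $1<i<r-1$ is not carried out. Your Case~1 lemma requires the side condition $c'_{i-1}\le c_i$ (and, in fact, also $c_i<c_{i+1}$, since otherwise your vector $d'=(c'_1,\dots,c'_{i-1},c_i+1,c_{i+1},\dots)$ is not weakly increasing and the $(r+1)$-colouring you build is not even a partition of $[h]$). Obtaining this side condition by meeting with colourings that witness variation at index $i-1$ can drop the $i$th coordinate back to $m_i$, and you give no mechanism to prevent this. So as written the argument does not close.

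More importantly, you have made the problem harder than it needs to be. The paper proves the stronger \emph{index-by-index} statement: if flexibility fails at the specific index $i$, then $b_i$ is fixed. One fixes a reference colouring $U_1<\cdots<U_r$, sets $y:=\max U_i$ and $z:=y+1$, and observes that $y$ must have a neighbour in $U_{i+1}$ and $z$ a neighbour $z'$ in $U_i$ (otherwise pulling out $y$ or $z$ as a singleton would witness flexibility at $i$). Then for any other colouring $Q_1<\cdots<Q_r$, one \emph{splices} the initial segment of $Q$ up to $z'$ with the tail of $U$ from $z'+1$ onwards. If $z'$ lay in $Q_k$ with $k<i$ this splice would have fewer than $r$ nonempty classes, contradicting $\chi_<(H)=r$, or would itself yield a flexibility witness at $i$; hence $k\ge i$, forcing $z\in Q_{\ge i+1}$. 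The symmetric argument gives $y\in Q_{\le i}$, so the $i$th boundary is always $y$. This is short, uses the hypothesis $\chi_<(H)=r$ directly, and never needs information about variation at any index other than $i$ --- which is precisely the crutch your descent argument was reaching for.
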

\begin{proof}
Since $H$ is not flexible, there exist some $i\in[r-1]$ such that there is no interval $(r+1)$-colouring
$V_1<\cdots<V_i<\{x\}<V_{i+1}<\cdots<V_r$
of $H$ such that both $V_1<\cdots<V_i\cup\{x\}<V_{i+1}<\cdots<V_r$ and $V_1<\cdots<V_i<V_{i+1}\cup\{x\}<\cdots<V_r$ are interval $r$-colourings of $H$.

Let $U_1<\cdots<U_r$ be an interval $r$-colouring of $H$. Let $y$ be the  largest vertex in $U_i$ and let $z$ be the smallest vertex in $U_{i+1}$; so  $z=y+1$. Note that $y$ must be adjacent to some vertex in $U_{i+1}$, as otherwise the interval $(r+1)$-colouring $U_1<\cdots<U_i\setminus\{y\}<\{y\}<U_{i+1}<\cdots<U_r$ satisfies the flexibility property, a contradiction. Let $y'$ be the  smallest vertex in $U_{i+1}$ adjacent to $y$. Similarly, let $z'$ be the  largest vertex in $U_i$ adjacent to $z$.

\begin{claim}
Given any $r$-colouring $Q_1<\cdots<Q_r$ of $H$, $y \in Q_i$ and $z \in Q_{i+1}.$
\end{claim}
Fix an arbitrary interval $r$-colouring $Q_1<\cdots<Q_r$ of $H$; say that $z'$ lies in the $k$th interval $Q_k$. Note  that  $Q_1<\cdots<Q_k\cap[z']<(U_i\setminus[z'])<U_{i+1}<\cdots<U_r$ is an interval colouring of $H$. If (i) $k<i-1$ or (ii) $k=i-1$ and $z'=y$ then $\chi_<(H)<r$, a contradiction. If $k=i-1$ and $z'\not=y$ then the interval $(r+1)$-colouring $Q_1<\cdots<Q_{i-1}\cap[z']<(U_i\setminus[z'])<\{z\}<U_{i+1}\setminus\{z\}<\cdots<U_r$ satisfies the flexibility property, a contradiction to our initial assumption in the proof. Thus, $k\geq i$, which implies that $z$ is contained in the $(i+1)$th interval $Q_{i+1}$ or above since $z$ and $z'$ are adjacent. Similarly, we can show that $y$ is contained in the $i$th interval $Q_i$ or below. This implies that, since $y,z$ are consecutive vertices in $H$,  $y$ lies in $Q_i$ and $z$ in $Q_{i+1}$. This completes the proof of the claim.

\smallskip

By the claim above, and as $z=y+1$, we conclude that $y$ is the largest vertex in $Q_i$. Hence, the number of vertices in the first $i$ intervals of any given interval $r$-colouring of $H$ is exactly $y$, yielding the required result.
\end{proof}

\begin{col}\label{flexibilitycol2}
Let $H$ be an ordered graph with vertex set $[h]$.  If $\chi_{cr}^*(H)<\chi_<(H)$, then $H$ is flexible.
\end{col}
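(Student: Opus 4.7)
The plan is to prove the contrapositive: assuming $H$ is not flexible, I will show $\chi_{cr}^*(H) \geq \chi_<(H) =: r$. First, I invoke Lemma~\ref{flexibilitylemma2} to obtain an index $i \in [r-1]$ and an integer $y$ with the property that every interval $r$-colouring of $H$ has exactly $y$ vertices in its first $i$ colour classes. The goal then becomes to prove $\chi_{cr}(B) \geq r$ for every bottlegraph $B$ of $H$.

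Fix such a $B$. If $\chi(B) > r$ then $\chi_{cr}(B) > \chi(B) - 1 \geq r$, so there is nothing to do. Hence I may assume $\chi(B) = r$ (noting that $\chi(B) \geq \chi_<(H) = r$ is forced), making $B$ a complete $r$-partite graph with parts $B_1, \ldots, B_r$. For any permutation $\sigma$ of $[r]$, take an interval labelling $\phi$ of $B$ with respect to $\sigma$ so that $B_{\sigma^{-1}(1)} < \cdots < B_{\sigma^{-1}(r)}$ in $(B, \phi)$; by Definition~\ref{bottlegraphdef} there is some $t = t(B, H, \phi)$ such that the ordered blow-up $(B(t), \phi)$ contains a perfect $H$-tiling, consisting of $N = t|B|/h$ copies of $H$.

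The key counting step is the following: since the host $(B(t), \phi)$ is a complete $r$-partite ordered graph and $\chi_<(H) = r$, each copy of $H$ in the tiling must place its $k$th interval colour class entirely inside $B_{\sigma^{-1}(k)}(t)$, for every $k \in [r]$. Combined with Lemma~\ref{flexibilitylemma2}, each copy contributes exactly $y$ vertices to $B_{\sigma^{-1}(1)}(t) \cup \cdots \cup B_{\sigma^{-1}(i)}(t)$. Double counting yields
\[
t\bigl(|B_{\sigma^{-1}(1)}| + \cdots + |B_{\sigma^{-1}(i)}|\bigr) \;=\; Ny \;=\; \frac{t|B|y}{h},
\]
so $|B_{\sigma^{-1}(1)}| + \cdots + |B_{\sigma^{-1}(i)}| = |B|y/h$, a value that does \emph{not} depend on $\sigma$.

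Since $1 \leq i \leq r-1$, given any two distinct indices $j, j' \in [r]$ I can choose two permutations $\sigma, \sigma'$ whose first $i$ entries agree except that one contains $j$ where the other contains $j'$. Comparing the resulting identities forces $|B_j| = |B_{j'}|$, so all parts of $B$ have equal size and $\chi_{cr}(B) = r$. Combining the two cases, $\chi_{cr}(B) \geq r$ for every bottlegraph $B$, whence $\chi_{cr}^*(H) \geq r = \chi_<(H)$, contradicting the hypothesis. The most delicate point is the counting step: one must carefully use $\chi_<(H) = r$ to conclude that each copy of $H$ in the perfect tiling of $(B(t), \phi)$ has its $k$th colour class contained entirely in $B_{\sigma^{-1}(k)}(t)$, since this is precisely what makes Lemma~\ref{flexibilitylemma2} applicable inside the blow-up.
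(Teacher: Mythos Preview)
Your proof is correct and follows essentially the same approach as the paper's: both invoke Lemma~\ref{flexibilitylemma2} and exploit the fact that, in a perfect $H$-tiling of an $r$-partite ordered blow-up, the total number of vertices in the first $i$ parts is determined by $y$ alone, independently of which parts were placed there. The only real difference is that the paper first invokes Proposition~\ref{propbottle} to normalise $B$ so that one part is strictly smaller than the others, and then compares just two permutations (smallest part among the first $i$ versus not) to obtain a contradiction directly; you instead bypass Proposition~\ref{propbottle} and use a pairwise swap argument to conclude that all parts of $B$ have equal size, giving $\chi_{cr}(B)=r$. Your route is marginally more self-contained, while the paper's is slightly shorter once the normalisation is in hand, but the underlying idea is the same.
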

\begin{proof}
Let $r:=\chi_<(H)$.
Suppose for a contradiction that $H$ is not flexible. By Lemma~\ref{flexibilitylemma2}, there exist some $i\in[r-1]$ such that the number of vertices lying in the first $i$ intervals of any given interval $r$-colouring of $H$ is, say, $y$ for some fixed $y\in\mathbb{N}$. Consider a bottlegraph $B$ of $H$ with $\chi_{cr}(B)<r$ (which exists by definition of $\chi^*_{cr}(H)$ and as $\chi^*_{cr}(H)<r$); note that $B$ must consist of precisely $r$ parts. By Proposition~\ref{propbottle} we may assume that $B$ has parts $B_1,\dots,B_r$ where $|B_1|<m$ and $|B_i|=m$ for some $m\in\mathbb{N}$. Let $\sigma$ be a permutation of $[r]$ and $\phi$ be an interval labelling of $B$ with respect to $\sigma$. Recall that the ordered graph $(B,\phi)$ has parts $B_{\sigma^{-1}(1)}<\cdots<B_{\sigma^{-1}(r)}$. Then, there exists some $t\in\mathbb{N}$ such that the ordered blow-up $(B(t),\phi)$  contains a perfect $H$-tiling. In particular, this perfect $H$-tiling consists of $t|B|/h$ copies of $H$. By assumption, each copy of $H$ has exactly $y$ vertices lying in the first $i$ parts of $(B(t),\phi)$ which implies that the first $i$ parts contain exactly $yt|B|/h$ vertices; that is, the total number of vertices in these parts is independent of the choice of $\sigma$. This is clearly a contradiction
as if we pick $\sigma$ such that $\sigma^{-1}(1)=1$ then there are fewer than $itm$ vertices in the first $i$ parts of $(B(t),\phi)$, while if $\sigma^{-1}(r)=1$ then there are precisely 
$itm$ vertices in the first $i$ parts of $(B(t),\phi)$.
\end{proof}
Note that one does really require  $\chi^*_{cr}(H)<\chi _<(H)$ in the statement of Corollary~\ref{flexibilitycol2}; consider, e.g., when $H$ is a complete balanced $r$-partite ordered graph with  parts of size at least $2$.

\section{Proof sketch of Theorem~\ref{absorbingthm}}\label{sec:proofsketch}
In this section we briefly present the main ideas behind the proof of Theorem~\ref{absorbingthm} before giving a  rigorous argument in Section~\ref{sec:proofabsorbingthm}. Throughout this section we set $r:=\chi_<(H)\geq 3$.

Let $H$ be an ordered graph such that $\chi_{cr}^*(H)<\chi_<(H)=r$ and $H$ has no local barrier. Let $G$ be an $n$-vertex graph with $n$ sufficiently large and minimum degree 
$$\delta(G)\geq\left(1-\frac{1}{r-1}+o(1)\right)n.$$
Our ultimate goal is to construct many chains between any given pair of vertices $x,y\in V(G)$; then Lemma~\ref{absorbingtool} will conclude the proof.

To achieve this, we first divide $[n]$ into many nearly balanced intervals, remove all edges in $G$ which lie completely in some interval and then apply the multipartite version of the regularity lemma to obtain a reduced graph $R$. This preconditioning process will make it convenient to work with cliques in the reduced graph: if two clusters $W$ and $W'$ are adjacent in $R$ then by construction either $W<W'$ or $W>W'$.

We then show that given an arbitrary pair of clusters $W$ and $W'$ in $R$, for \emph{almost} every pair of vertices $x\in W$ and $y\in W'$ we can find many chains between $x$ and $y$. We achieve this gradually through various steps:
\begin{itemize}
\item Given a copy $T$ of $K_r$ in the reduced graph $R$ and an arbitrary cluster $W$ in $T$, we prove that for \emph{almost} every pair of vertices $x,y\in W$ we can find many chains between $x$ and $y$. This is quite straightforward and the only property of $H$ we use is that $\chi_<(H)=r$.
\item Given a copy $T$ of $K_r$ in the reduced graph $R$ and two arbitrary clusters $W,W'$ in $T$, we prove that for \emph{almost} every pair of vertices $x\in W$ and $y\in W'$ we can find many chains between $x$ and $y$. The ``flexibility" guaranteed by the condition $\chi_<(H)<\chi_{cr}^*(H)$, formally stated in Corollary~\ref{flexibilitycol2}, will be the main ingredient here.
\item Finally, we show that given any two arbitrary clusters $W$ and $W'$, for \emph{almost} every pair of vertices $x\in W$ and $y\in W'$ we can find many chains between $x$ and $y$. Since $r\geq 3$, this fairly straightforwardly follows from the minimum degree of $R$ and the previous point. 
\end{itemize} 
Next, given an arbitrary vertex $x\in V(G)$, we use the minimum degree condition to find a particular structure $\mathcal{L}$ in $G$ containing $x$ and resembling an extremal construction for graphs which have a local barrier, namely  Extremal Example~\ref{ex1}; in particular, the vertex classes of Extremal Example~\ref{ex1} are replaced by  some of the clusters in $R$. Assuming $H$ does not have a local barrier, and using Corollary~\ref{flexibilitycol1}, we find many chains between $x$ and \emph{almost} every vertex lying in a certain cluster $W$ in $\mathcal{L}$. 

Analogously, given an arbitrary $y\in V(G)\setminus\{x\}$, we find many chains between $y$ and \emph{almost} every vertex lying in a certain cluster $W'$. By the previous steps, there exist many chains between almost every vertex in $W$ and almost every vertex in $W'$. By concatenation, this ensures many chains  between $x$ and $y$ and so the hypothesis of Lemma~\ref{absorbingtool} is  satisfied, thereby yielding Theorem~\ref{absorbingthm}.

\section{Proof of Theorem~\ref{absorbingthm} and Theorem~\ref{thmcover}}\label{sec:proofs}

\subsection{Proof of Theorem~\ref{absorbingthm}}\label{sec:proofabsorbingthm}
Let $H$ be an ordered graph on $h$ vertices such that $\chi_{cr}^*(H)<\chi_<(H)$, $\chi_<(H)\geq 3$ and $H$ has no local barrier. Set $r:=\chi_<(H)$.

Given any $\eta >0$, define additional constants
 $d,\varepsilon_1,\varepsilon_2,\varepsilon_3\in\mathbb{R}$ and $\ell_0\in\mathbb{N}$ so that 
\begin{align}\label{hier1}
0<1/\ell_0\ll\varepsilon_1\ll\varepsilon_2\ll\varepsilon_3\ll d\ll\eta, 1/(rh).
\end{align}
Set $t:=\lceil 4/\eta\rceil$ and let $L_0=L_0(\varepsilon_1,t,\ell_0)\geq\ell_0$ be as in Theorem~\ref{multiregularity}. Additionally, let $\xi_1,\xi_2,\xi_3,\xi_4, \xi_5\in\mathbb{R}$ and $n\in\mathbb{N}$ so that
\begin{align}\label{hier2}
0<1/n\ll\xi_5 \ll \xi_4\ll\xi_3\ll\xi_2\ll\xi_1\ll 1/L_0.
\end{align}
Define $\nu:= (\xi_5/2)^h/4$ and $s:=2r(r-1)h+1$.

Let $G$ be an ordered graph on $n\geq L_0$ vertices with minimum degree
$$\delta(G)\geq\left(1-\frac{1}{r-1}+\eta\right)n.$$ 
Let $W_1<\cdots<W_t$ be a nearly balanced interval partition of $[n]$. Let $G'$ be the ordered graph obtained by deleting all edges lying in each $W_i$ from $G$. As
$t=\lceil 4/\eta \rceil$ we have that $|W_i|\leq \lceil \eta n/4 \rceil$ for all $i \in [t]$ and so
\begin{align}\label{minG}
  \delta(G')\geq\left(1-\frac{1}{r-1}+\frac{\eta}{2}\right)n.   
\end{align}

Apply Theorem~\ref{multiregularity} to $G'$ with parameters $\varepsilon_1,t,\ell_0$ and $d$ to obtain a pure graph $G''$ and a partition $W^0_i,\dots,W^\ell_i$ for every $W_i$, where $\ell_0\leq \ell\leq L_0$. All non-exceptional clusters $W^j_i$ have size $m$ where 
\begin{align}\label{hier3}
m\geq\frac{1}{\ell}\left(\left\lfloor\frac{n}{t}\right\rfloor-\frac{\varepsilon_1 n}{t}\right)\geq\frac{\eta n}{5L_0}.
\end{align}
Let $R$ be the corresponding reduced graph of $G'$. By (\ref{minG}), Proposition~\ref{reducedgraph} implies that 
\begin{align}\label{eq2}
\delta(R)\geq\left(1-\frac{1}{r-1}+\frac{\eta}{3}\right)|R|.
\end{align}
Crucially, observe that if $W_i^jW_{i'}^{j'}$ is an edge in $R$ then, by construction of $G'$, $i\not=i'$ and so either $W_i^j<W_{i'}^{j'}$ or $W_i^j>W_{i'}^{j'}$.  

First, we prove that given a copy $T$ of $K_r$ in $R$ and an arbitrary cluster $W$ in $T$, for \emph{almost} every pair of vertices $x,y\in W$ there exist many chains between $x$ and $y$. 

\begin{claim}\label{claim1}
Let $T_1<\cdots<T_r$ be $r$ clusters which form a copy of $K_r$ in $R$.  Given any $i\in[r]$, there exists a set $A_i\subseteq T_i$ of size $|A_i|\geq(1-\varepsilon_1 r)|T_i|$ such that the following holds: for every $x\in A_i$ there exists a set $C_x\subseteq T_i$ of size $|C_x|\geq(1-2\varepsilon_2 r)|T_i|$ such that for every $y\in C_x$ there are at least $\xi_1 n^{h-1}$ $(h-1)$-sets $X\subseteq V(G)$ so that both $G[X\cup\{x\}]$ and $G[X\cup\{y\}]$ contain a copy of $H$.
\end{claim}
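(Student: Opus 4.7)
The plan is to fix an interval $r$-colouring $V_1<\cdots<V_r$ of $H$ together with a distinguished vertex $v\in V_i$, and to produce common extensions by counting labelled copies of the ordered graph $H^-:=H-v$ inside the pure graph $G''$. Set $N:=N_H(v)\subseteq\bigcup_{j\ne i}V_j$. The idea is that, for a suitable pair $(x,y)\in T_i\times T_i$, after restricting to the joint $G''$-neighbourhoods of $x$ and $y$ on those clusters $T_j$ with $V_j\cap N\ne\emptyset$, any copy of $H^-$ in that restricted system automatically extends to copies of $H$ using either $x$ or $y$ in the role of $v$.

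First, I would define $A_i\subseteq T_i$ to consist of the vertices $x$ with $d_{G''}(x,T_j)\ge(d-\varepsilon_1)|T_j|$ for every $j\ne i$; since each pair $(T_i,T_j)_{G''}$ is $(\varepsilon_1,d)$-regular, a standard regularity argument gives $|A_i|\ge(1-(r-1)\varepsilon_1)|T_i|\ge(1-\varepsilon_1 r)|T_i|$. For $x\in A_i$ I would then set $T_j^x:=N_{G''}(x)\cap T_j$ when $V_j\cap N\ne\emptyset$, $T_j^x:=T_j$ when $j\ne i$ and $V_j\cap N=\emptyset$, and $T_i^x:=T_i\setminus\{x\}$; the Slicing Lemma (Lemma~\ref{slicinglemma}) then implies that each pair $(T_j^x,T_{j'}^x)_{G''}$ is $(\varepsilon_2,d-\varepsilon_1)$-regular. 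The set $C_x$ would be defined analogously as those $y\in T_i\setminus\{x\}$ satisfying $d_{G''}(y,T_j^x)\ge(d-2\varepsilon_2)|T_j^x|$ for every $j$ with $V_j\cap N\ne\emptyset$; a second typicality argument yields $|C_x|\ge(1-2\varepsilon_2 r)|T_i|$.

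Finally, for $x\in A_i$ and $y\in C_x$, I would set $T_j^{xy}:=T_j^x\cap N_{G''}(y)$ when $V_j\cap N\ne\emptyset$ and $T_j^{xy}:=T_j$ otherwise, together with $T_i^{xy}:=T_i\setminus\{x,y\}$. A further application of Slicing makes each pair $(T_j^{xy},T_{j'}^{xy})_{G''}$ into an $(\varepsilon_3,d')$-regular pair for some $d'\ge d/2$. I then apply the Key Lemma (Lemma~\ref{keylemma}) to $H^-$ with its inherited interval $r$-colouring $V_1<\cdots<V_{i-1}<V_i\setminus\{v\}<V_{i+1}<\cdots<V_r$, embedding $V_j$ into $T_j^{xy}$. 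Each resulting labelled copy of $H^-$ gives an $(h-1)$-set $X$ for which both $X\cup\{x\}$ and $X\cup\{y\}$ span $H$: the edges from $x$ and from $y$ to the relevant parts of $X$ are present by the definition of $T_j^{xy}$. Dividing by at most $h!$ to pass from labelled copies to distinct sets, and using $m\ge\eta n/(5L_0)$ from~(\ref{hier3}), the hierarchy $\xi_1\ll d,\eta,1/(rh),1/L_0$ then yields at least $\xi_1 n^{h-1}$ such $X$'s, as required. I expect no major conceptual obstacle; the only delicate point is ensuring the hierarchy $\varepsilon_1\ll\varepsilon_2\ll\varepsilon_3\ll d$ is tight enough that the three nested restrictions preserve sufficient regularity for the final Key Lemma application.
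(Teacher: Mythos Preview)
Your proposal is correct and follows essentially the same route as the paper: define $A_i$ via the standard typicality argument in the $(\varepsilon_1,d)$-regular pairs, restrict to neighbourhoods of $x$, define $C_x$ by a second typicality argument, restrict to neighbourhoods of $y$, and apply the Key Lemma to embed $H$ minus one vertex of $V_i$. The only difference is cosmetic: you restrict $T_j$ to $N_{G''}(x)$ (and then $N_{G''}(y)$) only when $V_j\cap N_H(v)\ne\emptyset$, whereas the paper restricts for \emph{every} $j\ne i$ and then embeds the full complete $r$-partite graph on the classes $V_1,\dots,V_i\setminus\{v\},\dots,V_r$; this buys nothing either way, since the hierarchy absorbs the extra slicing in the paper's version and your more economical restriction still yields the required regularity for the Key Lemma.
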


\begin{proof}
Fix $i\in[r]$. For every $k\not=i$, let 
$$L_k:=\{v\in T_i:d_{G''}(v,T_k)\leq(d-\varepsilon_1)|T_k|\}.$$
Observe that $d_{G''}(L_k,T_k)\leq d-\varepsilon_1$ for every $k\not=i$. Suppose $|L_{k_0}|\geq\varepsilon|T_i|$ for some $k_0\not=i$. Since $(T_i,T_{k_0})_{G''}$ is $(\varepsilon_1,d)$-regular then 
$$|d_{G''}(L_{k_0},T_{k_0})-d_{G''}(T_i,T_{k_0})|<\varepsilon_1$$
and so $d_{G''}(L_{k_0},T_{k_0})>d-\varepsilon_1$, a contradiction. Thus $|L_k|<\varepsilon_1|T_i|$ for every $k\not=i$. In particular, 
$$|L_1\cup\cdots\cup L_r|<\varepsilon_1 (r-1)|T_i|<\varepsilon_1 r|T_i|.$$
Let $A_i:=T_i\setminus(L_1\cup\cdots\cup L_r)$. It follows from the previous inequality that
$$|A_i|\geq(1-\varepsilon_1 r)|T_i|.$$ 

Let $x\in A_i$. Define $T'_k:=T_k\cap N_{G''}(x)$ for every $k\not=i$ and $T'_i:=T_i\setminus\{x\}$. By the slicing lemma (Lemma~\ref{slicinglemma}), the pair $(T'_a,T'_b)_{G''}$ is $(\varepsilon_2,d/2)$-regular for every  $a\not=b\in[r]$. For every $k\not=i$, let 
$$L'_k:=\{v\in T'_i:d_{G''}(v,T'_k)\leq(d/2-\varepsilon_2)|T'_k|\}.$$ 
By the same argument as before, 
$$|L'_1\cup\cdots\cup L'_r|<\varepsilon_2 r|T'_i|.$$
Let $C_x:=T'_i\setminus(L'_1\cup\cdots\cup L'_r)$. The above inequality implies that
$$|C_x|\geq(1-\varepsilon_2 r)|T'_i|\geq(1-2\varepsilon_2 r)|T_i|.$$

Let $y\in C_x$. Define $T''_k:=T'_k\cap N_{G''}(y)=T_k \cap N_{G''}(x) \cap N_{G''}(y)$ for every $k\not=i$ and $T''_i:=T'_i\setminus\{y\}=T_i \setminus \{x,y\}$. By the slicing lemma, the pair $(T''_a,T''_b)_{G''}$ is $(\varepsilon_3,d/3)$-regular for every  $a\not=b\in[r]$. Furthermore, by construction, $x$ and $y$ are adjacent to all vertices in $T''_k$ for every $k\not=i$.

Let $V_1<\cdots<V_r$ be an interval $r$-colouring of $H$. Pick any $v\in V_i$  and let $H'$ be the complete $r$-partite ordered graph
with parts $V_1<\cdots<V_i\setminus\{v\}<\cdots<V_r$. Using (\ref{hier1}), (\ref{hier2}) and (\ref{hier3}), the key lemma (Lemma~\ref{keylemma}) implies that there exist at least $\xi_1 n^{h-1}$ vertex sets  $X$ in $G$ so that $G[X]$ spans a copy of $H'$ where  $V_k\subseteq T''_k$ for every $k\not=i$ and $V_i\setminus\{v\}\subseteq T''_i$. Because $x$ and $y$ are adjacent to all vertices in $T''_k$ for every $k\not=i$, both $G[X\cup\{x\}]$ and $G[X\cup\{y\}]$ are spanned by a copy of $H$.
\end{proof}

We now prove that given a copy $T$ of $K_r$ in the reduced graph $R$ and two arbitrary clusters $W$ and $W'$ in $T$, for \emph{almost} every pair of vertices $x\in W$ and $y\in W'$ there exist many chains between $x$ and $y$. Recall $s=2r(r-1)h+1$.

\begin{claim}\label{claim2}
Let $T_1<\cdots<T_r$ be $r$ clusters which form a copy of $K_r$ in $R$. 
For every $i,j\in[r]$, there exist sets $A_i\subseteq T_i$ and $A_j\subseteq T_j$ of size $|A_i|\geq(1-\varepsilon_1 r)|T_i|$ and $|A_j|\geq(1-\varepsilon_1 r)|T_j|$ such that for every $x\in A_i$ and $y\in A_j$ there are at least $\xi_2 n^{sh-1}$ $(sh-1)$-sets $X\subseteq V(G)$ for which both $G[X\cup\{x\}]$ and $G[X\cup\{y\}]$ contain perfect $H$-tilings.
\end{claim}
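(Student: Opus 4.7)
The plan is to mimic the argument of Claim~\ref{claim1} while exploiting the flexibility of $H$ (guaranteed by Corollary~\ref{flexibilitycol2}, since $\chi_{cr}^*(H)<\chi_<(H)$) to produce chains of length $sh-1$ rather than single copies of $H$. Let $F$ be the complete $r$-partite ordered graph from Lemma~\ref{flexibilitylemma1}, so $|F|=2r(r-1)h^2=sh-h$. Since $r\geq 3$, pick $m^*\in[r]\setminus\{i,j\}$ and set $n_k:=|F_k|$ for $k\neq m^*$ and $n_{m^*}:=|F_{m^*}|+h-1$, so $\sum_k n_k=sh-1$. The key observation is that adding one vertex to part $i$ (respectively part $j$) produces part-sizes of the form $|F_k|+s_k$ with $s_k\geq 0$ and $\sum_k s_k=h$; therefore by Corollary~\ref{flexibilitycol1} (with $t=\ell=1$) any complete $r$-partite ordered graph with those part-sizes contains a perfect $H$-tiling.

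I would first define $A_k\subseteq T_k$ for every $k\in[r]$ precisely as in the proof of Claim~\ref{claim1}, so that $|A_k|\geq(1-\varepsilon_1 r)|T_k|$ and every $v\in A_k$ satisfies $d_{G''}(v,T_\ell)\geq(d-\varepsilon_1)|T_\ell|$ for all $\ell\neq k$. For fixed $x\in A_i,y\in A_j$ define
$$T''_k:=\begin{cases}T_k\cap N_{G''}(x)\cap N_{G''}(y)&\text{if }k\notin\{i,j\},\\ T_i\cap N_{G''}(y)\setminus\{x\}&\text{if }k=i\neq j,\\ T_j\cap N_{G''}(x)\setminus\{y\}&\text{if }k=j\neq i,\\ T_i\setminus\{x,y\}&\text{if }k=i=j.\end{cases}$$
By construction every vertex of $T''_k$ is adjacent in $G''$ to $x$ whenever $k\neq i$ and to $y$ whenever $k\neq j$. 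Consequently, any complete $r$-partite subgraph of $G[T''_1,\dots,T''_r]$ with parts of sizes $(n_1,\dots,n_r)$ becomes, upon appending $x$ to part $i$ (respectively $y$ to part $j$), a complete $r$-partite graph whose part-sizes meet the hypothesis of Corollary~\ref{flexibilitycol1}; this yields a perfect $H$-tiling, so the underlying $(sh-1)$-set $X$ serves as a chain between $x$ and $y$.

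To apply the key lemma and obtain the $\xi_2 n^{sh-1}$ lower bound, one needs $|T''_k|=\Omega(|T_k|)$ uniformly over the chosen pair $(x,y)$. The sizes $|T''_i|$ and $|T''_j|$ are at least $(d-\varepsilon_1)|T_k|-1$ directly from the definition of $A_i,A_j$. For $k\notin\{i,j\}$, applying the $\varepsilon_1$-regularity of $(T_j,T_k)$ to the subset $U:=N_{G''}(x)\cap T_k$ (of size at least $(d-\varepsilon_1)|T_k|\geq\varepsilon_1|T_k|$) shows that for every fixed $x\in A_i$, all but at most $\varepsilon_1|T_j|$ vertices $y\in T_j$ satisfy $|T''_k|=|N_{G''}(y)\cap U|\geq(d-2\varepsilon_1)|U|\geq(d/2)^2|T_k|$. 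Summing over the (at most $r$) relevant values of $k$, for every fixed $x\in A_i$ at most $r\varepsilon_1|T_j|$ vertices $y\in A_j$ fail this common-neighbourhood condition; a symmetric refinement of $A_i$ and $A_j$ (removing vertices of exceptionally high ``bad degree'' toward the other side) then produces the required pair of sets with $|T''_k|\geq(d/2)^2|T_k|$ uniformly over the chosen pairs. Once this is in place, the slicing lemma gives that each pair $(T''_a,T''_b)_{G''}$ is $(\varepsilon_3,d/3)$-regular, and Lemma~\ref{keylemma} applied with target graph the complete $r$-partite ordered graph of part-sizes $(n_1,\dots,n_r)$ produces at least $(\varepsilon_0 q)^{sh-1}\geq\xi_2 n^{sh-1}$ labelled copies with $q=\Theta(n)$; after dividing by the constant $\prod_k n_k!$ number of labellings we obtain the desired lower bound on the underlying sets $X$.

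The hard part is precisely the refinement of $A_i$ and $A_j$. A naive averaging argument over bad pairs loses a $\sqrt{r\varepsilon_1}$ factor, which is much larger than the allowed $\varepsilon_1 r$ slack; to keep the sets within the stated $(1-\varepsilon_1 r)$ bound one must exploit the hierarchy $\varepsilon_1\ll\varepsilon_2$ together with the direct $\varepsilon_1$-regularity bound on the bad degree of each vertex (rather than the weaker regularity obtained from slicing), and combine this with an iterative deletion scheme that controls bad degrees simultaneously on both sides.
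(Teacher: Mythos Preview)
Your direct approach via Corollary~\ref{flexibilitycol1} is a nice idea and is genuinely different from the paper's argument, but it has a real gap at exactly the point you flag as ``the hard part''. You need fixed sets $A_i,A_j$ of size at least $(1-\varepsilon_1 r)|T_i|,(1-\varepsilon_1 r)|T_j|$ such that \emph{every} pair $(x,y)\in A_i\times A_j$ has large common neighbourhood in every $T_k$ with $k\notin\{i,j\}$. Regularity only tells you that for each fixed $x$ the set of bad $y$'s has size at most $r\varepsilon_1|T_j|$, but this bad set depends on $x$. In general, a bipartite ``bad-pair'' graph with maximum degree $\delta n$ on both sides can contain a disjoint union of $1/\delta$ complete bipartite blocks of side $\delta n$; to avoid all bad edges you must then omit an entire block on one side for each block, forcing one of the two sets down to size $n/2$. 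Nothing in $\varepsilon$-regularity rules this structure out, so no ``iterative deletion scheme'' can recover the $(1-\varepsilon_1 r)$ bound (and even a $(1-O(\sqrt{\varepsilon_1}))$ bound, which would suffice downstream, is not obtainable by your sketch).

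The paper avoids this obstruction by decoupling $A_i$ and $A_j$. It defines both simply as the high-degree subsets from Claim~\ref{claim1} (so the $(1-\varepsilon_1 r)$ bound is immediate and independent of the other side), and then for each pair $(x,y)$ it introduces an intermediate vertex $z\in T_j$: first it builds $(|F|-1)$-chains between $x$ and $z$ using the key lemma inside $N(x)\cap N(z)$, then it uses Claim~\ref{claim1} to get $(h-1)$-chains between $y$ and $z$, and finally it concatenates via Lemma~\ref{chainlemma} to obtain $(sh-1)$-chains between $x$ and $y$ (note $|F|+h=sh$). The point is that the pair-dependence is absorbed into the choice of $z$ (which lives in a large but $(x,y)$-dependent set $C\cap C_y\subseteq T_j$), not into the definition of $A_i$ or $A_j$.
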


\begin{proof}
Fix $i,j\in[r]$. As in the proof of Claim~\ref{claim1}, for every $k\not=i$, we define
$$L_k:=\{v\in T_i:d_{G''}(v,T_k)\leq(d-\varepsilon_1)|T_k|\}$$
and set  $A_i:=T_i\setminus(L_1\cup\cdots\cup L_r)$.
In the proof of Claim~\ref{claim1} we saw that 
$$|L_1\cup\cdots\cup L_r|<\varepsilon_1 (r-1)|T_i|<\varepsilon_1 r|T_i| \ \
\text{and thus} \ \
|A_i|\geq(1-\varepsilon_1 r)|T_i|.$$ 

Let $x\in A_i$.  As in the proof of Claim~\ref{claim1}, we define $T'_k:=T_k\cap N_{G''}(x)$ for every $k\not=i$ and $T'_i:=T_i\setminus\{x\}$. The pair $(T'_a,T'_b)_{G''}$ is $(\varepsilon_2,d/2)$-regular for every  $a\not=b\in[r]$. 

For every $k\not=j$, let 
$$L'_k:=\{v\in T'_j:d_{G''}(v,T'_k)\leq(d/2-\varepsilon_2)|T'_k|\}.$$ 
Note that $L'_k$ is not quite the same as the corresponding set given in the proof 
of Claim~\ref{claim1} (it is defined it terms of $j$ not $i$); however,
as in the proof of Claim~\ref{claim1} we have that
$$
|L'_1\cup\cdots\cup L'_r|<\varepsilon_2 r|T'_j|.$$
Let $C:=T'_j\setminus(L'_1\cup\cdots\cup L'_r)$. The previous inequality implies that
\begin{align}\label{Cbound}
|C|\geq(1-\varepsilon_2 r)|T'_j|\geq(1-\varepsilon_2 r)(d-\varepsilon_1)|T_j|\stackrel{(\ref{hier1})}{\geq}\frac{d}{2}|T_j|.
\end{align}

Let $z\in C$. Define  $T''_k:=T'_k\cap N_{G''}(z)$ for every $k\not=j$ and $T''_j:=T'_j\setminus\{z\}$. By the slicing lemma and (\ref{hier1}), the pair $(T''_a,T''_b)_{G''}$ is $(\varepsilon_3,d/3)$-regular for every pair $a\not=b\in[r]$. Furthermore, by construction, $x$ is adjacent to all vertices in $T''_k$ for every $k\not=i$ while $z$ is adjacent to all vertices in $T''_k$ for every $k\not=j$.

Since $\chi_{cr}^*(H)<\chi_<(H)$, by Corollary~\ref{flexibilitycol2} $H$ is flexible. Let $F$ be the complete $r$-partite ordered graph with parts $F_1<\cdots<F_r$ as defined in Lemma~\ref{flexibilitylemma1}. Recall that $|F|=2r(r-1)h^2=(s-1)h$. Pick any $v\in F_i$ and let $F^*$ be the ordered graph obtained by removing the vertex $v$ from $F$. In particular, $F^*$ is a complete $r$-partite ordered graph with parts $F_1<\cdots<F_i\setminus\{v\}<\cdots<F_r$. By (\ref{hier1}), (\ref{hier2}) and (\ref{hier3}), the key lemma (Lemma~\ref{keylemma}) implies that there exist at least $\xi_1 n^{|F|-1}$ vertex sets  $Y$ in $G$ so that $G[Y]$ spans a copy of $F^*$ where $F_k\subseteq T''_k$ for every $k\not=i$ and $F_i\setminus\{v\}\subseteq T''_i$. 

Given any such set $Y$,
since $x$ is adjacent to all vertices in $T''_k$ (for all $k\not=i$) then $G[Y\cup\{x\}]$ is spanned by a copy of $F$ and thus it contains a perfect $H$-tiling. Similarly, since $z$ is adjacent to all vertices in $T''_k$ (for all $k\not=j$) then $G[Y\cup\{z\}]$ is spanned by a complete $r$-partite ordered graph $F'$ on $|F|$ vertices where each part of $F'$ differs in size by at most one compared to the corresponding part in $F$. Thus, Lemma~\ref{flexibilitylemma1} implies $G[Y\cup\{z\}]$ contains a perfect $H$-tiling. 

Let $A_j\subseteq T_j$ be as in the statement of Claim~\ref{claim1}; so $|A_j|\geq(1-\varepsilon_1 r)|T_j|$. Let $y\in A_j$. By Claim~\ref{claim1}, there exists a set $C_y\subseteq T_j$ of size 
\begin{align}\label{Cybound}
|C_y|\geq(1-2\varepsilon_2 r)|T_j|
\end{align}
such that for any $z\in C_y$ there exist at least $\xi_1 n^{h-1}$ $(h-1)$-sets $Z\subseteq V(G)$ so that both $G[Z\cup\{y\}]$ and $G[Z\cup\{z\}]$ contain spanning copies of $H$. 

In summary, given any $x \in A_i$ and $y \in A_j$, we have shown that for every $z\in C\cap C_y$ there exist at least $\xi_1 n^{|F|-1}$ $(|F|-1)$-sets $Y$ such that both $G[Y\cup\{x\}]$ and $G[Y\cup\{z\}]$ contain perfect $H$-tilings and similarly there exist at least $\xi_1 n^{h-1}$ $(h-1)$-sets $Z$ such that both $G[Z\cup\{y\}]$ and $G[Z\cup\{z\}]$ contain perfect $H$-tilings. Furthermore, as $C,C_y\subseteq T_j$ note that
$$|C\cap C_y|=|C|+|C_y|-|C\cup C_y|\geq|C|+|C_y|-|T_j|\stackrel{(\ref{hier1}),(\ref{Cbound}),(\ref{Cybound}) }{\geq}\varepsilon_1|T_j|=\varepsilon_1 m\stackrel{(\ref{hier3})}{\geq}\frac{\varepsilon_1\eta n}{5L_0}.$$
 Applying Lemma~\ref{chainlemma} with $C \cap C_y,  \varepsilon_1\eta /({5L_0}),\xi_1, \xi_2$ playing the roles of $A,  \alpha, \beta , \gamma$, we conclude that 
 there exist at least $\xi_2 n^{|F|+h-1}=\xi_2 n^{sh-1}$ $(sh-1)$-sets $X\subseteq V(G)$ such that both $G[X\cup\{x\}]$ and $G[X\cup\{y\}]$ contain perfect $H$-tilings, as desired.
\end{proof}

In the next claim we show that given any arbitrary pair of clusters $W$ and $W'$ in the reduced graph $R$, for \emph{almost} every pair of vertices $x\in W$ and $y\in W'$ there exist many chains between $x$ and $y$. The assumption $r\geq 3$ is crucial here.

\begin{claim}\label{claim3} 
For every pair of clusters $W$ and $W'$ in $R$, there exist sets $A\subseteq W$ and $A'\subseteq W'$ of size $|A|\geq(1-\varepsilon_1 r)|W|$ and $|A'|\geq(1-\varepsilon_1 r)|W'|$ satisfying the following: for every pair of vertices $x\in A$ and $y\in A'$ there are at least $\xi_3 n^{2sh-1}$ $(2sh-1)$-sets $X\subseteq V(G)$,  such that both $G[X\cup\{x\}]$ and $G[X\cup\{y\}]$ contain perfect $H$-tilings.
\end{claim}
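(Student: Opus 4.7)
The plan is to pick an intermediate cluster $W''$ that is adjacent to both $W$ and $W'$ in $R$ and then to concatenate the chains obtained by Claim~\ref{claim2} applied to two copies of $K_r$ in $R$, one containing $\{W,W''\}$ and the other containing $\{W',W''\}$. Since $r\ge 3$, we have $\delta(R)\ge (1-1/(r-1)+\eta/3)|R|\ge(1/2+\eta/3)|R|$, and so for any two distinct clusters $U_1,U_2$ in $R$, $|N_R(U_1)\cap N_R(U_2)|\ge 2\delta(R)-|R|\ge(2\eta/3)|R|$. In particular, there is some $W''\in N_R(W)\cap N_R(W')\setminus\{W,W'\}$; this is the step that crucially exploits the hypothesis $r\ge 3$. (In the degenerate case $W=W'$, just pick any $W''\in N_R(W)\setminus\{W\}$.)

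Next, we show that each of the edges $WW''$ and $W'W''$ extends to a copy of $K_r$ in $R$ via the standard iterative common-neighborhood argument. If $\{U_1,\dots,U_k\}$ forms a clique in $R$ with $k\le r-1$, then the common neighborhood of $U_1,\ldots,U_k$ in $R$ has size at least
$$k\delta(R)-(k-1)|R|=\bigl(1-k/(r-1)+k\eta/3\bigr)|R|\ge k\eta|R|/3>0,$$
so we can adjoin a cluster $U_{k+1}$ to obtain a clique on $k+1$ vertices. Iterating, we obtain copies of $K_r$ in $R$ which we call $T$ (containing $\{W,W''\}$) and $T'$ (containing $\{W',W''\}$).

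Now apply Claim~\ref{claim2} to $T$ with the indices corresponding to $W$ and $W''$ to obtain sets $A\subseteq W$ and $A''_1\subseteq W''$ with $|A|\ge(1-\varepsilon_1 r)|W|$ and $|A''_1|\ge(1-\varepsilon_1 r)|W''|$, such that for every $x\in A$ and $z\in A''_1$ there are at least $\xi_2 n^{sh-1}$ $(sh-1)$-sets $X$ with $G[X\cup\{x\}]$ and $G[X\cup\{z\}]$ both spanning perfect $H$-tilings. Similarly, Claim~\ref{claim2} applied to $T'$ yields sets $A'\subseteq W'$ and $A''_2\subseteq W''$ of analogous sizes, with the analogous chain property between $y\in A'$ and $z\in A''_2$. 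Let $A'':=A''_1\cap A''_2\subseteq W''\setminus\{W,W'\}$; then $|A''|\ge(1-2\varepsilon_1 r)|W''|=(1-2\varepsilon_1 r)m\ge \eta n/(10L_0)$ by (\ref{hier3}) and the hierarchy.

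Finally, for any $x\in A$ and $y\in A'$ we invoke Lemma~\ref{chainlemma} with $A''$ as the intermediate set and parameters $\alpha:=\eta/(10L_0)$, $\beta:=\xi_2$, $s_1=s_2:=s$ and $\gamma:=\xi_3$; this fits the hierarchy $\xi_3\ll\xi_2$. The lemma produces at least $\xi_3 n^{2sh-1}$ $(2sh-1)$-sets $X\subseteq V(G)$ for which both $G[X\cup\{x\}]$ and $G[X\cup\{y\}]$ contain perfect $H$-tilings, which is exactly the conclusion required. The main obstacle is arranging for the intermediate cluster $W''$ to be a common neighbor of $W$ and $W'$ (so that both $WW''$ and $W'W''$ are edges of $R$) and to extend each such edge to a copy of $K_r$; both rest on the minimum degree bound $\delta(R)\ge(1-1/(r-1)+\eta/3)|R|$ together with $r\ge 3$, without which (at $r=2$) the argument breaks down entirely.
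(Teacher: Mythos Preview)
Your proposal is correct and follows essentially the same approach as the paper: find a common neighbour $W''$ of $W$ and $W'$ in $R$ (using $r\ge 3$ so that $\delta(R)>|R|/2$), extend each of the edges $WW''$ and $W'W''$ to a copy of $K_r$ in $R$, apply Claim~\ref{claim2} twice, and concatenate via Lemma~\ref{chainlemma} through the intersection of the two resulting subsets of $W''$. The only blemish is the notation ``$A''_1\cap A''_2\subseteq W''\setminus\{W,W'\}$'', where $W$ and $W'$ are clusters rather than vertices; presumably you mean that $A''\subseteq W''$ and hence $A''\subseteq V(G)\setminus\{x,y\}$ since $W''$ is distinct from $W$ and $W'$.
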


\begin{proof}
Recall the reduced graph $R$ has minimum degree
$$\delta(R)\geq\left(1-\frac{1}{r-1}+\frac{\eta}{3}\right)|R|.$$
Using the above minimum degree condition, given any two adjacent clusters in $R$, one can greedily construct  a copy of $K_r$ in $R$ containing them both.

Since $r\geq 3$, $\delta(R)>|R|/2$ and so the clusters $W$ and $W'$ have a common neighbour $U$ in $R$. Let $K$ and $K'$ be two copies of $K_r$ in $R$ containing $W,U$ and $W',U$ respectively. 

Apply Claim~\ref{claim2} with $K$, $W$, $U$ playing the roles of $K_r$, $T_i$ and $T_j$ 
to obtain 
sets $A\subseteq W$ and $D\subseteq U$ with $|A|\geq (1-\eps _1 r)|W|$ and $|D|\geq (1-\eps _1 r)|U|$. 
Similarly, apply Claim~\ref{claim2} with $K'$, $W'$, $U$ playing the roles of $K_r$, $T_i$ and $T_j$
to obtain 
sets $A'\subseteq W'$ and $D'\subseteq U$ with $|A'|\geq (1-\eps _1 r)|W|$ and $|D'|\geq (1-\eps _1 r)|U|$. 

 By Claim~\ref{claim2}, for any $x\in A$, $y\in A'$ and $z\in D\cap D'$, there exist at least $\xi_2 n^{sh-1}$ $(sh-1)$-sets $Y\subseteq V(G)$ such that both $G[Y\cup\{x\}]$ and $G[Y\cup\{z\}]$ contain perfect $H$-tilings and $\xi_2 n^{sh-1}$ $(sh-1)$-sets $Z\subseteq V(G)$ such that both $G[Z\cup\{y\}]$ and $G[Z\cup\{z\}]$ contain perfect $H$-tilings. 

Furthermore, since $D,D'\subseteq U$ then
$$|D\cap D'|\geq|D|+|D'|-|U|\geq(1-2\varepsilon_1 r)|U|=(1-2\varepsilon_1 r)m\stackrel{(\ref{hier3})}{\geq}\frac{\eta n}{6L_0}.$$

Applying Lemma~\ref{chainlemma} with $D \cap D',  \eta /({6L_0}), \xi_2, \xi_3$ playing the roles of $A,  \alpha , \beta , \gamma$, we conclude that 
 there exist at least $\xi_3 n^{2sh-1}$ $(2sh-1)$-sets $X\subseteq V(G)$ such that both $G[X\cup\{x\}]$ and $G[X\cup\{y\}]$ contain perfect $H$-tilings, as desired.
\end{proof}

In the next claim we use the minimum degree condition of the reduced graph $R$ to find a structure $\mathcal{L}$ containing an arbitrary vertex $x$ and resembling the extremal construction for an ordered graph which has a local barrier. Furthermore, we prove that there exist many chains between $x$ and \emph{almost} every vertex in some  cluster in $\mathcal{L}$. 

\begin{claim}\label{claim4}
Let $x\in V(G)$. There exists some cluster $W\in V(R)$ and a set $A\subseteq W$ of size $|A|\geq(1-\varepsilon_2 r)|W|$ satisfying the following: for every $y\in A$ there exist at least $\xi_1 n^{sh-1}$ $(sh-1)$-sets $X\subseteq V(G)$ such that both $G[X\cup\{x\}]$ and $G[X\cup\{y\}]$ contain perfect $H$-tilings.
\end{claim}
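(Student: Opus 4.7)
The plan is to adapt the proof of Claim~\ref{claim2}, with $x$ taking the role of a vertex in some $A_i$ and the target vertex $y$ playing the role of a vertex at the \emph{same position} $i$ (not a different position as in Claim~\ref{claim2}). Given $x\in V(G)$, let $W_{i_0}$ be the initial interval of the partition $W_1<\cdots<W_t$ containing $x$; since $t=\lceil 4/\eta\rceil$ is much larger than $r$, one can pick a position $i\in[r+1]$ and intervals $W_{\alpha_1}<\cdots<W_{\alpha_{r+1}}$ with $W_{\alpha_i}=W_{i_0}$. Call a cluster $T\in V(R)$ \emph{$x$-good} if $|N_G(x)\cap T|\geq(d-\varepsilon_1)|T|$. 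Counting non-neighbours of $x$ and using $d\ll\eta$, the number of $x$-bad clusters is at most $\bigl(\tfrac{1}{r-1}-\eta+O(d)\bigr)|R|$. Combining this with $\delta(R)\geq (1-\tfrac{1}{r-1}+\tfrac{\eta}{3})|R|$, a supersaturation/averaging argument (the expected number of $x$-bad clusters in a random $K_r$ spanning $\{W_{\alpha_\sigma}:\sigma\neq i\}$ being at most $r\bigl(\tfrac{1}{r-1}-\eta\bigr)<2$) will yield $r$ clusters $T_\sigma\in W_{\alpha_\sigma}$, for $\sigma\in[r+1]\setminus\{i\}$, forming a copy of $K_r$ in $R$ with at most one (call it $T_j$) being $x$-bad. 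A further counting argument in $W_{\alpha_i}=W_{i_0}$ will produce a cluster $W\subseteq W_{i_0}$ adjacent in $R$ to $T_\sigma$ for every $\sigma\in[r+1]\setminus\{i,j\}$.

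Next, since $\chi_{cr}^*(H)<\chi_<(H)$, Corollary~\ref{flexibilitycol2} tells us that $H$ is flexible, and as $H$ has no local barrier there exists an interval $(r+1)$-colouring $V_1<\cdots<V_{r+1}$ of $H$ with $V_i=\{v\}$ and no edges between $v$ and $V_j$. I will define $A\subseteq W$ as the set of $y\in W$ satisfying $|N_G(y)\cap T_\sigma|\geq(d-\varepsilon_2)|T_\sigma|$ for every $\sigma\in[r+1]\setminus\{i,j\}$; by the regularity of each pair $(W,T_\sigma)$, $|A|\geq(1-\varepsilon_2 r)|W|$. For each fixed $y\in A$, the slicing lemma will guarantee that the restricted clusters $T_\sigma\cap N_G(x)\cap N_G(y)$ (for $\sigma\in[r+1]\setminus\{i,j\}$), together with $T_j$ left unrestricted, still form regular pairs of density at least $d/3$. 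An application of the key lemma (Lemma~\ref{keylemma}) combined with the structure $F$ of Lemma~\ref{flexibilitylemma1} and Corollary~\ref{flexibilitycol1} will then produce at least $\xi_1 n^{sh-1}$ $(sh-1)$-sets $X$ such that both $G[X\cup\{x\}]$ and $G[X\cup\{y\}]$ are complete $r$-partite ordered graphs whose part sizes lie within an additive $O(h)$ of those arising in a suitably scaled copy of $F$, and hence contain perfect $H$-tilings by Corollary~\ref{flexibilitycol1}.

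The main obstacle I expect is Step~1, namely producing a $K_r$ with at most one $x$-bad cluster in the chosen intervals, which becomes tight when $r=3$. I plan to handle this by working inside the subgraph $R^x$ of $R$ induced on $x$-good clusters; $R^x$ has minimum degree $\Omega(\eta)|R|$, so it contains many edges, and any such edge extends to a triangle in $R$ via common neighbours of its endpoints, which have size at least $(2\eta/3)|R|$ by the $\delta(R)$ bound (the extra vertex serving as a possibly $x$-bad $T_j$). Controlling the positional constraint (so that the three clusters lie in the three desired intervals among $\{W_{\alpha_\sigma}:\sigma\neq i\}$) will require a careful averaging over choices of interval triples.
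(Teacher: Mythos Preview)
Your plan shares the right ingredients with the paper (find a $K_r$ in $R$ with at most one cluster where $x$ has few neighbours, then combine the no-local-barrier colouring with the flexibility graph $F$), but it contains two genuine gaps stemming from the decision to place the target cluster $W$ in the \emph{same} interval $W_{i_0}$ as $x$.

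First, the claim that $G[X\cup\{x\}]$ and $G[X\cup\{y\}]$ are complete $r$-partite is false. In your setup $X$ lives in the $r$ clusters $T_\sigma$, $\sigma\in[r+1]\setminus\{i\}$, while both $x$ and $y$ sit at the separate position $i$; moreover $x$ (and $y$) is adjacent to all of $X\cap T_\sigma$ for $\sigma\neq i,j$, so neither vertex can be merged into a neighbouring interval class. Hence $G[X\cup\{x\}]$ is genuinely $(r+1)$-partite with a singleton class, and Corollary~\ref{flexibilitycol1} does not apply to it directly. One must instead split $X=Y\cup Z$ so that $G[Z]$ spans $F$ and $G[Y\cup\{x\}]$ spans a single copy of $H$ via the no-local-barrier colouring; you mention that colouring but never actually deploy it.

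Second, and more seriously, the step ``a further counting argument in $W_{\alpha_i}=W_{i_0}$ will produce a cluster $W\subseteq W_{i_0}$ adjacent in $R$ to $T_\sigma$ for every $\sigma\in[r+1]\setminus\{i,j\}$'' is not justified. The common neighbourhood in $R$ of the $r-1$ chosen $T_\sigma$'s has size at least $(r-1)(\eta/3)|R|$, but there are only $|R|/t\approx(\eta/4)|R|$ clusters inside $W_{i_0}$; for $r=3$ these numbers are of the same order and there is no reason the common neighbourhood should intersect $W_{i_0}$ at all. The same obstacle undermines the plan to fix the intervals $W_{\alpha_1}<\cdots<W_{\alpha_{r+1}}$ in advance and then look for a $K_r$ spanning the prescribed $r$ intervals: with $\delta(R)\approx(1-1/(r-1))|R|$ and $t$ large, a given cluster may have no neighbours whatsoever in a specified $W_{\alpha_\sigma}$.

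The paper avoids both problems by a single change of target: it takes $W:=T_j$, the (possibly) $x$-bad cluster itself. Then $y\in T_j$ lies at one of the $r$ cluster positions, so $G[X\cup\{y\}]$ \emph{is} a complete $r$-partite ordered graph (namely the graph $F'$ of Corollary~\ref{flexibilitycol1}), while only $x$ requires the $(r+1)$-colouring treatment. No auxiliary cluster in $W_{i_0}$ is needed, and the $K_r$ is found greedily without prescribing intervals in advance (one only excludes clusters in $W_{i_0}$, which costs $|R|/t\ll(\eta/3)|R|$).
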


\begin{proof}
Let $x\in V(G)$ and define
$$N^*(x):=\{W^i_j\in V(R):d_{G'}(x,W^i_j)\geq\eta|W^i_j|/100\}.$$
Recall that every non-exceptional cluster $W$ has size $m$. Hence, if $W\in N^*(x)$ then there are at most $m$ neighbours of $x$ in $W$, while if $W\not\in N^*(x)$ then there are at most $\eta m/100$ neighbours of $x$ in $W$. Finally, there are at most $\varepsilon_1 n$ vertices lying in exceptional clusters. Therefore,
$$\left(1-\frac{1}{r-1}+\frac{\eta}{2}\right)n \stackrel{(\ref{minG})}{\leq} d_{G'}(x)\leq m\cdot|N^*(x)|+\eta n/100+\varepsilon_1 n.$$
and thus
\begin{equation}\label{eq1}
\left(1-\frac{1}{r-1}+\frac{\eta}{3}\right)|R|\leq |N^*(x)|.
\end{equation}
Using  (\ref{eq2}) and (\ref{eq1}), we can greedily find $r$ clusters $T_1,\dots,T_r$ in $R$ such that
\begin{itemize}
    \item the $T_k$'s span a copy of $K_r$ in $R$;
    \item $T_k\in N^*(x)$ for every $k\in[r-1]$;
    \item if $x\in W_{k_0}$ with $k_0\in[t]$ then $T_k\not\subseteq  W_{k_0}$ for all $k\in[r]$. 
\end{itemize}
By the properties above, we may relabel indices so that there is an $i \in [r+1]$ and $j \in [r]$ for which $$T_1<\cdots<T_{i-1}<x<T_i<\cdots<T_r$$
and $T_k\in N^*(x)$ for every $k\not=j$.

Define $T'_k:=T_k\cap N_{G'}(x)$ for $k\not=j$ and $T'_j:=T_j$. By construction, $|T'_k|\geq\eta m/100$ for every $k\in[r]$. Thus, by the slicing lemma (Lemma~\ref{slicinglemma}), $(T'_a,T'_b)_{G''}$ is $(\varepsilon_2,d/2)$-regular for every  $a\not=b\in[r]$. 

We will show that the cluster $W:=T_j$ is as desired for the claim.
For every $k\not=j$, let 
$$L_k:=\{v\in T'_j:d_{G''}(v,T'_k)\leq(d/2-\varepsilon_2)|T'_k|\}.$$ 
It is easy to show that $|L_k|<\varepsilon_2|T'_j|$. Let $A:=T'_j\setminus(L_1\cup\cdots\cup L_r)$, then
$$|A|\geq(1-\varepsilon_2 r)|T'_j|=(1-\varepsilon_2 r)|T_j|.$$

Let $y\in A$. Define $T''_k:=T'_k\cap N_{G''}(y)$ for every $k\not=j$ and $T''_j=T'_j\setminus\{y\}$. By the slicing lemma, $(T''_a,T''_b)_{G''}$ is $(\varepsilon_3,d/3)$-regular for every  $a\not=b\in[r]$. Furthermore, $x$ and $y$ are  adjacent to all vertices in $T''_k$ for every $k\not=j$ (see Figure~\ref{fig:3}).

\begin{figure}[!h]
\centering
\begin{tikzpicture}
\fill[pattern=north east lines, pattern color=black] (0,0.2)--(9,0.2)--(9,-0.2)--(0,-0.2)--cycle;
\fill[pattern=north east lines, pattern color=black] (0,0.5) arc (135:45:6.364)--(9,0) arc (45:135:6.364)--cycle;

\fill[black!50] (0,-2)--(3.5,0)--(4.5,0)--cycle;
\fill[black!50] (0,-2)--(8.5,0)--(9.5,0)--cycle;

\fill[black!50] (6.5,-2)--(3.5,0)--(4.5,0)--cycle;
\fill[black!50] (6.5,-2)--(8.5,0)--(9.5,0)--cycle;

\filldraw[color=black, fill=white] (0,0) circle (1) node{$T_1\setminus\{y\}$};
\filldraw[black] (0,-2) circle (3pt) node[below=3pt]{$y$};
\filldraw[color=black, fill=white] (4,0) circle (0.5) node{$T''_2$};
\filldraw[black] (6.5,-2) circle (3pt) node[below=3pt]{$x$};
\filldraw[color=black, fill=white] (9,0) circle (0.5) node{$T''_3$};
\end{tikzpicture}
\caption{In this picture, we take $r=3$, $T_1<T_2<x<T_3$ and $j=1$. Note that $(T_1\setminus\{y\},T''_2)$, $(T_1\setminus\{y\},T''_3)$ and $(T''_2,T'_3)$ are regular pairs, while $x$ and $y$ are adjacent to all vertices in $T''_2$ and $T''_3$.}
\label{fig:3}
\end{figure}
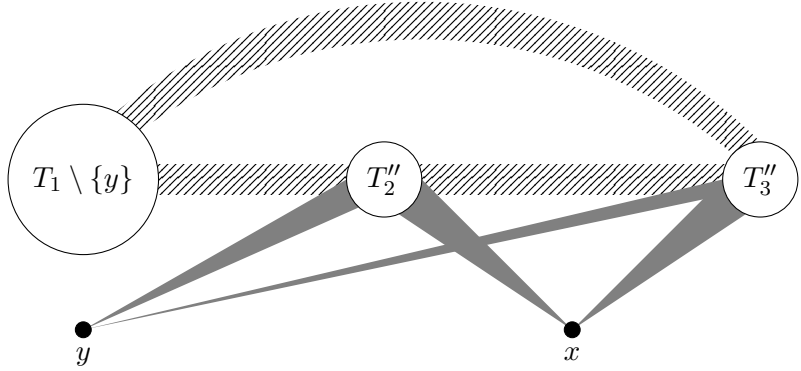

Since $H$ has no local barrier, there exists an interval $(r+1)$-colouring $$V_1<\cdots<V_{i-1}<\{v\}<V_i<\cdots<V_r$$ 
of $H$ such that there is no edge between $v$ and $V_j$. Furthermore, as $\chi_{cr}^*(H)<\chi_<(H)$, $H$ is flexible by Corollary~\ref{flexibilitycol2}. Let $F$ be the complete $r$-partite ordered graph with parts $F_1<\cdots<F_r$ as defined in Lemma~\ref{flexibilitylemma1} and let $F'$ be the complete $r$-partite ordered graph with parts $F'_1<\cdots<F'_r$ where $$|F'_k|=\begin{cases}
|F_k|+|V_k|+1 &\text{if $k=j$,}\\
|F_k|+|V_k| &\text{otherwise.}
\end{cases}$$
Note that
$$1+\sum_{k=1}^r |V_k|=1+(h-1)=h$$
thus, both $F$ and $F'$ contain perfect $H$-tilings by Corollary~\ref{flexibilitycol1}. 

Pick any $u\in F'_j$. Note that $F'\setminus\{u\}$ is the complete $r$-partite ordered graph such that the $k$th part has size $|F_k|+|V_k|$ for every $k\in[r]$ and 
 $|F'\setminus\{u\}|=|F|+h-1=sh-1$. By the key lemma (Lemma~\ref{keylemma}), there exist at least $\xi_1 n^{sh-1}$ $(sh-1)$-sets $X\subseteq V(G)$ such that $G[X]$ spans a copy of $F'\setminus\{u\}$ with $F'_k\subseteq T''_k$ for every $k\not=j$ and $F'_j\setminus\{u\}\subseteq T''_j$. It remains to show that both $G[X\cup\{x\}]$ and $G[X\cup\{y\}]$ contain perfect $H$-tilings.

First, we consider $G[X\cup\{x\}]$. Since $G[X]$ spans a copy of $F'\setminus\{u\}$ with $F'_k\subseteq T''_k$ for every $k\not=j$ and $F'_j\setminus\{u\}\subseteq T''_j$, then $X$ can be partitioned into two sets $Y,Z$ such that
$$|Y\cap T''_k|=|V_k|\ \ \text{and} \ \ |Z\cap T''_k|=|F_k|$$
for every $k\in[r]$. Note that $G[Z]$ spans a copy of $F$, thus $G[Z]$ contains a perfect $H$-tiling. On the other hand, $G[Y]$ spans a copy of $H\setminus\{v\}$. Recall that $x$ is adjacent to all vertices in $T''_k$ for every $k\not=j$ and $T''_{i-1}<x<T''_i$. Thus,  $G[Y\cup\{x\}]$ spans a copy of $H$ in $G$
where $x$ plays the role of $v$. It follows that $G[X\cup\{x\}]$ contains a perfect $H$-tiling.

Next, we consider $G[X\cup\{y\}]$. Since $y\in T_j$ and $y$ is adjacent to all vertices in $T''_k$ for $k\not=j$, then $G[X\cup\{y\}]$ spans a copy of $F'$ and so $G[X\cup\{y\}]$ contains a perfect $H$-tiling. 
\end{proof}

\begin{remark}\label{remark1}
Note that 
 Claim~\ref{claim4} holds even if we relax the hypothesis of Theorem~\ref{absorbingthm} 
to allow $\chi _<(H)\geq 2$; that is, we did not use the condition that $r \geq 3$ anywhere in the proof of this claim. This fact will be useful in the proof of Theorem~\ref{covercol} in the next subsection.
\end{remark}

Our final claim states that given arbitrary vertices $x,y\in V(G)$ there exist many chains of bounded size between $x$ and $y$.

\begin{claim}\label{finalclaim}
Let $x,y\in V(G)$. There exist at least $\xi_5 n^{4sh-1}$ $(4sh-1)$-sets $X\subseteq V(G)$ such that both $G[X\cup\{x\}]$ and $G[X\cup\{y\}]$ contain perfect $H$-tilings.
\end{claim}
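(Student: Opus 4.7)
The plan is to construct many $(4sh-1)$-chains between $x$ and $y$ by routing through two intermediate pivot vertices $z \in W_x$ and $z' \in W_y$ lying in appropriate clusters of the reduced graph $R$, gluing the resulting pieces together via two successive applications of Lemma~\ref{chainlemma}.

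First, I would apply Claim~\ref{claim4} once to $x$ and once to $y$, obtaining clusters $W_x,W_y\in V(R)$ and subsets $A_x\subseteq W_x$, $A_y\subseteq W_y$ of size at least $(1-\varepsilon_2 r)m$, so that for every $z\in A_x$ (resp.\ $z'\in A_y$) there are $\geq \xi_1 n^{sh-1}$ many $(sh-1)$-chains between $x$ and $z$ (resp.\ between $y$ and $z'$). Next, I would apply Claim~\ref{claim3} to the pair $(W_x,W_y)$ --- its proof remains valid when $W_x=W_y$, since one may simply take a single copy of $K_r$ through $W_x$ in that case --- to obtain $B\subseteq W_x$, $B'\subseteq W_y$ of size at least $(1-\varepsilon_1 r)m$ such that every pair $(z,z')\in B\times B'$ admits $\geq\xi_3 n^{2sh-1}$ many $(2sh-1)$-chains between $z$ and $z'$. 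Setting $B^*:=A_x\cap B$ and $B'^*:=A_y\cap B'$, the hierarchies (\ref{hier1}) and (\ref{hier3}) give $|B^*|,|B'^*|\geq \eta n/(10L_0)$.

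For the first concatenation, fix an arbitrary $z\in B^*$ and apply Lemma~\ref{chainlemma} with endpoints $(y,z)$, pivot set $B'^*\setminus\{y,z\}$, and parameters $(s_1,s_2)=(s,2s)$: every $z'\in B'^*\setminus\{y,z\}$ is connected to $y$ via $\geq \xi_1 n^{sh-1}$ chains of size $sh-1$ (by Claim~\ref{claim4}) and to $z$ via $\geq\xi_3 n^{2sh-1}$ chains of size $2sh-1$ (by Claim~\ref{claim3}), so the lemma yields $\geq \xi_4 n^{3sh-1}$ many $(3sh-1)$-chains between $y$ and $z$. Since this holds for every $z\in B^*\setminus\{y\}$, I may now perform the second concatenation: apply Lemma~\ref{chainlemma} again with endpoints $(x,y)$, pivot set $B^*\setminus\{x,y\}$, and parameters $(s_1,s_2)=(s,3s)$. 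For every $z\in B^*\setminus\{x,y\}$, $z$ is connected to $x$ via $\geq\xi_1 n^{sh-1}$ chains of size $sh-1$ (by Claim~\ref{claim4}) and to $y$ via $\geq\xi_4 n^{3sh-1}$ chains of size $3sh-1$ (by the first concatenation). The lemma then delivers $\geq\xi_5 n^{4sh-1}$ many $(4sh-1)$-chains between $x$ and $y$, as required.

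The main technical points will be to ensure the hierarchy $\xi_5\ll\xi_4\ll\xi_3\ll\xi_2\ll\xi_1$ in (\ref{hier2}) is exactly what is needed for the two successive applications of Lemma~\ref{chainlemma}, and to verify that Claim~\ref{claim3} still goes through in the degenerate case $W_x=W_y$. Once Claim~\ref{finalclaim} is established, Theorem~\ref{absorbingthm} follows by feeding its conclusion into Lemma~\ref{absorbingtool} with the parameter $\xi$ there taken to be $\xi_5$ and the ``$s$'' there taken to be $4s$, producing the desired absorbing set $Abs$ of size at most $\nu n=(\xi_5/2)^h n/4$ that absorbs every admissible $W\subseteq V(G)\setminus Abs$.
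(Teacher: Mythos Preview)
Your proposal is correct and follows essentially the same route as the paper's proof: apply Claim~\ref{claim4} to each of $x,y$ to obtain clusters $W,W'$ with large ``good'' subsets, apply Claim~\ref{claim3} to the pair $(W,W')$, intersect the two subsets in each cluster, and then concatenate twice via Lemma~\ref{chainlemma} (first through the intersection in $W'$ to link $y$ to any pivot $z$ in $W$, then through the intersection in $W$ to link $x$ to $y$). Your handling of the degenerate case $W_x=W_y$ and your explicit removal of $\{x,y,z\}$ from the pivot sets are minor extra care that the paper leaves implicit, but otherwise the argument is identical.
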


\begin{proof}
By Claim~\ref{claim4} there exist clusters $W,W'\in V(R)$ and sets $A\subseteq W$, $A'\subseteq W'$ of size $|A|\geq(1-\varepsilon_2 r)|W|$ and $|A'|\geq(1-\varepsilon_2 r)|W'|$ such that for every $z\in A$ and $z'\in A'$ there exist at least $\xi_1 n^{sh-1}$ $(sh-1)$-sets $Y\subseteq V(G)$ such that both $G[Y\cup\{x\}]$ and $G[Y\cup\{z\}]$ contain perfect $H$-tilings and at least $\xi_1 n^{sh-1}$ $(sh-1)$-sets $Y'\subseteq V(G)$ such that both $G[Y'\cup\{y\}]$ and $G[Y'\cup\{z'\}]$ contain perfect $H$-tilings. 

Furthermore, by Claim~\ref{claim3} there exist sets $D\subseteq W$, $D'\subseteq W'$ of size $|D|\geq(1-\varepsilon_1 r)|W|$ and $|D'|\geq(1-\varepsilon_1 r)|W'|$ such that for every $z\in D$ and $z'\in D'$ there exist at least $\xi_3 n^{2sh-1}$ $(2sh-1)$-sets $Z\subseteq V(G)$ such that both $G[Z\cup\{z\}]$ and $G[Z\cup\{z'\}]$ contain perfect $H$-tilings.
Note that
$$|A\cap D|\geq(1-2\varepsilon_2 r)|W|=(1-2\varepsilon_2 r)m\stackrel{(\ref{hier3})}{\geq}\frac{\eta n}{6L_0},$$
$$|A'\cap D'|\geq(1-2\varepsilon_2 r)|W'|=(1-2\varepsilon_2 r)m\stackrel{(\ref{hier3})}{\geq}\frac{\eta n}{6L_0}.$$
Apply Lemma~\ref{chainlemma} to $y$ and any $z\in A\cap D$ with $A'\cap D',  \eta /({6L_0}), \xi_3, \xi _4$ playing the roles of $A,  \alpha, \beta , \gamma$; we conclude that  there are at least ${\xi_4} n^{3sh-1}$ $(3sh-1)$-sets $X'\subseteq V(G)$ such that both $G[X'\cup\{z\}]$ and $G[X'\cup\{y\}]$ contain perfect $H$-tilings. 

Next apply 
 Lemma~\ref{chainlemma} to $x$ and $y$, with $A\cap D, \eta /({6L_0}), \xi_4, \xi_5$ playing the roles of $A,  \alpha, \beta, \gamma$; this yields at least $\xi_5 n^{4sh-1}$ $(4sh-1)$-sets $X\subseteq V(G)$ such that both $G[X\cup\{x\}]$ and $G[X\cup\{y\}]$ contain perfect $H$-tilings, as desired.
\end{proof}

Observe that, by Claim~\ref{finalclaim}, the hypothesis of Theorem~\ref{absorbingtool} is satisfied with
$4s$ and $\xi _5$ playing the roles of $s$ and $\xi$. Thus, $V(G)$ contains a set $Abs$ so that
\begin{itemize}
\item $|Abs|\leq(\xi_5/2)^hn/4$;
\item $Abs$ is an $H$-absorbing set for any $W\subseteq V (G)\setminus Abs$ such that $|W|\in h\mathbb{N}$ and $|W|\leq(\xi_5/2)^{2h}n/(512s^2h^3)$.
\end{itemize}
Recall that $\nu= (\xi_5/2)^h/4$. Therefore we have that
$|Abs|\leq \nu n$ and by (\ref{hier1}) and (\ref{hier2}) we have that $\nu ^3 <(\xi_5/2)^{2h}/(512s^2h^3) $; so $Abs$ is as desired.
\qed

\subsection{Proof of Theorem~\ref{thmcover}}
By arguing as in Claim~\ref{claim4}, we obtain the following result.

\begin{thm}\label{covercol}
Let $H$ be an ordered graph that does not have a local barrier and let $\eta>0$. 
There exists an $ n_0\in \mathbb N$ so that the following holds. If $G$ is an ordered graph  on $n\geq n_0$ vertices with
$$\delta(G)\geq\left(1-\frac{1}{\chi_<(H)-1}+\eta\right)n,$$
 then for any vertex $x\in V(G)$ there exists a copy of $H$ in $G$ covering the vertex $x$.
\end{thm}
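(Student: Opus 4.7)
My plan is to mirror the argument used for Claim~\ref{claim4}, stripping out every use of flexibility (which requires $\chi^*_{cr}(H) < \chi_<(H)$, not assumed here) and keeping only the part that produces a single copy of $H$ through $x$. Set $r := \chi_<(H)$ and $h := |H|$. Choose constants $1/n \ll 1/L_0 \ll \varepsilon_1 \ll \varepsilon_2 \ll d \ll \eta, 1/(rh)$ and $t := \lceil 4/\eta \rceil$. Partition $[n]$ into nearly balanced intervals $W_1 < \dots < W_t$, delete from $G$ every intra-interval edge to obtain $G'$, so that $\delta(G') \geq \left(1 - \tfrac{1}{r-1} + \tfrac{\eta}{2}\right)n$, and then apply the multipartite regularity lemma (Theorem~\ref{multiregularity}) to produce a pure graph $G''$ and a reduced graph $R$ with $\delta(R) \geq \left(1 - \tfrac{1}{r-1} + \tfrac{\eta}{3}\right)|R|$. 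Crucially, every edge of $R$ joins clusters belonging to different intervals $W_i$, so any two adjacent clusters are linearly ordered.

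Next, fix $x \in V(G)$, say $x \in W_{k_0}$, and define $N^*(x) := \{W \in V(R) : d_{G'}(x,W) \geq \eta|W|/100\}$. The standard degree count from the proof of Claim~\ref{claim4} yields $|N^*(x)| \geq \left(1 - \tfrac{1}{r-1} + \tfrac{\eta}{3}\right)|R|$. Using the lower bounds on $\delta(R)$ and $|N^*(x)|$ together with a greedy Tur\'an-type selection, I would pick $r$ clusters $T_1, \dots, T_r$ forming a copy of $K_r$ in $R$ such that at least $r-1$ of them lie in $N^*(x)$ and none lies in $W_{k_0}$. After relabeling, I may assume $T_1 < \dots < T_{i-1} < x < T_i < \dots < T_r$ for some $i \in [r+1]$ and $T_k \in N^*(x)$ for every $k \neq j$, where $j \in [r]$.

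Now the no-local-barrier hypothesis (Definition~\ref{localbarrier}) is exactly what is needed: applied to this specific pair $(i,j)$, it delivers an interval $(r+1)$-colouring
\[
V_1 < \dots < V_{i-1} < \{v\} < V_i < \dots < V_r
\]
of $H$ with no edge between $v$ and $V_j$. Set $T'_k := T_k \cap N_{G''}(x)$ for $k \neq j$ and $T'_j := T_j$; the slicing lemma promotes every pair $(T'_a, T'_b)_{G''}$ to $(\varepsilon_2, d/2)$-regularity. The key lemma (Lemma~\ref{keylemma}) then embeds $H \setminus \{v\}$ into $T'_1, \dots, T'_r$ with each colour class $V_k$ mapped into $T'_k$. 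Placing $x$ into the role of $v$ closes a copy of $H$ in $G$ covering $x$: the ordering constraint is satisfied because $T_{i-1} < x < T_i$ matches the position of $v$ in the colouring; all edges from $v$ to classes $V_k$ with $k \neq j$ are present because $x$ is adjacent in $G'$ (hence in $G$) to every vertex of $T'_k \subseteq N_{G''}(x)$; and no edges from $v$ to $V_j$ are needed.

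I do not anticipate any serious obstacle, since every ingredient is already established earlier in the paper. The only delicate point is the greedy construction of the $K_r$ with the correct placement of $x$ and the right number of clusters in $N^*(x)$, but this is verbatim the argument from the proof of Claim~\ref{claim4}, which (as Remark~\ref{remark1} observes) goes through for all $r \geq 2$ without relying on the condition $\chi^*_{cr}(H) < \chi_<(H)$.
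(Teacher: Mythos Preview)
Your proposal is correct and follows essentially the same route as the paper's proof: apply the multipartite regularity lemma after cutting intra-interval edges, locate a $K_r$ in the reduced graph with $r-1$ clusters in $N^*(x)$ and none in $x$'s interval, invoke the no-local-barrier hypothesis to find the right $(r+1)$-colouring, and finish with the key lemma so that $x$ plays the role of $v$. One small slip: you should set $T'_k := T_k \cap N_{G'}(x)$ (as in Claim~\ref{claim4}) rather than $T_k \cap N_{G''}(x)$, since membership in $N^*(x)$ only guarantees many $G'$-neighbours of $x$ in $T_k$, not $G''$-neighbours; with that correction the slicing lemma applies exactly as you describe.
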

\proof
Let $r:=\chi_<(H)$.
Given such an ordered graph $G$, define constants as in (\ref{hier1}) and (\ref{hier2}). 
As in the proof of Theorem~\ref{absorbingthm}, 
apply the regularity lemma, and then argue as in the proof of
Claim~\ref{claim4} to obtain the following:
given any
$x\in V(G)$, there are  disjoint  $T'_1,\dots,T'_r\subseteq V(G)$ where $|T'_k|\geq \eta^2  n/(500L_0)$ and  $i \in [r+1]$, $j \in [r]$ 
such that
$$T'_1<\cdots<T'_{i-1}<x<T'_i<\cdots<T'_r;$$
$(T'_a,T'_b)$ is $(\eps_2,d/2)$-regular for every $a\not=b\in[r]$; $x$ is adjacent to all vertices in $T'_k$ for any $k\not=j$.

As $H$ does not have a local barrier, there exists an interval $(r+1)$-colouring
$$V_1<\cdots<V_{i-1}<\{v\}<V_i<\cdots<V_r$$
of $H$ such that there is no edge between $v$ and $V_j$. By the key lemma, there exists some set $X\subseteq V(G)$ such that $G[X]$ spans a copy of $H\setminus\{v\}$ with $V_k\subseteq T'_k$ for every $k\in[r]$. Since $x$ is adjacent to all vertices in $T'_k$ for every $k\not=j$, $G[X\cup\{x\}]$ spans a copy of $H$ in $G$, as desired.
\endproof
We are now ready to prove Theorem~\ref{thmcover} using Theorem~\ref{BalLiTre2} and Theorem~\ref{covercol}.

\begin{proofofthmcover}
Note that the lower bounds stated in Theorem~\ref{thmcover} follow immediately from  Corollary~\ref{ex1col} and Lemma~\ref{ex3lemma}. It remains to prove the upper bounds.

Let $H$ be an ordered graph and $\eta>0$. Let $G$ be an ordered graph on $n$ vertices with $n$ sufficiently large and minimum degree
\begin{itemize}
    \item[(i)] $  \delta(G)\geq\left (1-\frac{1}{\chi_<(H)}+\eta\right )n $ \ \  if $H$ has a local barrier;
\item[(ii)] $  \delta(G)\geq\left (1-\frac{1}{\chi_<(H)-1}+\eta\right )n $  \ \  if $H$ does not have a local barrier.
\end{itemize}
In case (ii) we obtain an $H$-cover by Theorem~\ref{covercol}.
In case (i), consider any
 $x\in V(G)$. Let $y\in V(G)\setminus \{x\}$. By Theorem~\ref{BalLiTre2} there exists a set $X\subseteq V(G)$ such that both $G[X\cup\{x\}]$ and $G[X\cup\{y\}]$ contain perfect $H$-tilings. In particular, there exists a copy of $H$ in $G$ covering  $x$. Thus, $G$ has an $H$-cover, as desired.
\qed
\end{proofofthmcover}

\section{Properties of $\chi^*_{cr}(H)$}\label{sec:properties}
In this section, we provide various bounds on the parameter $\chi_{cr}^*(H)$. We start by showing some natural upper and lower bounds.

\begin{prop}\label{lowerbound}
Let $H$ be an ordered graph on $h$ vertices and let $r:=\chi_<(H)$. Let $\mathcal{C}$ denote the set of interval $r$-colourings of $H$. Define
$$\ell_-(H):=\max_{(H_1<\cdots<H_r)\in\mathcal{C}} |H_1|\ \ \ \text{and} \ \ \
\ell^*_-(H):=\max_{(H_1<\cdots<H_r)\in\mathcal{C}} |H_r|.$$
Then  $\chi_{cr}^*(H)\geq \max  \{ h/\ell_-(H) \, , \,  h/\ell ^*_-(H) \}$.
\end{prop}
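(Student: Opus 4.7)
The plan is to prove the bound $\chi^*_{cr}(H) \geq h/\ell_-(H)$ directly for every bottlegraph $B$ of $H$, by controlling the sizes of its parts individually; the symmetric argument (placing each part last and reasoning about the rightmost interval of each $H$-copy) will then give $\chi^*_{cr}(H) \geq h/\ell^*_-(H)$.

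The crux of the argument is the following auxiliary claim: in any interval colouring of $H$ (with any number of intervals, not just $r$), the first colour class has size at most $\ell_-(H)$. This is the key subtlety, because a bottlegraph may have $k > r$ parts and copies of $H$ induced in the blow-up need not use exactly $r$ non-empty intervals. I would prove it by contradiction. Suppose some interval colouring of $H$ has first class $\{1, \dots, a\}$ with $a > \ell_-(H)$; then $\{1,\dots,a\}$ is independent in $H$. Pick any interval $r$-colouring $H_1 < \dots < H_r$; since $|H_1|\leq \ell_-(H)<a$, let $j\geq 2$ be the smallest index such that $H_j$ contains some vertex greater than $a$. Define $H_1' := \{1,\dots,a\}$ and $H_i' := H_i\setminus \{1,\dots,a\}$ for $i\geq 2$. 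If $j=2$, the result is a valid interval $r$-colouring with first class of size $a>\ell_-(H)$, contradicting the maximality of $\ell_-(H)$. If $j\geq 3$, then $H_2,\dots,H_{j-1}$ are absorbed entirely into $H_1'$, leaving fewer than $r$ non-empty intervals, contradicting $\chi_<(H)=r$.

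Next I would use this lemma to bound each part of an arbitrary bottlegraph $B$ of $H$ with parts $B_1,\dots,B_k$. For each $i\in[k]$, fix an interval labelling $\phi_i$ of $B$ that places $B_i$ first. By definition of bottlegraph, $(B(t_i),\phi_i)$ admits a perfect $H$-tiling consisting of $t_i|B|/h$ copies of $H$. By the claim above, each copy contributes at most $\ell_-(H)$ vertices to the leftmost part $B_i(t_i)$, so
\[ t_i|B_i| \;\leq\; \frac{t_i|B|}{h}\cdot \ell_-(H), \qquad \text{i.e.}\qquad |B_i| \;\leq\; \frac{|B|\,\ell_-(H)}{h}. \]

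Finally, letting $B_{\min}$ denote a smallest part of $B$, summing the previous bound over the $k-1$ parts different from $B_{\min}$ gives
\[ |B|-|B_{\min}| \;=\; \sum_{i:\,B_i\neq B_{\min}} |B_i| \;\leq\; (k-1)\cdot\frac{|B|\,\ell_-(H)}{h}, \]
and hence
\[ \chi_{cr}(B) \;=\; \frac{(k-1)|B|}{|B|-|B_{\min}|} \;\geq\; \frac{(k-1)|B|}{(k-1)|B|\ell_-(H)/h} \;=\; \frac{h}{\ell_-(H)}. \]
Taking the infimum over all bottlegraphs $B$ yields $\chi^*_{cr}(H)\geq h/\ell_-(H)$, and the dual argument (labellings placing $B_i$ last, and the corresponding bound on the size of the last class of each $H$-copy) gives $\chi^*_{cr}(H)\geq h/\ell^*_-(H)$. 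The main obstacle is the first-class lemma: it is not a priori obvious that copies of $H$ embedded into a blow-up of a bottlegraph with many parts cannot pack more vertices into the leftmost part than an interval $r$-colouring of $H$ allows, and this requires the modification argument above using both the interval structure and $\chi_<(H)=r$.
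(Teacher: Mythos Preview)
Your proof is correct and rests on the same core observation as the paper's: each copy of $H$ in a perfect tiling of an ordered blow-up contributes at most $\ell_-(H)$ vertices to the leftmost part. The paper's execution is a bit more streamlined: it first invokes Proposition~\ref{propbottle} to assume all parts of $B$ have size $m$ except one smaller part $B_k$, then uses the single labelling $B_1<\cdots<B_k$ to obtain $mt\le \ell_-(H)\cdot|B|t/h$, whence $\chi_{cr}(B)=|B|/m\ge h/\ell_-(H)$ directly. You instead bound every $|B_i|$ via its own labelling and then sum over $i\ne i_{\min}$, which avoids the reduction of Proposition~\ref{propbottle} at the cost of a little extra bookkeeping. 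Your explicit proof of the ``first-class lemma'' (that no initial independent interval of $H$ can exceed $\ell_-(H)$) fills in a step the paper states without justification; your point that this needs an argument---since a bottlegraph may have $k>r$ parts, so the induced interval partition on a copy of $H$ is not automatically an $r$-colouring---is well taken.
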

\begin{proof}
We only prove that $\chi_{cr}^*(H)\geq h/\ell_-(H)$ as  the proof that $\chi_{cr}^*(H)\geq h/\ell ^*_-(H)$ is analogous.
Let $B$ be a bottlegraph of $H$ with parts $B_1,\dots,B_k$ for some $k\in\mathbb{N}$.
Our aim is to show that $\chi_{cr}(B)\geq h/\ell_-(H)$. By Proposition~\ref{propbottle}, we may assume there exists some $m\in\mathbb{N}$ such that
$$|B_k|\leq m \quad\text{and}\quad  |B_i|=m$$
for every $i\in[k-1]$. Let $\phi$ be an interval labelling of $B$ such that the ordered graph $(B,\phi)$ has parts $B_1<\cdots<B_k$. Since $B$ is a bottlegraph, there exists a $t \in \mathbb N$ so that $(B(t),\phi)$ contains a perfect $H$-tiling $\mathcal{H}$. Note $\mathcal{H}$ consists of $|B|t/h$ copies of $H$ and every copy of $H$ has at most $\ell_-(H)$ vertices in $B_1$, thus
$$mt=|B_1|t\leq\ell_-(H)|\mathcal{H}|=\ell_-(H)|B|t/h.$$
Since $\chi_{cr}(B)=|B|/m$, the above implies $\chi_{cr}(B)\geq h/\ell_-(H)$
and so $\chi_{cr}^*(H)\geq h/\ell_-(H).$
\end{proof}
Note Proposition~\ref{lowerbound} implies that if $H$ is such that $1$ and $2$ are adjacent or $h-1$ and $h$ are adjacent then $\chi_{cr}^*(H)=h$.

\begin{prop}\label{upperbound}
Let $H$ be an ordered graph on $h$ vertices and let $r:=\chi_<(H)$. Let $\mathcal{C}$ denote the set of interval $r$-colourings of $H$. Define
$$\ell_+(H):=\max_{(H_1<\cdots<H_r)\in\mathcal{C}}\left\{\min_{i\in[r]} |H_i|\right\}.$$
Then $\chi_{cr}^*(H)\leq h/\ell_+(H)$.
\end{prop}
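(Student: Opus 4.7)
To prove $\chi_{cr}^*(H) \le h/\ell_+(H)$, I would exhibit an explicit bottlegraph $B$ of $H$ with $\chi_{cr}(B) \le h/\ell_+(H)$. Fix an interval $r$-colouring $(H_1,\ldots,H_r)$ of $H$ achieving $\ell_+(H) =: s$, so $s_i := |H_i| \ge s$ for every $i$. Let $k := \lceil h/s \rceil$ and $\tilde s := h - (k-1)s \in (0,s]$, and take $B$ to be the complete $k$-partite graph with $k-1$ parts of size $s$ and one part of size $\tilde s$. Then $|B| = h$, the smallest part has size $\tilde s$, and a direct computation gives
\[
\chi_{cr}(B) \;=\; \frac{(k-1)\,h}{h-\tilde s} \;=\; \frac{(k-1)\,h}{(k-1)\,s} \;=\; \frac{h}{s} \;=\; \frac{h}{\ell_+(H)}.
\]

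It remains to verify that $B$ is a bottlegraph of $H$. Fix any permutation $\sigma$ of $[k]$ and any interval labelling $\phi$ of $B$; let $V_1 < \dots < V_k$ be the parts of $(B(t),\phi)$ in their induced order, with sizes $\tau_m t$, where $(\tau_m)_{m\in[k]}$ is a rearrangement of $(s,\ldots,s,\tilde s)$. Since $\chi_<(H)=r$, every copy of $H$ in $(B(t),\phi)$ is determined by an $r$-subset $T=\{p_1<\cdots<p_r\}\subseteq[k]$ together with an order-preserving embedding that places the $i$-th colour class $H_i$ inside $V_{p_i}$ (note that cross-part pairs of $B(t)$ are all edges, so only $H$'s edges need to be supplied, and every independent set $H_i$ maps into the independent part $V_{p_i}$). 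Writing $n_T$ for the number of copies assigned to $T$, a perfect $H$-tiling exists precisely when there are non-negative integers $(n_T)_T$ satisfying
\[
\sum_{T \ni m} s_{\operatorname{pos}(m,T)}\,n_T \;=\; \tau_m\,t \qquad \text{for every } m \in [k],
\]
where $\operatorname{pos}(m,T)$ is the rank of $m$ within the sorted tuple $T$.

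The approach would be to first construct an explicit non-negative rational solution to the fractional relaxation of this system, then choose $t$ to be a common multiple of the denominators to obtain an integer solution (which yields the required tiling). The rational construction exploits the interchangeability of the $k-1$ large parts of $B$: one averages over all $r$-subsets obtainable by ``cyclic shifts'' of a canonical placement of the colour classes across consecutive parts, and weights them so the part-size constraints are met. The main obstacle is to establish fractional feasibility uniformly across all $k$ positions into which the distinguished small part of size $\tilde s$ can be slotted by $\phi$; this small part breaks the full symmetry, so a careful case analysis on its position among $V_1,\ldots,V_k$ is required. The analysis stays tractable because $\tilde s \le s \le s_i$ for every $i$, which bounds the imbalance introduced by the small part and ensures that the ``missing'' $s-\tilde s$ vertices can always be compensated by shifting weight among the cyclic rotations of the canonical subset.
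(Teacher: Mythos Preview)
Your bottlegraph $B$ and the computation $\chi_{cr}(B)=h/\ell_+(H)$ are correct and essentially the same as the paper's choice (the paper scales every part by $h!$, but that does not affect $\chi_{cr}$). The gap is in the verification that $B$ is a bottlegraph: you outline a fractional LP argument with ``cyclic shifts of a canonical placement'' and explicitly concede that ``a careful case analysis on the position of the small part is required'' --- but you do not carry out either the construction of the fractional solution or the case analysis. As written this is a plan, not a proof; in particular you give no reason why the linear system is fractionally feasible for \emph{every} position of the small part, which is the entire content of the claim.

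The paper sidesteps all of this with a direct combinatorial construction. It takes the parts of $B$ to have sizes $\ell_+(H)\cdot h!$ (and one of size $(h-k\ell_+(H))\cdot h!$), so that $|B|=h\cdot h!$, and then tiles any ordered version of $B$ by $h!$ copies of $H$ laid down ``in parallel'': the $j$th copy's class $H_i$ occupies the interval $T_j^i:=[t_{i-1}+(j-1)|H_i|+1,\;t_{i-1}+j|H_i|]$, where $t_i:=(|H_1|+\cdots+|H_i|)\cdot h!$. Because every part boundary and every $t_{i-1}$ is a multiple of $h!$, divisibility forces each $T_j^i$ to lie entirely inside a single part; and since any part has size at most $\ell_+(H)\cdot h!$ while $|H_i|,|H_{i+1}|\ge\ell_+(H)$, the consecutive blocks $T_j^i$ and $T_j^{i+1}$ cannot share a part. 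Hence the $h!$ copies form a perfect $H$-tiling, with no LP, no averaging, and no case analysis on the location of the small part. If you want to salvage your approach, you would need to actually exhibit the fractional solution and check non-negativity in each case; but the $h!$-scaling trick is both shorter and uniform.
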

\begin{proof}
Let $H_1<\cdots<H_r$ be an interval $r$-colouring of $H$ such that $$\ell_+(H)=\min_{i\in[r]}|H_i|.$$ 
Let $k:=\lfloor h/\ell_+(H)\rfloor$ and let $B$ be the complete $(k+1)$-partite unordered graph with parts $B_1,\dots,B_{k+1}$ such that
$$|B_{k+1}|=(h-k\ell_+(H))\cdot(h!) \quad\text{and}\quad  |B_s|=\ell_+(H)\cdot(h!)$$
for every $s\in[k]$. 
Note that $B_{k+1}$ is the smallest part and may be empty.
We have that $|B|=h \cdot h!$ and $\chi_{cr}(B)=h/\ell_+(H)$.

We will now prove that $B$ is a simple bottlegraph of  $H$. Let $\sigma$ be a permutation of $[k+1]$ and let $\phi$ be an interval labelling of $B$ with respect to $\sigma$. Note that $V((B,\phi))=[h\cdot(h!)]$. Set $t_0:=0$ and 
$$t_i:=(|H_1|+\cdots+|H_i|)\cdot(h!)$$
for every $i\in[r]$. For every $j\in[h!]$ and $i\in[r]$, let 
$$T_j^i:=[t_{i-1}+j|H_i|]\setminus[t_{i-1}+(j-1)|H_i|].$$
Observe that the set of intervals $\{T_j^i\}_{j\in[h!]}$ is a partition of $[t_i]\setminus[t_{i-1}]$ and in particular the set of all intervals  $T_j^i$ is a partition of $[h\cdot(h!)]$. 

Relabel the parts of $(B,\phi)$
by $B'_1,\dots, B'_{k+1}$ so that
$B'_1<\dots<B'_{k+1}$.
Suppose that $T_j^i$ intersects both $B'_s$ and $B'_{s+1}$ for some $i\in[r]$, $j\in[h!]$ and $s\in[k]$. Since $|T_{j'}^i|=|H_i|$ for every $j'\in[h!]$, it follows that
$$|(B'_1\cup\cdots\cup B'_s)\setminus [t_{i-1}]|$$
is not divisible by $|H_i|$, as otherwise no $T_{j'}^i$ would intersect both $B'_s$ and $B'_{s+1}$ for any $j'\in[h!]$, and in particular for $j'=j$, contradicting our assumption. However, $|B'_1\cup\cdots\cup B'_s|$ and $t_{i-1}$ are both divisible by $h!$ and thus 
$|(B'_1\cup\cdots\cup B'_s)\setminus [t_{i-1}]|$ is divisible 
by $|H_i|$, reaching a contradiction. Therefore, every $T_j^i$ is a subset of some $B'_s$.

Suppose $T_j^i$ and $T_j^{i+1}$ are both subsets of some $B'_s$, then
\begin{align*}
|B'_s|&\geq(t_i+j|H_{i+1}|)-(t_{i-1}+(j-1)|H_i|) =(h!-j+1)|H_i|+j|H_{i+1}| \geq(h!+1)\cdot\ell_+(H),
\end{align*}
again reaching a contradiction. It follows that the ordered graph $T_j$ spanned by $T_j^1<\cdots<T_j^r$ is a complete $r$-partite ordered subgraph of $(B,\phi)$. Since $|T_j^i|=|H_i|$ then $T_j$ spans a copy of $H$ in $(B,\phi)$ for every $j\in[h!]$. The $T_j$'s are disjoint and cover all the vertices of $(B,\phi)$, thus they yield a perfect $H$-tiling. Since $\sigma,\phi$ are arbitrary, $B$ is a simple bottlegraph of $H$. In particular,
$$\chi_{cr}^*(H)\leq\chi_{cr}(B)=h/\ell_+(H).$$
\end{proof}

The next result follows easily from the previous bounds.

\begin{prop}\label{complete1class}
Let $H$ be a complete $r$-partite ordered graph on $h$ vertices with parts $H_1<\cdots<H_r$ such that (i) $|H_1|\leq|H_i|$ for every $i\in[r]$ or (ii) $|H_r|\leq|H_i|$ for every $i\in[r]$. Then $\chi_{cr}^*(H)=h/|H_1| \geq \chi_<(H)$ in case (i) and $\chi_{cr}^*(H)=h/|H_r| \geq \chi_<(H)$ in case (ii).
\end{prop}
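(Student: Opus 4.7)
The plan is to deduce the equality $\chi_{cr}^*(H) = h/|H_1|$ (respectively $h/|H_r|$) directly from the two general bounds in Propositions~\ref{lowerbound} and~\ref{upperbound} by observing that the relevant parameters $\ell_-(H), \ell_-^*(H), \ell_+(H)$ all coincide with $|H_1|$ (respectively $|H_r|$). The only content of the argument is to pin down these parameters, which rests on the fact that $H$ being complete $r$-partite forces the interval $r$-colouring to be essentially unique.

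First I would verify the following claim: if $H$ is a complete $r$-partite ordered graph with parts $H_1<\cdots<H_r$, then the only interval $r$-colouring of $H$ is $H_1<\cdots<H_r$ itself. The reason is that in a complete multipartite graph, any two vertices lying in distinct parts are adjacent, so an independent set is contained in a single part $H_i$. Hence every colour class of an interval $r$-colouring lies inside some $H_i$; but since the colour classes form an interval partition of $[h]$ into exactly $r$ non-empty pieces and the $H_i$ are themselves intervals partitioning $[h]$ into $r$ pieces, the two partitions must coincide.

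Granted this, in case (i) we have $\ell_-(H) = |H_1|$ (the first class of the unique interval $r$-colouring) and $\ell_+(H) = \min_i |H_i| = |H_1|$ by the hypothesis. Proposition~\ref{lowerbound} gives $\chi_{cr}^*(H) \geq h/\ell_-(H) = h/|H_1|$ and Proposition~\ref{upperbound} gives $\chi_{cr}^*(H) \leq h/\ell_+(H) = h/|H_1|$, so $\chi_{cr}^*(H) = h/|H_1|$. In case (ii) I would use the companion lower bound $\chi_{cr}^*(H) \geq h/\ell_-^*(H) = h/|H_r|$ from Proposition~\ref{lowerbound} together with $\ell_+(H) = |H_r|$ and Proposition~\ref{upperbound} to conclude $\chi_{cr}^*(H) = h/|H_r|$.

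For the inequality with $\chi_<(H) = r$, in case (i) we have $h = \sum_{i=1}^r |H_i| \geq r|H_1|$, so $h/|H_1| \geq r = \chi_<(H)$; the analogous calculation works in case (ii). There is no real obstacle: the only point that requires any thought is the uniqueness of the interval $r$-colouring, and even that is immediate from the structure of independent sets in a complete multipartite graph.
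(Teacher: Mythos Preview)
Your proposal is correct and follows essentially the same route as the paper's proof: the paper simply asserts $\ell_-(H)=\ell_+(H)=|H_1|$ (and analogously in case (ii)) and invokes Propositions~\ref{lowerbound} and~\ref{upperbound}. You have merely made explicit the justification for these equalities via uniqueness of the interval $r$-colouring, and spelled out the easy inequality $h/|H_1|\ge r$, both of which the paper leaves to the reader.
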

\begin{proof}
For case (i),
let $\ell_-(H)$ and $\ell_+(H)$ be as in Proposition~\ref{lowerbound} and Proposition~\ref{upperbound} respectively. Note that $\ell_-(H)=\ell_+(H)=|H_1|$, hence Propositions~\ref{lowerbound} and~\ref{upperbound} imply that $\chi_{cr}^*(H)=h/|H_1|$. Case (ii) follows similarly.
\end{proof}

In \cite{blt}, Balogh, Li and the second author (implicitly) computed $\chi_{cr}^*(H)$ for any $H$ such that $\chi_<(H)=2$. Their result can be easily recovered using Propositions~\ref{lowerbound} and \ref{upperbound}.

\begin{prop}[Balogh, Li and Treglown \cite{blt}]\label{bottlegraphbipartite}
Let $H$ be an ordered graph with vertex set $[h]$ such that $\chi_<(H)=2$. Define $\alpha^+(H)$ to be the largest integer $t\in[h]$ so that $[1,t]$ is an independent set in $H$; $\alpha^-(H)$ to be the largest integer $t\in[h]$ so that $[h-t+1,h]$ is an independent set in $H$;  $\alpha(H):=\min\{\alpha^+(H),\alpha^-(H)\}$. Then
$$\chi_{cr}^*(H)=\frac{h}{\alpha(H)}.$$
\end{prop}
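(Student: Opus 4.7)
My plan is to match the bounds on $\chi_{cr}^*(H)$ by applying Propositions~\ref{lowerbound} and~\ref{upperbound}, supplemented by a direct bipartite bottlegraph construction in one regime. For the lower bound, note that when $r = \chi_<(H) = 2$, the quantity $\ell_-(H) = \max_{(H_1<H_2)} |H_1|$ equals the largest $t$ such that $[1,t]$ is independent in $H$, i.e., $\alpha^+(H)$; symmetrically $\ell_-^*(H) = \alpha^-(H)$. Proposition~\ref{lowerbound} then immediately yields
$$\chi_{cr}^*(H) \geq \max\bigl\{h/\alpha^+(H),\, h/\alpha^-(H)\bigr\} = h/\alpha(H).$$

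For the upper bound, observe first that $\chi_<(H) = 2$ guarantees that some valid interval $2$-colouring exists, which forces $\alpha^+(H) + \alpha^-(H) \geq h$. Suppose $\alpha(H) \leq h/2$; WLOG $\alpha(H) = \alpha^+(H)$. Then the partition $[1, \alpha(H)] < [\alpha(H)+1, h]$ is a valid interval $2$-colouring of $H$ (its second part is contained in $[h - \alpha^-(H)+1, h]$ since $\alpha^-(H) \geq h - \alpha^+(H) = h - \alpha(H)$, and is therefore independent). This colouring witnesses $\ell_+(H) \geq \min(\alpha(H), h - \alpha(H)) = \alpha(H)$, so Proposition~\ref{upperbound} gives $\chi_{cr}^*(H) \leq h/\ell_+(H) \leq h/\alpha(H)$.

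If instead $\alpha(H) > h/2$, Proposition~\ref{upperbound} alone is too weak since $\ell_+(H) \leq \lfloor h/2 \rfloor < \alpha(H)$. Here I would directly exhibit a bipartite bottlegraph: let $B$ be the complete bipartite unordered graph with parts $P_1, P_2$ of sizes $h - \alpha(H)$ and $\alpha(H)$ respectively, so that $\chi_{cr}(B) = h/\alpha(H) < 2$. To verify $B$ is a simple bottlegraph, I check both possible interval labellings. With $P_1 < P_2$, the $2$-colouring $[1, h-\alpha(H)] < [h-\alpha(H)+1, h]$ of $H$ is valid because both parts are independent, using $\alpha^+(H), \alpha^-(H) \geq \alpha(H) > h/2 > h - \alpha(H)$; hence $H$ embeds as a spanning subgraph of $B$. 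With $P_2 < P_1$, the $2$-colouring $[1, \alpha(H)] < [\alpha(H)+1, h]$ is valid by the same reasoning as in the previous case, again yielding a spanning copy of $H$. Thus $\chi_{cr}^*(H) \leq \chi_{cr}(B) = h/\alpha(H)$.

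The main delicate point is the second case: Proposition~\ref{upperbound}'s generic construction cannot produce a bottlegraph with $\chi_{cr} < 2$, whereas the regime $\alpha(H) > h/2$ demands exactly $\chi_{cr}^*(H) = h/\alpha(H) < 2$. Recognising that a genuinely bipartite bottlegraph is required, and that its validity relies on both $\alpha^+(H)$ and $\alpha^-(H)$ exceeding $h - \alpha(H)$ (which is automatic from $\alpha(H) > h/2$), is the main conceptual step; once these observations are in place the verification is routine.
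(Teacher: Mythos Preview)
Your proof is correct and follows essentially the same approach as the paper's: both use Proposition~\ref{lowerbound} for the lower bound via $\ell_-(H)=\alpha^+(H)$, $\ell_-^*(H)=\alpha^-(H)$, then split the upper bound into the case $\alpha(H)\le h/2$ (handled by Proposition~\ref{upperbound}) and the case $\alpha(H)>h/2$ (handled by the explicit bipartite bottlegraph with parts of sizes $\alpha(H)$ and $h-\alpha(H)$). Your write-up is in fact slightly more explicit than the paper's, spelling out why $\alpha^+(H)+\alpha^-(H)\ge h$ and why $\ell_+(H)\ge\alpha(H)$ when $\alpha(H)\le h/2$.
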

\begin{proof}
Let $\ell_-(H),\ell_-^*(H),\ell_+(H)$ be as in Propositions~\ref{lowerbound} and \ref{upperbound}.

Observe that both $[1,\alpha^+(H)]<[\alpha^+(H)+1,h]$ and  $[1,h-\alpha^-(H)]<[h-\alpha^-(H)+1,h]$ are interval $2$-colourings of $H$. In particular,
$$\ell_-(H)=\alpha^+(H)\quad\text{and}\quad \ell_-^*(H)=\alpha^-(H),$$
and so $\chi_{cr}^*(H)\geq h/\alpha(H)$ by Proposition~\ref{lowerbound}. It remains to show that $\chi_{cr}^*(H)\leq h/\alpha(H)$. If $\alpha(H)\leq h/2$ then $\ell _+(H)\geq\alpha(H)$\footnote{In fact, if $\alpha(H)\leq h/2$ then $\ell_+(H)=\alpha(H)$.} and so $\chi_{cr}^*(H)\leq h/\alpha(H)$ by Proposition~\ref{upperbound}. If $\alpha(H)>h/2$, then both $[1,\alpha(H)]<[\alpha(H)+1,h]$ and  $[1,h-\alpha(H)]<[h-\alpha(H)+1,h]$ are interval $2$-colourings of $H$. Therefore the complete bipartite graph $B$ with parts $U,V$ of size $|U|=\alpha(H)$ and $|V|=h-\alpha(H)$ is a simple bottlegraph of $H$. We have $\chi_{cr}(B)=h/\alpha(H)$ and thus $\chi_{cr}^*(H)\leq h/\alpha(H)$.
\end{proof}

If $\chi_<(H)=3$, finding a closed formula for $\chi_{cr}^*(H)$ already proves challenging. In the next result we determine $\chi_{cr}^*(H)$ for any complete $3$-partite ordered graph $H$.

\begin{prop}\label{comp3partitecase}
Let $H$ be a complete $3$-partite ordered graph on $h$  vertices with parts $H_1<H_2<H_3$ of size $h_1,h_2,h_3$ respectively and let
$$g(H):=\left(2-\frac{\min\{h_1,h_2,h_3\}}{\min\{h_1,h_3\}}\right)\cdot\frac{h}{\min\{h_1,h_3\}}.$$
Then $\chi^*_{cr}(H)=g(H)$.
\end{prop}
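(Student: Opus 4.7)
The argument splits into two cases. If $h_2 \ge \min\{h_1, h_3\}$, then $\min\{h_1, h_2, h_3\} = \min\{h_1, h_3\}$, so either $H_1$ or $H_3$ is the smallest part of $H$, and Proposition~\ref{complete1class} gives $\chi^*_{cr}(H) = h/\min\{h_1, h_3\} = g(H)$. So I may assume from now on that $c := h_2 < a := \min\{h_1, h_3\}$, in which case $g(H) = (2a - c)h/a^2$.

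For the lower bound, let $B$ be any bottlegraph of $H$. By Proposition~\ref{propbottle}, I may assume $B$ has $k$ parts, $k-1$ of size $m$ and one of size $m_1 \le m$, so $\chi_{cr}(B) = |B|/m$. First, $k \ge 4$: if $k = 3$, the unique interval $3$-colouring of $H$ forces the parts of $B$ to be in ratio $h_1 : h_2 : h_3$ under every ordering, which is impossible when the $h_i$ are not all equal. For $k \ge 4$, I consider the interval labelling that places the small part at the last position. In any perfect $H$-tiling of $(B(t), \phi)$, each copy of $H$ must embed $H_1 < H_2 < H_3$ into three distinct blow-up parts in the given order. Partition the copies into $\mathcal{A}$ (those with $H_1$ in the first part) and its complement $\mathcal{B}$. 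The first part is filled entirely with $H_1$-vertices, giving $h_1 |\mathcal{A}| = tm$, and counting vertices in the first two parts yields
\[
2tm \le (h_1 + h_2)|\mathcal{A}| + h_1|\mathcal{B}|,
\]
since $\mathcal{A}$- and $\mathcal{B}$-copies contribute at most $h_1 + h_2$ and $h_1$ vertices there, respectively. Combining this with $|\mathcal{A}| + |\mathcal{B}| = t|B|/h$ rearranges to $\chi_{cr}(B) = |B|/m \ge (2h_1 - c)h/h_1^2$. The symmetric argument with the smallest-first labelling yields $\chi_{cr}(B) \ge (2h_3 - c)h/h_3^2$. Since $x \mapsto (2x - c)h/x^2$ is decreasing for $x > c$, the stronger of these two bounds comes from $x = a = \min\{h_1, h_3\}$, giving $\chi_{cr}(B) \ge g(H)$.

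For the upper bound, I set $k := \lceil g(H) \rceil$ and choose integers $m, m_1$ (with $m$ sufficiently divisible) so that $m_1/m = g(H) - (k-1) \in [0, 1]$. Let $B$ be the complete $k$-partite graph with $k-1$ parts of size $m$ and one part of size $m_1$; then $\chi_{cr}(B) = (k-1) + m_1/m = g(H)$. To verify that $B$ is a bottlegraph, I must show that for each of the $k$ possible positions of the small part in an interval labelling, the ordered blow-up $(B(t), \phi)$ admits a perfect $H$-tiling for some $t$. This reduces to exhibiting non-negative integers $a_{ijl}$ that count copies of $H$ embedded with $(H_1, H_2, H_3)$ in parts $(i, j, l)$ and satisfy the part-size equations together with the identity $\sum a_{ijl} = t|B|/h$. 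The binding ordering (smallest-last if $h_1 \le h_3$, smallest-first otherwise) saturates the lower-bound inequalities, and its tiling LP is feasible by essentially unwinding the lower-bound computation; the remaining orderings have strict slack in the analogous inequalities, and I construct explicit non-negative solutions using a few "diagonal" triples $(i, i+1, i+2)$ together with a bounded number of correction triples around the small part.

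The main obstacle will be this LP feasibility check across all $k$ interval labellings in the upper-bound construction, especially the orderings that place the small part in a middle position. I expect to handle this by a finite case analysis, distinguishing the subcases $h_1 \le h_3$ and $h_1 > h_3$ (where the roles of the smallest-last and smallest-first orderings switch) and using a large blow-up factor $t$ to absorb any divisibility issues in the explicit tilings.
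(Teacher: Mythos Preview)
Your lower bound is correct and is essentially the paper's argument: the paper also places the small part of $B$ at one end, partitions the tiling according to whether $H_1$ lands in the first blow-up part, and derives $\chi_{cr}(B)\ge (2h_1-h_2)h/h_1^2$ from the two counting inequalities. Your extra observations (the equality $h_1|\mathcal A|=tm$, the claim $k\ge 4$, and running the symmetric ordering to get the $h_3$-bound) are all correct but not needed once one assumes $h_1\le h_3$ without loss of generality.

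The upper bound, however, is a genuine gap. You assert that the obvious candidate bottlegraph $B$ (with $k=\lceil g(H)\rceil$ parts) works by ``unwinding the lower-bound computation'' for the binding ordering and by ``diagonal triples plus corrections'' for the rest, but neither of these is a proof. Unwinding the lower bound only tells you how many copies of each type must have $H_1$ in part~1 versus part~2; it says nothing about where $H_2$ and $H_3$ of those copies go among parts $3,\dots,k$, and this allocation is exactly the non-trivial content. The diagonal scheme $(i,i{+}1,i{+}2)$ uses $h_1,h_2,h_3$ vertices from three consecutive equal parts, which does not obviously balance, and ``bounded corrections around the small part'' is not an argument. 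The paper's construction shows that this step requires real work: it first builds an explicit $4$-part bottlegraph in the symmetric case $h_1=h_3$ by a hand-crafted two-layer tiling, then bootstraps to the case where $g(H)$ is an integer by reducing to an auxiliary graph $H'$ with $h_1=h_3$, and finally handles general $g(H)$ by taking a weighted combination of two such bottlegraphs and padding with extra vertices in the last part. None of this is captured by an LP-feasibility sketch; you should either carry out the explicit constructions or give a concrete feasible solution for every position of the small part.
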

\begin{proof}
We may assume that $h_1\leq h_3$; the case $h_1\geq h_3$ follows by the symmetry of the argument. If $h_1\leq h_2$ then $\chi_{cr}^*(H)=h/h_1$ by Proposition~\ref{complete1class} and the result follows. So for the rest of the proof we assume $h_2<h_1\leq h_3$. Under these assumptions, $g(H)=(2-h_2/h_1)h/h_1>3$.

\begin{claim}\label{3partiteclaim1}
$\chi_{cr}^*(H)\geq g(H)$.
\end{claim}
Let $B$ be a bottlegraph of $H$ with parts $B_1,\dots,B_k$ for some $k\in\mathbb{N}$. Note that $k\geq3$ since $H$ is $3$-partite.
Our aim is to show that $\chi_{cr}(B) \geq g(H)$.
By Proposition~\ref{propbottle}, we may assume that there exists some $m\in\mathbb{N}$ such that
$$  |B_k|\leq m   \quad\text{and}\quad  |B_i|=m$$
for every $i\in[k-1]$. Pick an interval labelling $\phi$ of $B$ such that the ordered graph $(B,\phi)$ has parts $B_1<B_2<\cdots<B_k$. By definition there exists some $t\in\mathbb{N}$ such that the ordered blow-up $(B(t),\phi)$ contains a perfect $H$-tiling $\mathcal{M}$. Recall that we denote the parts of $(B(t),\phi)$ by $B_1(t)<\cdots<B_k(t)$.

Let $\mathcal{M}_1$ be the set of copies of $H\in\mathcal{M}$ whose first part $H_1$ lies completely in $B_1(t)$ and let $\mathcal{M}_2:=\mathcal{M}\setminus\mathcal{M}_1$. Note that every copy of $H\in\mathcal{M}_1$ has exactly $h_1$ vertices in $B_1(t)$ and at most $h_1+h_2$ vertices in $B_1(t)\cup B_2(t)$, while every copy of $H\in\mathcal{M}_2$ has  at most $h_1$ vertices in $B_1(t) \cup B_2(t)$. Since $|B_1(t)|=|B_2(t)|=mt$, it follows that
$$mt \geq h_1\cdot|\mathcal{M}_1|\quad\text{and}\quad 2mt\leq(h_1+h_2)\cdot|\mathcal{M}_1|+h_1\cdot|\mathcal{M}_2|.$$
The above implies that
$$|\mathcal{M}|=|\mathcal{M}_1|+|\mathcal{M}_2|\geq\left(2-\frac{h_2}{h_1}\right)\frac{mt}{h_1}.$$
Since $\chi_{cr}(B)=|B|/m=|\mathcal{M}|h/(mt)$ then $\chi_{cr}(B)\geq g(H)$, proving the claim.

\smallskip

\begin{claim}\label{3partiteclaim2}
If $h_1=h_3$ then there exists a simple bottlegraph $B$ of $H$ with four parts such that  three parts of $B$ have size $h_1^2$; one part has size $h_1^2-h_2^2$; $\chi_{cr}(B)=g(H)$.
\end{claim}
Suppose $h_1=h_3$. Then
$$g(H)=\left(2-\frac{h_2}{h_1}\right)\frac{h}{h_1}=4-\left(\frac{h_2}{h_1}\right)^2$$
and so $3<g(H)<4$. Let $B$ be the complete $4$-partite graph with parts $B_1,B_2,B_3,B_4$ where
$$|B_1|=h_1^2-h_2^2  \quad\text{and}\quad   |B_i|=h_1^2$$
for every $i=2,3,4$. Note that $\chi_{cr}(B)=|B|/h_1^2=g(H)$. It remains to show that $B$ is a simple bottlegraph of $H$.

Let $\sigma$ be a permutation of $[4]$ and $\phi$ an interval labelling of $B$ with respect to $\sigma$. Recall that the ordered graph $(B,\phi)$ has parts $B_{\sigma^{-1}(1)}<B_{\sigma^{-1}(2)}<B_{\sigma^{-1}(3)}<B_{\sigma^{-1}(4)}$.
Either $\sigma^{-1}(1), \sigma^{-1}(2) \ne 1$ or $\sigma^{-1}(3), \sigma^{-1}(4) \ne 1$.
We will now deal with the former case. 

 Observe that $V(B)$ can be partitioned into two sets $X,Y$ of size $|X|=h_1(2h_1+h_2)$ and $|Y|=(h_1-h_2)(2h_1+h_2)$ such that
$$|Y\cap B_{\sigma^{-1}(i)}|=\begin{cases}
0 & \text{if $i=1$,}\\
h_1(h_1-h_2) & \text{if $i=2$,}\\
h_2(h_1-h_2) & \text{if $i=3$,}\\
h_1(h_1-h_2) & \text{if $i=4$,}
\end{cases}
$$
and
$$|X\cap B_{\sigma^{-1}(i)}|=\begin{cases}
h_1^2 & \text{if $i=1$,}\\
h_1h_2 & \text{if $i=2$,}\\
|B_{\sigma^{-1}(3)}|-h_1h_2+h_2^2 & \text{if $i=3$,}
\end{cases}$$
and all remaining vertices of $X$  lie in $B_{\sigma^{-1}(4)}$.

In particular, (i) $|X\cap B_{\sigma^{-1}(1)}|=h^2_1$; $|X\cap B_{\sigma^{-1}(2)}|=h_1h_2$; 
$|X\cap B_{\sigma^{-1}(3)}|=h_1^2-h_1h_2+h_2^2$; $|X\cap B_{\sigma^{-1}(4)}|=h_1h_2-h^2_2$ or 
(ii) $|X\cap B_{\sigma^{-1}(1)}|=h^2_1$; $|X\cap B_{\sigma^{-1}(2)}|=h_1h_2$; 
$|X\cap B_{\sigma^{-1}(3)}|=h_1^2-h_1h_2$; $|X\cap B_{\sigma^{-1}(4)}|=h_1h_2.$
Thus, we have that $(B,\phi)[X]$ contains a perfect $H$-tiling consisting of $h_1$ copies of $H$ 
where the first  and second  parts $H_1$, $H_2$ of every such copy of $H$ lie in $B_{\sigma^{-1}(1)}$ and 
$B_{\sigma^{-1}(2)}$ respectively.
Further,
 $(B,\phi)[Y]$ contains a perfect $H$-tiling consisting of $h_1-h_2$ copies of $H$. The union of these two $H$-tilings yields a perfect $H$-tiling in $(B,\phi)$. 
 
The only remaining  case is when $\sigma^{-1}(3), \sigma^{-1}(4) \ne 1$. However,
 since $h_1=h_3$, this case will follow by  a symmetric argument to that of the previous case.
Thus $B$ is a simple bottlegraph of $H$.

\smallskip

\begin{claim}\label{3partiteclaim3}
If $g(H)$ is an integer then there exists a simple bottlegraph $B$ of $H$ such that $\chi_{cr}(B)=g(H)$ (i.e., $B$ has $g(H)$ parts) and all parts of $B$ have size $h_1^2$.
\end{claim}
Set $k:=g(H)$. Recall that $k>3$
and so $k\geq4$ since $k\in\mathbb{N}$. Let $B$ be the complete $k$-partite graph with parts $B_1,\dots,B_k$ all of size $h_1^2$. Notice  $\chi_{cr}(B)=k$, so it remains to show that $B$ is a simple bottlegraph of $H$.

Let $\sigma$ be a permutation of $[k]$ and $\phi$ an interval labelling of $B$ with respect to $\sigma$. Recall that the ordered graph $(B,\phi)$ has parts $B_{\sigma^{-1}(1)}<\cdots<B_{\sigma^{-1}(k)}$. Let $X\subseteq V(B)$ be a set of size $4h_1^2-h_2^2$ such that
$$|X\cap B_{\sigma^{-1}(i)}|=\begin{cases}
h_1^2 & \text{if $i=1,2,3$},\\
h_1^2-h_2^2 & \text{if $i=4$}.
\end{cases}$$
Let $H'$ be the complete $3$-partite ordered graph with parts $H'_1<H'_2<H'_3$ of size $h_1,h_2,h_1$ respectively. By Claim~\ref{3partiteclaim2}, $(B,\phi)[X]$ contains a perfect $H'$-tiling consisting of $2h_1-h_2$ copies of $H'$. Observe that by definition of $k$,
$$|V(B)\setminus X|=kh_1^2-(4h_1^2-h_2^2)=(2h_1-h_2)(h_3-h_1).$$
Thus we can partition $V(B)\setminus X$ into $2h_1-h_2$ sets of size $h_3-h_1$ and assign each set to a copy of $H'$. Notice that every copy of $H'$ together with its assigned set forms a copy of $H$; so we constructed a perfect $H$-tiling in $(B,\phi)$. Since $\sigma$ and $\phi$ are arbitrary, $B$ is a simple bottlegraph of $H$.

\smallskip

\begin{claim}\label{3partiteclaim4}
There exists a simple bottlegraph $B$ of $H$ such that $\chi_{cr}(B)=g(H)$.
\end{claim}
Recall that $h_2<h_1\leq h_3$.
We may assume that $g(H)$ is not an integer as otherwise we are done by Claim~\ref{3partiteclaim3}.
Given $t\in\mathbb{N}$ and $\ell\in\mathbb{N} \cup \{0\}$, let $H(t,\ell)$ be the complete $3$-partite ordered graph with parts $H'_1<H'_2<H'_3$ of size $th_1,th_2,th_3-\ell$ respectively.

Set $k:=\lfloor g(H)\rfloor$. We define $t,\ell,s$ and $B$ as follows:
\begin{itemize}

\item If $g(H)\geq 4$, there exist $t, \ell \in\mathbb{N}$  such that 
$$\frac{\ell}{t}=\frac{(g(H)-k)h_1}{\left(2-{h_2}/{h_1}\right)},$$
since the right hand side of the above inequality is a positive rational number. Thus,
\begin{align}\label{eqtl1}
k=g(H)-\left(2-\frac{h_2}{h_1}\right)\frac{\ell/t}{h_1}=\left(2-\frac{h_2}{h_1}\right)\frac{h-\ell/t}{h_1}=\left(2-\frac{th_2}{th_1}\right)\frac{th_1+th_2+th_3-\ell}{th_1}.
\end{align}
Furthermore, we have
\begin{align}\label{eqtl2}
k\geq4>4-\left(\frac{h_2}{h_1}\right)^2=\left(2-\frac{th_2}{th_1}\right)\frac{th_1+th_2+th_1}{th_1}.
\end{align}
Equations (\ref{eqtl1}) and (\ref{eqtl2}) imply that $th_3-\ell>th_1$. Hence $th_2< th_1<th_3-\ell$ and so
$$g(H(t,\ell))=\left(2-\frac{th_2}{th_1}\right)\frac{th_1+th_2+th_3-\ell}{th_1}=k.$$
Let $B$ be the simple bottlegraph of $H(t,\ell)$ as in Claim~\ref{3partiteclaim3} and set $s:=0$.

\item If $3<g(H)<4$, let $t:=1$ and $\ell:=h_3-h_1$. Observe that the parts $H'_1<H'_2<H'_3$ of $H(t,\ell)$ have size $h_1,h_2,h_1$ respectively. Let $B$ be the simple bottlegraph of $H(t,\ell)$ as in Claim~\ref{3partiteclaim2}. Set $s:=h_1^2-h_2^2$.
\end{itemize}
Note that in both cases, $B$ is a complete $(k+1)$-partite graph where each part has size $(th_1)^2$ except one smaller part of size $t^2s$ (which is empty if  $g(H)\geq4$).

Observe that $g(H)-k$ is a rational number and
\begin{align}\label{newnew}
\frac{s}{h_1^2}\leq g(H)-k<1.
\end{align}
Indeed, the lower bound is trivial if $g(H)\geq 4$; if $3<g(H)<4$ then $s=h^2 _1-h^2_2$
and $k=3$, so $g(H)-k \geq (2-h_2/h_1)(2h_1+h_2)/h_1-k= 4-(h_2/h_1)^2 -3=1-(h_2/h_1)^2=s/h^2_1$.

Thus, by (\ref{newnew}), there exist $a,b\in\mathbb{N}\cup \{0\}$ such that $(a,b)\not=(0,0)$ and
$$a(1-g(H)+k)=b(g(H)-k-s/h^2_1) \implies a+b\frac{s}{h_1^2}=(a+b)(g(H)-k)$$
\begin{align}\label{new3}
\implies a(th_1)^2+bt^2s=(a+b)(th_1)^2(g(H)-k).
\end{align}
Let $B'$ be the complete $(k+1)$-partite graph with parts $B'_1,\dots,B'_{k+1}$ where
$$|B'_1|=(a+b)(th_1)^2(g(H)-k) \quad\text{and}\quad   |B'_i|=(a+b)(th_1)^2$$
for every $i>1$. Notice $\chi_{cr}(B')=g(H)$, so it remains to show that $B'$ is a simple bottlegraph of $H$.

Let $\sigma$ be a permutation of $[k+1]$ and let $\phi$ be an interval labelling of $B'$ with respect to $\sigma$. Recall that the ordered graph $(B',\phi)$ has parts $B'_{\sigma^{-1}(1)}<\cdots<B'_{\sigma^{-1}(k+1)}$. Let $X,Y$ be two disjoint sets in $V(B')$ of size $a|B|$ and $b|B|$ respectively such that
$$|X\cap B'_{\sigma^{-1}(i)}|=\begin{cases}
a(th_1)^2 & \text{if $i<k+1$}, \\
a(t^2s)& \text{if $i=k+1$},
\end{cases}$$
and 
$$|Y\cap B'_{\sigma^{-1}(i)}|=\begin{cases}
b(th_1)^2& \text{if $\sigma^{-1}(i)\not=1$}, \\
b(t^2s) & \text{if $\sigma^{-1}(i)=1$}.
\end{cases}$$
Notice that by (\ref{new3}) and the choice of the sizes of the parts of $B'$, all vertices in $B'_{\sigma^{-1}(i)}$  are in $X \cup Y$ for $i \leq k$; as  $s\leq h^2_1$ it may be that some vertices
in $B'_{\sigma^{-1}(k+1)}$ are not in $X \cup Y$.

Note that $(B',\phi)[X]$ is a copy of $(B(a),\phi')$ for some interval labelling $\phi'$. 
So as $B$ is a simple bottlegraph of $H(t,\ell)$, $(B(a),\phi')$ contains a perfect $H(t,\ell)$-tiling $\mathcal{M}_1$ consisting of 
\begin{align}\label{new4}
\frac{a|B|}{|H(t,\ell)|}=\frac{a(th_1)^2g(H(t,\ell))}{|H(t,\ell)|}=a(th_1)^2\left(2-\frac{h_2}{h_1}\right)\frac{h-\ell/t}{h_1}\frac{1}{th-\ell}=a(2h_1-h_2)t
\end{align}
copies of $H(t,\ell)$. Similarly, $(B',\phi)[Y]$ is a copy of $(B(b),\phi'')$ for some interval labelling $\phi''$ and it contains a perfect $H(t,\ell)$-tiling $\mathcal{M}_2$ consisting of 
\begin{align}\label{new5}
\frac{b|B|}{|H(t,\ell)|}=b(2h_1-h_2)t
\end{align}
copies of $H(t,\ell)$. Note that  the vertices in $V(B')\setminus(X\cup Y)$ lie in $B'_{\sigma^{-1}(k+1)}$. Moreover, as $g(H)=(2-h_2/h_1)h/h_1$,
\begin{align*}
|V(B')\setminus(X\cup Y)|&= |B'|-(a+b)|B|=(a+b)(th_1)^2g(H)-(a+b)|B| \\
&\stackrel{(\ref{new4}),(\ref{new5})}{=}(a+b)\left((2h_1-h_2)ht^2-(2h_1-h_2)(th-\ell)t\right)=(a+b)(2h_1-h_2)\ell t.
\end{align*}
Thus, we can divide $V(B')\setminus(X\cup Y)$ into $(a+b)(2h_1-h_2)t$ sets of size $\ell$ and assign each set to a copy of $H(t,\ell)$ in $\mathcal{M}_1\cup\mathcal{M}_2$. Note that each copy of $H(t,\ell)$ together with its assigned set forms a copy of $H(t,0)$. Thus we constructed a perfect $H(t,0)$-tiling in $(B',\phi)$. Since $H(t,0)=H(t)$, this yields a perfect $H$-tiling in $(B',\phi)$, proving that $B'$ is indeed a simple bottlegraph of $H$.

\smallskip

Claim~\ref{3partiteclaim4} implies that $\chi_{cr}^*(H)\leq g(H)$. Together with Claim~\ref{3partiteclaim1}, this concludes the proof.
\end{proof}

Observe that Propositions~\ref{complete1class} and~\ref{comp3partitecase} combined with Theorem~\ref{mainthm} yield the  following result which makes the threshold in Theorem~\ref{mainthm} explicit for all complete $3$-partite ordered graphs.

\begin{thm}
Let $H$ be a complete $r$-partite ordered graph on $h$ vertices with parts $H_1<\cdots<H_r$ where $|H_i|=h_i$ for every $i\in[r]$.
\begin{itemize}
\item If $h_1\leq h_i$ for every $i>1$, then 
$\delta_<(H,n)=\left(1-\frac{h_1}{h}+o(1)\right)n.$
\item If $h_r\leq h_i$ for every $i\in [r-1]$, then 
$\delta_<(H,n)=\left(1-\frac{h_r}{h}+o(1)\right)n.$
\item If $r=3$ and $h_2\leq h_1\leq h_3$ then
$\delta_<(H,n)=\left(1-\frac{h_1^2}{(2h_1-h_2)h}+o(1)\right)n.$
\item If $r=3$ and $h_2\leq h_3\leq h_1$ then
$\delta_<(H,n)=\left(1-\frac{h_3^2}{(2h_3-h_2)h}+o(1)\right)n.$
\end{itemize}
\end{thm}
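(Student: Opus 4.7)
The plan is to combine the explicit formulas for $\chi^*_{cr}(H)$ computed in Propositions~\ref{complete1class} and~\ref{comp3partitecase} with case (i) of Theorem~\ref{mainthm}. The key preliminary step will be checking in each of the four cases that $\chi^*_{cr}(H) \geq \chi_<(H) = r$, which places us squarely in the regime of Theorem~\ref{mainthm}(i), where the asymptotic threshold is precisely $(1 - 1/\chi^*_{cr}(H) + o(1))n$. One therefore just has to read off $\chi^*_{cr}(H)$ in each setting and substitute.

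For the first two bullets, Proposition~\ref{complete1class} directly yields $\chi^*_{cr}(H) = h/h_1$ when $h_1$ is the smallest part, and symmetrically $\chi^*_{cr}(H) = h/h_r$ when $h_r$ is smallest. In both instances $h = h_1 + \cdots + h_r \geq r h_1$ (resp.\ $\geq r h_r$), so $\chi^*_{cr}(H) \geq r = \chi_<(H)$ and Theorem~\ref{mainthm}(i) immediately gives $\delta_<(H,n) = (1 - h_1/h + o(1))n$ (resp.\ $(1 - h_r/h + o(1))n$).

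For the remaining two bullets ($r=3$ with $h_2$ the smallest part), I would invoke Proposition~\ref{comp3partitecase}, which in general gives
\[
\chi^*_{cr}(H) = \left(2 - \frac{\min\{h_1,h_2,h_3\}}{\min\{h_1,h_3\}}\right) \cdot \frac{h}{\min\{h_1,h_3\}}.
\]
Specializing to $h_2 \leq h_1 \leq h_3$ produces $\chi^*_{cr}(H) = (2h_1 - h_2)h / h_1^2$, and symmetrically $\chi^*_{cr}(H) = (2h_3 - h_2)h / h_3^2$ for $h_2 \leq h_3 \leq h_1$. The verification that $\chi^*_{cr}(H) \geq 3$ in the first subcase amounts to showing $(2h_1 - h_2)(h_1 + h_2 + h_3) \geq 3 h_1^2$; using $h_3 \geq h_1$ this reduces to $(2h_1 - h_2)(2h_1 + h_2) \geq 3 h_1^2$, which simplifies to $4h_1^2 - h_2^2 \geq 3 h_1^2$, i.e.\ $h_1 \geq h_2$, which holds by hypothesis. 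The symmetric computation covers the fourth bullet. Substituting into Theorem~\ref{mainthm}(i) then yields the two stated thresholds.

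No substantive obstacle arises: the real mathematical content sits in Propositions~\ref{complete1class} and~\ref{comp3partitecase} together with Theorem~\ref{mainthm}, and the proof here is little more than a substitution together with the easy inequality $\chi^*_{cr}(H) \geq \chi_<(H)$ in each case.
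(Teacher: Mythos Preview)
Your proposal is correct and follows exactly the paper's approach: the paper simply states that the result follows by combining Propositions~\ref{complete1class} and~\ref{comp3partitecase} with Theorem~\ref{mainthm}, and you have spelled this out in detail, including the explicit verification that $\chi^*_{cr}(H)\geq\chi_<(H)$ in each case so that Theorem~\ref{mainthm}(i) applies. In fact your argument is slightly more thorough than the paper's one-line remark, since you explicitly check the inequality $(2h_1-h_2)(h_1+h_2+h_3)\geq 3h_1^2$ in the third bullet (and its symmetric counterpart), whereas the paper leaves this implicit.
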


\section{The tightness of  Theorem~\ref{BalLiTre1}}\label{lowersec}
 In this section we show that the minimum degree condition in Theorem~\ref{BalLiTre1} is  best possible.
Given an ordered graph $H$, let $c_<(H)$ denote the smallest non-negative  number which satisfies the following: for every $\eta>0$, there exists an integer $n_0\in\mathbb{N}$ such that if $G$ is an ordered graph on $n\geq n_0$ vertices and with minimum degree $\delta(G)\geq c_<(H)n$ then $G$ contains an $H$-tiling covering all but at most $\eta n$ vertices. Observe  Theorem~\ref{BalLiTre1} immediately implies
that
\begin{align}\label{cbound}
c_<(H)\leq\left(1-\frac{1}{\chi^*_{cr}(H)}\right).
\end{align}
In fact, we will show that equality holds. 

\begin{thm}\label{aptsthm}
Let $H$ be an ordered graph on $h$ vertices. Then
$$c_<(H)=\left(1-\frac{1}{\chi^*_{cr}(H)}\right).$$
\end{thm}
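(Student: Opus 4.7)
The upper bound $c_<(H) \leq 1 - 1/\chi^*_{cr}(H)$ follows immediately from Theorem~\ref{BalLiTre1}, as already noted in~\eqref{cbound}. For the matching lower bound, the plan is to prove the following: for every $a < 1 - 1/\chi^*_{cr}(H)$ there exist $\eta = \eta(a,H) > 0$ and an infinite sequence of ordered graphs $(G_n)$ on $n$ vertices with $\delta(G_n) \geq an$ such that $G_n$ admits no $(1-\eta, H)$-tiling. This contradicts $c_<(H) \leq a$ and, letting $a \to (1 - 1/\chi^*_{cr}(H))^-$, yields $c_<(H) \geq 1 - 1/\chi^*_{cr}(H)$.

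Fix $a < 1 - 1/\chi^*_{cr}(H)$. Since then $\chi^*_{cr}(H) > 1/(1-a)$, I first choose a rational $c$ with $1/(1-a) < c < \chi^*_{cr}(H)$ and realise $c$ as $\chi_{cr}(B)$ for some complete $k$-partite unordered graph $B$ of the shape described in Proposition~\ref{propbottle}: all parts of some common size $m$ save for one possibly smaller part. Because $c < \chi^*_{cr}(H) = \inf\{\chi_{cr}(B'): B' \text{ is a bottlegraph of } H\}$, the graph $B$ is not a bottlegraph of $H$, so Definition~\ref{bottlegraphdef} supplies a permutation $\sigma$ of $[k]$ and an interval labelling $\phi$ of $B$ with respect to $\sigma$ such that no blow-up $(B(t),\phi)$ admits a perfect $H$-tiling. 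The candidate extremal ordered graphs will be $G_n := (B(t),\phi)$ with $n = t|B|$; by the choice of $c$ they satisfy $\delta(G_n) = (1 - 1/c)n > an$.

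The crux is to upgrade ``no perfect $H$-tiling for any $t$'' into the uniform statement ``no $(1-\eta,H)$-tiling for any $t$'' with $\eta$ independent of $t$. I plan to do this via linear programming duality. Consider the finite-dimensional LP in which, for every \emph{pattern} $p = (n_1, \ldots, n_k)$ recording the class sizes of an interval $k$-colouring of $V(H)$ (with empty classes allowed) into independent sets of $H$, one introduces a variable $x_p \geq 0$, subject to $\sum_p n_i\,x_p = |B_{\sigma^{-1}(i)}|$ for each $i \in [k]$. A rational solution of this LP would, after clearing denominators, produce an integral perfect $H$-tiling of $(B(t),\phi)$ for some $t$, contradicting the choice of $\phi$; since the LP has rational constraints, rational infeasibility is equivalent to infeasibility over $\mathbb R$, and Farkas's lemma supplies $y \in \mathbb R^k$ with $\sum_i y_i n_i \geq 0$ for every pattern $p$ and $\delta := -\sum_i y_i |B_{\sigma^{-1}(i)}| > 0$. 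Applying this certificate to the vector $(U_i)_{i \in [k]}$ of uncovered counts (with $U_i$ the uncovered vertices in $B_{\sigma^{-1}(i)}(t)$) of an arbitrary $H$-tiling of $(B(t),\phi)$ yields $\sum_i y_i U_i \leq -\delta t$, and hence $\sum_i U_i \geq \eta\cdot t|B|$ for $\eta := \delta/(|B|\max_i|y_i|) > 0$, a constant depending only on $B$ and $\phi$. Thus every $H$-tiling of $(B(t),\phi)$ misses at least $\eta n$ vertices, completing the plan.

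The main obstacle is verifying the LP translation itself: that fractional $H$-tilings of $(B,\phi)$ correspond precisely to non-negative solutions of the LP, and that a rational solution, cleared of denominators, really does produce a perfect $H$-tiling of a suitable blow-up. The argument may usefully split into sub-cases — for instance, when $1/(1-a) \leq \chi_<(H) - 1$ one can bypass Farkas entirely and use the $(\chi_<(H)-1)$-partite construction $F_3(H,n)$ of Section~\ref{sec:extremalconstructions}, which contains no copy of $H$ at all — but in the general range $\chi_<(H)-1 < c < \chi^*_{cr}(H)$ the LP-duality argument above delivers the uniform quantitative obstruction in a single stroke and, notably, does not require an explicit description of the ``reason'' why $\chi^*_{cr}(H)$ takes its value.
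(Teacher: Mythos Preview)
Your proof is correct and takes a genuinely different route from the paper's. The paper argues by contradiction and splits into three cases according to whether $\chi^*_{cr}(H) > \chi_<(H)$, or $\chi^*_{cr}(H) \leq \chi_<(H)$ with $H$ flexible, or $\chi^*_{cr}(H) = \chi_<(H)$ with $H$ not flexible. In the first two cases it does \emph{not} construct extremal graphs at all: instead it assumes $c_<(H) < 1 - 1/\chi^*_{cr}(H)$ and uses absorbing machinery (Theorem~\ref{BalLiTre2} in Case~1, the flexibility lemmas culminating in Corollary~\ref{flexibilitycol1} in Case~2) to upgrade the assumed almost perfect tilings to genuine perfect tilings at too low a minimum degree, contradicting Corollary~\ref{ex2col} or the definition of $\chi^*_{cr}(H)$; only in Case~3 does it exhibit an explicit family. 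Your Farkas argument handles all cases uniformly and directly: the certificate $y$ is exactly the linear ``space barrier'' witnessing that no blow-up $(B(t),\phi)$ can be perfectly tiled, and the inequality $\sum_i y_i U_i \leq -\delta t$ converts this into a uniform $\eta$-gap for every $t$. This is shorter, more elementary (no regularity lemma, no absorbing), and yields explicit extremal ordered graphs for every $H$---something the paper explicitly remarks it does not achieve. The paper's route, by contrast, costs it almost nothing because it is recycling machinery already built for Theorem~\ref{mainthm}. One small point worth making explicit in your write-up: the passage from a rational LP solution to a perfect $H$-tiling of $(B(q),\phi)$ uses that any choice of $n_i^{(p)}$ vertices from each part $B_{\sigma^{-1}(i)}(q)$ spans a complete $k$-partite ordered graph with those part sizes, and hence (since $p$ is a valid interval $k$-colouring of $H$) a copy of $H$; the relative order of chosen vertices within a part is irrelevant.
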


\begin{proof}
By (\ref{cbound}) it remains to prove that $c_<(H)\geq (1-{1}/{\chi^*_{cr}(H)}).$
Throughout the proof, we let $c:=c_<(H)$ and $r:=\chi_<(H)$.  Suppose for a contradiction that $c<(1-1/\chi^*_{cr}(H))$. We split the proof into three different cases.

\smallskip

{\it Case 1: $\chi^*_{cr}(H)>r$. }

 Let $\eta>0$ be sufficiently small and define $c'$ such that
\begin{align}\label{etac'}
c':=\max\left\{\left(1-\frac{1}{r}\right),c\right\}+\eta<1-\frac{1}{\chi^*_{cr}(H)}.
\end{align}
Let $ 0<\nu\ll\eta$ and let $n \in \mathbb N$ be sufficiently large and divisible by  $h$.

Let $G$ be any ordered graph on $n$ vertices with minimum degree 
$$\delta(G)\geq c'n\stackrel{(\ref{etac'})}{\geq}\left(1-\frac{1}{r}+\eta\right)n.$$
By Theorem~\ref{BalLiTre2}, there exists a set  $Abs\subseteq V(G)$ such that $|Abs|\leq\nu n$ and $Abs$ is an $H$-absorbing set for every $W\subseteq V(G)\setminus Abs$ such that $|W|\in h\mathbb{N}$ and $|W|\leq\nu^3n$. Let $G':=G\setminus Abs$. Note that
$$\delta(G')\geq\left(c'-\eta\right)n\stackrel{(\ref{etac'})}{\geq} cn.$$
By the definition of $c$, and as $n$ is sufficiently large, there exists an $H$-tiling $\mathcal{M}_1$ covering all but at most $\nu^3n$ vertices in $G'$. Let $W\subseteq V(G')$ be the set of vertices not covered by $\mathcal{M}_1$. Since $|W|\leq\nu^3 n$ and $h$ divides $|W|$ (as $h$ divides $n,|Abs|,|V(\mathcal{M}_1)|$), there exists a perfect $H$-tiling $\mathcal{M}_2$ in $G[Abs\cup W]$. Thus, $\mathcal{M}_1\cup\mathcal{M}_2$ is a perfect $H$-tiling in $G$. Recall that $G$ is an arbitrary ordered graph on $n$ vertices with  $\delta(G)\geq c'n$, hence 
$$\delta_<(H,n)\leq c'n<\left(1-\frac{1}{\chi_{cr}^*(H)}\right)n.$$
This contradicts Corollary~\ref{ex2col}. Therefore, the initial assumption that $c<(1-1/\chi^*_{cr}(H))$ is  false and the result follows in this case.

\smallskip

{\it Case 2: $\chi^*_{cr}(H)\leq r$ and $H$ is flexible. }

As $H$ is flexible, there exists a complete $r$-partite ordered graph $F$ with parts $F_1<\cdots<F_r$ as in Lemma~\ref{flexibilitylemma1}. Pick $\eta>0$ 
 sufficiently small
 so that
\begin{align}\label{eta}
c+\eta+\eta|F|\leq 1-\frac{1}{\chi_{cr}^*(H)}
\end{align}
and 
\begin{align}\label{eta2}
\frac{r-1}{1-\eta |F|}< \frac{\chi_{cr}^*(H)}{1+\eta\chi_{cr}^*(H)}.
\end{align}
Let $n \in \mathbb N$ be sufficiently large and divisible by  $h$;  we may choose $\eta $ and $n$ so that $\eta n \in \mathbb N$. 

Recall that by (\ref{cruciallower}), $r-1<\chi^*_{cr}(H)\leq r$.
Thus, it is not difficult to show that there exist a complete $r$-partite unordered graph $B$ on $n$ vertices with parts $B_1,\dots,B_r$ such that all parts have the same size, except one smaller part, and
\begin{align}\label{chi}
\frac{\chi_{cr}^*(H)}{1+\eta\chi_{cr}^*(H)}\leq\chi_{cr}(B)<\chi_{cr}^*(H).
\end{align}
Note that
$$\delta(B)=\left(1-\frac{1}{\chi_{cr}(B)}\right)n\stackrel{(\ref{chi})}{\geq}\left(1-\frac{1}{\chi_{cr}^*(H)}-\eta\right)n.$$
Pick any permutation $\sigma$ of $[r]$ and any interval labelling $\phi$ of $B$ with respect to $\sigma$. Up to relabelling, we may assume that the complete $r$-partite ordered graph $(B,\phi)$ has parts $B_1<\cdots<B_r$. 

Notice that (\ref{eta2}) and (\ref{chi}) imply  $|B_k|\geq \eta |F| n$ for all $k \in [r]$.
Let $X\subseteq V(B)$ such that $|X\cap B_k|=\eta |F_k|n$ for every $k\in[r]$. Clearly, $(B, \phi)[X]$ is a copy of the ordered blow-up $F(\eta n)$. Let $B'$ be the ordered graph obtained from $(B,\phi)$ by deleting the vertices in $X$; so $B'$ is a complete $r$-partite ordered graph with parts $B'_1<\cdots<B'_r$ where $B'_k\subseteq B_k$ for every $k\in[r]$. Note that
$$\delta(B')\geq\delta(B)-\eta n|F|\geq\left(1-\frac{1}{\chi_{cr}^*(H)}-\eta-\eta|F|\right)n\stackrel{(\ref{eta})}{\geq}cn.$$
By the definition of $c$, there exists an $H$-tiling $\mathcal{M}_1$ in $B'$ covering all but at most $(\eta h) n$ vertices. Let $W\subseteq V(B')$ be the set of vertices not covered by $\mathcal{M}_1$. Furthermore, let $s_k:=|W\cap B'_k|$ for every $k\in[r]$. Note that $(B,\phi)[X\cup W]$ is a complete $r$-partite ordered graph with parts $L_1<\cdots<L_r$ such that $|L_k|=\eta n|F_k|+s_k$. Since $s_1+\cdots+s_r=|W|\leq(\eta n)h$ and $h$ divides $|W|$, then by Corollary~\ref{flexibilitycol1} there exists a perfect $H$-tiling $\mathcal{M}_2$ in $(B,\phi)[X\cup W]$. Finally, note that $\mathcal{M}_1\cup\mathcal{M}_2$ is a perfect $H$-tiling in $(B,\phi)$. Since $\sigma,\phi$ are arbitrary, this implies that $B$ is a simple bottlegraph of $H$. This is a contradiction since $\chi_{cr}(B)<\chi_{cr}^*(H)$. Thus, the initial 
assumption that $c<(1-1/\chi^*_{cr}(H))$ is false and the result follows in this case.

\smallskip

{\it Case 3: $\chi^*_{cr}(H)\leq r$ and $H$ is not flexible. }

Note that if $\chi^*_{cr}(H)<r$ then $H$ is flexible by Corollary~\ref{flexibilitycol2}, a contradiction.
Thus, $\chi^*_{cr}(H)=r$. In this case we  actually   produce an explicit extremal example to show that $c\geq 1-1/r$.

Since $H$ is not flexible, by Lemma~\ref{flexibilitylemma2} there exist some $i\in[r-1]$ such that the number of vertices in the first $i$ intervals of any given interval $r$-colouring of $H$ is fixed. Clearly, this implies that the number of vertices in the last $(r-i)$ intervals of any given interval $r$-colouring of $H$ is also fixed. Since $H$ has $h$ vertices, the number of vertices in the first $i$ intervals is at least $ih/r$ or the number of vertices in the last $(r-i)$ intervals is at least $(r-i)h/r$. Without loss of generality we may assume the former.

Fix $0<\eta<1$ and $n\in\mathbb{N}$ such that ${n}/{r}$ and ${\eta n}/{r}$ are integers. Let $G$ be the complete $r$-partite ordered graph on $n$ vertices with parts $G_1<\cdots<G_r$ of size
$$|G_j|=\begin{cases}
\frac{n}{r}(1-\eta) & \text{ if $j<r$}, \\
\frac{n}{r}\left(1+(r-1)\eta\right) & \text{ if $j=r$}.
\end{cases}$$
Observe that
$$\delta(G)=n-\frac{n}{r}\left(1+(r-1)\eta\right)=\left(1-\frac{1}{r}-\frac{(r-1)\eta}{r}\right)n.$$
By assumption, a copy of $H$ in $G$ has at least $ih/r$ vertices in $G_1\cup\cdots\cup G_i$. Suppose $\mathcal{M}$ is an $H$-tiling in $G$, then the number of copies of $H$ in $\mathcal{M}$ is at most 
$$\frac{|G_1\cup\cdots\cup G_i|}{(ih/r)}=\frac{n(1-\eta)}{h}.$$
In particular, there are at least $\eta n$ vertices in $G$ which are not covered by $\mathcal{M}$. 

In summary what we have shown is that, given any $c'< 1-1/r$, we can choose $\eta>0$ sufficiently small so that for an infinite number of choices of $n \in \mathbb N$, we can produce an $n$-vertex ordered graph $G$ with $\delta(G) \geq c'n$
and so that $G$ does not contain an $H$-tiling covering more than $(1-\eta)n$ vertices. So by definition $c \geq 1-1/r$, as desired.
\end{proof}

\section{Proof of Theorem~\ref{OrderedKomlos}}\label{sec:ok}
This section is devoted to the proof of Theorem~\ref{OrderedKomlos}. We will need the following result from \cite[Lemma~6.2]{blt} which itself is a special case of a lemma of B\'ar\'any and Valtr~\cite{bv}.

\begin{lemma}[Balogh, Li and Treglown \cite{blt}]\label{ordertrick}
For $n\geq k\geq2$, let $A_1,A_2,\dots,A_k$ be nonempty disjoint subsets of $[n]$. Then there exist sets $S_1,S_2,\dots,S_k$, where $S_i\subseteq A_i$, and a permutation $\sigma=(\sigma(1), \sigma(2),\dots,\sigma(k))$ of the set $[k]$, such that the following conditions hold for all $i, j\in[k]$:
\begin{itemize}
\item $|S_i|\geq\lfloor|A_i|/k\rfloor$;
\item $S_i<S_j$ if $\sigma(i)<\sigma(j)$.
\end{itemize}
\end{lemma}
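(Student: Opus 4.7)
The plan is to prove the lemma by induction on $k$, with the trivial base case $k = 1$ (take $S_1 := A_1$ and $\sigma := (1)$). For the inductive step, I will ``peel off'' one of the sets, designating a block of its top $\lfloor |A_i|/k \rfloor$ elements as the rightmost block $S_i$ (so $\sigma(i) = k$), and then apply the inductive hypothesis to the remainders of the other $k-1$ sets restricted to a suitable initial segment of $[n]$.

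The key decision is which index to peel off. For each $i$, let $b_i := \lfloor |A_i|/k \rfloor$, let $T_i$ be the top $b_i$ elements of $A_i$, and set $\tau_i := \min T_i$. I choose $i^*$ so that $\tau_{i^*} = \max_i \tau_i$, and set $S_{i^*} := T_{i^*}$ and $\sigma(i^*) := k$. For each $i \neq i^*$, define $A_i' := A_i \cap [1, \tau_{i^*} - 1]$. Since $\tau_i \leq \tau_{i^*}$, at most $|T_i| = b_i$ elements of $A_i$ lie in $[\tau_{i^*}, n]$, so $|A_i'| \geq |A_i| - b_i$. Combined with the trivial inequality $k b_i \leq |A_i|$, this yields $|A_i'|/(k-1) \geq b_i$, hence $\lfloor |A_i'|/(k-1) \rfloor \geq b_i$; in particular each $A_i'$ is nonempty, since $k \geq 2$ forces $|A_i| > b_i$ when $|A_i| \geq 1$.

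Applying the inductive hypothesis to the $k-1$ nonempty disjoint sets $\{A_i' : i \neq i^*\}$ inside the ground set $[1, \tau_{i^*} - 1]$ yields sets $S_i \subseteq A_i'$ with $|S_i| \geq \lfloor |A_i'|/(k-1) \rfloor \geq b_i = \lfloor |A_i|/k \rfloor$, together with a permutation $\sigma'$ witnessing the required ordering among the indices $i \neq i^*$. Extending by $\sigma(i^*) := k$ produces a permutation of $[k]$: the ordering between $S_{i^*}$ and each other $S_i$ is automatic, since $S_i \subseteq [1, \tau_{i^*} - 1]$ while $S_{i^*} \subseteq [\tau_{i^*}, n]$, matching $\sigma(i) \leq k - 1 < k = \sigma(i^*)$. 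The only delicate point is the arithmetic verification that $|A_i'|/(k-1) \geq b_i$; the geometric intuition—choosing the set whose top block starts furthest to the right, so that removing it erases at most $b_i$ elements from any other $A_i$—makes the peel-off natural, and I anticipate no further obstacle.
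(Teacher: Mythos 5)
The paper does not actually prove this lemma: it is imported verbatim from \cite{blt} (Lemma~6.2 there), itself a special case of a lemma of B\'ar\'any and Valtr, so there is no in-paper argument to compare against. Your peel-off induction is the natural elementary proof, and its main step is sound: with $b_i:=\lfloor|A_i|/k\rfloor$, choosing $i^*$ so that the top block of $A_{i^*}$ starts furthest to the right gives $|A_i\cap[\tau_{i^*},n]|\leq|A_i\cap[\tau_i,n]|=|T_i|=b_i$, hence $|A_i'|\geq|A_i|-b_i\geq(k-1)b_i$ and $\lfloor|A_i'|/(k-1)\rfloor\geq b_i$; the ordering between $S_{i^*}$ and the recursively produced sets is automatic because they live on opposite sides of $\tau_{i^*}$.

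The one place the writeup breaks is the degenerate case $b_i=0$, i.e.\ $|A_i|<k$. Then $T_i=\emptyset$, so $\tau_i=\min T_i$ is undefined (the maximisation defining $i^*$ must be restricted to indices with $b_i\geq1$), and the assertion that ``each $A_i'$ is nonempty'' is false: for $k=2$, $A_1=[100]$, $A_2=\{101\}$ one gets $i^*=1$, $\tau_{i^*}=51$ and $A_2'=\emptyset$, so the inductive hypothesis (which requires nonempty sets) cannot be invoked as stated. Your justification ``$|A_i|>b_i$'' only yields $|A_i'|\geq1$ via the inequality $|A_i'|\geq|A_i|-b_i$, whose derivation presupposes $\tau_i$ exists. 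This is an edge case rather than a flaw in the idea: for such $i$ the conclusion only demands $|S_i|\geq0$, and since the relation $X<Y$ is a universally quantified statement it holds vacuously for $S_i=\emptyset$. So it suffices to first set $S_i:=\emptyset$ for every $i$ with $\lfloor|A_i|/k\rfloor=0$, insert these indices anywhere in $\sigma$, and apply the lemma to the remaining $k'\leq k$ sets, noting $\lfloor|A_i|/k'\rfloor\geq\lfloor|A_i|/k\rfloor$. With that one-line repair the proof is complete.
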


\smallskip

\begin{proofofOrderedKomlos}
Let $H$ be an ordered graph on $h$ vertices and let $x\in(0,1)$. Throughout the proof we set $r:=\chi_<(H)$. If $r=1$ the statement of the theorem holds trivially, so we may assume that $r\geq 2$. Observe that it suffices to prove that the theorem holds for any $\eta>0$ sufficiently small. Fix constants $0<\eta\ll 1/\chi_{cr}^*(x,H)$ and 
\begin{align}\label{vareps}
0<\varepsilon\ll(1-x)\eta/(xh).
\end{align}
By definition of $\chi_{cr}^*(x,H)$ there is 
 an $x$-bottlegraph $B$ of $H$ with 
\begin{align}\label{xeta}
\chi_{cr}^*(x,H)\leq \chi_{cr}(B)<\frac{\chi_{cr}^*(x,H)}{1-(\eta/12)\chi_{cr}^*(x,H)}.
\end{align}
Let $k:=\chi(B)\geq r$ and
let $B_1,\dots, B_k $ be the parts of $B$.
Fix $t\in\mathbb{N}$ such that $k/t<\varepsilon$.

Let $n\in\mathbb{N}$ be sufficiently large and let $G$ be an ordered graph on $n$ vertices with minimum degree
$$\delta(G)\geq\left(1-\frac{1}{\chi_{cr}^*(x,H)}+\eta\right)n.$$
Recall by (\ref{goodlowerbound}),  $\chi^*_{cr}(x,H)\geq r-1$; so $\delta(G)\geq(1-1/(r-1)+\eta)n$. By the {\it{Erd\H{o}s--Stone--Simonovits theorem for ordered graphs}}~\cite{pt} there exists a copy of $H$ in $G$. We remove the vertices of $H$ from $G$ and repeat the same process until we obtain a set $W\subseteq V(G)$ such that $G[W]$ contains a perfect $H$-tiling $\mathcal{W}$ and $|W|=\left\lfloor\frac{\eta n}{2h}\right\rfloor h$. Let $G':=G\setminus W$. Observe that
$$\delta(G')\geq\left(1-\frac{1}{\chi_{cr}^*(x,H)}+\frac{\eta}{3}\right)|G'|\stackrel{(\ref{xeta})}{\geq}\left(1-\frac{1}{\chi_{cr}(B)}+\frac{\eta}{4}\right)|G'|.$$
Since $\chi_{cr}(B(t))=\chi_{cr}(B)$ (and ignoring the ordering of $V(G')$),  Theorem~\ref{Komlos} implies there exists a $B(t)$-tiling $\mathcal{B}$ in $G'$ which covers all but at most $\varepsilon |G'|$ vertices. 

Consider a fixed copy of $B(t)\in\mathcal{B}$ whose parts are  $A_1,\dots,A_k$ with $|A_i|=t|B_i|$ for every $i\in[k]$. By Lemma~\ref{ordertrick}, there exist sets $S_i\subseteq A_i$ for every $i\in[k]$ such that $|S_i|\geq\lfloor|A_i|/k\rfloor\geq |B_i|$ and a permutation $\sigma$ of $[k]$ such that $S_i<S_j$ if $\sigma(i)<\sigma(j)$. By discarding some vertices if necessary, we may assume that $|S_i|=|B_i|$ for every $i\in[k]$. Note that the sets $S_1,\dots,S_k$ span a copy of $B$ and the ordering of $V(G')$ induces an interval labelling of $B$ with respect to $\sigma$. 

Crucially, $|A_i\setminus S_i|=(t-1)|B_i|$, so we can repeatedly apply Lemma~\ref{ordertrick} to find a $B$-tiling in $G'$ covering all but at most
$$k|B|=(k/t)|B(t)|$$ 
vertices in our fixed $B(t)\in \mathcal B$. Furthermore, by Lemma~\ref{ordertrick}, the ordering of $V(G')$ induces an interval labelling on each of these copies of $B$. Since $B$ is an $x$-bottlegraph, each of these copies contains an $(x,H)$-tiling. Repeat this process for all $B(t)\in \mathcal B$ and
denote the union of all these $(x,H)$-tilings as $\mathcal{M}$. The number of vertices in $G$ covered by $\mathcal{M} \cup \mathcal{W}$ is at least
\begin{align*}
x(1-k/t)(1-\varepsilon)|G'|+|W|&\geq x(1-\varepsilon)^2(n-|W|)+|W| \\
&\geq xn -2x\varepsilon n -x|W|+|W|\stackrel{(\ref{vareps})}{\geq} xn.
\end{align*}
Hence $\mathcal{M} \cup \mathcal{W}$ is an $(x,H)$-tiling in $G$, as desired.\qed
\end{proofofOrderedKomlos}

\section{Computing the threshold in Theorem~\ref{OrderedKomlos} for some choices of $H$}\label{sec:xtiling}
In this section we investigate the behaviour of the function $f(x,H)$ as defined in Theorem~\ref{OrderedKomlos}. Recall that for an unordered graph $H$, the analogous function $g(x,H)$ (defined in Theorem~\ref{Komlos}) is linear in $x$. On the other hand, for a vertex-ordered graph $H$, the function $f(x,H)$ can behave rather differently.
However, we first give an instance where $f(x,H)$ does grow linearly in $x$. 

\begin{prop}\label{linear}
Let $H$ be an ordered graph with vertex set $[h]$ such that $\chi_<(H)=2$. Define $\alpha^+(H)$ to be the largest integer $t\in[h]$ such that $[1,t]$ is an independent set in $H$; $\alpha^-(H)$ to be the largest integer $t\in[h]$ such that $[h-t+1,h]$ is an independent set in $H$; $\alpha(H):=\min\{\alpha^+(H),\alpha^-(H)\}$. For any $x\in(0,1)$, we have
$$f(x,H)=\frac{x(h-\alpha(H))}{h}.$$

\end{prop}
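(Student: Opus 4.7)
Since $f(x, H) = 1 - 1/\chi_{cr}^*(x, H)$, the proposition reduces to showing $\chi_{cr}^*(x, H) = h/\beta$, where $\beta := (1-x)h + x\alpha(H)$. For the lower bound let $B$ be any $x$-bottlegraph of $H$; by Definition~\ref{defx}, $B$ is complete $k$-partite with $k-1$ parts of size $m$ and one part of size $a_1 \leq m$, so $\chi_{cr}(B) = |B|/m$. Assume without loss of generality that $\alpha(H) = \alpha^+(H)$. Choose an interval labelling $\phi$ of $B$ placing one of the $m$-sized parts, call it $P$, first in $(B, \phi)$, and let $\mathcal{M}$ be the $(x, H)$-tiling in $(B, \phi)$ guaranteed by hypothesis. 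In each copy $H' \in \mathcal{M}$, the vertices $V(H') \cap P$ form an initial independent interval of $H'$, so $|V(H') \cap P| \leq \alpha^+(H) = \alpha(H)$, and thus $H'$ uses at least $h - \alpha$ vertices outside $P$. Summing gives $|\mathcal{M}|(h - \alpha) \leq |B| - m$; combined with $h|\mathcal{M}| \geq x|B|$, this yields $m \leq |B|\beta/h$, so $\chi_{cr}(B) \geq h/\beta$.

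For the upper bound I construct, for each $\varepsilon > 0$, an $x$-bottlegraph $B$ of $H$ with $\chi_{cr}(B) \leq h/\beta + \varepsilon$. Set $k := \lceil h/\beta \rceil$, so that $h/\beta \in (k-1, k]$, and define $B$ to be the complete $k$-partite unordered graph with $k - 1$ parts of size $m$ and one part $U_1$ of size $a_1 := m(h/\beta - (k-1)) \in (0, m]$, where $m$ is a large positive integer (with a rational approximation of $\beta$ if needed, so that $a_1$ is integral and integer tilings can be realised in the blow-up). A direct computation gives $|B| = mh/\beta$ and $\chi_{cr}(B) = a_1/m + (k-1) = h/\beta$.

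It remains to verify that $B$ is an $x$-bottlegraph. Fix an interval labelling $\phi$ of $B$ and write the parts of $(B, \phi)$ as $P_1 < \cdots < P_k$. I will consider copies of $H$ whose interval $2$-colouring has class sizes $(\alpha, h - \alpha)$ (again without loss of generality taking $\alpha = \alpha^+$); each such copy places $\alpha$ vertices in some $P_i$ and $h - \alpha$ in a later $P_j$, counted by a variable $N_{i,j}$. The existence of an $(x, H)$-tiling then reduces to a linear program maximising $h \sum_{i < j} N_{i,j}$ subject to $\sum_{j > i} \alpha N_{i,j} + \sum_{j < i} (h - \alpha) N_{j,i} \leq |P_i|$ for every $i$. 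By LP duality, it suffices to show $\sum_i y_i |P_i| \geq xmh/\beta$ for every non-negative $(y_1, \ldots, y_k)$ satisfying $\alpha y_i + (h - \alpha) y_j \geq h$ for all $i < j$. I expect the main obstacle to lie here: performing this dual estimate case-by-case depending on the position of $U_1$ in the ordering, particularly the interior case where the constraints around $U_1$ are tight, and possibly augmenting the primal LP with copies of $H$ spanning more than two parts (corresponding to interval colourings of $H$ with more than two classes, which are available since $\chi_<(H) = 2$).
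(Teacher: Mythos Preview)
Your lower bound argument is correct and, in fact, cleaner than the paper's: you work directly from the definition of $\chi_{cr}^*(x,H)$, whereas the paper constructs an explicit $n$-vertex ordered graph $G$ without an $(x,H)$-tiling and invokes Theorem~\ref{OrderedKomlos} to bound $f(x,H)$ from below.

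The upper bound, however, has a genuine gap. You set up an LP using only copies of $H$ with the fixed $2$-colouring of class sizes $(\alpha, h-\alpha)$, placed across exactly two parts $P_i < P_j$, and then leave the dual estimate unfinished. Two issues arise. First, restricting to this single $2$-colouring is likely too rigid: for general orderings of $B$ (especially when the small part $U_1$ sits in the interior), optimal tilings may need copies of $H$ that spread across three or more parts of $B$, and may need to exploit both $\alpha^+$ and $\alpha^-$ simultaneously. You already anticipate this, but the augmentation is not carried out. Second, even granting the right primal, the dual case analysis depending on the position of $U_1$ is the heart of the matter and is entirely absent.

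The paper sidesteps all of this with a reduction trick. Rather than verifying the bottlegraph property directly, it constructs an auxiliary ordered graph $H'$ on roughly $Nh/x$ vertices (with $\chi_<(H')=2$) that contains an $(x,H)$-tiling by design; then every $1$-bottlegraph of $H'$ is automatically an $x$-bottlegraph of $H$, so $\chi_{cr}^*(x,H)\le \chi_{cr}^*(1,H')=\chi_{cr}^*(H')$. The value $\chi_{cr}^*(H')$ is then read off from Proposition~\ref{bottlegraphbipartite}. This reduces your LP/duality verification to a one-line computation of $\alpha(H')$, and is the missing idea in your proposal.
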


\begin{proof}
Let $H$ be an ordered graph with vertex set $[h]$ so that $\chi_<(H)=2$, and let $x\in(0,1)$.
Additionally, fix constants $0<1/N\ll\eta<1$ where $N\in\mathbb{N}$. We first prove that $f(x,H)\leq x(h-\alpha(H))/h$.

By the definition of $\alpha^+(H)$ and $\alpha^-(H)$ and the fact that $\chi_<(H)=2$, $[1,\alpha^+(H)]<[\alpha^+(H)+1,h]$ and $[1,h-\alpha^-(H)]<[h-\alpha^-(H)+1,h]$ are  interval $2$-colourings of $H$. The maximality of $\alpha^+(H)$ implies $\alpha^+(H)\geq h-\alpha^-(H)$, i.e., $\alpha^+(H)+\alpha^-(H)-h\geq0$. Furthermore, the previous observations imply that the vertices in $[h-\alpha^-(H)+1,\alpha^+(H)]$ are isolated in $H$ and in particular $[1,h-\alpha^-(H)]<[h-\alpha^-(H)+1,\alpha^+(H)]<[\alpha^+(H)+1,h]$ is an interval $3$-colouring of $H$. (Note that $[h-\alpha^-(H)+1,\alpha^+(H)]$ may be empty.)

Let $H'$ be an ordered graph such that:
\begin{itemize}
\item $V(H')=[h']$ with $h':=Nh+\lfloor (1-x)Nh/x\rfloor$;
\item $V(H')$ can be partitioned into four intervals $D_1<D_2<D_3<D_4$ where
$$|D_1|=(h-\alpha^-(H))N,\quad    |D_2|=(\alpha^+(H)+\alpha^-(H)-h)N,$$
$$|D_3|=\lfloor (1-x)Nh/x\rfloor,  \quad  |D_4|=(h-\alpha^+(H))N,   $$
such that $H'[D_1,D_4]$ is a complete $2$-partite ordered graph and all vertices in $D_2\cup D_3$ are isolated in $H'$.
\end{itemize}
As $[1,h-\alpha^-(H)]<[h-\alpha^-(H)+1,\alpha^+(H)]<[\alpha^+(H)+1,h]$ is an interval $3$-colouring of $H$ and $[h-\alpha^-(H)+1,\alpha^+(H)]$ is a set of isolated vertices, we have that $H'[D_1\cup D_2\cup D_4]$ contains $N$ disjoint copies of $H$. These copies form an $(x,H)$-tiling in $H'$ since $Nh\geq xh'$.

Notice $\alpha^+(H')=|D_1|+|D_2|+|D_3|$ and $\alpha^-(H')=|D_2|+|D_3|+|D_4|$, thus
\begin{align}\label{alpha*}
\alpha(H')=h'-\max\{|D_1|,|D_4|\}=h'-N(h-\alpha(H)).
\end{align}
As $\chi_<(H')=2$, Proposition~\ref{bottlegraphbipartite} implies that
\begin{align}\label{chiH'}
\chi_{cr}^*(H')=\frac{h'}{\alpha(H')}.
\end{align}
Since $H'$ contains an $(x,H)$-tiling, it follows that every $1$-bottlegraph of $H'$ is an $x$-bottlegraph of $H$ and so $\chi_{cr}^*(x,H)\leq\chi_{cr}^*(1,H')$. Proposition~\ref{propeq} implies that $\chi_{cr}^*(1,H')=\chi_{cr}^*(H')$, so 
\begin{align*}
f(x,H)=\left(1-\frac{1}{\chi_{cr}^*(x,H)}\right)\leq\left(1-\frac{1}{\chi_{cr}^*(H')}\right)\stackrel{(\ref{chiH'})}{=}\left(1-\frac{\alpha(H')}{h'}\right)&\stackrel{(\ref{alpha*})}{=}\left(1-\frac{h'-N(h-\alpha(H))}{h'}\right) \\
&=\frac{x(h-\alpha(H))}{xh'/N}.
\end{align*}
Since $N$ is arbitrarily large and $xh'/N\to h$ as $N\to\infty$, the above implies $f(x,H)\leq x(h-\alpha(H))/h$.

\smallskip

Next we prove that $f(x,H)\geq x(h-\alpha(H))/h$. Without loss of generality, we may assume that $\alpha(H)=\alpha^+(H)$. Let $G$ be the ordered graph obtained by taking the complete $2$-partite ordered graph with classes $U<V$ where
$$|U|=N\alpha^+(H)+\left\lceil\frac{1-x}{x}Nh\right\rceil+1\quad\text{and}\quad|V|=N(h-\alpha^+(H))-1,$$
and adding all possible edges to $V$. Suppose $G$ contains an $(x,H)$-tiling $\mathcal{H}$. Since $|G|=Nh+\left\lceil\frac{1-x}{x}Nh\right\rceil$ then $\mathcal{H}$ covers at least $xNh+(1-x)Nh=Nh$ vertices of $G$. Let $\mathcal{H}'\subseteq\mathcal{H}$ where $\mathcal{H}'$  is an $H$-tiling consisting of exactly $N$ copies of $H$ in $G$. Every copy of $H$ has at most $\alpha^+(H)$ vertices in $U$. Furthermore, there are precisely $\left\lceil\frac{1-x}{x}Nh\right\rceil$ vertices in $G$ which are not covered by $\mathcal{H}'$, thus 
$$|U|\leq N\alpha^+(H)+\left\lceil\frac{1-x}{x}Nh\right\rceil<|U|,$$ a contradiction. Therefore, the assumption that $G$ contains an $(x,H)$-tiling is false. It follows from Theorem~\ref{OrderedKomlos} that 
$$\delta(G)< (f(x,H)+\eta)|G|.$$
Finally, we have
\begin{align*}
f(x,H)+\eta>\frac{\delta(G)}{|G|}=\frac{N(h-\alpha^+(H))-1}{Nh+\left\lceil\frac{1-x}{x}Nh\right\rceil}&\geq\frac{N(h-\alpha^+(H))-1}{Nh+\frac{1-x}{x}Nh+1}=\frac{x(h-\alpha^+(H))-x/N}{h+x/N}.
\end{align*}
As $N$ can be chosen arbitrarily large and $\eta$  arbitrarily small, it follows that
$f(x,H)\geq x(h-\alpha(H))/h$.
\end{proof}

In general, for any ordered graph $H$, if $x$ is not too big then the function $f(x,H)$ is linear in $x$.

\begin{prop}\label{linearresult}
Let $H$ be an ordered graph on $h$ vertices and $r:=\chi_<(H)$. Let $\mathcal{C}$ be the set of interval $r$-colourings of $H$. Additionally, set
$$T:=\max_{1\leq i\leq r}\left\{\min_{(H_1< \cdots < H_r)\in\mathcal{C}} |H_i|\right\},\quad J:=\max_{1\leq i\leq r}\left\{\max_{(H_1<\cdots < H_r)\in\mathcal{C}} |H_i|\right\}$$
and $x_0:=\frac{h}{(r-1)J+T}$. Then for any $x\in(0,x_0]$ where $x<1$,
$$f(x,H)=1-\frac{h-xT}{h(r-1)}.$$
\end{prop}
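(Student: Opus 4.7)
The plan is to show $\chi^*_{cr}(x,H) = h(r-1)/(h-xT)$; combined with $f(x,H) = 1 - 1/\chi^*_{cr}(x,H)$, this immediately yields the claimed formula. For notation, for each $i \in [r]$ write $T_i := \min_{(H_1<\cdots<H_r)\in\mathcal{C}} |H_i|$ so that $T = \max_i T_i$; fix $i^*$ with $T_{i^*} = T$, and for each $i$ fix an interval $r$-colouring $(H_1^i < \cdots < H_r^i)$ attaining $|H_i^i| = T_i$, setting $c_{i,j} := |H_j^i|$. Since $\max_{\mathcal{C}} |H_i| \geq \min_{\mathcal{C}} |H_i|$ for each $i$, we have $J \geq T$ and hence $x_0 \leq h/(rT)$.

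For the lower bound $\chi^*_{cr}(x,H) \geq h(r-1)/(h-xT)$, I would take any $x$-bottlegraph $B$. By Definition~\ref{defx}, $B$ has $k$ parts $U_1, \dots, U_k$ with $|U_1| \leq m$ and $|U_2| = \cdots = |U_k| = m$. If $k \geq r+1$, then $\chi_{cr}(B) \geq k - 1 \geq r \geq h(r-1)/(h-xT)$, the last inequality following from $x \leq x_0 \leq h/(rT)$. If $k = r$, I interval-label $B$ so that $U_1$ sits at position $i^*$. The resulting $(x,H)$-tiling $\mathcal{M}$ has $|\mathcal{M}| \geq x|B|/h$ copies; since $\chi_<(H) = r = k$, each copy uses every part of $B$ and in particular places at least $T_{i^*} = T$ vertices in $U_1$. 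Summing yields $|U_1| \geq xT|B|/h$, and so
$$\chi_{cr}(B) = \frac{(r-1)|B|}{|B|-|U_1|} \geq \frac{h(r-1)}{h-xT}.$$

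For the upper bound, I construct $x$-bottlegraphs $B$ with $\chi_{cr}(B)$ arbitrarily close to $h(r-1)/(h-xT)$. Concretely, for each large $N \in \mathbb{N}$, let $B$ be the complete $r$-partite graph with parts of sizes $a, m, \dots, m$ chosen so that $a \geq \lceil x|B|/h \rceil T$ and $m \geq \lceil x|B|/h \rceil J$; for $x < x_0$ any approximate choice $a \approx xTN$, $m \approx (h-xT)N/(r-1)$ works once $N$ is large, while for $x = x_0$ the clean choice $|B| := N((r-1)J+T)$, $a := NT$, $m := NJ$ achieves $\lceil x_0|B|/h \rceil = N$ and saturates both inequalities exactly. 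Given any interval labelling that places $U_1$ at position $i$, I exhibit an $(x,H)$-tiling using $\lceil x|B|/h \rceil$ vertex-disjoint copies of $H$, each using the colouring $(H_1^i < \cdots < H_r^i)$; feasibility follows from the two size inequalities together with $T_i \leq T$ and $c_{i,j} \leq J$. Then $\chi_{cr}(B) = (r-1) + a/m$, which tends to (or, at $x = x_0$, equals) $h(r-1)/(h-xT)$ as $N \to \infty$, so $\chi^*_{cr}(x,H) \leq h(r-1)/(h-xT)$.

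The main technical subtlety is the boundary case $x = x_0$: here the constraint $x((r-1)J+T) \leq h$ (which governs the placement of copies into the parts at positions $j \neq i$) becomes an equality, leaving no room to absorb the $O(1)$ rounding from $\lceil x|B|/h \rceil$. The explicit choice $|B| = N((r-1)J+T)$, $a = NT$, $m = NJ$ sidesteps this by making $x_0|B|/h = N$ exactly integral so that every constraint holds with equality; for $x < x_0$ the strict slack in $x((r-1)J+T) < h$ dominates the $O(1)$ error once $N$ is taken sufficiently large.
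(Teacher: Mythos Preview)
Your proof is correct, and your upper bound construction is essentially the same as the paper's (the paper takes $|B_1|=NT$ and $|B_i|=\lfloor \frac{N}{r-1}(\frac{h}{x}-T)\rfloor$ for $i\neq 1$, which is just a reparametrisation of your choice).

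Your lower bound argument, however, is genuinely different. The paper does not bound $\chi^*_{cr}(x,H)$ directly from its definition. Instead, it constructs an explicit $n$-vertex ordered graph $G$ (a complete $r$-partite ordered graph with the small part at position $i^*$) that contains no $(x,H)$-tiling, and then invokes Theorem~\ref{OrderedKomlos} to conclude $\delta(G) < (f(x,H)+\eta)|G|$, from which the lower bound on $f(x,H)$ follows after letting $N\to\infty$ and $\eta\to 0$. Your route is more elementary and self-contained: you take an arbitrary $x$-bottlegraph $B$, split into the cases $k\geq r+1$ (where $\chi_{cr}(B)\geq k-1\geq r\geq h(r-1)/(h-xT)$ via $x\leq x_0\leq h/(rT)$) and $k=r$ (where placing the small part at position $i^*$ forces every copy of $H$ to deposit at least $T$ vertices there, yielding $|U_1|\geq xT|B|/h$). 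This avoids appealing to Theorem~\ref{OrderedKomlos} altogether, which is cleaner here since $f(x,H)$ is \emph{defined} as $1-1/\chi^*_{cr}(x,H)$. The paper's approach has the complementary virtue of exhibiting an explicit extremal host graph, thereby also witnessing the tightness of the minimum degree condition in Theorem~\ref{OrderedKomlos} for this range of $x$.
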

\begin{proof}
Let $H$ be an ordered graph on $h$ vertices with $r:=\chi_<(H)$ and $x\in(0,x_0]$ where $x<1$. Fix constants $0<1/N\ll\eta<1$ where $N\in\mathbb{N}$. We first show that $f(x,H)\leq 1-(h-xT)/(h(r-1))$.

Let $B$ be the complete $r$-partite unordered graph with parts $B_1,\dots,B_r$ where
$$|B_1|=NT\quad\text{ and }\quad |B_i|=\left\lfloor\frac{N}{r-1}\left(\frac{h}{x}-T\right)\right\rfloor$$
for every $i\not=1$. We now prove that $B$ is an $x$-bottlegraph of $H$. Note that for every $i\not=1$,
\begin{align}\label{JT}
|B_i|\geq \left\lfloor\frac{N}{r-1}\left(\frac{h}{x_0}-T\right)\right\rfloor=NJ\geq NT=|B_1|.
\end{align}
Let $\sigma$ be a permutation of $[r]$ and $\phi$ be an interval labelling of $B$ with respect to $\sigma$. Recall that the ordered graph $(B,\phi)$ has parts $B_{\sigma^{-1}(1)}<\cdots<B_{\sigma^{-1}(r)}$. Let $H_1<\cdots <H_r$ be an interval $r$-colouring of $H$ which minimises $|H_{\sigma(1)}|$. By the definition of $T$ we have that 
$$|B_1|=NT\geq N|H_{\sigma(1)}|.$$ 
Furthermore, by the definition of $J$ we have that for every $i\not=1$,
$$|B_i|\stackrel{(\ref{JT})}{\geq}NJ\geq N|H_{\sigma(i)}|.$$
Hence, the ordered graph $(B,\phi)$ contains an $H$-tiling $\mathcal{H}$ consisting of $N$ disjoint copies of $H$. These copies cover exactly $Nh$ vertices of $(B,\phi)$. In particular 
$$x|B|\leq xNT+xN\left(\frac{h}{x}-T\right)=Nh,$$
therefore, $\mathcal{H}$ is an $(x,H)$-tiling in $(B,\phi)$. Since $\sigma,\phi$ are arbitrary, $B$ is indeed an $x$-bottlegraph of $H$ and thus $\chi_{cr}^*(x,H)\leq\chi_{cr}(B)$. Note that
$$\chi_{cr}(B)=\frac{|B|}{|B_2|}
=\frac{N T+(r-1)\left\lfloor\frac{N}{r-1}\left(\frac{h}{x}-T\right)\right\rfloor}{\left\lfloor\frac{N}{r-1}\left(\frac{h}{x}-T\right)\right\rfloor}
< \frac{N T+N\left(\frac{h}{x}-T\right)}{\frac{N}{r-1}\left(\frac{h}{x}-T\right)-1}
=\frac{h(r-1)}{(h-xT)-\frac{x(r-1)}{N}}$$
and so
$$f(x,H)=1-\frac{1}{\chi_{cr}^*(x,H)}
\leq 1-\frac{1}{\chi_{cr}(B)}
<1-\frac{(h-xT)-\frac{x(r-1)}{N}}{h(r-1)}.$$
As $N$ is arbitrarily large, the above implies 
$f(x,H ) \leq 1-({h-xT})/(h(r-1)).$

\smallskip

Next we show that $f(x,H)\geq 1-(h-xT)/(h(r-1))$. Suppose the value of $T$ is achieved for some interval $r$-colouring $H^*_1<\cdots < H^*_r$ of $H$ and $i^*\in[r]$; that is, $T=|H^*_{i^*}|$. Let $G$ be the complete $r$-partite ordered graph with parts $G_1<\cdots<G_r$ of where
$$|G_{i^*}|=NT\quad\text{ and }\quad |G_i|=\left\lceil\frac{N}{r-1}\left(\frac{h}{x}-T\right)\right\rceil+1$$
for every $i\not=i^*$. Notice that $|G_i|  \geq |G_{i^*}|$ for every $i \in [r]$ since $x \leq x_0$. Suppose there exists an $(x,H)$-tiling $\mathcal{H}$ in $G$. By definition of $T$, every copy of $H\in\mathcal{H}$ has at least $T$ vertices in $G_{i^*}$, thus
$$|\mathcal{H}|\leq\frac{|G_{i^*}|}{T}=N.$$
Hence, $\mathcal{H}$ covers at most $Nh$ vertices of $G$. However,
$$|G|>NT+N\left(\frac{h}{x}-T\right)=\frac{Nh}{x}\implies x|G|>Nh,$$
contradicting the assumption that $\mathcal{H}$ is an $(x,H)$-tiling. In particular, $G$ does not contain an $(x,H)$-tiling and so Theorem~\ref{OrderedKomlos} implies
$$\delta(G)<(f(x,H)+\eta)|G|.$$
Let $i\not=i^*$. We have that
$$f(x,H)+\eta>\frac{\delta(G)}{|G|}=\frac{|G|-|G_i|}{|G|}=\left(1-\frac{\left\lceil\frac{N}{r-1}\left(\frac{h}{x}-T\right)\right\rceil+1}{NT+(r-1)\left\lceil\frac{N}{r-1}\left(\frac{h}{x}-T\right)\right\rceil+r-1}\right)$$
$$\geq\left(1-\frac{\frac{N}{r-1}\left(\frac{h}{x}-T\right)+2}{NT+N\left(\frac{h}{x}-T\right)}\right)=1-\frac{(h-xT)+\frac{2x(r-1)}{N}}{h(r-1)}.$$
As $N$ can be chosen arbitrarily large and $\eta$  arbitrarily small, it follows that
$f(x,H)\geq 1-(h-xT)/(h(r-1))$.
\end{proof}

The following result states that $f(x,H)$ is piecewise linear for certain types of $H$. These different behaviours already suggest that computing $f(x,H)$  is likely to be a difficult task in general.

\begin{prop}\label{pwlinear}
Let  $\ell_1\leq\dots\leq\ell_r$ be positive integers and $x\in(0,1]$.
Let $H$ be a complete $r$-partite ordered graph on $h$ vertices with parts $H_1<\cdots<H_r$ where $|H_i|=\ell_i$ for every $i\in[r]$. 
\begin{itemize}
\item If $\;t\ell_t-\sum\limits_{i=1}^t \ell_i\leq\frac{1-x}{x}h<(t+1) \ell_{t+1}-\sum\limits_{i=1}^{t+1} \ell_i$ for some $t\in[1,r-1]$ then
$$f(x,H)=1-\frac{1}{ht}\left((1-x)h+x\sum\limits_{i=1}^t \ell_i\right).$$
\item If $\;\frac{(1-x)h}{x}\geq r\ell_r-\sum\limits_{i=1}^{r} \ell_i$ then
$$ f(x,H)=1-\frac{h-x\ell_r}{h(r-1)}.\footnote{Observe that this value of $f(x,H)$ coincides with the value of $f(x,H)$ in the previous case when $t=r-1$.}$$
\end{itemize}
In particular, for $H$ fixed, $f(x,H)$ is continuous and piecewise linear in $x$ with $r'$ pieces where $r'$ is the number of strict inequalities in $\ell_1\leq\dots\leq\ell_r$.
\end{prop}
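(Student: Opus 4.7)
The proof computes $\chi_{cr}^*(x, H) = 1/(1 - f(x, H))$ explicitly in each regime via matching upper and lower bounds, and then reads off piecewise linearity from the resulting formulas. The Case 2 formula is immediate from Proposition~\ref{linearresult} applied to $H$: since $H$ is complete $r$-partite, its unique interval $r$-colouring has colour classes of sizes $\ell_1, \ldots, \ell_r$, so the parameter $T$ there equals $\ell_r$; the condition $\frac{(1-x)h}{x} \geq r\ell_r - h$ is equivalent to $x \leq x_0 = h/(r\ell_r)$, and substitution yields $f(x, H) = 1 - (h - x\ell_r)/(h(r-1))$.

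For Case 1 with parameter $t \in [1, r-1]$, set $L := \sum_{i=1}^t \ell_i$ and $\chi := ht/((1-x)h + xL)$; the goal is $\chi_{cr}^*(x, H) = \chi$. For the upper bound I would construct an explicit $x$-bottlegraph $B$ of $H$. Take $k := \max(r, \lceil \chi \rceil)$ and let $B$ be complete $k$-partite with a distinguished part $B_1$ of size $b_1 := (\chi - k + 1)m$ and $k - 1$ further parts each of size $m$ (with $m$ chosen so that $b_1$ is a positive integer); a short calculation gives $|B| = \chi m$ and $\chi_{cr}(B) = \chi$. To verify the $x$-bottlegraph property, for each interval labelling $\phi$ of $B$ I would solve the linear program whose variables $n_{(j_1, \ldots, j_r)}$ count the copies of $H$ with $H_i$ placed in the part at position $j_i$ (for $j_1 < \cdots < j_r$ in $[k]$), subject to the capacity constraints on each part; the regime condition $t\ell_t - L \leq (1-x)h/x < (t+1)\ell_{t+1} - (L + \ell_{t+1})$ is precisely what guarantees an $(x, H)$-tiling covering $x|B|$ vertices in every ordering. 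The case analysis splits by the position $p$ of $B_1$, using that $B_1$ in position $p$ can host $H_i$ only for $i \in [\max(1, r - k + p), \min(r, p)]$.

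For the matching lower bound I would proceed in the spirit of the extremal construction following Theorem~\ref{OrderedKomlos}: given any $x$-bottlegraph $B$ of $H$ with $\chi_{cr}(B) < \chi$, I would place $B_1$ in the worst-case position of the interval labelling (determined by $t$) and exhibit a dual LP certificate showing the resulting ordered graph contains no $(x, H)$-tiling, contradicting the $x$-bottlegraph property. Combining the two bounds gives the formulas in (i) and (ii). Continuity and piecewise linearity of $f(x, H)$ are then immediate: within each regime $f(x, H)$ is linear in $x$, and at regime boundaries (where $(1-x)h/x = t\ell_t - L$) direct substitution verifies that adjacent formulas agree. The hardest part will be verifying LP feasibility in the upper bound across every possible interval labelling, as this requires exploiting the full flexibility of all embedding patterns $(j_1, \ldots, j_r)$ and is delicately sensitive to the regime parameter $t$.
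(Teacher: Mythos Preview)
Your treatment of the second bullet via Proposition~\ref{linearresult} matches the paper exactly. For the first bullet, however, your direct LP approach diverges substantially from the paper's argument, and the part you flag as hardest is in fact avoided entirely there.

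For the upper bound, the paper does \emph{not} construct an $x$-bottlegraph of $H$ directly. Instead it builds an auxiliary complete $r$-partite ordered graph $H'$ with parts $H'_1<\cdots<H'_r$ of sizes
\[
|H'_i|=\left\lfloor\tfrac{N}{t}\Bigl(\tfrac{1-x}{x}h+\sum_{j\le t}\ell_j\Bigr)\right\rfloor\quad(i\le t),\qquad |H'_i|=N\ell_i\quad(i>t),
\]
and observes (using the regime inequality) that $|H'_1|\le\cdots\le|H'_r|$ and that $H'$ contains an $(x,H)$-tiling consisting of $N$ copies of $H$. The point is that every $1$-bottlegraph of $H'$ is then automatically an $x$-bottlegraph of $H$, so $\chi^*_{cr}(x,H)\le\chi^*_{cr}(H')$, and the latter is computed by Proposition~\ref{complete1class} as $|H'|/|H'_1|$. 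Letting $N\to\infty$ gives the claimed bound. This completely sidesteps the per-ordering LP feasibility you anticipated: the burden of checking all interval labellings is absorbed into the already-proved Proposition~\ref{complete1class} (whose proof, in turn, goes through Proposition~\ref{upperbound}).

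For the lower bound, the paper does not work with an arbitrary $x$-bottlegraph either. It constructs a specific ordered graph $G$ on roughly $Nh/x$ vertices with $t+1$ parts $G_1<\cdots<G_{t+1}$, where the first $t$ parts are independent and of equal size $\approx\tfrac{N}{t}(\tfrac{1-x}{x}h+L)$, and $G_{t+1}$ is a \emph{clique} of size $\approx N(h-L)$. The single observation driving the argument is that any copy of $H$ in $G$ places at most $L=\sum_{i\le t}\ell_i$ vertices in $G_1\cup\cdots\cup G_t$; a short counting then rules out an $(x,H)$-tiling, and Theorem~\ref{OrderedKomlos} converts this into the desired lower bound on $f(x,H)$. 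No dual LP certificate is needed.

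Your plan is not wrong in principle, but the LP feasibility check over all orderings that you identify as the crux is precisely what the paper's reduction-to-$H'$ trick eliminates; as written, your proposal leaves that check as an unexecuted promise, which is the main gap.
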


\begin{proof}
If $x=1$, then the result follows from Propositions~\ref{propeq} and \ref{complete1class}; so assume that $x <1$.
Let $T,J,x_0$ be as in Proposition~\ref{linearresult}. For our choice of $H$, we have $T=J=\ell_r$ and $x_0=h/(r\ell_r)$. Note that if
$$\frac{(1-x)h}{x}\geq r\ell_r-\sum\limits_{i=1}^{r} \ell_i,$$
then $x\leq x_0$. It follows from Proposition~\ref{linearresult} that
$$f(x,H)=1-\frac{h-xT}{h(r-1)}=1-\frac{h-x\ell_r}{h(r-1)}.$$
Throughout the rest of the proof, we assume that
\begin{align}\label{eqlinearcase1}
t\ell_t-\sum\limits_{i=1}^t \ell_i\leq\frac{1-x}{x}h<(t+1) \ell_{t+1}-\sum\limits_{i=1}^{t+1} \ell_i
\end{align}
for some $t\in[1,r-1]$. Fix constants $0<1/N\ll\eta\ll1$ where $N\in\mathbb{N}$. We first prove that $f(x,H)$ is at most the claimed value. Let $H'$ be the complete $r$-partite ordered graph with parts $H'_1<\cdots<H'_r$ such that the following holds:
\begin{itemize}
\item for every $i\leq t$,     
$$|H'_i|=\left\lfloor\frac{N}{t}\left(\frac{1-x}{x}h+\sum_{i=1}^t \ell_i\right)\right\rfloor$$
and so $N\ell_t\leq|H'_i|<N\ell_{t+1}$ by (\ref{eqlinearcase1});
\item  for every $i\geq t+1$, 
$$|H'_i|=N\ell_i.$$
\end{itemize}
As $N\ell_i\leq|H'_i|$ for every $i\in[r]$,  there exists an $H$-tiling $\mathcal{H}$ in $H'$ consisting of exactly $N$ copies of $H$. Note that $\mathcal{H}$ covers exactly $Nh$ vertices in $H'$ and 
$$x|H'|\leq xN\left(\frac{1-x}{x}h+\sum_{i=1}^t \ell_i\right)+xN\sum_{i=t+1}^r \ell_i=xN\left(\frac{1-x}{x}h+h\right)=Nh.$$
Thus, $\mathcal{H}$ is an $(x,H)$-tiling in $H'$. Since $H'$ contains an $(x,H)$-tiling,  every $1$-bottlegraph of $H'$ is an $x$-bottlegraph of $H$, therefore $\chi_{cr}^*(x,H)\leq\chi_{cr}^*(H')$.  

Since $|H'_1|\leq\dots\leq |H'_r|$, Proposition~\ref{complete1class} implies
$
   \chi_{cr}^*(H')={|H'|}/{|H'_1|}. 
$
Therefore,
\begin{align*}
f(x,H)&=\left(1-\frac{1}{\chi_{cr}^*(x,H)}\right)\leq\left(1-\frac{1}{\chi_{cr}^*(H')}\right)=1-\frac{|H'_1|}{|H'|}
=1-\frac{\left\lfloor\frac{N}{t}\left(\frac{1-x}{x}h+\sum\limits_{i=1}^t \ell_i\right)\right\rfloor}{t\left\lfloor\frac{N}{t}\left(\frac{1-x}{x}h+\sum\limits_{i=1}^t \ell_i\right)\right\rfloor+N\sum\limits_{i=t+1}^r\ell_i} \\
&<
1-\frac{\frac{N}{t}\left(\frac{1-x}{x}h+\sum\limits_{i=1}^t \ell_i\right)-1}{N\left(\frac{1-x}{x}h+\sum\limits_{i=1}^t \ell_i\right)+N\sum\limits_{i=t+1}^r\ell_i}=1-\frac{1}{ht}\left((1-x)h+x\sum\limits_{i=1}^t \ell_i\right)+\frac{x}{Nh}.
\end{align*}
Since $N$ can be chosen arbitrarily large, the above implies
$$f(x,H)\leq 1-\frac{1}{ht}\left((1-x)h+x\sum\limits_{i=1}^t \ell_i\right).$$

Next we prove that $f(x,H)$ is at least the claimed value. Let $G$ be the ordered graph obtained by taking the complete $(t+1)$-partite ordered graph with parts $G_1<\cdots<G_{t+1}$ where
$$|G_{t+1}|=N\left(h-\sum\limits_{i=1}^t \ell_i\right)-t \quad \text{and} \quad 
|G_i|=\left\lceil\frac{N}{t}\left(\frac{1-x}{x}h+\sum\limits_{i=1}^t \ell_i\right)\right\rceil+1
$$
for every $i\leq t$, and adding all missing edges to $G_{t+1}$. Suppose $G$ contains an $(x,H)$-tiling $\mathcal{H}$. Observe that
$$|G|\geq N\left(\frac{1-x}{x}h+\sum\limits_{i=1}^t \ell_i\right)+t+N\left(h-\sum\limits_{i=1}^t \ell_i\right)-t=\frac{Nh}{x}\implies x|G|\geq Nh.$$ 
Hence, $\mathcal{H}$ covers at least $Nh$ vertices in $G$. Let $\mathcal{H'}\subseteq\mathcal{H}$ be an $H$-tiling consisting of exactly $N$ copies of $H$. Each copy of $H$ has at most $\sum\limits_{i=1}^t \ell_i$ vertices in $G_1\cup\cdots\cup G_t$. Also,
$$|G|<N\left(\frac{1-x}{x}h+\sum\limits_{i=1}^t \ell_i\right)+2t+N\left(h-\sum\limits_{i=1}^t \ell_i\right)-t=\frac{Nh}{x}+t,$$
and so there are fewer than $\frac{1-x}{x}Nh+t$ vertices in $G$ which are not covered by $\mathcal{H'}$. This implies
$$|G_1\cup\cdots\cup G_t|<N\sum_{i=1}^t \ell_i+\frac{(1-x)Nh}{x}+t=N\left(\frac{1-x}{x}h+\sum\limits_{i=1}^t \ell_i\right)+t\leq|G_1\cup\cdots\cup G_t|,$$
which is a contradiction; hence $G$ does not contain an $(x,H)$-tiling. Therefore,  Theorem~\ref{OrderedKomlos} implies that
$\delta(G)<(f(x,H)+\eta)|G|$
and so
$$f(x,H)+\eta > \frac{\delta(G)}{|G|}=\frac{|G|-|G_1|}{|G|}=1-\frac{\left\lceil\frac{N}{t}\left(\frac{1-x}{x}h+\sum\limits_{i=1}^t \ell_i\right)\right\rceil+1}{t\left\lceil\frac{N}{t}\left(\frac{1-x}{x}h+\sum\limits_{i=1}^t \ell_i\right)\right\rceil+t+N\left(h-\sum\limits_{i=1}^t \ell_i\right)-t}$$

$$\geq 1-\frac{\frac{N}{t}\left(\frac{1-x}{x}h+\sum\limits_{i=1}^t \ell_i\right)+2}{N\left(\frac{1-x}{x}h+\sum\limits_{i=1}^t \ell_i\right)+N\left(h-\sum\limits_{i=1}^t \ell_i\right)}=1-\frac{1}{ht}\left((1-x)h+x\sum\limits_{i=1}^t \ell_i\right)+\frac{2x}{Nh}.$$
Since $\eta$ can be chosen arbitrarily small and $N$  arbitrarily large, the above implies that
$$f(x,H)\geq1-\frac{1}{ht}\left((1-x)h+x\sum\limits_{i=1}^t \ell_i\right).$$
\end{proof}

We conclude this section with the following result.
\begin{prop}
Let $H$ be an ordered graph, then
$$\lim_{x\to 1}\chi_{cr}^*(x,H)=\chi^*_{cr}(H)\quad\text{and}\quad\lim_{x\to 0}\chi_{cr}^*(x,H)=\chi_<(H)-1.$$
\end{prop}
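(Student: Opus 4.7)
Both limits will be approached by exploiting the monotonicity of $x\mapsto\chi_{cr}^*(x,H)$: if $x'\leq x$ then every $x$-bottlegraph of $H$ is automatically an $x'$-bottlegraph of $H$ (since an $(x,H)$-tiling is in particular an $(x',H)$-tiling), so the infimum defining $\chi_{cr}^*(x,H)$ only grows with $x$. Combined with Proposition~\ref{propeq} this yields $\chi_{cr}^*(x,H)\leq\chi_{cr}^*(1,H)=\chi_{cr}^*(H)$, and combined with~(\ref{goodlowerbound}) it yields $\chi_{cr}^*(x,H)\geq\chi_<(H)-1$, for every $x\in(0,1)$. In particular both one-sided limits exist and only the matching inequalities remain.

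The first limit will be proved by identifying $\lim_{x\to 1^-}f(x,H)$ with the parameter $c_<(H)$ from Section~\ref{lowersec} and then invoking Theorem~\ref{aptsthm}. The upper bound $c_<(H)\leq\lim_{x\to 1^-}f(x,H)$ is immediate from Theorem~\ref{OrderedKomlos}: given $c>\lim_{x\to 1^-}f(x,H)$ and $\eta>0$, set $x:=1-\eta$; then $c>f(x,H)$ with slack, so Theorem~\ref{OrderedKomlos} guarantees that $\delta(G)\geq cn$ forces an $(x,H)$-tiling, i.e.\ an $H$-tiling missing at most $\eta n$ vertices. For the matching lower bound I will use the sharpness remark following Theorem~\ref{OrderedKomlos}: for any $c<f(x,H)$ with $x\in(0,1)$, that remark produces arbitrarily large ordered graphs $G$ with $\delta(G)>cn$ which fail to contain any $(x,H)$-tiling, hence fail to contain any $H$-tiling covering more than $xn$ vertices. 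This witnesses $c<c_<(H)$ with $\eta:=1-x$, so $c_<(H)\geq f(x,H)$ for every $x\in(0,1)$. Combining the two bounds with Theorem~\ref{aptsthm} gives $1-1/\chi_{cr}^*(H)=\lim_{x\to 1^-}\bigl(1-1/\chi_{cr}^*(x,H)\bigr)$, which rearranges to the first limit.

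For the second limit only the upper bound is needed. Set $r:=\chi_<(H)$ and fix a large integer $N$. Let $B$ be the complete $r$-partite unordered graph with part sizes $1,N,\ldots,N$, so $\chi_{cr}(B)=(r-1)+\tfrac{1}{N}$, and consider the blow-up $B(h)$: this is a complete $r$-partite graph with one part of size $h$ and $r-1$ parts of size $hN$, satisfying (i) and (ii) of Definition~\ref{defx} with $m=hN$, and $\chi_{cr}(B(h))=\chi_{cr}(B)$. For any permutation $\sigma$ and any interval labelling $\phi$ of $B(h)$ with respect to $\sigma$, every part of $(B(h),\phi)$ has size at least $h$, so any fixed interval $r$-colouring $V_1<\cdots<V_r$ of $H$ embeds into the $r$ parts of $(B(h),\phi)$ (each $|V_i|\leq h$), producing a copy of $H$. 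This single copy covers $h$ of the $h(1+(r-1)N)$ vertices, so $B(h)$ is an $x$-bottlegraph for $x=1/(1+(r-1)N)$. Therefore $\chi_{cr}^*(x,H)\leq(r-1)+\tfrac{1}{N}$ for this $x$, and letting $N\to\infty$ drives $x$ to $0$ and the bound to $r-1$, as required.

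The only slightly delicate step is the lower bound argument in the first limit, where one must check that the tightness remark after Theorem~\ref{OrderedKomlos} really produces, for every $x\in(0,1)$, arbitrarily large extremal ordered graphs with minimum degree exceeding $cn$; this is precisely what is asserted there. The remainder of both proofs is essentially monotonicity together with the small explicit bottlegraph construction.
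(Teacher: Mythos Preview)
Your argument is correct. For the first limit it is essentially the paper's proof: the paper also observes monotonicity (hence $\ell:=\lim_{x\to 1^-}\chi_{cr}^*(x,H)\leq\chi_{cr}^*(H)$), then uses Theorem~\ref{OrderedKomlos} to obtain $c_<(H)\leq 1-1/\ell$ and concludes $\chi_{cr}^*(H)\leq\ell$ via Theorem~\ref{aptsthm}. Your additional step --- the lower bound $c_<(H)\geq f(x,H)$ obtained from the sharpness remark after Theorem~\ref{OrderedKomlos} --- is valid but redundant, since monotonicity together with Proposition~\ref{propeq} has already handed you $\ell\leq\chi_{cr}^*(H)$; you only need the upper bound on $c_<(H)$ to close the loop.

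For the second limit your route is genuinely different and more economical. The paper appeals to Proposition~\ref{linearresult}, which computes $f(x,H)$ exactly for all small $x$ and then reads off the limit; that proposition has a nontrivial proof involving both an $x$-bottlegraph construction and a matching extremal ordered graph. You instead give a direct, self-contained witness: the complete $r$-partite graph with parts of sizes $h,hN,\dots,hN$ is an $x$-bottlegraph of $H$ for $x=1/(1+(r-1)N)$, since each part has room for any colour class of $H$. This avoids Proposition~\ref{linearresult} entirely and gives only the inequality you need (the matching lower bound being~(\ref{goodlowerbound}) itself). The paper's route, of course, yields the sharper information that $f(x,H)$ is in fact linear in a neighbourhood of $0$, which your bottlegraph does not see.
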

\begin{proof}
Let $T,J,x_0$ be as in Proposition~\ref{linearresult}. If $x\in(0,x_0)$ then Proposition~\ref{linearresult} implies
$$f(x,H)=1-\frac{h-xT}{h(\chi_<(H)-1)}\implies\lim_{x\to 0} f(x,H)=1-\frac{1}{\chi_<(H)-1}.$$
Since $f(x,H) =1-1/\chi_{cr}^*(x,H)$, it follows from the above that
$$\lim_{x\to 0}\chi_{cr}^*(x,H)=\chi_<(H)-1.$$

Let $y,z\in(0,1]$ with $y<z$. Let $B$ be a $z$-bottlegraph of $H$. Clearly every $z$-bottlegraph of $H$ is a $y$-bottlegraph of $H$, so
$\chi_{cr}^*(y,H)\leq\chi_{cr}^*(z,H)$. 
It follows that, for $x\in(0,1]$, the function $\chi_{cr}^*(x,H)$ is non-decreasing. In particular, $$\chi^*_{cr}(x,H)\leq\chi^*_{cr}(1,H)=\chi_{cr}^*(H)$$
for every $x\in(0,1]$. The monotone convergence theorem implies that the limit $\ell:=\lim_{x\to 1}\chi_{cr}^*(x,H)$
exists and $\ell\leq\chi_{cr}^*(H)$. Fix arbitrary constants $0<\eta,\varepsilon<1$. Pick $n\in\mathbb{N}$ sufficiently large and let $G$ be an ordered graph on $n$ vertices with minimum degree $$\delta(G)\geq\left(1-\frac{1}{\ell}+\varepsilon\right)n\geq\left(1-\frac{1}{\chi_{cr}^*(1-\eta,H)}+\varepsilon\right)n.$$
By Theorem~\ref{OrderedKomlos}, $G$ contains a $(1-\eta,H)$-tiling, i.e., an $H$-tiling covering all but at most $\eta n$ vertices. Recall the definition of $c_<(H)$ given at the beginning of Section~\ref{lowersec}: $c_<(H)$ denotes the smallest non-negative number such that for every $\eta>0$, there exists an integer $n_0\in\mathbb{N}$ so that if $G$ is an ordered graph on $n\geq n_0$ vertices and with minimum degree $\delta(G)\geq c_<(H)n$ then $G$ contains an $H$-tiling covering all but at most $\eta n$ vertices.

It follows that $c_<(H)\leq1-1/\ell+\varepsilon$, and so $c_<(H)\leq1-1/\ell$ since $\varepsilon>0$ is arbitrary. By Theorem~\ref{aptsthm}, 
$c_<(H)=1-{1}/{\chi^*_{cr}(H)}$
and thus $\chi_{cr}^*(H)\leq\ell$. In particular, $\chi_{cr}^*(H)=\ell$.

\end{proof}

\section{Concluding remarks and open problems}\label{sec:conc}
Theorem~\ref{mainthm} together with~\cite[Theorem~1.9]{blt} asymptotically determine the minimum degree threshold for forcing a perfect $H$-tiling in an ordered graph, for any fixed ordered graph $H$. Depending on the structure of $H$, this threshold depends on one of three factors: ($C1$) the existence of an almost perfect $H$-tiling;  ($C2$) the avoidance of divisibility barriers; ($C3$) the existence of an $H$-cover. Analogous factors govern the threshold for other perfect $H$-tiling problems in a range of settings too. 

Therefore, it would be extremely interesting to find a natural `local' density condition (e.g., minimum degree, Ore-type, degree sequence) for an (ordered) graph, directed graph or hypergraph for which the corresponding perfect $H$-tiling threshold depends on another factor. We suspect no such problem exists. An alternative way to think about this question is as follows: are there barriers, other than local and divisibility barriers, that prevent absorbing for a perfect $H$-tiling problem?

Other than this general `meta problem', it would be interesting to establish the Ore-type degree threshold that forces a perfect $H$-tiling in an ordered graph $G$ (and compare this threshold to the corresponding Ore-type degree threshold for unordered graphs~\cite{orekot}).

In light of Theorem~\ref{BalLiTre1} it is natural to raise the following ordered graph analogue of
the theorem of Shokoufandeh and  Zhao~\cite{szhao}.

\begin{conj}
Let $H$ be an ordered graph.  Then there is a constant $C=C(H)\in \mathbb N$ so that the following holds. 
If $G$ is an $n$-vertex ordered graph  with
$$\delta(G)\geq\left(1-\frac{1}{\chi^*_{cr}(H)}\right)n$$
then $G$ contains an $H$-tiling covering all but at most $C$ vertices.
\end{conj}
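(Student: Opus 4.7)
The plan is to extend Theorem~\ref{BalLiTre1} by combining it with an augmentation procedure, in the spirit of Shokoufandeh and Zhao's proof of the analogous unordered result. First I would apply Theorem~\ref{BalLiTre1} with a suitably small $\eta$ to obtain an initial $H$-tiling $\mathcal{M}_0$ leaving a set $W_0$ of at most $\eta n$ vertices uncovered. The goal is then to iteratively transform $\mathcal{M}_0$ into a tiling whose uncovered set $W$ has bounded size, by local modifications (swaps) that strictly decrease $|W|$ at each step, stopping only when $|W| \le C(H)$.

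The central technical tool I would aim to prove is a \emph{local augmentation lemma}: there exists a constant $s = s(H)$ such that, given an $H$-tiling $\mathcal{M}$ of $G$ with $|W| \ge C(H)$ for a sufficiently large constant $C(H)$, one can always find a subcollection of at most $s$ copies of $H$ in $\mathcal{M}$, together with at least two vertices of $W$, whose union admits an $H$-tiling containing strictly more copies of $H$. Applying this lemma as long as $|W| > C(H)$ yields the conclusion. To prove the lemma I would apply the multipartite regularity lemma (Theorem~\ref{multiregularity}) to $G$, locate bottlegraph-like subgraphs of the reduced graph $R$ in the neighbourhood of two chosen vertices $u, v \in W$ using $\delta(R) \ge (1-1/\chi_{cr}^*(H) - o(1))|R|$ (Proposition~\ref{reducedgraph}) and Proposition~\ref{propbottle} on the structure of optimal bottlegraphs, and then use the key lemma (Lemma~\ref{keylemma}) to pull a replacement tiling back into $G$ so that the leftover decreases by at least one.

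The main obstacle will be carrying out the swap in a way that respects the vertex ordering. Unlike the unordered case, a copy of $H$ in $\mathcal{M}$ admits only very restricted rearrangements compatible with the interval colourings, and the position of $u$ and $v$ relative to the clusters involved genuinely constrains which swaps are available. When $\chi_{cr}^*(H) < \chi_<(H)$, the flexibility results of Section~\ref{sec:usefulresults} (in particular Corollaries~\ref{flexibilitycol1} and~\ref{flexibilitycol2}) should make the swap feasible, much as they did in the proof of Claim~\ref{claim4} for our absorbing theorem. The harder case is $\chi_{cr}^*(H) \ge \chi_<(H)$, where flexibility may fail and the extremal graphs are genuine bottlegraphs of $H$. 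Here I would classify each vertex of $W$ into a bounded number of \emph{types} determined by how it sits within a locally bottlegraph-like structure; pigeonholing on $|W|$ forces many vertices of complementary types to coexist, and Lemma~\ref{flexibilitylemma1}-style balancing arguments should then enable a combined absorption of a pair.
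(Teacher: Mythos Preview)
This statement is a \emph{conjecture}, not a theorem: it appears in Section~\ref{sec:conc} as an open problem, explicitly raised as the ordered-graph analogue of the Shokoufandeh--Zhao theorem~\cite{szhao}. The paper provides no proof to compare against, and your proposal should be read as a research plan rather than a proof.

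As a plan, the overall architecture is sensible and mirrors the unordered strategy, but the local augmentation lemma you posit is exactly where the real difficulty lies, and your sketch does not resolve it. Two concrete obstacles: first, the hypothesis has no $\eta$-slack in the minimum degree, so after applying Theorem~\ref{multiregularity} and Proposition~\ref{reducedgraph} the reduced graph $R$ only satisfies $\delta(R)\ge(1-1/\chi_{cr}^*(H)-2\eps-d)|R|$, which can fall strictly below the bottlegraph threshold; at the exact threshold one cannot simply ``locate bottlegraph-like subgraphs of $R$'' by a minimum-degree argument, and a stability analysis (near-extremal versus far-from-extremal) is typically required here, as in~\cite{szhao}. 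Second, in the case $\chi_{cr}^*(H)\ge\chi_<(H)$ you appeal to a type-classification and pigeonhole argument, but you give no mechanism for why two vertices of ``complementary types'' can be jointly absorbed while respecting the vertex ordering; Lemma~\ref{flexibilitylemma1} genuinely needs flexibility, which may fail in this regime (cf.\ Case~3 of the proof of Theorem~\ref{aptsthm}), so the balancing you invoke is not available in general. In short, the proposal identifies the right shape of argument but leaves the crux---a swap lemma valid at the exact threshold and compatible with the ordering---unproven.
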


Finally, whilst we have obtained some understanding of the function $f(x,H)$, it would be interesting to obtain a more complete understanding of how this function can behave in general. In particular, is it true that for any fixed ordered graph $H$, $f(x,H)$ is piecewise linear?

\section*{Acknowledgment}
The authors are grateful to the referees for their helpful and careful reviews.

{\noindent \bf Conflicts of interest:} none. 

{\noindent \bf Financial Support:} The second author is supported by EPSRC grant EP/V002279/1.  There are no additional data beyond that contained within the main manuscript.

\end{document}